\title{Categorical aspects of cointegrals on quasi-Hopf algebras}
\author[T. Shibata]{Taiki Shibata}
\address[T. Shibata]{Department of Applied Mathematics,
  Okayama University of Science \\
  1-1 Ridai-cho, Kita-ku Okayama-shi, Okayama 700-0005, Japan.}
\email{shibata@xmath.ous.ac.jp}
\author[K. Shimizu]{Kenichi Shimizu}
\address[K. Shimizu]{Department of Mathematical Sciences,
  Shibaura Institute of Technology \\
  307 Fukasaku, Minuma-ku, Saitama-shi, Saitama 337-8570, Japan.}
\email{kshimizu@shibaura-it.ac.jp}
\numberwithin{equation}{section}
\newtheorem{counter}{}[section]
\theoremstyle{definition}
\newtheorem{definition}         [counter]{Definition}
\theoremstyle{plain}
\newtheorem{lemma}              [counter]{Lemma}
\newtheorem{claim}              [counter]{Claim}
\newtheorem{proposition}        [counter]{Proposition}
\newtheorem{theorem}            [counter]{Theorem}
\newtheorem{corollary}          [counter]{Corollary}
\newtheorem*{theorem*}           {Theorem}
\theoremstyle{remark}
\newtheorem{remark}             [counter]{Remark}
\newcommand{\id}{\mathrm{id}}
\newcommand{\ad}{\mathsf{ad}}
\newcommand{\cop}{\mathop{\sf cop}}
\newcommand{\Hom}{\mathrm{Hom}}
\newcommand{\End}{\mathrm{End}}
\newcommand{\Img}{\mathrm{Im}}
\newcommand{\eval}{{\rm ev}}
\newcommand{\coev}{{\rm coev}}
\newcommand{\op}{\mathsf{op}}
\newcommand{\rev}{\mathsf{rev}}
\newcommand{\unitobj}{\mathbbm{1}}
\newcommand{\coinv}{\mathsf{co}}
\newcommand{\CF}{\mathsf{CF}}
\newcommand{\qhphi}{\upphi}
\newcommand{\qhalpha}{\upalpha}
\newcommand{\qhbeta}{\upbeta}
\newcommand{\qhmu}{\upmu}
\newcommand{\qhepsilon}{\upepsilon}
\newcommand{\qhomega}{\upomega}
\newcommand{\qhS}{\mathcal{S}}
\newcommand{\qhf}{\mathbbm{f}}
\newcommand{\qhg}{\mathbbm{g}}
\newcommand{\qhp}{\mathbbm{p}}
\newcommand{\qhq}{\mathbbm{q}}
\newcommand{\qhL}{\mathsf{L}}
\newcommand{\qhR}{\mathsf{R}}
\newcommand{\qhU}{\mathbbm{u}}
\newcommand{\qhV}{\mathbbm{v}}
\newcommand{\qhW}{\mathbbm{w}}
\newcommand{\qhUL}{\qhU^{\qhL}}
\newcommand{\qhUR}{\qhU^{\qhR}}
\newcommand{\qhVL}{\qhV^{\qhL}}
\newcommand{\qhVR}{\qhV^{\qhR}}
\newcommand{\qhWR}{\qhW^{\qhR}}
\newcommand{\qhWL}{\qhW^{\qhL}}
\newcommand{\qhpl}{\qhp^{\qhL}}
\newcommand{\qhql}{\qhq^{\qhL}}
\newcommand{\qhpr}{\qhp^{\qhR}}
\newcommand{\qhqr}{\qhq^{\qhR}}
\newcommand{\ptrace}{\mathsf{ptr}}
\newcommand{\Int}{\mathord{\smallint}}
\newcommand{\cointl}{\Int^{\qhL}}
\newcommand{\cointr}{\Int^{\qhR}}
\newcommand{\musymint}{\Int^{\qhmu\text{-{\sf sym}}}}
\newcommand{\catcoint}{\Int^{\mathsf{cat}}}
\newcommand{\catlambda}{\lambda^{\mathsf{cat}}}
\newcommand{\HH}{\mathrm{HH}}
\newcommand{\HHm}{\HH^{\mathsf{mod}}}
\newcommand{\Mod}{\mathscr{M}}
\newcommand{\Proj}{\mathscr{P}}
\newcommand{\fd}{\mathsf{fd}}
\newcommand{\Vect}{\mathsf{Vec}}
\newcommand{\hatotimes}{\mathbin{\hat{\otimes}}}
\newcommand{\YD}{\mathscr{Y}\!\!\mathscr{D}}
\newcommand{\YDsigma}{\sigma}
\newcommand{\YDAdelta}{\delta^{\mathrm{1st}}}
\newcommand{\YDA}[1]{\langle{#1}\rangle}
\newcommand{\YDBdelta}{\delta^{\mathrm{2nd}}}
\newcommand{\YDB}[1]{[#1]}
\newcommand{\qhulcolor}{red}
\newcommand{\qhano}[1]{\color{\qhulcolor}\dotuline{{\color{black}#1}}\color{black}}
\newcommand{\mycap}[1]{\POS{:
    (0,0); (0,#1) **@{} ?(.5)="_P1",
    (1,0); (1,#1) **@{} ?(.5)="_P2",
    (0,0); (1,0) **\crv{"_P1" & (0.5,#1) & "_P2"}}}
\newcommand{\makecoord}[2]{\POS{#1; p+(0, 1) **\dir{} ?!{#2; p+(1, 0)}}}
\begin{document}

\begin{abstract}
We discuss relations between some category-theoretical notions for a finite tensor category and cointegrals on a quasi-Hopf algebra. Specifically, for a finite-dimensional quasi-Hopf algebra $H$, we give an explicit description of categorical cointegrals of the category ${}_H \Mod$ of left $H$-modules in terms of cointegrals on $H$. Provided that $H$ is unimodular, we also express the Frobenius structure of the `adjoint algebra' in the Yetter-Drinfeld category ${}^H_H \YD$ by using an integral in $H$ and a cointegral on $H$. Finally, we give a description of the twisted module trace for projective $H$-modules in terms of cointegrals on $H$. 
\end{abstract}

\maketitle

\tableofcontents

%\begin{keyword}
%CCCCC \sep AAAA \sep BBBB
%\MSC code \sep code
%\MSC[2008] code \sep code (2000 is the default)
%\end{keyword}

%\title{Categorical aspects of cointegrals on quasi-Hopf algebras}
%\author[T.~Shibata]{Taiki Shibata}
%\address[T.~Shibata]{Department of Applied Mathematics \\
%  Okayama University of Science \\
%  1-1 Ridai-cho, Kita-ku Okayama-shi, Okayama 700-0005, Japan.}
%\email{shibata@xmath.ous.ac.jp}
%
%\author[K.~Shimizu]{Kenichi Shimizu}
%\address[K.~Shimizu]{Department of Mathematical Sciences \\
%  Shibaura Institute of Technology \\
%  307 Fukasaku, Minuma-ku, Saitama-shi, Saitama 337-8570, Japan.}
%\email{kshimizu@shibaura-it.ac.jp}

% \thanks{The second author (K.S.) is supported by JSPS KAKENHI Grant Number JP16K17568}

%\maketitle

\section{Introduction}

Results on Hopf algebras have been reexamined from the viewpoint of the theory of tensor categories since a result on Hopf algebras generalized to tensor categories is expected to be useful in applications to, {\it e.g.}, low-dimensional topology and conformal field theories. Integrals and cointegrals for Hopf algebras are introduced by Sweedler \cite{MR0242840} and play an important role in the study of Hopf algebras. Let $H$ be a finite-dimensional Hopf algebra over a field $k$. Recently, we have observed that cointegrals on $H$ appear in several results on finite tensor categories. For example, it is known that, if $\mathcal{C}$ is a unimodular finite tensor category, then there is a Frobenius algebra $\mathbf{A}$ in the Drinfeld center $\mathcal{Z}(\mathcal{C})$ of $\mathcal{C}$ arising from an adjunction between $\mathcal{Z}(\mathcal{C})$ and $\mathcal{C}$ \cite{MR3265394,MR3632104}. As shown in \cite{MR3265394}, the Frobenius structure of $\mathbf{A}$ is written in terms of integrals and cointegrals of $H$ if $\mathcal{C} = {}_H \Mod_{\fd}$ is the category of finite-dimensional left $H$-modules. As another example, we mention that the space of certain kind of traces for a pivotal finite tensor category $\mathcal{C}$ can be identified with the space of `$\qhmu$-symmetrized' cointegrals on $H$ if $\mathcal{C} = {}_H \Mod_{\fd}$ and $\qhmu$ is the modular function on $H$ \cite{2018arXiv180100321B,2018arXiv180901122F}.

The above-mentioned results may be explained by the integral theory for finite tensor categories established by the second-named author \cite{MR3632104,MR3921367}. He introduced the notions of categorical integrals and categorical cointegrals for a finite tensor category $\mathcal{C}$ and showed that they can be identified with ordinary (co)integrals if $\mathcal{C} = {}_H \Mod_{\fd}$. As demonstrated in \cite{MR3921367}, several results in the Hopf algebra theory are extended to finite tensor categories by using categorical (co)integrals. However, a relation between module traces and categorical cointegrals is still open. We also remark that an explicit description of a categorical cointegral is not known except the case where $\mathcal{C} = {}_H \Mod_{\fd}$ or if $\mathcal{C}$ is a fusion category.

In this paper, we discuss these problems in the case where $\mathcal{C}$ is the category ${}_H \Mod$ of left modules over a {\em finite-dimensional quasi-Hopf algebra} $H$ over a field $k$. In this case, the Drinfeld center $\mathcal{Z}(\mathcal{C})$ is identified with the category ${}^H_H \YD$ of Yetter-Drinfeld $H$-modules \cite{MR1631648,MR2194347}. By using the fundamental theorem for quasi-Hopf bimodules \cite{1999math4164H}, we give a convenient expression of a right adjoint $R: {}_H \Mod \to {}^H_H \YD$ of the forgetful functor from ${}^H_H \YD$ to ${}_H \Mod$ (Theorem~\ref{thm:Rad-ind-2nd}). As an application, we express the space of categorical cointegrals in terms of ordinary cointegrals on $H$ (Theorem~\ref{thm:q-Hopf-cat-coint}). Under the assumption that $H$ is unimodular, we give an explicit description of the Frobenius structure of the algebra $\mathbf{A} = R(k)$ in terms of (co)integrals of $H$ (Theorem~\ref{thm:Ishii-Masuoka-q-Hopf}). Finally, we show that $\qhmu$-twisted module traces on ${}_H \Mod_{\fd}$ (see Subsection \ref{subsec:modified-trace} for the definition) are expressed by `$\qhmu$-symmetrized' cointegrals on $H$, where $\qhmu$ is the modular function on $H$ (Theorem~\ref{thm:q-Hopf-modified-trace}).

Recently, several interesting examples of finite-dimensional quasi-Hopf algebras are introduced and studied from the viewpoint of logarithmic conformal field theories \cite{2017arXiv170608164F,2017arXiv171207260C,2018arXiv181202277N,2018arXiv180902116G}. We hope our abstract results are useful in future study of the representation theory of these algebras and their applications to conformal field theories.

This paper is organized as follows: In Section \ref{sec:preliminaries}, we recall basic notions in the theory of monoidal categories and their module categories. In Section \ref{sec:quasi-hopf}, we recall the definition of quasi-Hopf algebras from \cite{MR1047964,MR1321145} and elementary results on the representation theory of quasi-Hopf algebras. We also collect useful identities in a quasi-Hopf algebra from \cite{MR1669685,MR1696105,1999math4164H,MR3929714}.

In Section \ref{sec:integrals}, we review the integral theory for quasi-Hopf algebras. Let $H$ be a quasi-Hopf algebra and, for simplicity, assume that $H$ is finite-dimensional in this Introduction. The category ${}_H \Mod_H$ of $H$-bimodules is a monoidal category as it can be identified with the category of left modules over the quasi-Hopf algebra $H \otimes H^{\op}$. The axioms of a quasi-Hopf algebra ensure that $H$ is a coalgebra in ${}_H \Mod_H$. The categories ${}_H^H \Mod_H^{}$ and ${}_H^{} \Mod_H^H$ of left and right quasi-Hopf bimodules are defined as the categories of left $H$-comodules and right $H$-comodules in ${}_H \Mod_H$, respectively. The fundamental theorem for quasi-Hopf bimodules \cite{1999math4164H} gives equivalences ${}_H^{} \Mod_H^{H} \approx {}_H \Mod \approx {}_H^H \Mod_H^{}$. Cointegrals on $H$ are defined by using these equivalences. We give some characterizations of cointegrals for later use.

In Section~\ref{sec:yetter-drinfeld}, we review basic results on Yetter-Drinfeld $H$-modules  established in \cite{MR2106925,MR2194347,MR1897403} and give new results on an adjunction between ${}_H \Mod$ and the category ${}^H_H \YD$ of Yetter-Drinfeld $H$-modules. Schauenburg \cite{MR1897403} proved that ${}^H_H \YD$ is equivalent to the category ${}^H_H \Mod{}^H_H$ of $H$-bicomodules in ${}_H \Mod_H$. The equivalence is actually a monoidal equivalence \cite{MR3929714} and there is a commutative diagram
\begin{equation*}
  \xymatrix@C=128pt{
    {}^H_H \YD
    \ar[r]^{\text{Schauenburg \cite{MR1897403}}}_{\approx}
    \ar[d]
    & {}^H_H \Mod^H_H
    \ar[d] \\
    {}_H \Mod
    \ar[r]^{\text{Hausser-Nill \cite{1999math4164H}}}_{\approx}
    & {}_H^{} \Mod^H_H
  }
\end{equation*}
of monoidal  functors, where the vertical arrows are the forgetful functors. This commutative diagram gives a nice description of a right adjoint of the forgetful functor $F: {}^H_H \YD \to {}_H \Mod$ (Theorem~\ref{thm:Rad-ind-2nd}).

Now let $R$ be the right adjoint of $F$. Since $F$ is strong monoidal, $R$ has a natural structure of a monoidal functor. We also express the monoidal structure of $R$ (Lemma \ref{lem:R-monoidal-struc}) and the structure of the algebra $\mathbf{A} = R(k) \in {}^H_H \YD$ explicitly (Theorem~\ref{thm:BCP-alg}).
It turns out that the algebra $\mathbf{A}$ is identical to the algebra $H_0$ investigated in \cite{MR2106925,MR2194347}.

As the algebra $\mathbf{A}$ is important in the theory of tensor categories, we also include the following three remarks on $\mathbf{A}$ which will not be mentioned in later sections:
Firstly, we give a different proof of \cite[Proposition 4.2]{MR2106925} that states that $\mathbf{A}$ is commutative (Theorem~\ref{thm:BCP-alg-1}).
Secondly, we give a description of the algebra $\CF(H) := \Hom_{H}(\mathbf{A}, \unitobj)$ of `class functions' on $H$ introduced in \cite{MR3631720} (Theorem \ref{thm:BCP-alg-3}).
Thirdly, we show that the category of $\mathbf{A}$-modules in ${}^H_H \YD$ is equivalent to ${}_H \Mod$ (Theorem~\ref{thm:BCP-alg-2}). This thorem is in fact a consequence of co-Hopfness of the monoidal adjunction $F \dashv R$ (Theorem~\ref{thm:YD-co-Hopf-adj}) and a general result in the theory of Hopf monads \cite{MR2793022}.

In Section \ref{sec:categori-coint}, we study categorical cointegrals of ${}_H \Mod$ (introduced in \cite{MR3921367}) and the Frobenius structure of the algebra $\mathbf{A}$. In our notation, the space of categorical cointegrals of ${}_H \Mod$ can be identified with ${}^H_H \YD(R(\qhmu), k)$, where $\qhmu$ is the modular function on $H$ regarded as a one-dimensional left $H$-module. The above commutative diagram shows that the functor $L: {}_H \Mod \to {}_H^H \YD$ given by tensoring with $H^{\vee} \in {}_H^{H} \Mod_H^{}$ is left adjoint to $F$ (Theorem~\ref{thm:left-adj}). The fundamental theorem for quasi-Hopf bimodules gives a relation between $H$ and $H^{\vee}$, and this relation gives rise to a relation between $L$ and $R$ through the equivalence of Schauenburg. Specifically, there is an isomorphism
\begin{equation*}
  R(\qhmu \otimes V) \cong L(V)
  \quad (V \in {}_H \Mod)
\end{equation*}
of functors written by using a non-zero cointegral on $H$ (Theorem~\ref{thm:left-adj}). By using this isomorphism, we give a description of a categorical cointegral of ${}_H \Mod$ in terms of a cointegral on $H$ (Theorem~\ref{thm:q-Hopf-cat-coint}). The above isomorphism also implies that $L \cong R$ when $H$ is unimodular, {\it i.e.}, $\qhmu$ is identical to the counit of $H$. It is known that if $H$ is unimodular, then the algebra $\mathbf{A}$ is a Frobenius algebra in ${}^H_H \YD$ \cite{MR3632104}. We write the Frobenius structure of $\mathbf{A}$ explicitly in terms of an integral in $H$ and a cointegral on $H$.

In Section~\ref{sec:modified-trace}, we suppose that $H$ has a pivotal element so that ${}_H \Mod_{\fd}$ is a pivotal monoidal category. Let ${}_H \Proj_{\fd}$ be the full subcategory of ${}_H \Mod_{\fd}$ consisting of projective objects. A $\qhmu$-twisted module traces on ${}_H \Proj_{\fd}$ ({\it cf}. \cite{MR2803849,MR2998839,2018arXiv180900499G,2018arXiv180100321B,2018arXiv180901122F}) is a family of linear maps $\mathsf{t}_{\bullet} = \{ \mathsf{t}_P: \Hom_H(P, \qhmu \otimes P) \to k \}_{P \in {}_H \Proj_{\fd}}$ satisfying the $\qhmu$-twisted cyclicity and being compatible with the partial pivotal trace in ${}_H \Mod_{\fd}$; see Subsection \ref{subsec:modified-trace} for the precise definition. According to \cite{2018arXiv180901122F}, such a trace is constructed from a `$\qhmu$-symmetrized' cointegral on $H$ if $H$ is an ordinary Hopf algebra. The main result of this section is a generalization of this result to the case where $H$ is a quasi-Hopf algebra (Theorem~\ref{thm:q-Hopf-modified-trace}). The proof goes along almost the same way as \cite{2018arXiv180901122F} but also uses a certain technical result on cointegrals on a quasi-Hopf algebra discussed in Section~\ref{sec:integrals}.

\subsection*{Acknowledgment}
We are grateful to Peter Schauenburg for helpful discussion.
We are also grateful to the referee for careful reading of the manuscript and several valuable suggestions.
The first author (T.S.) is supported by JSPS KAKENHI Grant Number JP19K14517.
The second author (K.S.) is supported by JSPS KAKENHI Grant Numbers JP16K17568 and JP20K03520.

\section{Preliminaries}
\label{sec:preliminaries}

\subsection{Monoidal categories}
\label{subsec:monoidal-cats}

We recall basic results on monoidal categories from \cite{MR1712872,MR1321145,MR3242743}. 
A {\em monoidal category} is a datum $\mathcal{C} = (\mathcal{C}, \otimes, \unitobj, \Phi, l, r)$ consisting of a category $\mathcal{C}$, a functor $\otimes: \mathcal{C} \times \mathcal{C} \to \mathcal{C}$, an object $\unitobj \in \mathcal{C}$ and natural isomorphisms $\Phi_{X,Y,Z}: (X \otimes Y) \otimes Z \to X \otimes (Y \otimes Z)$, $l_{X}: \unitobj \otimes X \to X$ and $r_X: X \otimes \unitobj \to X$ ($X, Y, Z \in \mathcal{C}$) satisfying the pentagon and the triangle axioms. The natural isomorphisms $\Phi$, $l$ and $r$ are called the associator, the left unit isomorphism and the right unit isomorphism, respectively. We always assume that the left and the right unit isomorphisms are identities. Although the associator is often assumed to be the identity in the study of monoidal categories, we do not so in this paper as our main concern is the category of modules over a quasi-Hopf algebra.

Given a category $\mathcal{A}$, we denote by $\mathcal{A}^{\op}$ the opposite category of $\mathcal{A}$. If $\mathcal{C}$ is a monoidal category, then $\mathcal{C}^{\op}$ is also a monoidal category. We use the symbol $\mathcal{C}^{\rev}$ to mean the monoidal category obtained from $\mathcal{C}$ by reversing the order of the tensor product of $\mathcal{C}$.

Let $\mathcal{C}$ and $\mathcal{D}$ be monoidal categories. A {\em monoidal functor} from $\mathcal{C}$ to $\mathcal{D}$ is a triple $(F, F^{(0)}, F^{(2)})$ consisting of a functor $F: \mathcal{C} \to \mathcal{D}$, a morphism $F^{(0)}: \unitobj \to F(\unitobj)$ in $\mathcal{D}$ and  a natural transformation
\begin{equation*}
  F_{X,Y}^{(2)}: F(X) \otimes F(Y) \to F(X \otimes Y)
  \quad (X, Y \in \mathcal{C})
\end{equation*}
such that the equations
\begin{gather*}
  \begin{aligned}
    F(\Phi_{X,Y,Z}) \circ F^{(2)}_{X \otimes Y, Z} \, \circ
    & \, (F^{(2)}_{X, Y} \otimes \id_{F(Z)}) \\
    = F^{(2)}_{X, Y \otimes Z} \, \circ
    & \, (\id_{F(X)} \otimes F_{Y,Z}^{(2)}) \circ \Phi_{F(X), F(Y), F(Z)}, \\
  \end{aligned} \\
  F^{(2)}_{\unitobj, X} \circ (F^{(0)} \otimes \id_{F(X)}) = \id_{F(X)} = F^{(2)}_{X, \unitobj} \circ (\id_{F(X)} \otimes F^{(0)})
\end{gather*}
hold for all objects $X, Y, Z \in \mathcal{C}$. A monoidal functor $F: \mathcal{C} \to \mathcal{D}$ is said to be {\em strong} if $F^{(0)}$ and $F^{(2)}$ are invertible. It is said to be {\em strict} if $F^{(0)}$ and $F^{(2)}$ are identities.

Let $F, G: \mathcal{C} \to \mathcal{D}$ be monoidal functors. A {\em monoidal natural transformation} from $F$ to $G$ is a natural transformation $\xi: F \to G$ such that the equations $\xi_{\unitobj} \circ F^{(0)} = G^{(0)}$ and $\xi_{X \otimes Y} \circ F^{(2)}_{X,Y} = G^{(2)}_{X,Y} \circ (\xi_X \otimes \xi_Y)$ hold for all objects $X, Y \in \mathcal{C}$. Suppose that a strong monoidal functor $F: \mathcal{C} \to \mathcal{D}$ admits a right adjoint $R: \mathcal{D} \to \mathcal{C}$. Let $\eta: \id_{\mathcal{C}} \to R F$ and $\varepsilon: F R \to \id_{\mathcal{D}}$ be the unit and the counit of the adjunction, respectively. The functor $R$ has a monoidal structure given by
\begin{gather*}
  R^{(0)} = R( (F^{(0)})^{-1} ) \circ \eta_{\unitobj}, \\
  R^{(2)}_{X,Y} = R(\varepsilon_X \otimes \varepsilon_Y)
  \circ R( (F^{(2)}_{R(X),R(Y)})^{-1} ) \circ \eta_{R(X) \otimes R(Y)}
\end{gather*}
for $X, Y \in \mathcal{C}$. The adjunction $(F, R, \eta, \varepsilon)$ is in fact a {\em monoidal adjunction} in the sense that $F$ and $R$ are monoidal functors and $\eta$ and $\varepsilon$ are monoidal natural transformations.

\subsection{Duality in a monoidal category}
\label{subsec:duality}

Let $\mathcal{C}$ be a monoidal category, and let $X$ be an object of $\mathcal{C}$. A {\em left dual object} of $X$ is a triple $(Y, e, c)$ consisting of an object $Y \in \mathcal{C}$ and morphisms $e: Y \otimes X \to \unitobj$ and $c: \unitobj \to X \otimes Y$ in $\mathcal{C}$ such that the equations
\begin{gather*}
  (\id_X \otimes e) \Phi_{X,Y,X} (c \otimes \id_X) = \id_X
  \quad \text{and} \quad
  (e \otimes \id_{Y}) \Phi_{Y,X,Y}^{-1} (\id_{Y} \otimes c) = \id_Y
\end{gather*}
hold. The morphisms $e$ and $c$ are referred to as the {\em evaluation} and the {\em coevaluation}, respectively. A left dual object of $X$ is, if it exists, unique up to unique isomorphism in the following sense: If $(Y, e, c)$ and $(Y', e', c')$ are both left dual objects of $X$, then there exists a unique isomorphism $f: Y \to Y'$ in $\mathcal{C}$ such that $e' \circ (f \otimes \id_X) = e$ and $c' = (\id_X \otimes f) \circ c$.

Now we suppose that $X$ has a left dual object $(X^{\vee}, \eval_X, \coev_X)$. Given an object $A \in \mathcal{C}$, we denote by $\mathfrak{L}_{A}: \mathcal{C} \to \mathcal{C}$ the functor defined by $\mathfrak{L}_A(V) = A \otimes V$. The definition of a left dual object implies that the functor $\mathfrak{L}_{X^{\vee}}$ is left adjoint to $\mathfrak{L}_X$. More precisely, there is a natural isomorphism
\begin{gather}
  \label{eq:reciprocity-1}
  \mathbbm{Q} = \mathbbm{Q}_{V,W,X}:
  \Hom_{\mathcal{C}}(V, X \otimes W)
  \to \Hom_{\mathcal{C}}(X^{\vee} \otimes V, W), \\
  \notag \mathbbm{Q}(f)
  = (\eval_{X} \otimes \id_{W}) \circ \Phi_{X^{\vee}, X, W}^{-1} \circ (\id_{X^{\vee}} \otimes f)
\end{gather}
for $V, W \in \mathcal{C}$. The inverse of $\mathbbm{Q}$ is given by
\begin{equation*}
  \mathbbm{Q}^{-1}(g)
  = (\id_X \otimes g) \circ \Phi_{X,X^{\vee},V} \circ (\coev_X \otimes \id_{V}).
\end{equation*}
If we use the graphical calculus (see, {\it e.g.}, \cite{MR1321145}), then the natural isomorphism $\mathbbm{Q}$ and its inverse are expressed by string diagrams as follows:
\begin{equation*}
  \mathbbm{Q} \left(
    {\xy /r1pc/: (0,1.5)="T1", (0,-1.5)="T0",
      (0,0) *+!{\makebox[2.5em]{$f$}} *\frm{-}="BX1",
      "BX1"!U; \makecoord{p}{"T1"} **\dir{-} ?>="L1",
      "BX1"!D!L(.6); \makecoord{p}{"T0"} **\dir{-} ?>="L2",
      "BX1"!D!R(.6); \makecoord{p}{"T0"} **\dir{-} ?>="L3",
      % labels
      "L1" *+!D{V},
      "L2" *+!U{X},
      "L3" *+!U{W}, \endxy} \right)
  = {\xy /r1pc/: (0,1.5)="T1", (0,-1.5)="T0",
    (0,0) *+!{\makebox[2.5em]{$f$}} *\frm{-}="BX1",
    "BX1"!U; \makecoord{p}{"T1"} **\dir{-} ?>="L1",
    "BX1"!D!R(.6); \makecoord{p}{"T0"} **\dir{-} ?>="L2",
    "BX1"!D!L(.6); p-(1.5,0) \mycap{.75}
    ?>; \makecoord{p}{"T1"} **\dir{-} ?>="L3",
    % labels    
    "L1" *+!D{V},
    "L2" *+!U{W},
    "L3" *+!D{X\smash{^{\vee}}}, \endxy},
  \qquad
  \mathbbm{Q}^{-1} \left(
    {\xy /r1pc/: (0,1.5)="T1", (0,-1.5)="T0",
      (0,0) *+!{\makebox[2.5em]{$g$}} *\frm{-}="BX1",
      "BX1"!U!L(.6); \makecoord{p}{"T1"} **\dir{-} ?>="L1",
      "BX1"!U!R(.6); \makecoord{p}{"T1"} **\dir{-} ?>="L2",
      "BX1"!U; \makecoord{p}{"T0"} **\dir{-} ?>="L3",
      % labels
      "L1" *+!D{X\smash{^{\vee}}},
      "L2" *+!D{V},
      "L3" *+!U{W}, \endxy} \right)
  = {\xy /r1pc/: (0,1.5)="T1", (0,-1.5)="T0",
    (0,0) *+!{\makebox[2.5em]{$g$}} *\frm{-}="BX1",
    "BX1"!U!R(.6); \makecoord{p}{"T1"} **\dir{-} ?>="L1",
    "BX1"!U!L(.6); p-(1.5,0) \mycap{-.75}
    ?>; \makecoord{p}{"T0"} **\dir{-} ?>="L2",
    "BX1"!U; \makecoord{p}{"T0"} **\dir{-} ?>="L3",
    % labels
    "L1" *+!D{V},
    "L2" *+!U{X},
    "L3" *+!U{W}, \endxy} \,.
\end{equation*}
When we express a morphism by such a string diagram, we adopt the convention that a morphism goes from the top to the bottom of the diagram. The evaluation and the coevaluation are represented by a cup ($\cup$) and a cap ($\cap$), respectively. Although the graphical calculus is a very useful tool in the theory of monoidal categories, it hides the associator that should not be ignored in the study of quasi-Hopf algebras. For this reason, we use string diagrams only to give readers graphical intuition.

There is also a natural isomorphism
\begin{gather}
  \label{eq:reciprocity-2}
  \mathbbm{P} = \mathbbm{P}_{V,W,X}: \Hom_{\mathcal{C}}(V \otimes X, W)
  \to \Hom_{\mathcal{C}}(V, W \otimes X^{\vee}), \\
  \notag \mathbbm{P}(f)
  = (f \otimes \id_{X^{\vee}}) \circ \Phi_{V,X,X^{\vee}}^{-1} \circ (\id_{V} \otimes \coev_X)
\end{gather}
for $V, W \in \mathcal{C}$. The inverse of $\mathbbm{P}$ is given by
\begin{equation*}
  \mathbbm{P}^{-1}(g)
  = (\id_{W} \otimes \eval_X) \circ \Phi_{W,X^{\vee},X} \circ (g \otimes \id_{X}).
\end{equation*}
The isomorphism $\mathbbm{P}$ and its inverse are expressed as follows:
\begin{equation*}
  \mathbbm{P} \left(
    {\xy /r1pc/: (0,1.5)="T1", (0,-1.5)="T0",
      (0,0) *+!{\makebox[2.5em]{$f$}} *\frm{-}="BX1",
      "BX1"!U!L(.6); \makecoord{p}{"T1"} **\dir{-} ?>="L1",
      "BX1"!U!R(.6); \makecoord{p}{"T1"} **\dir{-} ?>="L2",
      "BX1"!U; \makecoord{p}{"T0"} **\dir{-} ?>="L3",
      % labels
      "L1" *+!D{V},
      "L2" *+!D{X},
      "L3" *+!U{W}, \endxy} \right)
  = {\xy /r1pc/: (0,1.5)="T1", (0,-1.5)="T0",
    (0,0) *+!{\makebox[2.5em]{$f$}} *\frm{-}="BX1",
    "BX1"!U!L(.6); \makecoord{p}{"T1"} **\dir{-} ?>="L1",
    "BX1"!U!R(.6); p+(1.5,0) \mycap{.75}
    ?>; \makecoord{p}{"T0"} **\dir{-} ?>="L2",
    "BX1"!U; \makecoord{p}{"T0"} **\dir{-} ?>="L3",
    % labels
    "L1" *+!D{V},
    "L2" *+!U{X\smash{^{\vee}}},
    "L3" *+!U{W}, \endxy},
  \qquad
  \mathbbm{P}^{-1} \left(
    {\xy /r1pc/: (0,1.5)="T1", (0,-1.5)="T0",
      (0,0) *+!{\makebox[2.5em]{$g$}} *\frm{-}="BX1",
      "BX1"!U; \makecoord{p}{"T1"} **\dir{-} ?>="L1",
      "BX1"!D!L(.6); \makecoord{p}{"T0"} **\dir{-} ?>="L2",
      "BX1"!D!R(.6); \makecoord{p}{"T0"} **\dir{-} ?>="L3",
      % labels
      "L1" *+!D{V},
      "L2" *+!U{W},
      "L3" *+!U{X\smash{^{\vee}}}, \endxy} \right)
  = {\xy /r1pc/: (0,1.5)="T1", (0,-1.5)="T0",
    (0,0) *+!{\makebox[2.5em]{$g$}} *\frm{-}="BX1",
    "BX1"!U; \makecoord{p}{"T1"} **\dir{-} ?>="L1",
    "BX1"!D!L(.6); \makecoord{p}{"T0"} **\dir{-} ?>="L2",
    "BX1"!D!R(.6); p+(1.5,0) \mycap{-.75}
    ?>; \makecoord{p}{"T1"} **\dir{-} ?>="L3",
    % labels    
    "L1" *+!D{V},
    "L2" *+!U{W},
    "L3" *+!D{X}, \endxy}.
\end{equation*}  

A {\em right dual object} of $X$ is a triple $(Y, e, c)$ consisting of an object $Y \in \mathcal{C}$ and morphisms $e: X \otimes Y \to \unitobj$ and $c: \unitobj \to Y \otimes X$ such that, in a word, the triple $(X, e, c)$ is a left dual object of $Y$. A right dual object of $X$ is nothing but a left dual object of $X \in \mathcal{C}^{\rev}$. Thus, if ${}^{\vee} \! X$ is a right dual object of $X$, then there are natural isomorphisms
\begin{align*}
  \Hom_{\mathcal{C}}(X \otimes V, W) & \cong \Hom_{\mathcal{C}}(V, {}^{\vee} \! X \otimes W), \\ 
  \Hom_{\mathcal{C}}(V, W \otimes X) & \cong \Hom_{\mathcal{C}}(V \otimes {}^{\vee} \! X, W)
\end{align*}
for $V, W \in \mathcal{C}$.

\subsection{Modules over an algebra}
\label{subsec:modules-over-alg}

Let $\mathcal{C}$ be a monoidal category. A {\em left $\mathcal{C}$-module category} is a category $\mathcal{M}$ endowed with a functor $\ogreaterthan: \mathcal{C} \times \mathcal{M} \to \mathcal{M}$ (called the action) and natural isomorphisms $\Omega_{X,Y,M}: (X \otimes Y) \ogreaterthan M \to X \ogreaterthan (Y \ogreaterthan M)$ and $l_M: \unitobj \ogreaterthan M \to M$ ($X, Y \in \mathcal{C}$, $M \in \mathcal{M}$) obeying certain axioms similar to those for monoidal categories. A {\em right $\mathcal{C}$-module category} and a {\em $\mathcal{C}$-bimodule category} are defined analogously. We omit the definitions of $\mathcal{C}$-module functors and their morphisms; see \cite{MR3242743}.

An {\em algebra} in $\mathcal{C}$ is a triple $(A, m, u)$ consisting of an object $A \in \mathcal{C}$ and morphisms $m: A \otimes A \to A$ and $u: \unitobj \to A$ in $\mathcal{C}$ such that the following equations hold:
\begin{gather*}
  m \circ (m \otimes \id_A) = m \circ (\id_A \otimes m) \circ \Phi_{A,A,A}, \\
  m \circ (u \otimes \id_A) = \id_A = m \circ (\id_A \otimes u).
\end{gather*}
Dually, a {\em coalgebra} in $\mathcal{C}$ is a triple $(C, \Delta, \varepsilon)$ consisting of an object $C \in \mathcal{C}$ and morphisms $\Delta: C \to C \otimes C$ and $\varepsilon: C \to \unitobj$ in $\mathcal{C}$ such that, in a word, $(C, \Delta, \varepsilon)$ is an algebra in $\mathcal{C}^{\op}$. Now let $\mathcal{M}$ be a left $\mathcal{C}$-module category. Given an object $A \in \mathcal{C}$, we define the functor $\mathfrak{L}_A: \mathcal{M} \to \mathcal{M}$ by $\mathfrak{L}_A(M) = A \ogreaterthan M$ for $M \in \mathcal{M}$. If $A$ is an algebra in $\mathcal{C}$, then the functor $\mathfrak{L}_A$ is a monad \cite[VI.1]{MR1712872} on $\mathcal{M}$ in a natural way. Similarly, if $C$ is a coalgebra in $\mathcal{C}$, then the functor $\mathfrak{L}_C$ has a natural structure of a comonad on $\mathcal{M}$.

\begin{definition}
  \label{def:modules-in-module-cat}
  Given an algebra $A$ in $\mathcal{C}$, we define the category ${}_A \mathcal{M}$ of left $A$-modules in $\mathcal{M}$ to be the category of $\mathfrak{L}_A$-modules ($=$ the Eilenberg-Moore category of $\mathfrak{L}_A$ \cite{MR1712872}). Given a coalgebra $C$ in $\mathcal{C}$, we define the category ${}^{C} \! \mathcal{M}$ of left $C$-comodules in $\mathcal{M}$ to be the category of $\mathfrak{L}_C$-comodules. The category of right (co)modules in a right $\mathcal{C}$-module category is defined and denoted in an analogous way.
\end{definition}

We note that $\mathcal{C}$ is a $\mathcal{C}$-bimodule category by the tensor product. Thus, given an algebra $A$ in $\mathcal{C}$, the notions of a left $A$-module in $\mathcal{C}$ and a right $\mathcal{C}$-module in $\mathcal{C}$ are defined. Let $X$ be a left $A$-module in $\mathcal{C}$ with action $\rho: A \otimes X \to X$. If $X$ has a left dual object $(X^{\vee}, \eval_X, \coev_X)$, then we define $\rho^{\sharp}: X^{\vee} \otimes A \to X^{\vee}$ to be the morphism corresponding to $\rho$ via
\begin{equation*}
  \Hom_{\mathcal{C}}(A \otimes X, X)
  \xrightarrow[\text{\eqref{eq:reciprocity-2}}]{\quad \cong \quad}
  \Hom_{\mathcal{C}}(A, X \otimes X^{\vee})
  \xrightarrow[\text{\eqref{eq:reciprocity-1}}]{\quad \cong \quad}
  \Hom_{\mathcal{C}}(X^{\vee} \otimes A, X^{\vee}).
\end{equation*}
It is known that $X^{\vee}$ is a right $A$-module in $\mathcal{C}$ by the action $\rho^{\sharp}$. Graphically, the morphism $\rho^{\sharp}$ can be expressed as follows:
\begin{equation*}
  {\xy /r1pc/: (0,1.5)="T1", (0,-1.5)="T0",
    (0,-.25) *+!{\makebox[3em]{$\rho^{\sharp}\mathstrut$}} *\frm{-}="BX1",
    "BX1"!U!L(.6); \makecoord{p}{"T1"} **\dir{-} ?>="L1",
    "BX1"!U!R(.6); \makecoord{p}{"T1"} **\dir{-} ?>="L2",
    "BX1"!D; \makecoord{p}{"T0"} **\dir{-} ?>="L3",
    % labels
    "L1" *+!D{X\smash{^{\vee}}},
    "L2" *+!D{A},
    "L3" *+!U{X\smash{^{\vee}}}
    \endxy}
  \ = {\xy /r1pc/: (0,1.5)="T1", (0,-1.5)="T0",
    (0,-.25) *+!{\makebox[1.5em]{$\rho\mathstrut$}} *\frm{-}="BX1",
    "BX1"!D; p-(2,0) \mycap{.75}
    ?>; \makecoord{p}{"T1"} **\dir{-} ?>="L1",
    "BX1"!U!L(.6); \makecoord{p}{"T1"} **\dir{-} ?>="L2",
    "BX1"!U!R(.6); p+(2,0) \mycap{.75}
    ?>; \makecoord{p}{"T0"} **\dir{-} ?>="L3",
    % labels
    "L1" *+!D{X\smash{^{\vee}}},
    "L2" *+!D{A},
    "L3" *+!U{X\smash{^{\vee}}}
    \endxy}.
\end{equation*}

There is a similar construction for comodules: Let $C$ be a coalgebra in $\mathcal{C}$, and let $X$ be a right $C$-comodule in $\mathcal{C}$ with coaction $\delta: X \to X \otimes C$. If $X$ has a left dual object $X^{\vee}$, then $X^{\vee}$ is a left $C$-comodule by the coaction defined to be the morphism corresponding to $\delta$ via
\begin{equation*}
  \Hom_{\mathcal{C}}(X, X \otimes C)
  \xrightarrow[\text{\eqref{eq:reciprocity-1}}]{\quad \cong \quad}
  \Hom_{\mathcal{C}}(X^{\vee} \otimes X, C)
  \xrightarrow[\text{\eqref{eq:reciprocity-2}}]{\quad \cong \quad}
  \Hom_{\mathcal{C}}(X^{\vee}, C \otimes X^{\vee}).
\end{equation*}

\section{Quasi-Hopf algebras}
\label{sec:quasi-hopf}

\subsection{Notation}

Throughout this paper, we work over a fixed field $k$. Unless otherwise noted, the symbol $\otimes$ means the tensor product over $k$. For a vector space $V$ over $k$ and a positive integer $n$, we denote by $V^{\otimes n}$ the $n$-th tensor power of $V$ over $k$. An element $x \in V^{\otimes n}$ is written symbolically as
\begin{equation*}
  x = x_1 \otimes \dotsb \otimes x_n,
\end{equation*}
although an element of $V^{\otimes n}$ is a sum of the form $\sum_i x_{i,1} \otimes \dotsb \otimes x_{i,n}$ in general. Given a permutation $\sigma$ on the set $\{ 1, \dotsc, n \}$, we write $x_{\sigma(1) \, \cdots \, \sigma(n)} = x_{\sigma(1)} \otimes \dotsb \otimes x_{\sigma(n)}$.

By a $k$-algebra, we always mean an associative unital algebra over the field $k$.
Given a $k$-algebra $A$, we denote by $A^{\op}$ the opposite algebra of $A$.
We note that $A^{\otimes n}$ ($n = 1, 2, 3, \dotsc$) is a $k$-algebra by the component-wise multiplication.
If $x$ is an invertible element of $A^{\otimes n}$, then we write its inverse as
\begin{equation*}
  x^{-1} = \overline{x}_1 \otimes \dotsb \otimes \overline{x}_n.
\end{equation*}

Given a vector space $M$, we denote its dual space by $M^* = \Hom_k(M, k)$. For $\xi \in M^*$ and $m \in M$, we often write $\xi(m)$ as $\langle \xi, m \rangle$. If $M$ is a left $A$-module, then $M^*$ is a right $A$-module by the action $\leftharpoonup$ given by $\langle \xi \leftharpoonup a, m \rangle = \langle \xi, a m \rangle$ for $\xi \in M^*$, $a \in A$ and $m \in M$. Similarly, if $M$ is a right $A$-module, then $M^*$ is a left $A$-module by $\langle a \rightharpoonup \xi, m \rangle = \langle \xi, m a \rangle$ for $a \in A$, $\xi \in M^*$ and $m \in M$.

\subsection{Quasi-Hopf algebras}

A {\em quasi-Hopf algebra} \cite{MR1047964,MR1321145} is a datum $H = (H, \Delta, \qhepsilon, \qhphi, \qhS, \qhalpha, \qhbeta)$ consisting of a $k$-algebra $H$, algebra maps $\Delta: H \to H^{\otimes 2}$ and $\qhepsilon: H \to k$, an invertible element $\qhphi \in H^{\otimes 3}$, an anti-algebra map $\qhS: H \to H$, and elements $\qhalpha, \qhbeta \in H$ satisfying the equations
\begin{gather}
  \label{eq:q-Hopf-def-1}
  h_{(1)} \otimes \Delta(h_{(2)})
  = \qhphi \cdot (\Delta(h_{(1)}) \otimes h_{(2)}) \cdot \qhphi^{-1}, \\
  \label{eq:q-Hopf-def-2}
  \qhepsilon(h_{(1)}) h_{(2)} = h = h_{(1)} \qhepsilon(h_{(2)}), \\
  \label{eq:q-Hopf-def-3}
  (\id \otimes \id \otimes \Delta)(\qhphi) (\Delta \otimes \id \otimes \id)(\qhphi)
  = (1 \otimes \qhphi) (\id \otimes \Delta \otimes \id) (\qhphi) (\qhphi \otimes 1), \\
  \label{eq:q-Hopf-def-4}
  (\id \otimes \qhepsilon \otimes \id)(\qhphi) = 1 \otimes 1, \\
  \label{eq:q-Hopf-def-5}
  \qhS(h_{(1)}) \qhalpha h_{(2)} = \qhepsilon(h) \qhalpha, \quad
  h_{(1)} \qhbeta \qhS(h_{(2)}) = \qhepsilon(h) \qhbeta, \\
  \label{eq:q-Hopf-def-6}
  \qhphi_1 \qhbeta \qhS(\qhphi_2) \qhalpha \qhphi_3 = 1
  = \qhS(\overline{\qhphi}_1) \qhalpha \overline{\qhphi}_2 \qhbeta \qhS(\overline{\qhphi}_3),
\end{gather}
for all $h \in H$, where $h_{(1)} \otimes h_{(2)}$ is the symbolic notation for $\Delta(h)$. The maps $\Delta$, $\qhepsilon$ and $\qhS$ are called the comultiplication, the counit and the antipode of $H$, respectively.

Equations \eqref{eq:q-Hopf-def-2}--\eqref{eq:q-Hopf-def-4} imply
\begin{equation}
  \label{eq:q-Hopf-phi-eps}
  (\id \otimes \id \otimes \qhepsilon)(\qhphi) = 1 \otimes 1 = (\qhepsilon \otimes \id \otimes \id)(\qhphi).
\end{equation}
By applying $\qhepsilon$ to equation \eqref{eq:q-Hopf-def-6}, we have $\qhepsilon(\qhalpha) \qhepsilon(\qhbeta) = 1$. Hence the equation
\begin{equation}
  \label{eq:q-Hopf-def-7}
  \qhepsilon(\qhalpha) = \qhepsilon(\qhbeta) = 1
\end{equation}
holds if we replace the elements $\qhalpha$ and $\qhbeta$ with  $\qhepsilon(\qhbeta) \qhalpha$ and $\qhepsilon(\qhalpha) \qhbeta$, respectively. For simplicity, all quasi-Hopf algebras in this paper are assumed to satisfy \eqref{eq:q-Hopf-def-7}. A category-theoretical meaning of this assumption is explained in Remark~\ref{rem:epsilon-alpha}.

Now we fix a quasi-Hopf algebra $H = (H, \Delta, \qhepsilon, \qhphi, \qhS, \qhalpha, \qhbeta)$, and assume that the antipode $\qhS$ is bijective with inverse $\overline{\qhS}$ (this assumption is automatically satisfied when $H$ is finite-dimensional; see \cite{MR1995128} and \cite{MR2037710}).
We define the quasi-Hopf algebras $H^{\op}$ and $H^{\cop}$ by
\begin{gather}
  \label{eq:q-Hopf-def-op}
  H^{\op} = (H^{\op}, \Delta, \qhepsilon, \qhphi^{-1}, \overline{\qhS}, \overline{\qhS}(\qhbeta), \overline{\qhS}(\qhalpha)), \\
  \label{eq:q-Hopf-def-cop}
  H^{\cop} = (H, \Delta^{\cop}, \qhepsilon, \qhphi_{321}^{-1}, \overline{\qhS}, \overline{\qhS}(\qhalpha), \overline{\qhS}(\qhbeta)),
\end{gather}
respectively, where $\Delta^{\cop}(h) = h_{(2)} \otimes h_{(1)}$ ($h \in H$). Hence,
\begin{equation*}
  H^{\op, \cop} := (H^{\op})^{\cop}
  = (H^{\op}, \Delta^{\cop}, \qhepsilon, \qhphi_{321}, \qhS, \qhbeta, \qhalpha)
  = (H^{\cop})^{\op}.
\end{equation*}

To express iterated comultiplications, we use the following variant of the Sweedler notation: For $h \in H$, we write $h_{(1)} \otimes \Delta(h_{(2)}) = h_{(1)} \otimes h_{(2,1)} \otimes h_{(2,2)}$ and $\Delta(h_{(1)}) \otimes h_{(2)} = h_{(1,1)} \otimes h_{(1,2)} \otimes h_{(2)}$. If $H$ is an ordinary Hopf algebra, then the equation $h_{(1,1)} \otimes h_{(1,2)} \qhS(h_{(2)}) = h \otimes 1$ holds for all $h \in H$. To state quasi-Hopf analogues of such equations, we introduce:
\begin{align}
  \label{eq:q-Hopf-def-pR}
  \qhpr & := \overline{\qhphi}_1 \otimes \overline{\qhphi}_2 \qhbeta \qhS(\overline{\qhphi}_3), \\
  \label{eq:q-Hopf-def-qR}
  \qhqr & := \qhphi_1 \otimes \overline{\qhS}(\qhalpha \qhphi_3) \qhphi_2, \\
  \label{eq:q-Hopf-def-pL}
  \qhpl & := \qhphi_2 \overline{\qhS}(\qhphi_1 \qhbeta) \otimes \qhphi_3, \\
  \label{eq:q-Hopf-def-qL}
  \qhql & := \qhS(\overline{\qhphi}_1) \qhalpha \overline{\qhphi}_2 \otimes \overline{\qhphi}_3.
\end{align}
Then, for all $h \in H$, we have
\begin{align}
  \label{eq:q-Hopf-pR}
  h_{(1,1)} \qhpr_1 \otimes h_{(1,2)} \qhpr_2 \qhS(h_{(2)})
  & = \qhpr (h \otimes 1), \\
  \label{eq:q-Hopf-qR}
  \qhqr_1 h_{(1,1)} \otimes \overline{\qhS}(h_{(2)}) \qhqr_2 h_{(1,2)}
  & = (h \otimes 1) \qhqr, \\
  \label{eq:q-Hopf-pL}
  h_{(2,1)} \qhpl_1 \overline{\qhS}(h_{(1)}) \otimes h_{(2,2)} \qhpl_2
  & = \qhpl (1 \otimes h), \\
  \label{eq:q-Hopf-qL}
  \qhS(h_{(1)}) \qhql_1 h_{(2,1)} \otimes \qhql_2 h_{(2,2)}
  & = (1 \otimes h) \qhql.
\end{align}
The following formulas are also useful:
\begin{align}
  \label{eq:q-Hopf-pR-qR-1}
  \qhqr_{1(1)} \qhpr_1 \otimes \qhqr_{1(2)} \qhpr_2 \qhS(\qhqr_2)
  & = 1 \otimes 1, \\
  \label{eq:q-Hopf-pR-qR-2}
  \qhqr_1 \qhpr_{1(1)} \otimes \overline{\qhS}(\qhpr_2) \qhqr_2 \qhpr_{1(2)}
  & = 1 \otimes 1, \\
  \label{eq:q-Hopf-pL-qL-1}
  \qhql_{2(1)} \qhpl_1 \overline{\qhS}(\qhql_1) \otimes \qhql_{2(2)} \qhpl_2
  & = 1 \otimes 1, \\
  \label{eq:q-Hopf-pL-qL-2}
  \qhS(\qhpl_1) \qhql_1 \qhpl_{2(1)}\otimes \qhql_2 \qhpl_{2(2)}
  & = 1 \otimes 1.
\end{align}
One can find a proof of  equations \eqref{eq:q-Hopf-pR}--\eqref{eq:q-Hopf-pL-qL-2} in, {\it e.g.}, \cite{MR1696105,MR3929714}. A graphical interpretation of these equations is given in \cite{MR1669685}.

Equations~\eqref{eq:q-Hopf-pR}--\eqref{eq:q-Hopf-qL} are equivalent in the following sense: Let $\qhpr_{\op}$, $\qhqr_{\op}$, $\qhpl_{\op}$ and $\qhql_{\op}$ be the elements $\qhpr$, $\qhqr$, $\qhpl$ and $\qhql$ for $H^{\op}$, respectively. We use the same notation for $H^{\cop}$. Then we have
\begin{gather}
  \label{eq:q-Hopf-pq-op}
  \qhpr_{\op} = \qhqr,
  \quad \qhqr_{\op} = \qhpr,
  \quad \qhpl_{\op} = \qhql,
  \quad \qhql_{\op} = \qhpl, \\
  \label{eq:q-Hopf-pq-cop}
  \qhpr_{\cop} = \qhpl_{21},
  \quad \qhqr_{\cop} = \qhql_{21},
  \quad \qhpl_{\cop} = \qhpr_{21},
  \quad \qhql_{\cop} = \qhqr_{21}
\end{gather}
by defining formulas \eqref{eq:q-Hopf-def-op}--\eqref{eq:q-Hopf-def-qL}. Thus~\eqref{eq:q-Hopf-qR}, \eqref{eq:q-Hopf-pL} and \eqref{eq:q-Hopf-qL} are obtained by applying \eqref{eq:q-Hopf-pR} to the quasi-Hopf algebras $H^{\op}$, $H^{\cop}$ and $H^{\op,\cop}$, respectively. Equations~\eqref{eq:q-Hopf-pR-qR-1}--\eqref{eq:q-Hopf-pL-qL-2} are also equivalent in the same sense.

The antipode of a Hopf algebra is known to be an anti-coalgebra map. To extend this result to quasi-Hopf algebras, it is convenient to introduce the twisting operation for quasi-Hopf algebras \cite{MR1047964,MR1321145}. A {\em gauge transformation} of $H$ is an invertible element $F \in H^{\otimes 2}$ such that $\qhepsilon(F_1) F_2 = 1 = F_1 \qhepsilon(F_2)$. Given a gauge transformation $F$ of $H$, we define
\begin{gather*}
  \Delta^F(h) = F \Delta(h) F^{-1} \quad (h \in H),
  \quad \qhalpha^F = \qhS(\overline{F}_1) \qhalpha \overline{F}_2,
  \quad \qhbeta^F = F_1 \qhbeta \qhS(F_2), \\
  \qhphi^F = (1 \otimes F) (\id \otimes \Delta)(F) \qhphi (\Delta \otimes \id)(F^{-1}) (F^{-1} \otimes 1).
\end{gather*}
Then $H^F := (H, \Delta^F, \qhepsilon, \qhphi^F, \qhS, \qhalpha^F, \qhbeta^F)$ is a quasi-Hopf algebra, called the {\em twist} of $H$ by $F$. Drinfeld \cite{MR1047964} introduced a special gauge transformation $\qhf$ given as follows: We define $\mathbbm{a}, \mathbbm{b} \in H^{\otimes 2}$ by
\begin{gather}
  \label{eq:q-Hopf-def-a}
  \begin{aligned}
    \mathbbm{a}
    & = \qhS(\overline{\qhphi}_1 \qhphi_2) \qhalpha \overline{\qhphi}_2 \qhphi_{3(1)}
    \otimes \qhS(\qhphi_1) \qhalpha \overline{\qhphi}_3 \qhphi_{3(2)} \\
    {}^{\eqref{eq:q-Hopf-def-3}}
    & = \qhS(\qhphi_2 \overline{\qhphi}_{1(2)}) \qhalpha \qhphi_3 \overline{\qhphi}_2
    \otimes \qhS(\qhphi_1 \overline{\qhphi}_{1(1)}) \qhalpha \overline{\qhphi}_3, \\
  \end{aligned} \\
  \label{eq:q-Hopf-def-b}
  \begin{aligned}
    \mathbbm{b}
    & = \qhphi_{1(1)} \overline{\qhphi}_1 \qhbeta \qhS(\qhphi_3)
    \otimes \qhphi_{1(2)} \overline{\qhphi}_2 \qhbeta \qhS(\qhphi_2 \overline{\qhphi}_3) \\
    {}^{\eqref{eq:q-Hopf-def-3}}
    & = \overline{\qhphi}_1 \qhbeta \qhS(\overline{\qhphi}_{3(2)} \qhphi_3)
    \otimes \overline{\qhphi}_2 \qhphi_1 \qhbeta \qhS(\overline{\qhphi}_{3(1)} \qhphi_2).
  \end{aligned}
\end{gather}
Then $\qhf$ and its inverse are given by the following formulas:
\begin{gather}
  \label{eq:q-Hopf-def-f}
  \qhf = (\qhS \otimes \qhS)\Delta^{\cop}(\qhpr_1) \, \mathbbm{a} \, \Delta(\qhpr_2),
  \quad
  \qhf^{-1} = \Delta(\qhql_1) \, \mathbbm{b} \, (\qhS \otimes \qhS) \Delta^{\cop}(\qhql_2).
\end{gather}
The antipode of $H$ is shown to be an isomorphism $\qhS: H^{\op, \cop} \to H^{\qhf}$ of quasi-Hopf algebras. Namely, we have
\begin{gather}
  \label{eq:q-Hopf-f-1}
  \Delta^{\qhf}(\qhS(h)) = \qhS(h_{(2)}) \otimes \qhS(h_{(1)}),
  % \label{eq:q-Hopf-f-2}
  \quad \qhepsilon(\qhS(h)) = \qhepsilon(h), \\
  \label{eq:q-Hopf-f-3}
  \qhS(\qhphi_3) \otimes \qhS(\qhphi_2) \otimes \qhS(\qhphi_1) = \qhphi^{\qhf}, \\
  \label{eq:q-Hopf-f-4}
  \qhS(\qhbeta) = \qhS(\overline{\qhf}_1) \qhalpha \overline{\qhf}_2,
  \quad
  \qhS(\qhalpha) = \qhf_1 \qhbeta \qhS(\qhf_2)
\end{gather}
for all $h \in H$. The following equations are also useful:
\begin{equation}
  \label{eq:q-Hopf-delta-alpha}
  \Delta(\qhalpha) = \qhf^{-1} \mathbbm{a},
  \quad
  \Delta(\qhbeta) = \mathbbm{b} \qhf.
\end{equation}
For later use, we prove the following Lemmas~\ref{lem:q-Hopf-f-op-cop}--\ref{lem:q-Hopf-pR-qL}

\begin{lemma}
  \label{lem:q-Hopf-f-op-cop}
  Let $\qhf_{\op}$ and $\qhf_{\cop}$ be the elements $\qhf$ for the quasi-Hopf algebras $H^{\op}$ and $H^{\cop}$, respectively. These elements are given explicitly by
  \begin{equation}
    \label{eq:q-Hopf-f-op-cop}
    \qhf_{\op} = \overline{\qhS}(\overline{\qhf}_2) \otimes \overline{\qhS}(\overline{\qhf}_1),
    \quad
    \qhf_{\cop} = \overline{\qhS}(\qhf_1) \otimes \overline{\qhS}(\qhf_2).
  \end{equation}
\end{lemma}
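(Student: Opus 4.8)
The plan is to identify $\qhf_{\op}$ and $\qhf_{\cop}$ by checking that the elements on the right-hand side of \eqref{eq:q-Hopf-f-op-cop} satisfy the relations that determine the Drinfeld twist, and then to invoke its uniqueness. Recall that the twist $\qhf = \qhf_H$ is determined by the property that $\qhS$ becomes an anti-coalgebra map after the twist (equation \eqref{eq:q-Hopf-f-1}) together with the compatibilities \eqref{eq:q-Hopf-f-3}--\eqref{eq:q-Hopf-f-4} with $\qhphi$, $\qhalpha$ and $\qhbeta$, this being precisely what makes $\qhS\colon H^{\op,\cop} \to H^{\qhf}$ an isomorphism of quasi-Hopf algebras. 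Since $H^{\op}$ and $H^{\cop}$ carry the same coalgebra data up to reversal and have antipode $\overline{\qhS}$, the natural idea is to transport \eqref{eq:q-Hopf-f-1} for $H$ along $\overline{\qhS}$.

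The cleanest step is the conjugation relation, and I would treat the $\cop$ case first. The map $\overline{\qhS} \otimes \overline{\qhS}\colon H \otimes H \to H \otimes H$ is an anti-algebra automorphism with $\overline{\qhS}\,\qhS = \id$. Applying it to \eqref{eq:q-Hopf-f-1}, written as $\qhf\,\Delta(\qhS(g))\,\qhf^{-1} = \qhS(g_{(2)}) \otimes \qhS(g_{(1)})$, and substituting $h = \qhS(g)$, the right-hand side becomes $\Delta^{\cop}(\overline{\qhS}(h))$ and, after rearranging, one obtains
\begin{equation*}
  \big(\overline{\qhS}(\qhf_1) \otimes \overline{\qhS}(\qhf_2)\big)\,
  \Delta^{\cop}(\overline{\qhS}(h))\,
  \big(\overline{\qhS}(\qhf_1) \otimes \overline{\qhS}(\qhf_2)\big)^{-1}
  = \overline{\qhS}(h_{(1)}) \otimes \overline{\qhS}(h_{(2)}),
\end{equation*}
which is exactly \eqref{eq:q-Hopf-f-1} for $H^{\cop}$ (where $\qhS_{\cop} = \overline{\qhS}$ and $\Delta_{\cop} = \Delta^{\cop}$). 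For $H^{\op}$ one applies instead the anti-algebra automorphism $a \otimes b \mapsto \overline{\qhS}(b) \otimes \overline{\qhS}(a)$ to the same equation; after reindexing this yields the conjugation relation for $H^{\op}$, now with respect to the multiplication of $H^{\op} \otimes H^{\op}$, and identifies the candidate as $\overline{\qhS}(\overline{\qhf}_2) \otimes \overline{\qhS}(\overline{\qhf}_1)$, using $\qhf^{-1} = \overline{\qhf}_1 \otimes \overline{\qhf}_2$.

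It remains to check that these candidates also satisfy the $\qhphi$- and $(\qhalpha,\qhbeta)$-compatibilities \eqref{eq:q-Hopf-f-3}--\eqref{eq:q-Hopf-f-4} for $H^{\op}$ and $H^{\cop}$; here I would substitute $\qhalpha_{\op} = \overline{\qhS}(\qhbeta)$, $\qhbeta_{\op} = \overline{\qhS}(\qhalpha)$ (and the $\cop$-analogues $\qhalpha_{\cop} = \overline{\qhS}(\qhalpha)$, $\qhbeta_{\cop} = \overline{\qhS}(\qhbeta)$) and rewrite using \eqref{eq:q-Hopf-f-3}--\eqref{eq:q-Hopf-f-4} for $H$, the anti-multiplicativity of $\overline{\qhS}$, and $\overline{\qhS}\,\qhS = \id$. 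Once all the defining relations are verified, uniqueness of the Drinfeld twist yields the two formulas. I expect this last compatibility check, rather than the conjugation relation, to be the main obstacle: the $\qhalpha,\qhbeta$ relations mix $\qhS$ with $\overline{\qhS}$ and do not transport as transparently, so one likely has to bring in the relation between $\qhS^2$ and $\qhf$. An alternative that avoids the uniqueness argument altogether, but is computationally heavier, is to expand the defining formula \eqref{eq:q-Hopf-def-f} directly for $H^{\op}$ and $H^{\cop}$, using the dictionaries \eqref{eq:q-Hopf-pq-op} and \eqref{eq:q-Hopf-pq-cop} for $\qhpr,\qhqr,\qhpl,\qhql$ together with the corresponding expressions for $\mathbbm{a}$ and $\mathbbm{b}$ under $\op$ and $\cop$.
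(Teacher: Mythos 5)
Your argument hinges on the claim that $\qhf$ is the \emph{unique} gauge transformation $F$ for which $\qhS: H^{\op,\cop} \to H^{F}$ is an isomorphism of quasi-Hopf algebras, i.e.\ the unique $F$ satisfying \eqref{eq:q-Hopf-f-1}, \eqref{eq:q-Hopf-f-3} and \eqref{eq:q-Hopf-f-4}. This uniqueness is not stated anywhere in the paper, and it is false in general. For instance, take $H = k[\mathbb{Z}/3]$ over $\mathbb{C}$ with trivial $\qhphi$, $\qhalpha$, $\qhbeta$, so that $\qhf = 1 \otimes 1$. Writing $F = \sum_{i,j} b_i b_j b_{i+j}^{-1}\, e_i \otimes e_j$ in terms of the primitive idempotents $e_i$, with $b_0 = 1$, $b_1 = t$, $b_2 = t^{-1}$, one checks that $F$ is a gauge transformation satisfying the conjugation relation (trivially, by commutativity and cocommutativity), that $\qhphi^F = 1$ (it is a coboundary, hence a $2$-cocycle), and that $F_1 \qhS(F_2) = 1 = \qhS(\overline{F}_1)\overline{F}_2$ since $b_i b_{-i} = 1$; yet $F \ne 1 \otimes 1$ whenever $t^3 \ne 1$. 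So verifying that your candidates satisfy \eqref{eq:q-Hopf-f-1}, \eqref{eq:q-Hopf-f-3}--\eqref{eq:q-Hopf-f-4} for $H^{\op}$ and $H^{\cop}$ cannot, by itself, identify them with $\qhf_{\op}$ and $\qhf_{\cop}$. (Your transport of the conjugation relation along $\overline{\qhS} \otimes \overline{\qhS}$ is correct as far as it goes, but it only pins the candidate down up to an element centralizing $\Delta^{\cop}(H)$, resp.\ $\Delta(H)$ in $H^{\op}\otimes H^{\op}$.)

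There is a second, related gap: you defer the verification of the $\qhphi$- and $(\qhalpha,\qhbeta)$-compatibilities, which you yourself identify as the main obstacle, and you do not carry them out. Even if a correct uniqueness statement were supplied, the proof would still be incomplete without these checks. The paper avoids all of this by working directly with the defining formula \eqref{eq:q-Hopf-def-f}: it writes out $\qhf_{\op}$ and $\qhf_{\cop}$ via the dictionaries \eqref{eq:q-Hopf-pq-op} and \eqref{eq:q-Hopf-pq-cop}, applies $\qhS \otimes \qhS$, and uses \eqref{eq:q-Hopf-def-b}, \eqref{eq:q-Hopf-f-1}, \eqref{eq:q-Hopf-delta-alpha} and \eqref{eq:q-Hopf-def-6} to collapse the result to $\qhf_{21}^{-1}$ and $\qhf$, respectively -- essentially the ``computationally heavier'' alternative you mention in your last sentence. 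That alternative is the route you should take; as written, the primary argument does not close.
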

\begin{proof}
  By~\eqref{eq:q-Hopf-pq-op},~\eqref{eq:q-Hopf-def-a},~\eqref{eq:q-Hopf-def-f}, and the definition of $H^{\op}$, we have
  \begin{align*}
    \qhf_{\op}
    = \Delta(\qhqr_2) \cdot (\overline{\qhphi}_{3(1)} \qhphi_2  \overline{\qhS}(\qhbeta)
    \overline{\qhS}(\overline{\qhphi}_2 \qhphi_1))
    \otimes \overline{\qhphi}_{3(2)} \qhphi_3 \overline{\qhS}(\qhbeta)
    \overline{\qhS}(\overline{\qhphi}_1))
    \cdot (\overline{\qhS} \otimes \overline{\qhS}) \Delta^{\cop}(\qhqr_{1}).
  \end{align*}
  Thus we compute\footnote{In this paper, the red dotted underlines $\qhano{\qquad}$ in an expression indicate which parts are changed to obtain the next expression.}:
  \begin{align*}
    (\qhS \otimes \qhS)(\qhf_{\op})
    & = \Delta^{\cop}(\qhqr_{1})
      \cdot (\qhano{\overline{\qhphi}_2 \qhphi_1 \qhbeta \qhS(\overline{\qhphi}_{3(1)} \qhphi_2)}
      \otimes \qhano{\overline{\qhphi}_1 \qhbeta \qhS(\overline{\qhphi}_{3(2)} \qhphi_3)})
      \cdot (\qhS \otimes \qhS) \Delta(\qhqr_2) \\
    {}^{\eqref{eq:q-Hopf-def-b}}
    & = \Delta^{\cop}(\qhqr_{1})
      \cdot \mathbbm{b}_{21}
      \cdot \qhano{(\qhS \otimes \qhS) \Delta(\qhqr_2)} \\
    {}^{\eqref{eq:q-Hopf-f-1}}
    & = \Delta^{\cop}(\qhqr_{1})
      \cdot \qhano{\mathbbm{b}_{21} \cdot \qhf_{21}}
      \cdot \Delta^{\cop}(\qhS(\qhqr_2)) \cdot \qhf_{21}^{-1} \\
    {}^{\eqref{eq:q-Hopf-delta-alpha}}
    & = \Delta^{\cop}(\qhqr_{1}) \cdot \Delta^{\cop}(\qhbeta) \cdot \Delta^{\cop}(\qhS(\qhqr_2)) \cdot \qhf_{21}^{-1} \\
    & = \Delta^{\cop}(\qhqr_{1} \qhbeta \qhS(\qhqr_2)) \cdot \qhf_{21}^{-1}
      \stackrel{\text{\eqref{eq:q-Hopf-def-6}}}{=} \qhf_{21}^{-1}.
  \end{align*}
  This shows the first equation of~\eqref{eq:q-Hopf-f-op-cop}. To prove the second one, we note:
  \begin{equation*}
    \qhf_{\cop}
    = (\overline{\qhS} \otimes \overline{\qhS})\Delta(\qhpl_2) \cdot (\overline{\qhS}(\qhphi_3 \overline{\qhphi}_2) \overline{\qhS}(\qhalpha) \qhphi_2 \overline{\qhphi}_{1(2)}
      \otimes \overline{\qhS}(\overline{\qhphi}_3) \overline{\qhS}(\qhalpha) \qhphi_1 \overline{\qhphi}_{1(1)})
      \cdot \Delta^{\cop}(\qhpl_1).
  \end{equation*}
  Thus we have
  \begin{align*}
    (\qhS \otimes \qhS)(\qhf_{\cop})
    & = \qhano{(\qhS \otimes \qhS) \Delta^{\cop}(\qhpl_1)}
      \cdot (\qhano{\qhS(\qhphi_2 \overline{\qhphi}_{1(2)}) \qhalpha \qhphi_3 \overline{\qhphi}_2}
      \otimes \qhano{\qhS(\qhphi_1 \overline{\qhphi}_{1(1)}) \qhalpha \overline{\qhphi}_3})
      \cdot \Delta(\qhpl_2) \\
    {}^\text{\eqref{eq:q-Hopf-def-a}, \eqref{eq:q-Hopf-f-1}}
    & = \qhf \cdot \Delta(\qhS(\qhpl_1)) \cdot \qhano{\qhf^{-1} \cdot \mathbbm{a}}
      \cdot \Delta(\qhpl_2) \\
    {}^{\eqref{eq:q-Hopf-delta-alpha}}
    & = \qhf \cdot \Delta(\qhS(\qhpl_1)) \cdot \Delta(\qhalpha) \cdot \Delta(\qhpl_2) \\
    & =
      \qhf \cdot \Delta(\qhS(\qhpl_1) \qhalpha \qhpl_2)
      \stackrel{\text{\eqref{eq:q-Hopf-def-6}}}{=} \qhf. \qedhere
  \end{align*}
\end{proof}

\begin{lemma}
  The following equations hold:
  \begin{align}
    \label{eq:q-Hopf-qR-phi}
    \qhqr_{1(1)} \overline{\qhphi}_1 \otimes \qhqr_{1(2)} \overline{\qhphi}_2 \otimes \qhqr_{2} \overline{\qhphi}_3
    & = \qhphi_1 \otimes \qhqr_1 \qhphi_{2(1)} \otimes \overline{\qhS}(\qhphi_3) \qhqr_2 \qhphi_{2(2)}, \\
    \label{eq:q-Hopf-pR-phi}
    \qhphi_1 \qhpr_{1(1)} \otimes \qhphi_2 \qhpr_{1(2)} \otimes \qhphi_3 \qhpr_{2}
    & = \overline{\qhphi}_1 \otimes \overline{\qhphi}_{2(1)} \qhpr_1 \otimes \overline{\qhphi}_{2(2)} \qhpr_2 \qhS(\overline{\qhphi}_3).
  \end{align}
\end{lemma}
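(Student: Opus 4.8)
The plan is to prove \eqref{eq:q-Hopf-qR-phi} directly from the axioms and then to obtain \eqref{eq:q-Hopf-pR-phi} from it by the $H^{\op}$-duality already used to derive \eqref{eq:q-Hopf-qR}--\eqref{eq:q-Hopf-qL}.

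For \eqref{eq:q-Hopf-qR-phi}, I would substitute the definition $\qhqr = \qhphi_1 \otimes \overline{\qhS}(\qhalpha \qhphi_3)\qhphi_2$ of \eqref{eq:q-Hopf-def-qR} and observe that both sides are contractions of four-leg expressions in $\qhphi$. Writing $\nu$ for the operation that replaces the last two tensorands $c \otimes d$ by the single tensorand $\overline{\qhS}(\qhalpha d)\,c$, the left-hand side of \eqref{eq:q-Hopf-qR-phi} is $\nu$ applied to $(\Delta \otimes \id \otimes \id)(\qhphi)$ and then right-multiplied by $\qhphi^{-1}$, whereas the right-hand side is $\nu$ applied to $(1 \otimes \qhphi)\,(\id \otimes \Delta \otimes \id)(\qhphi)$. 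Now the $3$-cocycle identity \eqref{eq:q-Hopf-def-3} rewrites the bracket of the right-hand side as $(\id \otimes \id \otimes \Delta)(\qhphi)\,(\Delta \otimes \id \otimes \id)(\qhphi)\,(\qhphi^{-1} \otimes 1)$. Applying $\nu$ to this product, the two legs produced by the factor $(\id \otimes \id \otimes \Delta)(\qhphi)$ are exactly $\Delta(\qhphi_3)$, so the $\overline{\qhS}$-form of the antipode axiom \eqref{eq:q-Hopf-def-5}, namely $\overline{\qhS}(h_{(2)})\,\overline{\qhS}(\qhalpha)\,h_{(1)} = \qhepsilon(h)\,\overline{\qhS}(\qhalpha)$ (obtained by applying $\overline{\qhS}$ to the first equation of \eqref{eq:q-Hopf-def-5}), collapses them to a counit; the normalization \eqref{eq:q-Hopf-phi-eps} then kills that copy of $\qhphi$ entirely. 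What survives is precisely $\nu$ applied to $(\Delta \otimes \id \otimes \id)(\qhphi)$ and right-multiplied by $\qhphi^{-1}$, which is the left-hand side.

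To deduce \eqref{eq:q-Hopf-pR-phi}, I would write \eqref{eq:q-Hopf-qR-phi} for the quasi-Hopf algebra $H^{\op}$ and translate back to $H$ by means of \eqref{eq:q-Hopf-pq-op}. Since $\qhqr_{\op} = \qhpr$, the associator of $H^{\op}$ is $\qhphi^{-1}$ (so the roles of $\qhphi$ and $\qhphi^{-1}$ are exchanged), the inverse antipode of $H^{\op}$ is $\qhS$, and the product of $H^{\op}$ is opposite to that of $H$, a term-by-term substitution turns \eqref{eq:q-Hopf-qR-phi} for $H^{\op}$ into \eqref{eq:q-Hopf-pR-phi} for $H$. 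The comultiplication is unchanged under $(-)^{\op}$, so the Sweedler legs of $\qhpr$ and of $\qhphi$ line up correctly, while reversing products moves each $\qhphi_i$ to the left of the $\qhpr$-legs and converts $\overline{\qhS}$ into $\qhS$, exactly as demanded by the right-hand side of \eqref{eq:q-Hopf-pR-phi}.

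The hardest part will be the associator bookkeeping in the first step. The contraction $\nu$ combines two tensor legs multiplicatively through $\overline{\qhS}$ and $\qhalpha$ and therefore does not distribute over the three factors supplied by \eqref{eq:q-Hopf-def-3}; one must keep the whole expression in $H^{\otimes 4}$, with the copy of $\qhphi$ hidden inside $\qhqr$ and the explicit copies of $\qhphi^{\pm 1}$ tracked separately, and apply $\nu$ together with the antipode collapse only at the very end. The delicate point is to confirm that, after the $3$-cocycle substitution, the word fed into the antipode axiom is genuinely $\overline{\qhS}(\qhphi_{3(2)})\,\overline{\qhS}(\qhalpha)\,\qhphi_{3(1)}$ (with the $\overline{\qhS}$ of the neighbouring legs pulled out to either side) and not some reassociated variant; once this ordering is checked, the counit normalizations \eqref{eq:q-Hopf-def-4} and \eqref{eq:q-Hopf-phi-eps} finish the computation, and a final routine check confirms that the $H^{\op}$-substitution introduces no spurious interchange of $\qhS$ and $\overline{\qhS}$.
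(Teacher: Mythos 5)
Your proposal is correct and follows essentially the same route as the paper: the paper also proves \eqref{eq:q-Hopf-qR-phi} by substituting the definition \eqref{eq:q-Hopf-def-qR}, invoking the cocycle identity \eqref{eq:q-Hopf-def-3} in exactly the rearranged form you describe, collapsing the $\Delta(\qhphi_3)$ legs via \eqref{eq:q-Hopf-def-5} and \eqref{eq:q-Hopf-phi-eps}, and then obtains \eqref{eq:q-Hopf-pR-phi} by applying \eqref{eq:q-Hopf-qR-phi} to $H^{\op}$ using \eqref{eq:q-Hopf-pq-op}. The only cosmetic difference is that the paper runs the first computation from left-hand side to right-hand side rather than the reverse; your care about applying the contraction only after expanding the full product in $H^{\otimes 4}$, and about the ordering of the word fed into the antipode axiom, is exactly the bookkeeping the paper's displayed computation carries out.
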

\begin{proof}
  The first equation is verified as follows:
  \begin{align*}
    & \qhano{\qhqr_{1(1)}} \overline{\qhphi}_1
      \otimes \qhano{\qhqr_{1(2)}} \overline{\qhphi}_2
      \otimes \qhano{\qhqr_{2}} \overline{\qhphi}_3 \\
    {}^{\eqref{eq:q-Hopf-def-qR}}
    & = \qhano{\qhphi_{1(1)} \overline{\qhphi}_1}
      \otimes \qhano{\qhphi_{1(2)} \overline{\qhphi}_2}
      \otimes \overline{\qhS}(\qhS(\qhano{\qhphi_2 \overline{\qhphi}_3})
      \qhalpha \qhano{\qhphi_3}) \\
    {}^{\eqref{eq:q-Hopf-def-3}}
    & = \overline{\qhphi}{}'_1 \qhphi'_1 \otimes \overline{\qhphi}{}'_2 \qhphi_1 \qhphi'_{2(1)}
      \otimes \overline{\qhS}(\qhano{\qhS(\overline{\qhphi}{}'_{3(1)}} \qhphi_2 \qhphi{}'_{2(2)})
      \qhano{\qhalpha \overline{\qhphi}{}'_{3(2)}} \qhphi_3 \qhphi{}'_3) \\
    {}^{\eqref{eq:q-Hopf-def-5}, \eqref{eq:q-Hopf-phi-eps}}
    & = \qhphi'_1 \otimes \qhano{\qhphi_1} \qhphi'_{2(1)}
      \otimes \qhano{\overline{\qhS}(\qhalpha \qhphi{}_3} \qhphi'_3)
      \qhano{\qhphi_2} \qhphi'_{2(2)} \\
      {}^{\eqref{eq:q-Hopf-def-qR}}
    & = \qhphi'_1 \otimes \qhqr_1 \qhphi'_{2(1)} \otimes \overline{\qhS}(\qhphi'_3) \qhqr_2 \qhphi'_{2(2)},
  \end{align*}
  where $\qhphi'$ is a copy of $\qhphi$. The second one is obtained by applying~\eqref{eq:q-Hopf-qR-phi} to $H^{\op}$.
\end{proof}

\begin{lemma}
  \label{lem:q-Hopf-pR-qL}
  The following equations hold:
  \begin{gather}
    \label{eq:q-Hopf-pquv-1}
    \qhS(\qhpr_1) \qhql_1 \qhpr_{2(1)} \otimes \qhql_2 \qhpr_{2(2)}
    = \qhS(\qhpl_2) \qhf_1 \otimes \qhS(\qhpl_1) \qhf_2, \\
    \label{eq:q-Hopf-pquv-2}
    \qhqr_{2(1)} \qhpl_1 \overline{\qhS}(\qhqr_1) \otimes \qhqr_{2(2)} \qhpl_2
    = \overline{\qhS}(\qhql_2 \overline{\qhf}_2) \otimes \overline{\qhS}(\qhql_1 \overline{\qhf}_1), \\
    \label{eq:q-Hopf-pquv-3}
    \qhqr_1 \qhpl_{1(1)} \otimes \overline{\qhS}(\qhpl_2) \qhqr_2 \qhpl_{1(2)}
    = \overline{\qhS}(\qhf_2 \qhpr_2) \otimes \overline{\qhS}(\qhf_1 \qhpr_1), \\
    \label{eq:q-Hopf-pquv-4}
    \qhql_{1(1)} \qhpr_1 \otimes \qhql_{1(2)} \qhpr_2 \qhS(\qhql_2)
    = \overline{\qhf}_1 \qhS(\qhqr_2) \otimes \overline{\qhf}_2 \qhS(\qhqr_1).
  \end{gather}
\end{lemma}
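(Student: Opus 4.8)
The plan is to prove only the first identity \eqref{eq:q-Hopf-pquv-1} by a direct computation and then to deduce the remaining three for free from the op/cop symmetry exploited throughout this section. Reading \eqref{eq:q-Hopf-pquv-1} in the quasi-Hopf algebra $H^{\op}$ and translating back by the dictionary \eqref{eq:q-Hopf-pq-op} for $\qhpr, \qhqr, \qhpl, \qhql$, the formula $\qhf_{\op} = \overline{\qhS}(\overline{\qhf}_2) \otimes \overline{\qhS}(\overline{\qhf}_1)$ of Lemma~\ref{lem:q-Hopf-f-op-cop}, and the facts that the antipode of $H^{\op}$ is $\overline{\qhS}$ and its multiplication is reversed, one recovers precisely \eqref{eq:q-Hopf-pquv-2}. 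Likewise, applying \eqref{eq:q-Hopf-pquv-1} to $H^{\cop}$ and to $H^{\op,\cop}$, with \eqref{eq:q-Hopf-pq-cop} and the formula for $\qhf_{\cop}$ in Lemma~\ref{lem:q-Hopf-f-op-cop}, yields \eqref{eq:q-Hopf-pquv-3} and \eqref{eq:q-Hopf-pquv-4}, respectively. In the latter two cases passing to $H^{\cop}$ reinterprets the Sweedler indices through $\Delta^{\cop}$, so the resulting identity is the image of the stated one under the tensor-flip $a \otimes b \mapsto b \otimes a$; since this flip is an algebra automorphism of $H^{\otimes 2}$, applying it to both sides is harmless. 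Thus the whole lemma reduces to \eqref{eq:q-Hopf-pquv-1}.

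For that base case I would first rewrite both sides compactly. The left-hand side factors as
\begin{equation*}
  \qhS(\qhpr_1) \qhql_1 \qhpr_{2(1)} \otimes \qhql_2 \qhpr_{2(2)}
  = (\qhS(\qhpr_1) \otimes 1) \cdot \qhql \cdot \Delta(\qhpr_2),
\end{equation*}
while the right-hand side is $(\qhS(\qhpl_2) \otimes \qhS(\qhpl_1)) \cdot \qhf$. The defining formula \eqref{eq:q-Hopf-def-f} for $\qhf$ already exhibits the factor $\Delta(\qhpr_2)$ occurring on the left, so the task is to transport $(\qhS(\qhpr_1) \otimes 1)\,\qhql$ past the remaining data and collapse it to $(\qhS \otimes \qhS)(\qhpl_2 \otimes \qhpl_1)$. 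Concretely I would expand $\qhpr$ and $\qhql$ by \eqref{eq:q-Hopf-def-pR} and \eqref{eq:q-Hopf-def-qL}, compute $\Delta(\qhpr_2)$ through $\Delta(\qhbeta) = \mathbbm{b}\,\qhf$ from \eqref{eq:q-Hopf-delta-alpha} and $\Delta \circ \qhS = \qhf^{-1}\,(\qhS \otimes \qhS)\Delta^{\cop}(-)\,\qhf$ from \eqref{eq:q-Hopf-f-1}, and then simplify by means of the pentagon \eqref{eq:q-Hopf-def-3} together with the antipode axioms \eqref{eq:q-Hopf-def-5} and \eqref{eq:q-Hopf-def-6}. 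The identity \eqref{eq:q-Hopf-f-3}, which rewrites $\qhS(\qhphi_3) \otimes \qhS(\qhphi_2) \otimes \qhS(\qhphi_1)$ as $\qhphi^{\qhf}$, is what makes the factor $\qhpl$ (defined in \eqref{eq:q-Hopf-def-pL}) reassemble correctly once $\qhbeta$ has been absorbed.

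The hard part will be the bookkeeping in this single direct computation. Because $\qhpr_2$ already contains an antipode, its coproduct $\Delta(\qhpr_2)$ forces the substitutions \eqref{eq:q-Hopf-delta-alpha} and \eqref{eq:q-Hopf-f-1}, each of which injects a copy of $\qhf^{\pm 1}$ and of $\mathbbm{a},\mathbbm{b}$ that must ultimately cancel against the single $\qhf$ targeted on the right-hand side. Keeping the several applications of the pentagon \eqref{eq:q-Hopf-def-3} aligned so that the $\qhalpha$--$\qhbeta$ cancellations of \eqref{eq:q-Hopf-def-6} become applicable at the right moment is the delicate step; once \eqref{eq:q-Hopf-pquv-1} is secured, the passage to $H^{\op}$, $H^{\cop}$ and $H^{\op,\cop}$ is purely formal.
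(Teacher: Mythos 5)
Your proposal follows essentially the same route as the paper: the paper also proves \eqref{eq:q-Hopf-pquv-1} by a direct computation whose key input is the identity $\Delta(\qhbeta\qhS(h)) = \mathbbm{b}\,(\qhS(h_{(2)})\otimes\qhS(h_{(1)}))\,\qhf$ (obtained from \eqref{eq:q-Hopf-f-1} and \eqref{eq:q-Hopf-delta-alpha}, exactly as you describe) combined with the pentagon, the antipode axioms and \eqref{eq:q-Hopf-pL}, and then deduces \eqref{eq:q-Hopf-pquv-2}--\eqref{eq:q-Hopf-pquv-4} by applying \eqref{eq:q-Hopf-pquv-1} to $H^{\op}$, $H^{\cop}$ and $H^{\op,\cop}$. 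The only cosmetic difference is that the paper simplifies $\mathbbm{b}$ via \eqref{eq:q-Hopf-def-b} and \eqref{eq:q-Hopf-def-pL} rather than invoking \eqref{eq:q-Hopf-f-3}, but the bookkeeping is the same.
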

\begin{proof}
  By \eqref{eq:q-Hopf-f-1} and \eqref{eq:q-Hopf-delta-alpha}, we have
  \begin{equation}
    \label{eq:q-Hopf-pR-qL-1}
    \Delta(\qhbeta \qhS(h))
    = \mathbbm{b} \qhf \Delta(\qhS(h))
    = \mathbbm{b} (\qhS(h_{(2)}) \otimes \qhS(h_{(1)})) \qhf
  \end{equation}
  for all $h \in H$. We now verify \eqref{eq:q-Hopf-pquv-1} as follows:
  \allowdisplaybreaks
  \begin{align*}
    & \qhS(\qhpr_1) \qhql_1 \qhpr_{2(1)} \otimes \qhql_2 \qhpr_{2(2)} \\
    {}^{\eqref{eq:q-Hopf-pR}, \eqref{eq:q-Hopf-qL}}
    & = \qhS(\overline{\qhphi}{}_1') \qhS(\overline{\qhphi}_1) 
      \qhalpha \overline{\qhphi}_2 (\overline{\qhphi}{}_2'
      \qhano{\qhbeta \qhS(\overline{\qhphi}{}_3')})_{(1)}
      \otimes \overline{\qhphi}_3 (\overline{\qhphi}{}_2'
      \qhano{\qhbeta \qhS(\overline{\qhphi}{}_3')})_{(2)} \\
    {}^{\eqref{eq:q-Hopf-pR-qL-1}}
    & = \qhS(\overline{\qhphi}_1 \overline{\qhphi}{}_1')
      \qhalpha \overline{\qhphi}_2 \overline{\qhphi}{}_{2(1)}' \qhano{\mathbbm{b}_1} \qhS(\overline{\qhphi}{}_{3(2)}') \qhf_1
      \otimes \overline{\qhphi}_3 \overline{\qhphi}{}_{2(2)}' \qhano{\mathbbm{b}_2} \qhS(\overline{\qhphi}{}_{3(1)}') \qhf_2 \\
      %%%%%%%%%%%%%%%%%%%%%%%%%%%%%%%%%%%%%%%%%%%%%%%%%%%%%%%%%%%%%%%%%%%%%% 
    {}^{\eqref{eq:q-Hopf-def-pL}, \eqref{eq:q-Hopf-def-b}}
    & = \qhS(\overline{\qhphi}_1 \overline{\qhphi}{}_1')
      \qhalpha \overline{\qhphi}_2 \overline{\qhphi}{}_{2(1)}' \overline{\qhphi}{}''_1
      \qhbeta \qhS(\overline{\qhphi}{}''_{3(2)} \qhpl_2) \qhS(\overline{\qhphi}{}_{3(2)}') \qhf_1 \\
    & \qquad \qquad
      \otimes \overline{\qhphi}_3 \overline{\qhphi}{}_{2(2)}'
      \overline{\qhphi}{}''_2 \qhS(\overline{\qhphi}{}''_{3(1)} \qhpl_1)
      \qhS(\overline{\qhphi}{}_{3(1)}') \qhf_2 \\
      %%%%%%%%%%%%%%%%%%%%%%%%%%%%%%%%%%%%%%%%%%%%%%%%%%%%%%%%%%%%%%%%%%%%%% 
    & = \qhS(\qhano{\overline{\qhphi}_1 \overline{\qhphi}{}_1'})
      \qhalpha \qhano{\overline{\qhphi}_2 \overline{\qhphi}{}_{2(1)}'
      \overline{\qhphi}{}''_1}
      \qhbeta \qhS((\qhano{\overline{\qhphi}{}_{3}' \overline{\qhphi}{}''_{3}})_{(2)} \qhpl_2)
      \qhf_1 \\
    & \qquad \qquad
      \otimes \qhano{\overline{\qhphi}_3 \overline{\qhphi}{}_{2(2)}'
      \overline{\qhphi}{}''_2}
      \qhS((\qhano{\overline{\qhphi}{}_{3}' \overline{\qhphi}{}''_{3}})_{(1)} \qhpl_1)
      \qhf_2 \\
      %%%%%%%%%%%%%%%%%%%%%%%%%%%%%%%%%%%%%%%%%%%%%%%%%%%%%%%%%%%%%%%%%%%%%% 
    {}^{\eqref{eq:q-Hopf-def-3}}
    & = \qhano{\qhS(\overline{\qhphi}_{1(1)}} \overline{\qhphi}{}_1')
      \qhano{\qhalpha \overline{\qhphi}{}_{1(2)}} \overline{\qhphi}{}'_2
      \qhbeta \qhS((\overline{\qhphi}{}_{3} \overline{\qhphi}{}'_{3(2)})_{(2)} \qhpl_2) \qhf_1 \\
    & \qquad \qquad
      \otimes \overline{\qhphi}_2 \overline{\qhphi}{}'_{3(1)}
      \qhS((\overline{\qhphi}{}_{3} \overline{\qhphi}{}'_{3(2)})_{(1)} \qhpl_1) \qhf_2 \\
      %%%%%%%%%%%%%%%%%%%%%%%%%%%%%%%%%%%%%%%%%%%%%%%%%%%%%%%%%%%%%%%%%%%%%% 
    {}^{\eqref{eq:q-Hopf-def-5}, \eqref{eq:q-Hopf-phi-eps}}
    & = \qhS(\overline{\qhphi}{}_1')
      \qhalpha \overline{\qhphi}{}_{1(2)}'
      \qhbeta \qhS(\overline{\qhphi}{}'_{3(2,2)} \qhpl_2) \qhf_1
      \otimes \overline{\qhphi}{}'_{3(1)}
      \qhS(\overline{\qhphi}{}'_{3(2,1)} \qhpl_1) \qhf_2 \\
    & = \qhS(\overline{\qhphi}{}_1')
      \qhalpha \overline{\qhphi}{}_{1(2)}'
      \qhbeta \qhS(\qhano{\overline{\qhphi}{}'_{3(2,2)} \qhpl_2}) \qhf_1
      \otimes 
      \qhS(\qhano{\overline{\qhphi}{}'_{3(2,1)}
      \qhpl_1 \overline{\qhS}(\overline{\qhphi}{}'_{3(1)})}) \qhf_2 \\
    {}^{\text{\eqref{eq:q-Hopf-pL}}}
    & = \qhano{\qhS(\overline{\qhphi}{}_1')
      \qhalpha \overline{\qhphi}{}_{1(2)}'
      \qhbeta \qhS}(\qhpl_2 \qhano{\overline{\qhphi}{}'_3}) \qhf_1
      \otimes \qhS(\qhpl_1) \qhf_2 \\
    {}^{\eqref{eq:q-Hopf-def-6}}
    & = \qhS(\qhpl_2) \qhf_1
      \otimes \qhS(\qhpl_1) \qhf_2,
  \end{align*}
  where $\qhphi'$ and $\qhphi''$ are copies of $\qhphi$. Equations~\eqref{eq:q-Hopf-pquv-2}, \eqref{eq:q-Hopf-pquv-3} and \eqref{eq:q-Hopf-pquv-4} are obtained by applying the equation \eqref{eq:q-Hopf-pquv-1} to $H^{\op}$, $H^{\cop}$ and $H^{\op, \cop}$, respectively.
\end{proof}

\subsection{Modules over a quasi-Hopf algebra}
\label{subsec:q-Hopf-and-monoidal}

Let $H$ be a quasi-Hopf algebra with bijective antipode. If $V$ and $W$ are left $H$-modules, then their tensor product $V \otimes W$ is a left $H$-module by the action given by $h \cdot (v \otimes w) = h_{(1)} v \otimes h_{(2)} w$ ($h \in H$, $v \in V$, $w \in W$). The vector space $\unitobj := k$ is a left $H$-module through the counit $\qhepsilon: H \to k$. The category ${}_H \Mod$ of left $H$-modules is a monoidal category with the associator $\Phi$, the left unit isomorphism $l$ and the right unit isomorphism $r$ given by
\begin{gather*}
  \Phi_{X,Y,Z}((x \otimes y) \otimes z) = \qhphi_1 x \otimes (\qhphi_2 y \otimes \qhphi_3 z), \\
  l_{X}(1 \otimes x) = x,
  \quad \text{and} \quad
  r_{X}(x \otimes 1) = x,
\end{gather*}
respectively, for $X, Y, Z \in {}_H \Mod$, $x \in X$, $y \in Y$ and $z \in Z$.

For a finite-dimensional left $H$-module $X$, we define the left $H$-module $X^{\vee}$ to be the vector space $X^{\vee} = X^*$ with the left $H$-module structure given by $h \cdot \xi = \xi \leftharpoonup \qhS(h)$ for $h \in H$ and $\xi \in X^{\vee}$. We fix a basis $\{ x_i \}$ of $X$ and let $\{ x^i \}$ be the dual basis of $\{ x_i \}$. We define linear maps $\eval_X$ and $\coev_X$ by
\begin{gather*}
  \eval_X: X^{\vee} \otimes X \to \unitobj,
  \quad \eval_X(\xi \otimes x) = \langle \xi, \qhalpha x \rangle, \\
  \coev_X: \unitobj \to X \otimes X^{\vee}, \quad \coev_X(1) = \qhbeta x_i \otimes x^i,
\end{gather*}
respectively, for $\xi \in X^{\vee}$ and $x \in X$, where we have used the Einstein convention to suppress the sum over $i$. Equations~\eqref{eq:q-Hopf-def-5} and~\eqref{eq:q-Hopf-def-6} imply that $(X^{\vee}, \eval_X, \coev_X)$ is a left dual object of $X$.

\begin{remark}
  \label{rem:epsilon-alpha}
  A representation-theoretical meaning of the condition \eqref{eq:q-Hopf-def-7} is explained as follows: The triple $(\unitobj, r_{\unitobj}^{}, r_{\unitobj}^{-1})$ is a left dual object of $\unitobj$. Thus, by the uniqueness of a left dual object (see Subsection~\ref{subsec:duality}), there is an isomorphism $f: \unitobj \to \unitobj^{\vee}$ in ${}_H \Mod$ such that $(\id_{\unitobj} \otimes f) r_{\unitobj}^{-1} = \coev_{\unitobj}$ and $r_{\unitobj} = \eval_{\unitobj} (f \otimes \id_{\unitobj})$. By the former equation, $f$ is given by $\langle f(c), c' \rangle = \qhepsilon(\qhbeta) c c'$ for $c, c' \in k$. Thus \eqref{eq:q-Hopf-def-7} is equivalent to that the canonical isomorphism $f: \unitobj \to \unitobj^{\vee}$ in ${}_H \Mod$ coincides with the canonical isomorphism $k \cong k^*$ of vector spaces.
\end{remark}

Let $X$ be a finite-dimensional left $H$-module with basis $\{ x_i \}$, and let $\{ x^i \}$ be the dual basis of $X^*$. As we have recalled in Subsection~\ref{subsec:duality}, there is a natural isomorphism
\begin{equation*}
  \mathbbm{Q} = \mathbbm{Q}_{V,W,X}: \Hom_H(V, X \otimes W) \to \Hom_H(X^{\vee} \otimes V, W)
\end{equation*}
for $V, W \in {}_H \Mod$. We express this isomorphism explicitly. Given a morphism $f: A \to B \otimes C$ in ${}_H \Mod$, where $A, B, C \in {}_H \Mod$, we write $f(a)$ for $a \in A$ symbolically as $f(a)_B \otimes f(a)_C$.

\begin{lemma}
  \label{lem:q-Hopf-reciprocity-Q}
  The isomorphism $\mathbbm{Q}$ and its inverse are given by
  \begin{gather}
    \label{eq:q-Hopf-reciprocity-Q1}
    \mathbbm{Q}(f)(\xi \otimes v) = \langle \xi, \qhql_1 f(v)_X \rangle \, \qhql_2 f(v)_W, \\
    \label{eq:q-Hopf-reciprocity-Q2}
    \mathbbm{Q}^{-1}(g)(v) = \qhS(\qhpl_1) x_i \otimes g(x^i \otimes \qhpl_2 v)
  \end{gather}
  for $f \in \Hom_H(V, X \otimes W)$, $g \in \Hom_H(X^{\vee} \otimes V, W)$, $v \in V$ and $\xi \in X^{\vee}$.
\end{lemma}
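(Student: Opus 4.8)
The plan is to prove both formulas by direct computation, unwinding the one-line categorical definitions of $\mathbbm{Q}$ and $\mathbbm{Q}^{-1}$ from Subsection~\ref{subsec:duality} using the explicit descriptions of the associator $\Phi$, the evaluation $\eval_X$, and the coevaluation $\coev_X$ of ${}_H \Mod$ recorded earlier in this subsection. Mutual invertibility is already guaranteed by the general statement, so it suffices to translate the abstract formulas into the module-theoretic ones.

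For \eqref{eq:q-Hopf-reciprocity-Q1} I would start from $\mathbbm{Q}(f) = (\eval_X \otimes \id_W) \circ \Phi_{X^{\vee},X,W}^{-1} \circ (\id_{X^{\vee}} \otimes f)$ of \eqref{eq:reciprocity-1} and evaluate at $\xi \otimes v$. Applying $\id_{X^{\vee}} \otimes f$ gives $\xi \otimes (f(v)_X \otimes f(v)_W)$, and applying $\Phi^{-1}$ inserts $\qhphi^{-1} = \overline{\qhphi}_1 \otimes \overline{\qhphi}_2 \otimes \overline{\qhphi}_3$ to produce $(\overline{\qhphi}_1 \xi \otimes \overline{\qhphi}_2 f(v)_X) \otimes \overline{\qhphi}_3 f(v)_W$. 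Using $\eval_X(\eta \otimes x) = \langle \eta, \qhalpha x \rangle$ together with the dual action $\overline{\qhphi}_1 \xi = \xi \leftharpoonup \qhS(\overline{\qhphi}_1)$ and the pairing rule $\langle \xi \leftharpoonup a, m \rangle = \langle \xi, a m \rangle$, the scalar factor collapses to $\langle \xi, \qhS(\overline{\qhphi}_1) \qhalpha \overline{\qhphi}_2 f(v)_X \rangle$. Recognizing $\qhS(\overline{\qhphi}_1)\qhalpha\overline{\qhphi}_2 = \qhql_1$ and $\overline{\qhphi}_3 = \qhql_2$ from \eqref{eq:q-Hopf-def-qL} yields \eqref{eq:q-Hopf-reciprocity-Q1} at once.

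For \eqref{eq:q-Hopf-reciprocity-Q2} I would evaluate $\mathbbm{Q}^{-1}(g) = (\id_X \otimes g) \circ \Phi_{X,X^{\vee},V} \circ (\coev_X \otimes \id_V)$ at $v$. Using $\coev_X(1) = \qhbeta x_i \otimes x^i$ and the associator gives $\qhphi_1 \qhbeta x_i \otimes g((\qhphi_2 x^i) \otimes \qhphi_3 v)$, where $\qhphi_2 x^i = x^i \leftharpoonup \qhS(\qhphi_2)$. The key manoeuvre is to transport the action off the dual basis vector and onto the basis vector via the reindexing identity $\sum_i x_i \otimes (x^i \leftharpoonup h) = \sum_i (h x_i) \otimes x^i$, valid for every $h \in H$ and verified by expanding $x^i \leftharpoonup h$ in the dual basis and using $\sum_i \langle x^i, h x_j \rangle x_i = h x_j$. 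Applying this with $h = \qhS(\qhphi_2)$ (after prepending $\qhphi_1\qhbeta$ to the left factor) turns the expression into $\qhphi_1 \qhbeta \qhS(\qhphi_2) x_i \otimes g(x^i \otimes \qhphi_3 v)$. Finally, since $\qhS$ is an anti-algebra map, $\qhphi_1 \qhbeta \qhS(\qhphi_2) = \qhS(\qhphi_2 \overline{\qhS}(\qhphi_1 \qhbeta)) = \qhS(\qhpl_1)$ and $\qhphi_3 = \qhpl_2$ by \eqref{eq:q-Hopf-def-pL}, giving \eqref{eq:q-Hopf-reciprocity-Q2}.

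The only genuinely delicate point is the basis/dual-basis reindexing in the second formula, together with the use of anti-multiplicativity of $\qhS$ to match $\qhpl_1$; everything else is bookkeeping once the conventions for the dual module action $h \cdot \xi = \xi \leftharpoonup \qhS(h)$ and for the inverse associator $\Phi^{-1}$ (which contributes $\qhphi^{-1}$) are fixed. I expect no substantial obstacle beyond keeping the summation index on the basis vectors straight through the application of $g$.
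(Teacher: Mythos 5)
Your proposal is correct and follows essentially the same route as the paper: unwind the categorical formulas \eqref{eq:reciprocity-1} using the explicit associator, evaluation, and coevaluation of ${}_H\Mod$, move the antipode through the pairing to recognize $\qhql$ in the first formula, and use the reindexing identity $\sum_i x_i \otimes (x^i \circ T) = \sum_i T(x_i) \otimes x^i$ together with anti-multiplicativity of $\qhS$ to recognize $\qhS(\qhpl_1)$ in the second. The paper's proof is exactly this computation, so no further comment is needed.
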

\begin{proof}
  For $f \in \Hom_H(V, X \otimes W)$, $\xi \in X^{\vee}$ and $v \in V$, we compute:
  \begin{align*}
    \mathbbm{Q}(f)(\xi \otimes v)
    & = ((\eval_{X} \otimes \id_{W}) \Phi_{X^{\vee}, X, W}^{-1} (\id_{X^{\vee}} \otimes f))(\xi \otimes v) \\
    & = (\eval_{X} \otimes \id_{W}) (\overline{\qhphi}_1 \xi \otimes \overline{\qhphi}_2 f(v)_X \otimes \overline{\qhphi}_3 f(v)_W) \\
    & = \langle \overline{\qhphi}_1 \xi, \qhalpha \overline{\qhphi}_2 f(v)_X \rangle
      \overline{\qhphi}_3 f(v)_W \\
    & = \langle \xi, \qhano{\qhS(\overline{\qhphi}_1) \qhalpha \overline{\qhphi}_2} f(v)_X \rangle
      \qhano{\overline{\qhphi}_3} f(v)_W \\
    {}^{\eqref{eq:q-Hopf-def-qL}}
    & = \langle \xi, \qhql_1 f(v)_X \rangle \, \qhql_2 f(v)_W.
  \end{align*}
  To verify the expression for $\mathbbm{Q}^{-1}$, we note that the equation $x_i \otimes (x^i \circ T) = T(x_i) \otimes x^i$ holds in $X \otimes X^{*}$ for all $T \in \End_k(X)$. In particular, we have $x_i \otimes h x^i = \qhS(h) x_i \otimes x^i$ for all $h \in H$. Thus we have
  \begin{align*}
    \mathbbm{Q}^{-1}(g)(v)
    & = ((\id_X \otimes g) \Phi_{X,X^{\vee},V} (\coev_X \otimes \id_{V}))(v) \\
    & = (\id_X \otimes g)(\qhphi_1 \qhbeta x_i \otimes \qhphi_2 x^i \otimes \qhphi_3 v) \\
    & = \qhphi_1 \qhbeta x_i \otimes g(\qhphi_2 x^i \otimes \qhphi_3 v) \\
    & = \qhano{\qhphi_1 \qhbeta \qhS(\qhphi_2)} x_i
      \otimes g(x^i \otimes \qhano{\qhphi_3} v) \\
    {}^{\eqref{eq:q-Hopf-def-pL}}
    & = \qhS(\qhpl_1) x_i \otimes g(x^i \otimes \qhpl_2 v)
  \end{align*}
  for $g \in \Hom_H(X^{\vee} \otimes V, W)$ and $v \in V$.
\end{proof}

The following lemma is proved in a similar manner:

\begin{lemma}
  \label{lem:q-Hopf-reciprocity-P}
  The natural isomorphism
  \begin{equation*}
    \mathbbm{P} = \mathbbm{P}_{V,W,X}: \Hom_H(V \otimes X, W) \to \Hom_H(V, W \otimes X^{\vee})
  \end{equation*}
  and its inverse are given by
  \begin{gather}
    \label{eq:q-Hopf-reciprocity-P1}
    \mathbbm{P}(f)(v) = f(\qhpr_1 v \otimes \qhpr_2 x_i) \otimes x^i, \\
    \label{eq:q-Hopf-reciprocity-P2}
    \mathbbm{P}^{-1}(g)(v \otimes x) = \qhqr_1 g(v)_W \langle g(v)_{X^{\vee}}, \qhS(\qhqr_2) x \rangle
  \end{gather}
  for $f \in \Hom_H(V \otimes X, W)$, $g \in \Hom_H(V, W \otimes X^{\vee})$, $v \in V$ and $x \in X$.
\end{lemma}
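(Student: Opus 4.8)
The plan is to prove this exactly as in Lemma~\ref{lem:q-Hopf-reciprocity-Q}: I substitute the explicit formulas for the associator $\Phi$, the evaluation $\eval_X$, the coevaluation $\coev_X$, and the dual $H$-action $h \cdot \xi = \xi \leftharpoonup \qhS(h)$ on $X^{\vee}$ into the categorical definition of $\mathbbm{P}$ and $\mathbbm{P}^{-1}$ recorded in \eqref{eq:reciprocity-2}, and then recognize the elements $\qhpr$ and $\qhqr$ from their defining formulas \eqref{eq:q-Hopf-def-pR} and \eqref{eq:q-Hopf-def-qR}. Since $\mathbbm{P}$ is already known to be an isomorphism (Subsection~\ref{subsec:duality}), no separate invertibility check is needed; it only remains to identify the two displayed maps with the images of $f$ and $g$.

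For \eqref{eq:q-Hopf-reciprocity-P1}, I evaluate $\mathbbm{P}(f) = (f \otimes \id_{X^{\vee}}) \circ \Phi_{V,X,X^{\vee}}^{-1} \circ (\id_V \otimes \coev_X)$ on $v \in V$. Applying $\id_V \otimes \coev_X$ gives $v \otimes (\qhbeta x_i \otimes x^i)$, applying $\Phi_{V,X,X^{\vee}}^{-1}$ gives $(\overline{\qhphi}_1 v \otimes \overline{\qhphi}_2 \qhbeta x_i) \otimes \overline{\qhphi}_3 x^i$, and applying $f \otimes \id_{X^{\vee}}$ gives $f(\overline{\qhphi}_1 v \otimes \overline{\qhphi}_2 \qhbeta x_i) \otimes \overline{\qhphi}_3 x^i$. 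The only nonformal step is to move the factor $\overline{\qhphi}_3$ off the dual basis vector by means of the identity $x_i \otimes h x^i = \qhS(h) x_i \otimes x^i$ already used in Lemma~\ref{lem:q-Hopf-reciprocity-Q}; this rewrites the expression as $f(\overline{\qhphi}_1 v \otimes \overline{\qhphi}_2 \qhbeta \qhS(\overline{\qhphi}_3) x_i) \otimes x^i$, which is exactly $f(\qhpr_1 v \otimes \qhpr_2 x_i) \otimes x^i$ by \eqref{eq:q-Hopf-def-pR}.

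For \eqref{eq:q-Hopf-reciprocity-P2}, I evaluate $\mathbbm{P}^{-1}(g) = (\id_W \otimes \eval_X) \circ \Phi_{W,X^{\vee},X} \circ (g \otimes \id_X)$ on $v \otimes x$. Writing $g(v) = g(v)_W \otimes g(v)_{X^{\vee}}$ and applying in turn $g \otimes \id_X$, then $\Phi_{W,X^{\vee},X}$, then $\id_W \otimes \eval_X$, I obtain $\qhphi_1 g(v)_W \langle \qhphi_2 g(v)_{X^{\vee}}, \qhalpha \qhphi_3 x \rangle$. Transferring the action of $\qhphi_2$ across the pairing via $\langle \xi \leftharpoonup \qhS(\qhphi_2), - \rangle = \langle \xi, \qhS(\qhphi_2)(-)\rangle$ turns the scalar into $\langle g(v)_{X^{\vee}}, \qhS(\qhphi_2) \qhalpha \qhphi_3 x \rangle$. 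Finally, since $\qhS$ is an anti-algebra map with $\qhS \circ \overline{\qhS} = \id$, the defining formula \eqref{eq:q-Hopf-def-qR} yields $\qhqr_1 = \qhphi_1$ and $\qhS(\qhqr_2) = \qhS(\qhphi_2)\qhalpha\qhphi_3$, whence the expression equals $\qhqr_1 g(v)_W \langle g(v)_{X^{\vee}}, \qhS(\qhqr_2) x \rangle$, as claimed.

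I expect no genuine obstacle: the argument is entirely parallel to Lemma~\ref{lem:q-Hopf-reciprocity-Q}, and the only point demanding care is the bookkeeping of the antipode when pushing $H$-elements across the dual object $X^{\vee}$ (once in $\mathbbm{P}$ via the trace identity on $X \otimes X^{\vee}$, and once in $\mathbbm{P}^{-1}$ via the dual action). One could instead deduce both formulas from Lemma~\ref{lem:q-Hopf-reciprocity-Q} by passing to the reversed monoidal category ${}_H \Mod^{\rev}$ together with the relations \eqref{eq:q-Hopf-pq-cop}, but the direct substitution is the most transparent route.
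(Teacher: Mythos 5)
Your proposal is correct and is exactly the argument the paper intends: the paper gives no explicit proof of Lemma~\ref{lem:q-Hopf-reciprocity-P}, stating only that it is ``proved in a similar manner'' to Lemma~\ref{lem:q-Hopf-reciprocity-Q}, and your direct substitution into \eqref{eq:reciprocity-2} with the identity $x_i \otimes h x^i = \qhS(h) x_i \otimes x^i$ and the identifications $\qhpr = \overline{\qhphi}_1 \otimes \overline{\qhphi}_2 \qhbeta \qhS(\overline{\qhphi}_3)$ and $\qhqr_1 \otimes \qhS(\qhqr_2) = \qhphi_1 \otimes \qhS(\qhphi_2)\qhalpha\qhphi_3$ is precisely that similar computation.
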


A left $H$-module (co)algebra is a synonym for a (co)algebra in the monoidal category ${}_H \Mod$. As an application of Lemmas~\ref{lem:q-Hopf-reciprocity-Q} and~\ref{lem:q-Hopf-reciprocity-P}, we give a description of the dual (co)module in ${}_H \Mod$. For this purpose, we introduce the following notation:
\begin{align}
  \label{eq:q-Hopf-def-UL}
  \qhUL & = \overline{\qhf}_1 \qhS(\qhqr_2) \otimes \overline{\qhf}_2 \qhS(\qhqr_1), \\
  \label{eq:q-Hopf-def-VR}
  \qhVR & = \qhS(\qhpl_2) \qhf_1 \otimes \qhS(\qhpl_1) \overline{\qhf}_2.
\end{align}

\begin{lemma}
  \label{lem:dual-module-left}
  \textup{(i)} Let $A$ be a left $H$-module algebra. If $X$ is a finite-dimensional left $A$-module in ${}_H \Mod$ with action $\triangleright$, then the right $A$-module structure of $X^{\vee}$ is given by
  \begin{equation}
    \label{eq:module-alg-dual-module-1}
    \langle \xi \triangleleft a, x \rangle
    = \langle \xi, (\qhUL_1 a) \triangleright (\qhUL_2 x) \rangle
    \quad (a \in A, \xi \in X^{\vee}, x \in X).
  \end{equation}
  \textup{(ii)} Let $C$ be a left $H$-module coalgebra. If $X$ is a finite-dimensional right $C$-comodule in ${}_H \Mod$ with coaction $x \mapsto x_{(0)} \otimes x_{(1)}$, then the left $C$-comodule structure of $X^{\vee}$ is given by
  \begin{equation}
    \label{eq:module-alg-dual-comodule-1}
    \xi \mapsto \langle \xi, \qhVR_1 x_{i(0)} \rangle \, \qhVR_2 x_{i(1)} \otimes x^i
    \quad (\xi \in X^{\vee}),
  \end{equation}
  where $\{ x_i \}$ is a basis of $X$ and $\{ x^i \}$ is the dual basis of $\{ x_i \}$.
\end{lemma}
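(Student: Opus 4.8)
The plan is to unwind the definitions of the dual action and the dual coaction given in Subsection~\ref{subsec:modules-over-alg} in terms of the explicit reciprocity isomorphisms of Lemmas~\ref{lem:q-Hopf-reciprocity-Q} and~\ref{lem:q-Hopf-reciprocity-P}, and then to recognize the resulting combinations of structural elements as the two sides of the identities in Lemma~\ref{lem:q-Hopf-pR-qL}. Throughout I will freely use that the left $H$-action on $X^{\vee}$ satisfies $\langle h \xi, x \rangle = \langle \xi, \qhS(h) x \rangle$ and the dual-basis identity $\sum_i \psi(x_i)\langle x^i, z\rangle = \psi(z)$ for any linear map $\psi$.

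For part~(i), write $\rho = \triangleright\colon A \otimes X \to X$ for the action. By construction the dual action $\rho^{\sharp}\colon X^{\vee} \otimes A \to X^{\vee}$ is the image of $\rho$ under $\mathbbm{Q}_{A,X^{\vee},X}\circ \mathbbm{P}_{A,X,X}$. First I would apply \eqref{eq:q-Hopf-reciprocity-P1} to obtain $\mathbbm{P}(\rho)(a) = (\qhpr_1 a)\triangleright(\qhpr_2 x_i)\otimes x^i$, and then \eqref{eq:q-Hopf-reciprocity-Q1} to get
\[
  \xi \triangleleft a = \rho^{\sharp}(\xi \otimes a) = \big\langle \xi, \qhql_1\big((\qhpr_1 a)\triangleright(\qhpr_2 x_i)\big)\big\rangle\, \qhql_2 x^i.
\]
Pairing with $x \in X$, using $\langle \qhql_2 x^i, x\rangle = \langle x^i, \qhS(\qhql_2) x\rangle$ and the dual-basis identity, the sum over $i$ collapses and replaces $x_i$ by $\qhS(\qhql_2)x$. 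Since $\triangleright$ is a morphism in ${}_H\Mod$, I can distribute $\qhql_1$ across the action via $\qhql_1\cdot(a'\triangleright x') = (\qhql_{1(1)}a')\triangleright(\qhql_{1(2)}x')$, which yields
\[
  \langle \xi \triangleleft a, x\rangle = \big\langle \xi, (\qhql_{1(1)}\qhpr_1\, a)\triangleright(\qhql_{1(2)}\qhpr_2\,\qhS(\qhql_2)\, x)\big\rangle.
\]
The desired formula \eqref{eq:module-alg-dual-module-1} is then exactly the substitution of \eqref{eq:q-Hopf-pquv-4}, which identifies $\qhql_{1(1)}\qhpr_1 \otimes \qhql_{1(2)}\qhpr_2\qhS(\qhql_2)$ with $\qhUL_1 \otimes \qhUL_2$ from \eqref{eq:q-Hopf-def-UL}.

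Part~(ii) is the dual argument. The dual coaction $\delta^{\sharp}\colon X^{\vee}\to C\otimes X^{\vee}$ is the image of $\delta$ under $\mathbbm{P}_{X^{\vee},C,X}\circ \mathbbm{Q}_{X,C,X}$. Applying \eqref{eq:q-Hopf-reciprocity-Q1} gives $\mathbbm{Q}(\delta)(\xi\otimes v) = \langle \xi, \qhql_1 v_{(0)}\rangle\,\qhql_2 v_{(1)}$, and then \eqref{eq:q-Hopf-reciprocity-P1} gives $\delta^{\sharp}(\xi) = \mathbbm{Q}(\delta)(\qhpr_1\xi\otimes\qhpr_2 x_i)\otimes x^i$. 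Here I would use $H$-linearity of the coaction, namely $\delta(\qhpr_2 x_i) = \qhpr_{2(1)}x_{i(0)}\otimes\qhpr_{2(2)}x_{i(1)}$, together with $\langle\qhpr_1\xi, y\rangle = \langle\xi,\qhS(\qhpr_1)y\rangle$, to arrive at
\[
  \delta^{\sharp}(\xi) = \big\langle \xi, \qhS(\qhpr_1)\qhql_1\qhpr_{2(1)}\, x_{i(0)}\big\rangle\, \qhql_2\qhpr_{2(2)}\, x_{i(1)}\otimes x^i.
\]
The coefficient $\qhS(\qhpr_1)\qhql_1\qhpr_{2(1)}\otimes\qhql_2\qhpr_{2(2)}$ is precisely the left-hand side of \eqref{eq:q-Hopf-pquv-1}, and substituting its right-hand side recovers $\qhVR$ of \eqref{eq:q-Hopf-def-VR}, proving \eqref{eq:module-alg-dual-comodule-1}.

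These computations are routine once the correct reciprocity maps are inserted in the correct order; the real content is packaged in Lemma~\ref{lem:q-Hopf-pR-qL}. The one place demanding care—and the main bookkeeping obstacle—is the twofold use of the (co)module-map property: distributing $\qhql_1$ across $\triangleright$ in part~(i) and $\qhpr_2$ across $\delta$ in part~(ii). These steps are exactly what produce the iterated comultiplications $\qhql_{1(1)}\otimes\qhql_{1(2)}$ and $\qhpr_{2(1)}\otimes\qhpr_{2(2)}$ matching the left-hand sides of \eqref{eq:q-Hopf-pquv-4} and \eqref{eq:q-Hopf-pquv-1}, so getting the grouping of subscripts to line up precisely with those identities is the delicate part.
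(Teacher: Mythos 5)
Your proposal is correct and follows essentially the same route as the paper's proof: unwind $\mathbbm{Q}\circ\mathbbm{P}$ (resp.\ $\mathbbm{P}\circ\mathbbm{Q}$) via the explicit formulas of Lemmas~\ref{lem:q-Hopf-reciprocity-Q} and~\ref{lem:q-Hopf-reciprocity-P}, apply the (co)module-map property and the dual-basis identity, and then invoke \eqref{eq:q-Hopf-pquv-4} and \eqref{eq:q-Hopf-pquv-1} to recognize $\qhUL$ and $\qhVR$. The only (immaterial) difference is the order in which you collapse the dual-basis sum versus distributing $\qhql_1$ across the action in part~(i).
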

\begin{proof}
  We write $\rho(a \otimes x) = a \triangleright x$. We have $h \rho(a \otimes x) = \rho(h_{(1)} a \otimes h_{(2)} x)$ for all $h \in H$, $a \in A$ and $x \in X$ since $\rho: A \otimes X \to X$ is a morphism in ${}_H \Mod$. By Lemmas~\ref{lem:q-Hopf-reciprocity-Q} and \ref{lem:q-Hopf-reciprocity-P}, we have
  \begin{align*}
    \langle \xi \triangleleft a, x \rangle
    & = \langle (\mathbbm{Q}_{A, X^{\vee}, X} \mathbbm{P}_{A, X, X}(\rho))(\xi \otimes a), x \rangle \\
    & = \langle \xi, \qhql_1 \rho(\qhpr_1 a \otimes \qhpr_2 x_i) \rangle \langle \qhql_2 x^i, x \rangle \\
    & = \langle \xi, \rho(\qhano{\qhql_{1(1)} \qhpr_1} a
      \otimes \qhano{\qhql_{1(2)} \qhpr_2 \qhS(\qhql_2)} x_i) \rangle \langle x^i, x \rangle \\
    {}^{\eqref{eq:q-Hopf-pquv-4}}
    & = \langle \xi, \rho(\qhano{\overline{\qhf}_1 \qhS(\qhqr_2)} a
      \otimes \qhano{\overline{\qhf}_2 \qhS(\qhqr_1)} x) \rangle \\
    {}^{\eqref{eq:q-Hopf-def-UL}}
    & = \langle \xi, (\qhUL_1 a) \triangleright (\qhUL_2 x) \rangle
  \end{align*}
  for $\xi \in X^{\vee}$, $a \in A$ and $x \in X$. Hence Part (i) is proved. To prove Part (ii), we let $\delta: X \to X \otimes C$ be the coaction of $C$ on $X$ and set $\delta^{\natural} = \mathbbm{Q}_{X,C,X}(\delta)$. Then the coaction of $C$ on $X^{\vee}$ is computed as follows:
  \begin{align*}
    \xi
    & \mapsto \mathbbm{P}_{X^{\vee},C,X}(\delta^{\natural})(\xi)
    = \delta^{\natural}(\qhpr_1 \xi \otimes \qhpr_2 x_i) \otimes x^i \\
    & = \langle \qhpr_1 \xi, \qhql_1 (\qhpr_2 x_i)_{(0)} \rangle
      \, \qhql_2 (\qhpr_2 x_i)_{(1)} \otimes x^i \\
    & = \langle \xi, \qhano{\qhS(\qhpr_1) \qhql_1 \qhqr_{2(1)}} x_{i(0)} \rangle
      \, \qhano{\qhql_2 \qhpr_{2(2)}} x_{i(1)} \otimes x^i \\
    {}^{\eqref{eq:q-Hopf-pquv-1}}
    & = \langle \xi, \qhano{\qhS(\qhpl_2) \qhf_1} x_{i(0)} \rangle
      \, \qhano{\qhS(\qhpl_1) \qhf_2} x_{i(1)} \otimes x^i \\
    {}^{\eqref{eq:q-Hopf-def-VR}}
    & = \langle \xi, \qhVR_1 x_{i(0)} \rangle \, \qhVR_2 x_{i(1)} \otimes x^i
  \end{align*}
  for all $\xi \in X^{\vee}$. Thus the proof of Part (ii) is done.
\end{proof}

For a finite-dimensional left $H$-module $X$, we also define the left $H$-module ${}^{\vee} \! X$ to be the vector space ${}^{\vee} \! X = X^*$ with the left $H$-module structure given by $h \cdot \xi = \xi \leftharpoonup \overline{\qhS}(h)$ for $h \in H$ and $\xi \in {}^{\vee} \! X$. We fix a basis $\{ x_i \}$ of $X$ and define $\eval'_X$ and $\coev'_X$ by
\begin{gather*}
  \eval'_X: X \otimes {}^{\vee} \! X \to \unitobj,
  \quad \eval'_X(x \otimes \xi) = \langle \xi, \overline{\qhS}(\qhalpha) x \rangle, \\
  \coev'_X: \unitobj \to {}^{\vee} \! X \otimes X,
  \quad \coev'_X(1) = x^i \otimes \overline{\qhS}(\qhbeta) x_i
\end{gather*}
for $x \in X$ and $\xi \in {}^{\vee} \! X$, where $\{ x^i \}$ is the dual basis of $\{ x_i \}$. Then $({}^{\vee} \! X, \eval'_X, \coev'_X)$ is a right dual object of $X$ in ${}_H \Mod$. We remark that a right dual object of $X$ is a left dual object of $X \in ({}_H \Mod)^{\rev}$ and the monoidal category $({}_H \Mod)^{\rev}$ can be identified with ${}_{H^{\cop}}\Mod$. We now define
\begin{align}
  \label{eq:q-Hopf-def-UR}
  \qhUR & = \overline{\qhS}(\qhql_2 \overline{\qhf}_2) \otimes \overline{\qhS}(\qhql_1 \overline{\qhf}_1), \\
  \label{eq:q-Hopf-def-VL}
  \qhVL & = \overline{\qhS}(\qhf_2 \qhpr_2) \otimes \overline{\qhS}(\qhf_1 \qhpr_1).
\end{align}
If we denote by $\qhUL_{\cop}$ and $\qhVR_{\cop}$ the elements $\qhUL$ and $\qhVR$ for $H^{\cop}$, respectively, then we have
\begin{equation}
  \label{eq:q-Hopf-UL-VR-cop}
  \qhUL_{\cop} = \qhUR_{21}
  \quad \text{and} \quad
  \qhVR_{\cop} = \qhVL_{21}
\end{equation}
by equations~\eqref{eq:q-Hopf-pq-cop} and \eqref{eq:q-Hopf-f-op-cop}. By applying Lemma~\ref{lem:dual-module-left} to $H^{\cop}$, we obtain the following lemma:

\begin{lemma}
  \label{lem:dual-module-right}
  \textup{(i)} Let $A$ be a left $H$-module algebra. If $X$ is a finite-dimensional right $A$-module in ${}_H \Mod$ with action $\triangleleft$, then the left $A$-module structure of ${}^{\vee} \! X$ is given by
  \begin{equation}
    \label{eq:module-alg-dual-module-2}
    \langle a \triangleright \xi, x \rangle
    = \langle \xi, (\qhUR_1 x) \triangleleft (\qhUR_2 a) \rangle
    \quad (a \in A, \xi \in {}^{\vee} \! X, x \in X).
  \end{equation}
  \textup{(ii)} Let $C$ be a left $H$-module coalgebra. If $X$ is a finite-dimensional left $C$-comodule in ${}_H \Mod$ with coaction $x \mapsto x_{(-1)} \otimes x_{(0)}$, then the right $C$-comodule structure of ${}^{\vee} \! X$ is given by
  \begin{equation}
    \label{eq:module-alg-dual-comodule-2}
    \xi \mapsto x^i \otimes
    \qhVL_1 x_{i(-1)} \,
    \langle \xi, \qhVL_2 x_{i(0)} \rangle
    \quad (\xi \in {}^{\vee} \! X),
  \end{equation}
  where $\{ x_i \}$ is a basis of $X$ and $\{ x^i \}$ is the dual basis of $\{ x_i \}$.
\end{lemma}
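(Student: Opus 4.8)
The plan is to deduce both parts from Lemma~\ref{lem:dual-module-left} by applying that lemma to the quasi-Hopf algebra $H^{\cop}$, exploiting the identification of monoidal categories $({}_H \Mod)^{\rev}$ with ${}_{H^{\cop}}\Mod$ noted just above. Under this identification, a right dual object ${}^{\vee} \! X$ of $X$ in ${}_H \Mod$ is precisely a left dual object $X^{\vee}$ of $X$ in ${}_{H^{\cop}}\Mod$. Reversing the tensor product interchanges left and right: a left $H$-module algebra $A$ is also a left $H^{\cop}$-module algebra with the same multiplication, and a right $A$-module in ${}_H \Mod$ becomes a left $A$-module in ${}_{H^{\cop}}\Mod$ with the same underlying action map, via $a \triangleright_{\cop} x = x \triangleleft a$; dually, a left $H$-module coalgebra is a left $H^{\cop}$-module coalgebra, and a left $C$-comodule in ${}_H \Mod$ becomes a right $C$-comodule in ${}_{H^{\cop}}\Mod$ with the two comodule legs interchanged.

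First I would record the translations \eqref{eq:q-Hopf-UL-VR-cop}, namely $\qhUL_{\cop} = \qhUR_{21}$ and $\qhVR_{\cop} = \qhVL_{21}$, which follow from \eqref{eq:q-Hopf-pq-cop} and \eqref{eq:q-Hopf-f-op-cop} as already observed. For Part (i), I would apply formula \eqref{eq:module-alg-dual-module-1} of Lemma~\ref{lem:dual-module-left} in ${}_{H^{\cop}}\Mod$ to compute the right $A$-module structure of $X^{\vee} = {}^{\vee} \! X$ there; rewriting the resulting $H^{\cop}$-left action $\triangleright_{\cop}$ back in terms of the $H$-right action $\triangleleft$ swaps the two tensor legs of $\qhUL_{\cop}$, and substituting $\qhUL_{\cop} = \qhUR_{21}$ yields exactly \eqref{eq:module-alg-dual-module-2}. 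For Part (ii), I would apply \eqref{eq:module-alg-dual-comodule-1} in ${}_{H^{\cop}}\Mod$ to the right $C$-comodule $X$, whose $H^{\cop}$-coaction legs are related to the given left coaction $x \mapsto x_{(-1)} \otimes x_{(0)}$ in ${}_H \Mod$ by $x_{(0)}^{\cop} = x_{(0)}$ and $x_{(1)}^{\cop} = x_{(-1)}$; after interpreting the output as an element of ${}^{\vee} \! X \otimes C$ and substituting $\qhVR_{\cop} = \qhVL_{21}$, one obtains \eqref{eq:module-alg-dual-comodule-2}.

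The only genuinely delicate point will be the bookkeeping of the reversal: one must consistently track how $\otimes^{\rev}$ swaps the order of the two tensor factors, since this is precisely what turns the left action into a right action (and vice versa), relabels the Sweedler legs of the coaction, and introduces the transposition $(\,\cdot\,)_{21}$ when passing from $\qhUL_{\cop}, \qhVR_{\cop}$ to $\qhUR, \qhVL$. Once the dictionary between the $H$- and $H^{\cop}$-data is fixed, the identities \eqref{eq:q-Hopf-UL-VR-cop} make the computation entirely mechanical, requiring no further quasi-Hopf identities beyond those already invoked in the proof of Lemma~\ref{lem:dual-module-left}.
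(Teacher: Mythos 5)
Your proposal is correct and is exactly the paper's argument: the lemma is obtained by applying Lemma~\ref{lem:dual-module-left} to $H^{\cop}$, identifying $({}_H \Mod)^{\rev}$ with ${}_{H^{\cop}}\Mod$ (so right duals, right actions, and left coactions become left duals, left actions, and right coactions), and substituting $\qhUL_{\cop} = \qhUR_{21}$, $\qhVR_{\cop} = \qhVL_{21}$ from \eqref{eq:q-Hopf-UL-VR-cop}. You have in fact spelled out the reversal bookkeeping more explicitly than the paper, which states the conclusion without further detail.
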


\section{Integral theory for quasi-Hopf algebras}
\label{sec:integrals}

\subsection{Integrals}

The notions of integrals and cointegrals for Hopf algebras play an important role in the Hopf algebra theory and its applications. The integral theory for quasi-Hopf algebras is established by Hausser and Nill in \cite{1999math4164H}. In this section, following \cite{1999math4164H,MR1995128,MR2862216,MR3929714}, we review basic results on (co)integrals for quasi-Hopf algebras.

The definition of integrals in a quasi-Hopf algebra is completely same as the case of Hopf algebras:

\begin{definition}
  Let $H$ be a quasi-Hopf algebra. A {\em left integral} in $H$ is an element $\Lambda \in H$ such that $h \Lambda = \qhepsilon(h) \Lambda$ for all $h \in H$. Analogously, a {\em right integral} in $H$ is an element $\Lambda \in H$ such that $\Lambda h = \qhepsilon(h) \Lambda$ for all $h \in H$.
\end{definition}

Hausser and Nill \cite{1999math4164H} defined a cointegral on a quasi-Hopf algebra based on the theory of {\em quasi-Hopf bimodules}. It turned out that quasi-Hopf bimodules are not indispensable to define cointegrals \cite{MR1995128,MR2862216}. However, since the definition of Hausser and Nill will be convenient and important in later sections, we also review basic results on quasi-Hopf bimodules.
% The definition of cointegrals on a quasi-Hopf algebra is more complicated than the Hopf case. The original definition of Hausser and Nill is based on the fundamental theorem for quasi-Hopf bimodules. There are several formulas defining cointegrals, however, the original definition is convenient from the theoretical point of view. In this section, following \cite{1999math4164H,MR1995128,MR2862216}, we review basic results on quasi-Hopf bimodules and (co)integrals for quasi-Hopf algebras.

\subsection{Quasi-Hopf bimodules}

Let $H$ be a quasi-Hopf algebra with bijective antipode. The category ${}_H \Mod_H$ of $H$-bimodules is identified with the category of left modules over the enveloping algebra $H^{e} := H \otimes H^{\op}$. Since $H^e$ is naturally a quasi-Hopf algebra, the category ${}_H \Mod_H$ is a monoidal category. To be precise, for $M, N \in {}_H \Mod_H$, their tensor product $M \hatotimes N \in {}_H \Mod_H$ is the vector space $M \otimes N$ endowed with the $H$-bimodule structure given by
\begin{equation*}
  h \cdot (m \otimes n) \cdot h' = h_{(1)}^{} m h'_{(1)} \otimes h_{(2)}^{} n h'_{(2)}
\end{equation*}
for $h, h' \in H$, $m \in M$ and $n \in N$. The unit object $\unitobj$ is the base field $k$ regarded as an $H$-bimodule by the counit of $H$. The associator of $({}_H \Mod_H, \hatotimes, \unitobj)$ is given by
\begin{align*}
  \Phi_{L,M,N}: (L \hatotimes M) \hatotimes N
  & \to (L \hatotimes M) \hatotimes N, \\
  (a \otimes b) \otimes c
  & \mapsto \qhphi_1 a \overline{\qhphi}_1 \otimes (\qhphi_2 b \overline{\qhphi}_2 \otimes \qhphi_3 c \overline{\qhphi}_3)
\end{align*}
for $L, M, N \in {}_H \Mod_H$, $a \in L$, $b \in M$ and $c \in N$. The left and the right unit isomorphisms of ${}_H \Mod_H$ are same as those of the monoidal category of vector spaces over $k$.

Equations~\eqref{eq:q-Hopf-def-1} and \eqref{eq:q-Hopf-def-2} imply that the $H$-bimodule $H$ is a coalgebra in the monoidal category $({}_H \Mod_H, \hatotimes, \unitobj)$ with respect to $\Delta$ and $\qhepsilon$. The category ${}_H^{} \Mod_H^H$ of {\em right quasi-Hopf bimodules} over $H$ is defined as the category of right $H$-comodules in ${}_H \Mod_H$ in the sense of Subsection \ref{subsec:modules-over-alg}. Given a left $H$-module $V$, we regard it as an $H$-bimodule by defining the right action of $H$ through the counit of $H$. Then the $H$-bimodule $V \hatotimes H$ is a right $H$-comodule in ${}_H \Mod_H$ by the coaction
\begin{equation}
  \label{eq:q-Hopf-bimod-free-coaction}
  V \hatotimes H
  \xrightarrow{\quad \id_{V} \hatotimes \Delta \quad}
  V \hatotimes (H \hatotimes H)
  \xrightarrow{\quad \Phi_{V,H,H}^{-1} \quad}
  (V \hatotimes H) \hatotimes H.
\end{equation}
Thus we have a functor
\begin{equation}
  \label{eq:q-Hopf-bimod-equiv-1}
  {}_H^{} \Mod \to {}_H^{} \Mod_H^H,
  \quad V \mapsto V \hatotimes H.
\end{equation}
For $M \in {}_H^{} \Mod_H^H$, we define the linear map $E_M: M \to M$ by
\begin{equation*}
  E_M(m) = \qhqr_1 m_{(0)} \qhbeta \qhS(\qhqr_2 m_{(1)}) \quad (m \in M),
\end{equation*}
where $m \mapsto m_{(0)} \otimes m_{(1)}$ is the coaction. The subspace $M^{\coinv{H}} := \Img(E_M)$ is called the space of {\em coinvariants}. Although $M^{\coinv H}$ is not a submodule of $M$, it is a left $H$-module by the action $\triangleright$ given by $h \triangleright m = E_M(h m)$ for $h \in H$ and $m \in M^{\coinv H}$. This construction extends to the functor
\begin{equation}
  \label{eq:q-Hopf-bimod-equiv-2}
  {}_H^{} \Mod_H^H \to {}_H^{} \Mod,
  \quad M \mapsto M^{\coinv H}.
\end{equation}
The fundamental theorem for quasi-Hopf bimodules \cite[Section 3]{1999math4164H} states that the functors \eqref{eq:q-Hopf-bimod-equiv-1} and \eqref{eq:q-Hopf-bimod-equiv-2} are mutually quasi-inverse to each other. Furthermore, they form an adjoint equivalence between ${}_H\Mod$ and ${}_H^{} \Mod_H^H$ with the unit and the counit given respectively by
\begin{gather}
  \label{eq:q-Hopf-bimod-FT-iso-1}
  V \to (V \hatotimes H)^{\coinv{H}}, \quad v \mapsto v \otimes 1 \quad (v \in V), \\
  \label{eq:q-Hopf-bimod-FT-iso-2}
  M^{\coinv{H}} \hatotimes H \to M, \quad m \otimes h \mapsto m h \quad (m \in M^{\coinv{H}}, h \in H).
\end{gather}

The category ${}_H^H \Mod_H^{}$ of {\em left quasi-Hopf bimodules} over $H$ is defined to be the category of left $H$-comodules in ${}_H \Mod_H$. This category can be identified with the category of right quasi-Hopf modules over $K = H^{\cop}$. Thus, by applying the fundamental theorem to $K$, we see that the functor
\begin{equation}
  \label{eq:q-Hopf-bimod-equiv-3}
  {}_H \Mod \to {}_H^H \Mod_H^{},
  \quad V \mapsto H \hatotimes V
\end{equation}
is an equivalence of categories. A quasi-inverse of~\eqref{eq:q-Hopf-bimod-equiv-3}, which we denote by
\begin{equation}
  \label{eq:q-Hopf-bimod-equiv-4}
  {}^{\coinv H}(-): {}_H^{H} \Mod_H^{} \to {}_H^{} \Mod,
\end{equation}
is given as follows: For $M \in {}^H_H \Mod_H^{}$, we define the linear map $E_M^{\cop}: M \to M$ by
\begin{equation*}
  E_M^{\cop}(m) = \qhql_2 m_{(0)} \overline{\qhS}(\qhbeta) \overline{\qhS}(\qhql_1 m_{(-1)})
  \quad (m \in M),
\end{equation*}
where $m \mapsto m_{(-1)} \otimes m_{(0)}$ is the coaction. Then ${}^{\coinv H} \! M := \mathrm{Im}(E_M^{\cop})$ as a vector space. The left $H$-module structure is given by $h \triangleright m = E_M^{\cop}(h m)$. The functors~\eqref{eq:q-Hopf-bimod-equiv-3} and \eqref{eq:q-Hopf-bimod-equiv-4} actually form an adjoint equivalence with the unit and the counit given respectively by
\begin{gather}
  V \to {}^{\coinv H}(H \hatotimes V),
  \quad v \mapsto 1 \otimes v
  \quad (v \in V), \\
  H \hatotimes {}^{\coinv H} \! M \to M,
  \quad h \otimes m \mapsto m h
  \quad (m \in {}^{\coinv H} \! M, h \in H).
\end{gather}

\subsection{Cointegrals}

Let $H$ be a quasi-Hopf algebra with bijective antipode. Since ${}_{H} \Mod_H$ is isomorphic to the category of left $H^{e}$-modules as a monoidal category, every finite-dimensional object of ${}_{H} \Mod_H$ admits a left dual object and a right dual object. Specifically, if $M$ is a finite-dimensional $H$-bimodule, then its left dual object $(M^{\vee}, \eval_M, \coev_M)$ is given as follows: As a vector space, $M^{\vee} = M^*$. We fix a basis $\{ m_i \}$ of $M$ and let $\{ m^i \}$ be the dual basis of $\{ m_i \}$. Then the $H$-bimodule structure of $M^{\vee}$, the evaluation morphism and the coevaluation morphism are given by
\begin{gather}
  \label{eq:H-bimod-left-dual-actions}
  h \cdot \xi \cdot h' = \overline{\qhS}(h') \rightharpoonup \xi \leftharpoonup \qhS(h), \\
  \label{eq:H-bimod-left-dual-eval}
  \eval_M: M^{\vee} \hatotimes M \to k, \quad
  \eval_M(\xi \otimes m) = \langle \xi, \upalpha m \overline{\qhS}(\qhbeta) \rangle, \\
  \label{eq:H-bimod-left-dual-coev}
  \coev_M: k \to M \hatotimes M^{\vee}, \quad
  \coev_M(1) = \qhbeta m_i \overline{\qhS}(\qhalpha) \otimes m^i,
\end{gather}
respectively, for $h, h' \in H$, $m \in M$ and $\xi \in M^{\vee}$, where the Einstein notation is used to suppress the sum over $i$. Analogously, the right dual object ${}^{\vee} \! M$ of $M$ is defined by ${}^{\vee} \! M = M^*$ as a vector space. The $H$-bimodule structure, the evaluation and the coevaluation are given by
\begin{gather}
  \label{eq:H-bimod-right-dual-actions}
  h \cdot \xi \cdot h' = \qhS(h') \rightharpoonup \xi \leftharpoonup \overline{\qhS}(h), \\
  \label{eq:H-bimod-right-dual-eval}
  \eval'_M: M \hatotimes {}^{\vee} \! M \to k, \quad
  \eval'_M(m \otimes \xi) = \langle \xi, \overline{\qhS}(\qhalpha) m \qhbeta \rangle, \\
  \label{eq:H-bimod-right-dual-coev}
  \coev'_M: k \to {}^{\vee} \! M \hatotimes M, \quad
  \coev'_M(1) = m^i \otimes \overline{\qhS}(\qhbeta) m_i \qhalpha,
\end{gather}
respectively, for $h, h' \in H$, $m \in M$ and $\xi \in {}^{\vee} \! M$.

\begin{lemma}
  \label{lem:q-Hopf-bimod-dual}
  \textup{(i)}   If $M$ is a finite-dimensional right quasi-Hopf bimodule over $H$, then the $H$-bimodule $M^{\vee}$ is a left quasi-Hopf bimodule by the coaction
  \begin{equation}
    \label{eq:H-bimod-left-dual-coactions}
    \xi \mapsto \langle \xi, \qhVR_1 m_{i(0)} \qhUR_1 \rangle \, \qhVR_2 m_{i(1)} \qhUR_2 \otimes m^i
    \quad (\xi \in M^{\vee}),
  \end{equation}
  where $\{ m_i \}$ is a basis of $M$, $\{ m^i \}$ is the dual basis of $M^*$.

  \textup{(ii)} If $M$ is a finite-dimensional left quasi-Hopf bimodule over $H$, then the $H$-bimodule ${}^{\vee} \! M$ is a right quasi-Hopf bimodule by the coaction
  \begin{equation}
    \label{eq:H-bimod-right-dual-coactions}
    \xi \mapsto m^i \otimes \qhVL_1 m_{i(-1)} \qhUL_1 \, \langle \xi, \qhVL_2 m_{i(0)} \qhUL_2 \rangle
    \quad (\xi \in {}^{\vee} \! M),
  \end{equation}
  where $\{ m_i \}$ is a basis of $M$, $\{ m^i \}$ is the dual basis of $M^*$.
\end{lemma}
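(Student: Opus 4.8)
The plan is to reduce both parts to Lemmas~\ref{lem:dual-module-left} and~\ref{lem:dual-module-right} applied to the enveloping quasi-Hopf algebra $H^e = H \otimes H^{\op}$. Recall that $({}_H \Mod_H, \hatotimes, \unitobj)$ is the category of left $H^e$-modules as a monoidal category, that $H$ is a coalgebra (equivalently, a left $H^e$-module coalgebra) in it, and that a finite-dimensional right quasi-Hopf bimodule $M$ is exactly a finite-dimensional right $H$-comodule in ${}_{H^e}\Mod$. First I would check that the left dual object $M^{\vee}$ specified by~\eqref{eq:H-bimod-left-dual-actions}--\eqref{eq:H-bimod-left-dual-coev} agrees with the left dual of $M$ viewed as an $H^e$-module; this is a direct comparison using $\qhS_{H^e} = \qhS \otimes \overline{\qhS}$. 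Granting this, Lemma~\ref{lem:dual-module-left}(ii) applied to $H^e$ shows at once that $M^{\vee}$ is a left $H$-comodule and produces its coaction as the formula of that lemma, with the $\qhVR$-element of $H^e$ in place of that of $H$ and the $H^e$-action in place of the $H$-action. Part~(ii) is entirely parallel, via Lemma~\ref{lem:dual-module-right}(ii) and the $\qhVL$-element of $H^e$.

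The computational core is to factor these elements through the data of $H$ and $H^{\op}$. All of $\Delta, \qhepsilon, \qhphi, \qhS, \qhalpha, \qhbeta$ for $H^e$ are the componentwise data of $H$ and of $H^{\op}$ (with the $H^{\op}$-legs of $\qhphi$ being $\qhphi^{-1}$), so the derived elements factor as well: by~\eqref{eq:q-Hopf-pq-op} one gets $\qhpl$ for $H^e$ equal to $(\qhpl_1 \otimes \qhql_1) \otimes (\qhpl_2 \otimes \qhql_2)$, and Lemma~\ref{lem:q-Hopf-f-op-cop} gives the matching factorization of $\qhf$ for $H^e$. Feeding these into the definition of $\qhVR$ (resp.\ $\qhVL$) and using that $\qhS$ and $\overline{\qhS}$ are the antipodes of the two tensor factors, I would obtain, as elements of $(H \otimes H^{\op})^{\otimes 2}$,
\begin{equation*}
  \qhVR_{H^e} = (\qhVR_1 \otimes \qhUR_1) \otimes (\qhVR_2 \otimes \qhUR_2),
  \qquad
  \qhVL_{H^e} = (\qhVL_1 \otimes \qhUL_1) \otimes (\qhVL_2 \otimes \qhUL_2),
\end{equation*}
the decisive sub-identities being
\begin{equation*}
  \qhVR_{\op} = \qhUR
  \quad \text{and} \quad
  \qhVL_{\op} = \qhUL,
\end{equation*}
where the subscript $\op$ denotes the element formed for $H^{\op}$. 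These last two are proved by exactly the $\overline{\qhS}$-anti-automorphism manipulations that produce~\eqref{eq:q-Hopf-UL-VR-cop}.

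To finish, I would translate back: since $a \otimes b \in H \otimes H^{\op}$ acts on an $H$-bimodule $N$ by $n \mapsto a n b$, the factorization turns each action $(\qhVR_{H^e})_j \cdot (-)$ into $\qhVR_j \, (-) \, \qhUR_j$, so the coaction of Lemma~\ref{lem:dual-module-left}(ii) becomes precisely~\eqref{eq:H-bimod-left-dual-coactions}; likewise the $\qhVL_{H^e}$-substitution in Lemma~\ref{lem:dual-module-right}(ii) yields~\eqref{eq:H-bimod-right-dual-coactions}.

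I expect the main obstacle to be the bookkeeping behind the two sub-identities $\qhVR_{\op} = \qhUR$ and $\qhVL_{\op} = \qhUL$: one must track the opposite multiplication in $H^{\op}$ and the interchange of $\qhS$ with $\overline{\qhS}$, so that the precise shapes of $\qhf_{\op}$ from Lemma~\ref{lem:q-Hopf-f-op-cop} and of $\qhpl_{\op}, \qhpr_{\op}$ from~\eqref{eq:q-Hopf-pq-op} are inserted in exactly the right places. Once these identities are settled, everything else is formal.
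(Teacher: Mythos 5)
Your proposal is correct and follows exactly the route the paper takes: its proof is the one-line observation that the lemma follows from Lemmas~\ref{lem:dual-module-left} and~\ref{lem:dual-module-right} applied to the coalgebra $C = H$ in ${}_H \Mod_H \cong {}_{H^e}\Mod$. The details you supply — that $\qhS_{H^e} = \qhS \otimes \overline{\qhS}$, that all derived elements of $H^e$ factor componentwise, and in particular the identities $\qhVR_{\op} = \qhUR$ and $\qhVL_{\op} = \qhUL$ obtained from \eqref{eq:q-Hopf-pq-op} and Lemma~\ref{lem:q-Hopf-f-op-cop} — are exactly the bookkeeping the paper leaves implicit, and they check out.
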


See \eqref{eq:q-Hopf-def-UL}, \eqref{eq:q-Hopf-def-VR}, \eqref{eq:q-Hopf-def-UR} and \eqref{eq:q-Hopf-def-VL} for the definitions of the elements $\qhUL$, $\qhVR$, $\qhUR$ and $\qhVL$ of $H \otimes H$, respectively.

\begin{proof}
  The result follows from Lemmas \ref{lem:dual-module-left} and~\ref{lem:dual-module-right} applied to the coalgebra $C = H$ in ${}_H \Mod_H$ ($\cong {}_{H^e}\Mod$). One can also verify this lemma by a direct computation along the same way as \cite[Proposition 3.2]{MR2862216}.
\end{proof}

Now we suppose that $H$ is finite-dimensional. Then $H$ is a finite-dimensional left and right quasi-Hopf bimodule over $H$. Thus $H^{\vee}$ and ${}^{\vee}H$ are a left and a right quasi-Hopf bimodule over $H$, respectively.

\begin{definition}
  \label{def:cointegrals}
  We set $\cointl := ({}^{\vee} \! H)^{\coinv H}$ and $\cointr := {}^{\coinv H} (H^{\vee})$ and call them the spaces of {\em left cointegrals} and {\em right cointegrals} on $H$, respectively.
\end{definition}

The fundamental theorem gives an isomorphism ${}^{\vee} \! H \cong \cointl \hatotimes H$ in ${}_H^{} \Mod_{H}^{H}$. By counting dimensions, we see that $\cointl$ is one-dimensional. Thus there is an algebra map $\qhmu: H \to k$ such that $h \triangleright \lambda = \qhmu(h) \lambda$ for all $h \in H$ and $\lambda \in \cointl$.

\begin{definition}
  We call $\qhmu$ the {\em modular function} on $H$. We say that $H$ is {\em unimodular} if the modular function on $H$ is identical to the counit of $H$.
\end{definition}

By the fundamental theorem, there is also an isomorphism $H^{\vee} \cong H \hatotimes \cointr$ in ${}_H^{H} \Mod_H^{}$. Hence there is an algebra map $\qhmu': H \to k$ such that $h \triangleright \lambda = \qhmu'(h) \lambda$ for all $h \in H$ and $\lambda \in \cointr$. The following lemma shows that $\qhmu'$ coincides with the modular function.

\begin{lemma}
  $\cointl \cong \cointr$ as left $H$-modules.
\end{lemma}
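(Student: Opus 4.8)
The plan is to reduce the statement to the equality $\qhmu = \qhmu'$ of the two modular functions and then to deduce this equality by a purely categorical manipulation of the fundamental theorem for quasi-Hopf bimodules together with duality in ${}_H \Mod_H$. Since $\cointl$ and $\cointr$ are both one-dimensional left $H$-modules, an isomorphism $\cointl \cong \cointr$ exists precisely when $\qhmu = \qhmu'$, equivalently when the two invertible objects of ${}_H \Mod$ determined by these characters agree. I would therefore aim to produce a single relation among $\cointl$, $\cointr$ and their duals that forces their characters to coincide, rather than computing either modular function by hand.

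First I would record the two instances of the fundamental theorem already available: applied to the right quasi-Hopf bimodule ${}^{\vee} \! H$ it gives ${}^{\vee} \! H \cong \cointl \hatotimes H$ in ${}_H^{} \Mod_H^H$, while applied to the left quasi-Hopf bimodule $H^{\vee}$ it gives $H^{\vee} \cong H \hatotimes \cointr$ in ${}_H^H \Mod_H^{}$. Next I would apply the right-dual functor ${}^{\vee}(-)$, which by Lemma~\ref{lem:q-Hopf-bimod-dual} sends left quasi-Hopf bimodules to right quasi-Hopf bimodules, to the second isomorphism. Using that ${}^{\vee}(-)$ reverses tensor products and that the canonical isomorphism ${}^{\vee}(H^{\vee}) \cong H$ is one of right quasi-Hopf bimodules, this yields $H \cong {}^{\vee}\cointr \hatotimes {}^{\vee} \! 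H$ in ${}_H^{} \Mod_H^H$. Substituting the first isomorphism and reassociating gives $H \cong ({}^{\vee}\cointr \otimes \cointl) \hatotimes H$, where $\cointr$ and $\cointl$ are regarded as $H$-bimodules with right action through $\qhepsilon$. Finally, applying the coinvariant functor $(-)^{\coinv H}$, which is quasi-inverse to $V \mapsto V \hatotimes H$ and sends $H$ to $\unitobj$, collapses the free factor and produces ${}^{\vee}\cointr \otimes \cointl \cong \unitobj$ in ${}_H \Mod$.

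To conclude, I would compare characters. The tautological isomorphism ${}^{\vee}\cointl \otimes \cointl \cong \unitobj$ shows $(\qhmu \circ \overline{\qhS}) * \qhmu = \qhepsilon$, while the relation just obtained shows $(\qhmu' \circ \overline{\qhS}) * \qhmu = \qhepsilon$; since one-dimensional characters are invertible under convolution, cancelling $\qhmu$ on the right gives $\qhmu' \circ \overline{\qhS} = \qhmu \circ \overline{\qhS}$, hence $\qhmu = \qhmu'$ and therefore $\cointl \cong \cointr$ as left $H$-modules.

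The step I expect to be the main obstacle is verifying that the canonical double-duality isomorphism ${}^{\vee}(H^{\vee}) \cong H$ in ${}_H \Mod_H$ is compatible with the coactions, i.e.\ is an isomorphism of right quasi-Hopf bimodules, and similarly that the duality isomorphism ${}^{\vee}(H \hatotimes \cointr) \cong {}^{\vee}\cointr \hatotimes {}^{\vee} \! H$ is one of right quasi-Hopf bimodules on the nose, with the structural elements $\qhUL$, $\qhVR$, $\qhUR$, $\qhVL$ of Lemma~\ref{lem:q-Hopf-bimod-dual} absorbed correctly. These are coherence checks rather than genuinely new computations, but in the quasi-Hopf setting they require care with the associator and with the elements $\qhf$, $\qhpr$, $\qhqr$, $\qhpl$ and $\qhql$; the compatibility identities of Lemma~\ref{lem:q-Hopf-pR-qL} are exactly what should make them go through.
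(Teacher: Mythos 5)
Your argument is correct and is essentially the paper's own proof: the same chain of isomorphisms of right quasi-Hopf bimodules (double-dualizing $H^{\vee} \cong H \hatotimes \cointr$, substituting ${}^{\vee}\!H \cong \cointl \hatotimes H$, reassociating, and applying the coinvariants functor) yielding ${}^{\vee}\cointr \otimes \cointl \cong \unitobj$. The only difference is cosmetic: the paper concludes by cancelling the invertible object ${}^{\vee}\cointr$ directly, whereas you compare convolution characters, and the coherence checks you flag are likewise left implicit in the paper.
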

\begin{proof}
  We have isomorphisms
  \begin{equation*}
    ({}^{\vee} \! \cointr \hatotimes \cointl) \hatotimes H
    \cong {}^{\vee} \! \cointr \hatotimes (\cointl \hatotimes H)
    \cong {}^{\vee} \! \cointr \hatotimes {}^{\vee} \! H
    \cong {}^{\vee} (H \hatotimes \cointr)
    \cong {}^{\vee} (H^{\vee})
    \cong H
  \end{equation*}
  of right quasi-Hopf bimodules. By the fundamental theorem, we have an isomorphism ${}^{\vee} \cointr \otimes \cointl \cong \unitobj$ of left $H$-modules. Thus $\cointl \cong \cointr$ as left $H$-modules.
\end{proof}

\subsection{Properties of cointegrals}

Let $H$ be a finite-dimensional quasi-Hopf algebra. We note that the antipode of $H$ is bijective if this is the case \cite{MR1995128,MR2037710}. Let $\Xi$ be the map \eqref{eq:q-Hopf-bimod-FT-iso-2} for the quasi-Hopf bimodule $M = {}^{\vee} \! H$. By \eqref{eq:H-bimod-right-dual-actions}, the map $\Xi$ is expressed as follows:
\begin{equation}
  \label{eq:Xi}
  \Xi: \cointl \hatotimes H \to {}^{\vee} \! H,
  \quad \lambda \otimes h \mapsto \qhS(h) \rightharpoonup \lambda
  \quad (\lambda \in \cointl, h \in H).
\end{equation}
The fundamental theorem for quasi-Hopf bimodules implies that this map is an isomorphism in the category ${}_H^{} \Mod^H_H$. In this subsection, we review some results on (co)integrals with an emphasis on the role of the map $\Xi$.

We fix a non-zero left cointegral $\lambda$ on $H$. Since the map $\Xi$ is bijective, and since the antipode $\qhS$ of $H$ is bijective, the map $H \to H^*$ given by $h \mapsto h \rightharpoonup \lambda$ ($h \in H$) is also bijective. This means that the algebra $H$ is a Frobenius algebra with Frobenius form $\lambda$.
% We briefly recall basic results on Frobenius algebras. Let $A$ be a Frobenius algebra with Frobenius form $\lambda: A \to k$. By definition, the map $\Theta: A \to A^*$ defined by $\Theta(a) = a \rightharpoonup \lambda$ ($a \in A$) is bijective. Given an algebra map $\chi: A \to k$, we define $I_{\chi} = \{ t \in A \mid \text{$a t = \chi(a) t$ for all $a \in A$} \}$ and $J_{\chi} = \{ f \in A^* \mid \text{$a \rightharpoonup f = \chi(a) f$ for all $a \in A$} \}$. Since $\Theta$ is an isomorphism of left $A$-modules, it induces an isomorphism between $I_{\chi}$ and $J_{\chi}$. It is easy to see that $J_{\chi}$ is spanned by $\chi$. Hence $I_{\chi}$ is spanned by $t_{\chi} := \Theta^{-1}(\chi)$. If $t \in I_{\chi}$ is a non-zero element, then it is a non-zero scalar multiple of $t_{\chi}$. Since
% \begin{equation*}
%   \langle \lambda, t_{\chi} \rangle
%   = (t_{\chi} \rightharpoonup \lambda)(1)
%   = (\Theta \Theta^{-1}(\chi))(1) = \chi(1) = 1,
% \end{equation*}
% we have $\lambda(t) \ne 0$ for all non-zero elements $t \in I_{\chi}$. The {\em Nakayama automorphism of $A$} (with respect to the Frobenius form $\lambda$) is the algebra automorphism $\nu: A \to A$ determined by
We recall that the {\em Nakayama automorphism} of $H$ (with respect to $\lambda$) is the algebra automorphism $\nu: H \to H$ characterized by
\begin{equation}
  \label{eq:def-Nakayama-auto}
  \lambda(h h') = \lambda(h' \nu(h)) \quad (h, h' \in H). % (a, b \in A).
\end{equation}
% Let $\overline{\nu}$ be the inverse of $\nu$. Then we have
% \begin{equation*}
%   \Theta(t_{\chi} a)(b)
%   = \lambda(b t_{\chi} a)
%   = \lambda(\overline{\nu}(a) b t_{\chi})
%   = \chi(\overline{\nu}(a)) \chi(b) \lambda(t_{\chi})
%  = \chi(\overline{\nu}(a)) \chi(b)
% \end{equation*}
% for all $a, b \in A$. Thus we have $t a = \chi(\overline{\nu}(a)) t$ for all $t \in I_{\chi}$ and $a \in A$. Now we apply these results to the Frobenius algebra $H$.
The following theorem is due to Hausser and Nill \cite{1999math4164H}.

\begin{theorem}
  \label{thm:left-coint}
  Let $\lambda$ be a non-zero left cointegral on $H$. Then we have:
  \begin{enumerate}
  \item $H$ is a Frobenius algebra with Frobenius form $\lambda$. The Nakayama automorphism of $H$ with respect to $\lambda$ is given by
    \begin{equation*}
      \nu(h) = \qhS(\qhS(h) \leftharpoonup \qhmu)
      \quad (h \in H),
    \end{equation*}
    where $\qhmu$ is the modular function on $H$ and
    \begin{equation*}
      h \leftharpoonup \xi = \langle \xi, h_{(1)} \rangle \, h_{(2)} \quad (\xi \in H^*, h \in H).
    \end{equation*}
  \item The space of left integrals in $H$ is one-dimensional. If $\Lambda$ is a non-zero left integral in $H$, then we have $\lambda(\Lambda) \ne 0$ and $\Lambda h = \qhmu(h) \Lambda$ for all $h \in H$.
  \item The space of right integrals in $H$ is also one-dimensional. If $\Lambda$ is a non-zero right integral in $H$, then we have $\lambda(\Lambda) \ne 0$ and $h \Lambda = \overline{\qhmu}(h) \Lambda$ for all $h \in H$, where $\overline{\qhmu} = \qhmu \circ \qhS$.
  \end{enumerate}
\end{theorem}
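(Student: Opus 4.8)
The linchpin of the whole proof is the identity
\begin{equation}
  \label{eq:star}
  \lambda(\overline{\qhS}(h) m) = \qhmu(h_{(1)}) \, \lambda(m \qhS(h_{(2)}))
  \quad (h, m \in H),
\end{equation}
which I would establish first. To prove \eqref{eq:star} I would use that the map $\Xi$ of \eqref{eq:Xi} is an isomorphism in ${}_H^{} \Mod_H^H$, hence in particular left $H$-linear. On the source $\cointl \hatotimes H$ the left action is $h \cdot (\lambda \otimes x) = \qhmu(h_{(1)}) \lambda \otimes h_{(2)} x$, since $\cointl$ carries the action $h \triangleright \lambda = \qhmu(h)\lambda$ and the right tensor factor is trivial; on ${}^{\vee} \! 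H$ the left action is $\langle h \cdot \xi, m\rangle = \langle \xi, \overline{\qhS}(h) m\rangle$ by \eqref{eq:H-bimod-right-dual-actions}. Writing out $\langle \Xi(h \cdot (\lambda \otimes x)), m\rangle = \langle h \cdot \Xi(\lambda \otimes x), m\rangle$ using $\langle \Xi(\lambda \otimes x), m\rangle = \lambda(m \qhS(x))$ and then specializing to $x = 1$ yields \eqref{eq:star}.

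For Part (1), the Frobenius property is already recorded before the statement: bijectivity of $\Xi$ and of $\qhS$ makes $h \mapsto h \rightharpoonup \lambda$ a linear isomorphism $H \to H^*$, so $(a, b) \mapsto \lambda(ab)$ is nondegenerate. For the Nakayama automorphism I would substitute $a = \overline{\qhS}(h)$, equivalently $h = \qhS(a)$, into \eqref{eq:star}, turning it into $\lambda(a m) = \lambda\bigl(m \cdot \qhmu(\qhS(a)_{(1)}) \qhS(\qhS(a)_{(2)})\bigr)$. Comparing with the defining relation \eqref{eq:def-Nakayama-auto} and invoking uniqueness of the Nakayama automorphism (guaranteed by nondegeneracy of $\lambda$), I read off $\nu(a) = \qhmu(\qhS(a)_{(1)}) \qhS(\qhS(a)_{(2)}) = \qhS(\qhS(a) \leftharpoonup \qhmu)$, which is the asserted formula.

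For Parts (2) and (3) the dimension and nonvanishing statements follow from the Frobenius structure alone. The Frobenius isomorphism $\theta \colon H \to H^*$, $\theta(x) = x \rightharpoonup \lambda$ (so $\theta(x)(a) = \lambda(ax)$), is left $H$-linear for the action $(h \cdot f)(a) = f(ah)$ on $H^*$, and a one-line check shows that the $f$ with $h \cdot f = \qhepsilon(h) f$ are exactly the multiples of $\qhepsilon$; hence $\theta$ identifies left integrals in $H$ with $k\qhepsilon$, so this space is one-dimensional, and normalizing $\theta(\Lambda) = \qhepsilon$ gives $\lambda(a \Lambda) = \qhepsilon(a)$, in particular $\lambda(\Lambda) = 1$. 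The symmetric map $\theta'(x) = \lambda(x\, \cdot\,)$ handles right integrals, giving $\lambda(\Lambda a) = \qhepsilon(a)$. For the modular behaviour, $\Lambda h$ (resp.\ $h \Lambda$) is again a left (resp.\ right) integral, so equals $\chi(h) \Lambda$ for an algebra map $\chi$; I would then evaluate \eqref{eq:star} at $m = \Lambda$. For a right integral this collapses directly to $\chi(\overline{\qhS}(h)) = \qhmu(h)$, i.e.\ $h\Lambda = \overline{\qhmu}(h)\Lambda$ with $\overline{\qhmu} = \qhmu \circ \qhS$. For a left integral it instead gives $\qhmu(h_{(1)}) \chi(\qhS(h_{(2)})) = \qhepsilon(h)$, exhibiting $\chi \circ \qhS$ as a convolution right-inverse of $\qhmu$. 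To conclude $\chi = \qhmu$ I would note that the characters of $H$ form a group under convolution (closure and unit $\qhepsilon$ are clear, and associativity holds because applying a multiplicative functional to \eqref{eq:q-Hopf-def-1} makes the associator contributions cancel), while $\qhmu \circ \qhS$ is the inverse of $\qhmu$: applying $\qhmu$ to \eqref{eq:q-Hopf-def-5} gives $\qhmu(\qhbeta)\bigl(\qhmu(h_{(1)})\qhmu(\qhS(h_{(2)})) - \qhepsilon(h)\bigr) = 0$, and $\qhmu(\qhbeta) \ne 0$ since applying $\qhmu$ to \eqref{eq:q-Hopf-def-6} displays $\qhmu(\qhbeta)$ as a factor of $1$. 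Uniqueness of inverses in a group then forces $\chi \circ \qhS = \qhmu \circ \qhS$, whence $\chi = \qhmu$.

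The main obstacle I anticipate is less conceptual than organizational: deriving \eqref{eq:star} requires careful bookkeeping of the several module structures transported through $\Xi$, each defined via the structural elements and the antipode; and the identification $\chi = \qhmu$ in Part (2) genuinely relies on the fact that characters still form a group even though $\Delta$ is only quasi-coassociative, together with the nonvanishing $\qhmu(\qhbeta) \ne 0$.
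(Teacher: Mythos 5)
Your argument is correct, and for Part (1) it coincides with the paper's own derivation: your key identity $\lambda(\overline{\qhS}(h)\,m)=\qhmu(h_{(1)})\,\lambda(m\,\qhS(h_{(2)}))$ is exactly the displayed computation the paper performs with the $H$-linearity of $\Xi$ (evaluate $\Xi(h\cdot(\lambda\otimes h'))=h\cdot\Xi(\lambda\otimes h')$ at $1$), and the substitution $h\mapsto\qhS(a)$ plus nondegeneracy then forces the stated Nakayama automorphism. For Parts (2) and (3) the paper does not give a proof at all --- it attributes them to Hausser--Nill with the remark that they ``follow from basic results on Frobenius algebras'' --- so here you are supplying detail the paper omits rather than diverging from it. Your route is the standard one: the Frobenius isomorphisms $\Theta_{\qhL}$, $\Theta_{\qhR}$ intertwine left/right integrals with $k\qhepsilon$, giving one-dimensionality and $\lambda(\Lambda)\neq 0$; then evaluating the key identity at $m=\Lambda$ identifies the character governing $\Lambda h$ (resp.\ $h\Lambda$). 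The only place that deserves to be made fully explicit is the step $\chi=\qhmu$ in Part (2): you obtain only the one relation $\qhmu * (\chi\circ\qhS)=\qhepsilon$, so to cancel $\qhmu$ you need it to be two-sidedly convolution-invertible and you need associativity of the convolution of the three functionals involved. Both are available --- associativity holds among algebra maps because applying multiplicative functionals to \eqref{eq:q-Hopf-def-1} kills the associator, and both halves of \eqref{eq:q-Hopf-def-5} together with the invertibility of $\gamma(\qhalpha)$ and $\gamma(\qhbeta)$ (recorded in the paper just before Theorem~\ref{thm:Frobenius-dual-basis}, via \eqref{eq:q-Hopf-def-6}) give $(\gamma\circ\qhS)*\gamma=\qhepsilon=\gamma*(\gamma\circ\qhS)$ for every character $\gamma$ --- and you gesture at all of this, so it is a matter of presentation rather than a gap.
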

% To prove Part (2), we note that the space of left integrals in $H$ is $I_{\chi}$ with $\chi = \qhepsilon$ in the above notation. By the above-mentioned results on Frobenius algebras, $I_{\qhepsilon}$ is one-dimensional. Furthermore, if $\Lambda$ is a non-zero left integral in $H$, then we have $\lambda(\Lambda) \ne 0$ and $\Lambda h = \qhepsilon(\overline{\nu}(h)) \Lambda = \qhmu(h) \Lambda$ for all $h \in H$. Thus the proof of Part (2) is done. Part (3) is proved by applying the same argument to $H^{\op}$, which is also a Frobenius algebra with the same Frobenius form $\lambda$.

We give some remarks on the role of the isomorphism $\Xi$ in the proof of this theorem:
Let $\lambda$ be a non-zero left cointegral on $H$. By the argument before the above theorem, we may say that the bijectivity of the map $\Xi$ given by~\eqref{eq:Xi} implies that $H$ is a Frobenius algebra with Frobenius form $\lambda$. The $H$-linearity of $\Xi$ implies Part (1) of the above theorem. Indeed, we have
\begin{gather*}
    \lambda(\overline{\qhS}(h) \qhS(h'))
    = (h \cdot \Xi(\lambda \otimes h'))(1)
    = \Xi(h \cdot (\lambda \otimes h'))(1) \\
    = \Xi(h_{(1)} \triangleright \lambda \otimes h_{(2)} h')(1)
    = \qhmu(h_{(1)}) \lambda(\qhS(h_{(2)} h'))
    = \lambda(\qhS(h') \qhS(h \leftharpoonup \qhmu))
\end{gather*}
for all $h, h' \in H$. If we replace $h$ and $h'$ with $\qhS(h)$ and $\overline{\qhS}(h')$, respectively, then we obtain $\lambda(h h') = \lambda(h' \qhS(\qhS(h) \leftharpoonup \qhmu))$ as desired.

According to Hausser and Nill \cite{1999math4164H}, Parts (2) and (3) of the above theorem follow from basic results on Frobenius algebras. We point out that the $H$-colinearity of $\Xi$ is not mentioned so far. This property of $\Xi$ actually implies:

\begin{lemma}
  \label{lem:left-coint-characterization-1}
  Let $\lambda$ be a left cointegral on $H$. Then the equation
  \begin{equation}
    \label{eq:left-cointegral-0}
    \qhVL_1 h_{(1)} \qhUL_1 \, \langle \lambda, \qhVL_2 h_{(2)} \qhUL_2 \qhS(h') \rangle
    = \qhmu(\overline{\qhphi}_{1})
    \langle \lambda, h \qhS(\overline{\qhphi}_2 h'_{(1)}) \rangle
    \, \overline{\qhphi}_3 h'_{(2)}
  \end{equation}
  holds for all $h, h' \in H$.
\end{lemma}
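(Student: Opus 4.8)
The plan is to exploit the right $H$-colinearity of the isomorphism $\Xi$ of \eqref{eq:Xi}, which, as the preceding discussion emphasizes, has not yet been used. Recall that $\Xi \colon \cointl \hatotimes H \to {}^{\vee} H$ is a morphism in ${}_H^{} \Mod^H_H$; in particular it is a map of right $H$-comodules, so that
\begin{equation*}
  \delta_{{}^{\vee} H} \circ \Xi = (\Xi \hatotimes \id_H) \circ \delta_{\cointl \hatotimes H}
\end{equation*}
as maps $\cointl \hatotimes H \to {}^{\vee} H \hatotimes H$, where $\delta_{(-)}$ denotes the respective coactions. I would evaluate this identity on the element $\lambda \otimes h' \in \cointl \hatotimes H$ and then apply the linear map $\langle -, h \rangle \otimes \id_H \colon H^* \otimes H \to H$ that pairs the first tensor factor with $h \in H$. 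Matching the two resulting elements of $H$ should give \eqref{eq:left-cointegral-0}.

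For the left-hand side, the coaction $\delta_{{}^{\vee} H}$ is the one described in Lemma~\ref{lem:q-Hopf-bimod-dual}(ii), applied to $M = H$ viewed as a left quasi-Hopf bimodule with coaction $\Delta$, so that $m_{i(-1)} \otimes m_{i(0)} = \Delta(m_i)$. Since $\Xi(\lambda \otimes h') = \qhS(h') \rightharpoonup \lambda$, this yields
\begin{equation*}
  \delta_{{}^{\vee} H}(\Xi(\lambda \otimes h'))
  = m^i \otimes \qhVL_1 m_{i(1)} \qhUL_1 \, \langle \lambda, \qhVL_2 m_{i(2)} \qhUL_2 \qhS(h') \rangle .
\end{equation*}
Pairing the first factor with $h$ and using $\sum_i \langle m^i, h \rangle m_i = h$ together with the linearity of $\Delta$ collapses the sum and produces exactly the left-hand side of \eqref{eq:left-cointegral-0}.

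For the right-hand side, I would compute the free coaction $\delta_{\cointl \hatotimes H} = \Phi^{-1}_{\cointl, H, H} \circ (\id \hatotimes \Delta)$ of \eqref{eq:q-Hopf-bimod-free-coaction} explicitly. Transcribing the inverse associator of ${}_H \Mod_H$ and using that $\cointl$ carries the trivial right action and the left action $h \triangleright \lambda = \qhmu(h)\lambda$, one obtains a scalar factor $\qhmu(\overline{\qhphi}_1)\qhepsilon(\qhphi_1)$; the counit property \eqref{eq:q-Hopf-phi-eps} then removes the remaining $\qhphi$'s, leaving
\begin{equation*}
  \delta_{\cointl \hatotimes H}(\lambda \otimes h')
  = \qhmu(\overline{\qhphi}_1) \, (\lambda \otimes \overline{\qhphi}_2 h'_{(1)}) \otimes \overline{\qhphi}_3 h'_{(2)} .
\end{equation*}
Applying $\Xi \hatotimes \id_H$ and then pairing the first factor with $h$ via $\langle \qhS(\overline{\qhphi}_2 h'_{(1)}) \rightharpoonup \lambda, h \rangle = \langle \lambda, h \qhS(\overline{\qhphi}_2 h'_{(1)}) \rangle$ gives precisely the right-hand side of \eqref{eq:left-cointegral-0}.

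The main obstacle is bookkeeping rather than conceptual: one must correctly record the inverse associator of ${}_H \Mod_H$ for the free comodule $\cointl \hatotimes H$ and carefully track how the modular function enters through the left action on $\cointl$, so that the reduction via \eqref{eq:q-Hopf-phi-eps} is justified. Since \eqref{eq:left-cointegral-0} is linear in $\lambda$, it suffices to verify it for a single $\lambda \in \cointl$, and no non-vanishing hypothesis on $\lambda$ is needed.
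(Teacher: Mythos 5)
Your proposal is correct and follows essentially the same route as the paper's proof: both exploit the right $H$-colinearity of $\Xi$, evaluate the colinearity identity at $\lambda \otimes h'$ (computing the free coaction on $\cointl \hatotimes H$ via \eqref{eq:q-Hopf-bimod-free-coaction} and the coaction on ${}^{\vee}\!H$ via Lemma~\ref{lem:q-Hopf-bimod-dual}), and then pair the first tensor factor against $h$. The intermediate formulas you record, including $\delta(\lambda\otimes h') = \qhmu(\overline{\qhphi}_1)(\lambda\otimes\overline{\qhphi}_2 h'_{(1)})\otimes\overline{\qhphi}_3 h'_{(2)}$, match the paper's computation exactly.
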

\begin{proof}
  We fix a left cointegral $\lambda$ on $H$ and elements $h, h' \in H$. Let $\delta$ and $\delta'$ be the right coactions of $H$ on $\cointl \hatotimes H$ and ${}^{\vee} \! H$, respectively. Since the map $\Xi$ preserves the coactions, we have $F = G$, where
  \begin{equation*}
    F = ((\Xi \otimes \id_H) \circ \delta)(\lambda \otimes h')
    \quad \text{and} \quad
    G = (\delta' \circ \Xi)(\lambda \otimes h').
  \end{equation*}
  We recall that $\delta$ is given by \eqref{eq:q-Hopf-bimod-free-coaction} with $V = \cointl$, and the $H$-bimodule structure of $\cointl$ is given by $x \lambda y = \qhmu(x) \qhepsilon(y) \lambda$ for $x, y \in H$. Thus we have
  \begin{align*}
    \delta(\lambda \otimes h')
    = (\overline{\qhphi}_1 \lambda \qhphi_1 \otimes \overline{\qhphi}_2 h'_{(1)} \qhphi_2)
    \otimes \overline{\qhphi}_3 h'_{(2)} \qhphi_3
    = (\qhmu(\overline{\qhphi}_1) \lambda \otimes \overline{\qhphi}_2 h'_{(1)})
    \otimes \overline{\qhphi}_3 h'_{(2)}.
  \end{align*}
  Now we define a linear map $T_h: H^* \otimes H \to H$ by $T_h(\xi \otimes a) = \xi(h) a$. Then,
  \begin{equation*}
    T_h(F)
    = \Xi(\qhmu(\overline{\qhphi}_1) \lambda \otimes \overline{\qhphi}_2 h'_{(1)})(h)
    \cdot \overline{\qhphi}_3 h'_{(2)}
    = \qhmu(\overline{\qhphi}_{1})
    \langle \lambda, h \qhS(\overline{\qhphi}_2 h'_{(1)}) \rangle
    \, \overline{\qhphi}_3 h'_{(2)}.
  \end{equation*}
  We fix a basis $\{ e_i \}$ of $H$ and let $\{ e^i \}$ be the dual basis of $\{ e_i \}$. Then we have
  \begin{align*}
    T_h(G) = T_h(\delta'(\qhS(h') \rightharpoonup \lambda))
    & = \langle e^i, h \rangle \qhVL_1 e_{i(1)} \qhUL_1 \, \langle \qhS(h') \rightharpoonup \lambda,
    \qhVL_2 e_{i(2)} \qhUL_2 \rangle \\
    & = \qhVL_1 h_{(1)} \qhUL_1 \, \langle \lambda, \qhVL_2 h_{(2)} \qhUL_2 \qhS(h') \rangle
  \end{align*}
  by Lemma~\ref{lem:q-Hopf-bimod-dual}. Now equation~\eqref{eq:left-cointegral-0} follows from $T_h(F) = T_h(G)$.
\end{proof}

\subsection{Characterizations of cointegrals}

If $\lambda \in H^*$ is a left cointegral on $H$, then we have
\begin{equation}
  \label{eq:left-cointegral-1}
  \qhVL_1 h_{(1)} \qhUL_1 \, \langle \lambda, \qhVL_2 h_{(2)} \qhUL_2\rangle
  = \qhmu(\overline{\qhphi}_{1})
  \langle \lambda, h \qhS(\overline{\qhphi}_2) \rangle
  \, \overline{\qhphi}_3
\end{equation}
for all $h \in H$ by~\eqref{eq:left-cointegral-0}. The first appearance of \eqref{eq:left-cointegral-1} seems to be the proof of \cite[Proposition 3.4]{MR1995128}. Later, equation~\eqref{eq:left-cointegral-1} has been used as a definition of left cointegrals on $H$ in some literature including \cite{MR2086073,MR2363502}. However, it is not trivial that equation \eqref{eq:left-cointegral-1} characterizes left cointegrals on $H$. One can prove such a characterization by putting some arguments of \cite{1999math4164H,MR1995128,MR2862216} together. For reader's convenience, we here give a self-contained proof of the fact that \eqref{eq:left-cointegral-1} characterizes left cointegrals on $H$ and give some more characterizations of cointegrals. We first begin with the following technical lemma:

\begin{lemma}
  \label{lem:left-coint-characterization-2}
  If $\lambda \in H^*$ satisfies \eqref{eq:left-cointegral-1}, then the equation
  \begin{equation}
    \label{eq:left-cointegral-2}
    \qhqr_1 h_{(1)} \qhpr_1 \langle \lambda, \qhqr_2 h_{(2)} \qhpr_2 \rangle
    = \qhmu(\overline{\qhphi}_{1}) \langle \lambda,
    \overline{\qhS}(\qhql_1) h \qhS(\overline{\qhphi}_2 \qhpl_1) \rangle
    \qhql_2 \overline{\qhphi}_3 \qhpl_2
  \end{equation}
  holds for all $h \in H$.
\end{lemma}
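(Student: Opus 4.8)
The plan is to connect the hypothesis \eqref{eq:left-cointegral-1} to the desired identity \eqref{eq:left-cointegral-2} through the auxiliary element $g := \overline{\qhS}(\qhql_1)\, h\, \qhS(\qhpl_1)$. First I would rewrite the right-hand side of \eqref{eq:left-cointegral-2}. Since $\qhS$ is an anti-algebra map, $\qhS(\overline{\qhphi}_2 \qhpl_1) = \qhS(\qhpl_1)\qhS(\overline{\qhphi}_2)$, so that $\overline{\qhS}(\qhql_1)\, h\, \qhS(\overline{\qhphi}_2 \qhpl_1) = g\, \qhS(\overline{\qhphi}_2)$. Keeping the Sweedler indices of $\qhql$ and $\qhpl$ correlated with those appearing inside $g$, the right-hand side of \eqref{eq:left-cointegral-2} becomes
\begin{equation*}
  \qhql_2 \Big( \qhmu(\overline{\qhphi}_1)\, \langle \lambda, g\, \qhS(\overline{\qhphi}_2) \rangle\, \overline{\qhphi}_3 \Big) \qhpl_2 .
\end{equation*}
The bracketed expression is precisely the right-hand side of \eqref{eq:left-cointegral-1} evaluated at $g$, so by the hypothesis \eqref{eq:left-cointegral-1} (read from right to left, with $h$ replaced by $g$) it equals $\qhVL_1 g_{(1)} \qhUL_1 \, \langle \lambda, \qhVL_2 g_{(2)} \qhUL_2 \rangle$. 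Hence the right-hand side of \eqref{eq:left-cointegral-2} equals $\qhql_2\, \qhVL_1 g_{(1)} \qhUL_1\, \qhpl_2\, \langle \lambda, \qhVL_2 g_{(2)} \qhUL_2 \rangle$.

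After this step the claim reduces to the $\lambda$-free identity
\begin{equation*}
  \qhql_2\, \qhVL_1 g_{(1)} \qhUL_1\, \qhpl_2 \otimes \qhVL_2 g_{(2)} \qhUL_2
  = \qhqr_1 h_{(1)} \qhpr_1 \otimes \qhqr_2 h_{(2)} \qhpr_2
\end{equation*}
in $H \otimes H$, with $g = \overline{\qhS}(\qhql_1)\, h\, \qhS(\qhpl_1)$: pairing the second tensorand with $\lambda$ then turns the left-hand side of \eqref{eq:left-cointegral-2} into the expression obtained above, completing the proof. I would emphasize that the single application of \eqref{eq:left-cointegral-1} is genuinely needed, since the displayed identity is purely structural and carries no information about $\lambda$.

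To establish this last identity I would substitute $\qhUL = \qhql_{1(1)} \qhpr_1 \otimes \qhql_{1(2)} \qhpr_2 \qhS(\qhql_2)$ and $\qhVL = \qhqr_1 \qhpl_{1(1)} \otimes \overline{\qhS}(\qhpl_2) \qhqr_2 \qhpl_{1(2)}$, which are exactly \eqref{eq:q-Hopf-pquv-4} and \eqref{eq:q-Hopf-pquv-3} read through \eqref{eq:q-Hopf-def-UL} and \eqref{eq:q-Hopf-def-VL}; these substitutions introduce the target elements $\qhqr, \qhpr$ directly, leaving auxiliary copies of $\qhpl, \qhql$. Collecting the three resulting comultiplications into $\Delta(G)$ with $G = \qhpl_1 g \qhql_1$, the left-hand side takes the form $\qhql_2\, \qhqr_1 G_{(1)} \qhpr_1\, \qhpl_2 \otimes \overline{\qhS}(\qhpl_2)\, \qhqr_2 G_{(2)} \qhpr_2\, \qhS(\qhql_2)$ (with independent copies of $\qhpl, \qhql$ coming from the two substitutions). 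I would then cancel the auxiliary factors: the terms pairing $\overline{\qhS}$ with the $\qhpl$-factors are absorbed by the intertwining relation \eqref{eq:q-Hopf-pL}, the terms pairing $\qhS$ with the $\qhql$-factors by \eqref{eq:q-Hopf-qL}, after rewriting $\Delta(\qhS(\qhpl_1))$ and $\Delta(\overline{\qhS}(\qhql_1))$ by means of \eqref{eq:q-Hopf-f-1} and Lemma~\ref{lem:q-Hopf-f-op-cop}; the residual antipode expressions then collapse through the axioms \eqref{eq:q-Hopf-def-5}--\eqref{eq:q-Hopf-def-6}, while the associator is reorganized using the pentagon \eqref{eq:q-Hopf-def-3}.

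The hard part will be exactly this $\lambda$-free identity. It involves several simultaneously interacting copies of the elements $\qhpr, \qhqr, \qhpl, \qhql$ together with the associator $\qhphi$, so the main difficulty is one of bookkeeping: tracking the correlated Sweedler indices and applying the relations of Section~\ref{sec:quasi-hopf} in an order that makes all auxiliary $\qhpl$- and $\qhql$-factors cancel against the $\qhf$-factors produced by $\Delta \circ \qhS$ and $\Delta \circ \overline{\qhS}$. An in-principle cleaner, though essentially equivalent, route is to recognize the hypothesis \eqref{eq:left-cointegral-1} as the explicit form of the right $H$-coaction on ${}^{\vee}\!H$ at $\lambda$ (as in the proof of Lemma~\ref{lem:left-coint-characterization-1}) and to read \eqref{eq:left-cointegral-2} off from that coaction; but carrying this out still amounts to the same calculation with $\qhqr, \qhpr, \qhpl, \qhql$.
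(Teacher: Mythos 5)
Your proposal is correct and follows essentially the same route as the paper: the paper likewise rewrites the right-hand side so as to apply \eqref{eq:left-cointegral-1} at the shifted element $\overline{\qhS}(\qhql_1)\, h\, \qhS(\qhpl_1)$, and then invokes the identities $(\qhql_2 \otimes 1)\, \qhVL\, \Delta(\overline{\qhS}(\qhql_1)) = \qhqr$ and $\Delta(\qhS(\qhpl_1))\, \qhUL\, (\qhpl_2 \otimes 1) = \qhpr$ (its \eqref{eq:HN-Eq-7.2,7.3}, due to Hausser--Nill), which are exactly your $\lambda$-free identity split into its two halves. The paper proves these from \eqref{eq:q-Hopf-pquv-3}--\eqref{eq:q-Hopf-pquv-4} together with Lemma~\ref{lem:q-Hopf-f-op-cop} and \eqref{eq:q-Hopf-f-1}, the same ingredients you list, just substituted in the opposite direction.
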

\begin{proof}
  Bulacu and Caenepeel \cite[Lemma 3.6]{MR2862216} showed that a left cointegral on $H$ satisfies~\eqref{eq:left-cointegral-2}, but their proof actually shows that \eqref{eq:left-cointegral-1} implies \eqref{eq:left-cointegral-2}. For the sake of completeness, we give a detailed proof. We first recall from \cite{1999math4164H} that the following equations hold:
  \begin{equation}
    \label{eq:HN-Eq-7.2,7.3}
    (\qhql_2 \otimes 1) \qhVL \Delta(\overline{\qhS}(\qhql_1)) = \qhqr,
    \quad
    \Delta(\qhS(\qhpl_1)) \qhUL (\qhpl_2 \otimes 1) = \qhpr.
  \end{equation}
  The first one is proved as follows:
  Set $\check{\qhf} = \overline{\qhS}(\qhf_2) \otimes \overline{\qhS}(\qhf_1)$.
  We have $\qhf_{\cop} = \check{\qhf}_{21}$ with notation of Lemma \ref{lem:q-Hopf-f-op-cop}. Thus, by applying \eqref{eq:q-Hopf-f-1} to $H^{\cop}$, we have
  \begin{equation}
    \label{eq:q-Hopf-f-1-cop}
    \check{\qhf} \cdot \Delta(\overline{\qhS}(h)) = (\overline{\qhS}(h_{(2)}) \otimes \overline{\qhS}(h_{(1)})) \cdot \check{\qhf}
  \end{equation}
  for all $h \in H$. We now compute:
  \begin{align*}
    & (\qhql_2 \otimes 1) \qhano{\qhVL} \Delta(\overline{\qhS}(\qhql_1)) \\
    {}^{\eqref{eq:q-Hopf-def-VL}}
    & = (\qhql_2 \otimes 1) \cdot (\overline{\qhS}(\qhpr_2) \otimes \overline{\qhS}(\qhpr_1))
      \cdot \qhano{\check{\qhf} \cdot \Delta(\overline{\qhS}(\qhql_1))} \\
    {}^{\eqref{eq:q-Hopf-f-1-cop}}
    & = (\qhql_2 \otimes 1) \cdot (\overline{\qhS}(\qhpr_2) \otimes \overline{\qhS}(\qhpr_1))
      \cdot (\overline{\qhS}(\qhql_{1(2)}) \otimes \overline{\qhS}(\qhql_{1(1)}))
      \cdot \check{\qhf} \\
    & = (\overline{\qhS} \otimes \overline{\qhS})
      (\qhano{\qhql_{1(2)} \qhpr_2 \qhS(\qhql_2)}
      \otimes \qhano{\qhql_{1(1)} \qhpr_1})
      \cdot \check{\qhf} \\
    {}^{\eqref{eq:q-Hopf-pquv-4}}
    & = (\overline{\qhS} \otimes \overline{\qhS})
      (\overline{\qhf}_2 \qhS(\qhqr_1) \otimes \overline{\qhf}_1 \qhS(\qhqr_2))
      \cdot \check{\qhf}
      = \qhqr.
  \end{align*}
  The second one is proved in a similar way. Now we suppose that $\lambda \in H^*$ satisfies~\eqref{eq:left-cointegral-1}. Then, for all $h \in H$, we have
  \begin{align*}
    & \qhmu(\overline{\qhphi}_{1}) \langle \lambda,
    \overline{\qhS}(\qhql_1) h \qhano{\qhS(\overline{\qhphi}_2 \qhpl_1)} \rangle
      \qhql_2 \overline{\qhphi}_3 \qhpl_2 \\
    & = \qhql_2 \cdot \qhano{\qhmu(\overline{\qhphi}_{1}) \langle \lambda,
      \overline{\qhS}(\qhql_1) h \qhS(\qhpl_1) \cdot \qhS(\overline{\qhphi}_2) \rangle
      \overline{\qhphi}_3} \cdot \qhpl_2 \\
    {}^{\eqref{eq:left-cointegral-1}}
    & = \qhql_2 \cdot \qhVL_1 (\overline{\qhS}(\qhql_1) h \qhS(\qhpl_1))_{(1)} \qhUL_1
      \, \langle \lambda, \qhVL_2 (\overline{\qhS}(\qhql_1) h \qhS(\qhpl_1))_{(2)} \qhUL_2 \rangle \cdot  \qhpl_2 \\
    & = \qhano{\qhql_2 \qhVL_1 \overline{\qhS}(\qhql_1)_{(1)}}
      h_{(1)} \qhano{\qhS(\qhpl_1)_{(1)} \qhUL_1 \qhpl_2}
      \, \langle \lambda, \qhano{\qhVL_2 \overline{\qhS}(\qhql_1)_{(2)}}
      h_{(2)} \qhano{\qhS(\qhpl_1)_{(2)} \qhUL_2} \rangle \\
    {}^{\eqref{eq:HN-Eq-7.2,7.3}}
    & = \qhqr_1 h_{(1)} \qhpr_1 \langle \lambda, \qhqr_2 h_{(2)} \qhpr_2 \rangle. \qedhere
  \end{align*}
\end{proof}

\begin{lemma}
  \label{lem:left-coint-characterization-3}
  Suppose that $\lambda \in H^*$ satisfies~\eqref{eq:left-cointegral-2}. Then we have
  \begin{equation}
    \label{eq:left-cointegral-4}
    \qhqr_1 \Lambda_{(1)} \qhpr_1 \langle \lambda, \qhqr_2 \Lambda_{(2)} \qhpr_2 \rangle
    = \qhmu(\qhbeta) \lambda(\Lambda) 1
  \end{equation}
  for all left integrals $\Lambda$ in $H$.
\end{lemma}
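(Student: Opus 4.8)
The plan is to read off the statement from the hypothesis \eqref{eq:left-cointegral-2} by evaluating it at $h = \Lambda$. Its left-hand side is \emph{literally} the left-hand side of \eqref{eq:left-cointegral-4}, so everything reduces to showing that the right-hand side of \eqref{eq:left-cointegral-2} collapses to $\qhmu(\qhbeta)\lambda(\Lambda)\,1$ once $h$ is a left integral. We may assume $\Lambda \neq 0$ (otherwise both sides vanish) and then invoke Theorem~\ref{thm:left-coint}.

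First I would kill the factor $\overline{\qhS}(\qhql_1)$ using the defining property $x\Lambda = \qhepsilon(x)\Lambda$ of a left integral. Since $\qhepsilon \circ \overline{\qhS} = \qhepsilon$ (a consequence of \eqref{eq:q-Hopf-f-1}), we have $\overline{\qhS}(\qhql_1)\Lambda = \qhepsilon(\qhql_1)\Lambda$, and then $\qhepsilon(\qhql_1)\qhql_2 = 1$ by the explicit form \eqref{eq:q-Hopf-def-qL} of $\qhql$ together with $\qhepsilon(\qhalpha) = 1$ from \eqref{eq:q-Hopf-def-7} and the normalization \eqref{eq:q-Hopf-phi-eps}. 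This reduces the right-hand side of \eqref{eq:left-cointegral-2} to
\begin{equation*}
  \qhmu(\overline{\qhphi}_1)\,\langle\lambda, \Lambda\,\qhS(\overline{\qhphi}_2\qhpl_1)\rangle\,\overline{\qhphi}_3\qhpl_2.
\end{equation*}
The key second move is to use the \emph{other} integral identity: since $\Lambda$ is a nonzero left integral, Theorem~\ref{thm:left-coint} gives $\Lambda g = \qhmu(g)\Lambda$ for all $g \in H$. Applying this with $g = \qhS(\overline{\qhphi}_2\qhpl_1)$ converts the pairing into the scalar $\qhmu(\qhS(\overline{\qhphi}_2\qhpl_1))\,\lambda(\Lambda)$, so the whole problem is reduced to the scalar-valued identity in $H$
\begin{equation*}
  \qhmu(\overline{\qhphi}_1)\,\qhmu(\qhS(\overline{\qhphi}_2\qhpl_1))\,\overline{\qhphi}_3\qhpl_2 = \qhmu(\qhbeta)\,1 .
\end{equation*}

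The final step is this identity, which I expect to be the only delicate point. Writing $\overline{\qhmu} = \qhmu \circ \qhS$ and noting that $\overline{\qhmu}$ is an algebra map (as $\qhS$ is an anti-algebra map and $k$ is commutative), the map $\Theta := \qhmu \otimes \overline{\qhmu} \otimes \id \colon H^{\otimes 3} \to H$ is an algebra homomorphism. Expanding $\qhpl$ by \eqref{eq:q-Hopf-def-pL} and simplifying $\qhS(\overline{\qhphi}_2\qhpl_1) = \qhS(\qhpl_1)\qhS(\overline{\qhphi}_2)$ with $\qhS \overline{\qhS} = \id$, the left-hand side factors through $\Theta$: the $\overline{\qhphi}$-contribution assembles into $\Theta(\qhphi^{-1})$, while $\overline{\qhmu}(\qhpl_1)\qhpl_2 = \qhmu(\qhbeta)\,\Theta(\qhphi)$. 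Hence the left-hand side equals $\qhmu(\qhbeta)\,\Theta(\qhphi^{-1})\Theta(\qhphi) = \qhmu(\qhbeta)\,\Theta(\qhphi^{-1}\qhphi) = \qhmu(\qhbeta)\,1$, as desired. The main obstacle is thus purely the bookkeeping that recognizes the two associator contributions as $\Theta(\qhphi^{-1})$ and $\Theta(\qhphi)$; remarkably, no use of the pentagon axiom is needed once both integral identities of Theorem~\ref{thm:left-coint} are brought to bear.
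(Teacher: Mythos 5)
Your proposal is correct and follows essentially the same route as the paper: specialize \eqref{eq:left-cointegral-2} at $h=\Lambda$, use $x\Lambda=\qhepsilon(x)\Lambda$ and $\Lambda g=\qhmu(g)\Lambda$ (Theorem~\ref{thm:left-coint}) to strip off $\overline{\qhS}(\qhql_1)$ and $\qhS(\overline{\qhphi}_2\qhpl_1)$, then substitute the definitions of $\qhpl,\qhql$ and cancel the two associator contributions. Your packaging of the final cancellation as $\Theta(\qhphi^{-1})\Theta(\qhphi)=\Theta(\qhphi^{-1}\qhphi)=1$ for the algebra map $\Theta=\qhmu\otimes\overline{\qhmu}\otimes\id$ is exactly what the paper does implicitly in its last line, just stated more conceptually.
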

\begin{proof}
  If $\Lambda$ is a left integral in $H$, then we have
  \begin{align*}
    & \qhqr_1 \Lambda_{(1)} \qhpr_1 \langle \lambda, \qhqr_2 \Lambda_{(2)} \qhpr_2 \rangle \\
    {}^{\eqref{eq:left-cointegral-2}}
    & = \qhmu(\overline{\qhphi}_{1}) \langle \lambda,
      \qhano{\overline{\qhS}(\qhql_1) \Lambda \qhS(\overline{\qhphi}_2 \qhpl_1)} \rangle
      \qhql_2 \overline{\qhphi}_3 \qhpl_2 \\
    {}^{\text{(Theorem \ref{thm:left-coint})}}
    & = \qhmu(\overline{\qhphi}_{1}) \qhmu(\qhS(\overline{\qhphi}_2 \qhano{\qhpl_1}))
      \qhepsilon(\overline{\qhS}(\qhano{\qhql_1}))
      \langle \lambda, \Lambda \rangle
      \qhano{\qhql_2} \overline{\qhphi}_3 \qhano{\qhpl_2} \\
    {}^{\eqref{eq:q-Hopf-def-pR}, \eqref{eq:q-Hopf-def-pL}}
    & = \qhmu(\overline{\qhphi}_{1}) \qhmu(\qhphi_1 \qhbeta \qhS(\qhphi_2) \qhS(\overline{\qhphi}_2))
      \qhepsilon(\qhS(\overline{\qhphi}{}'_1) \qhalpha \overline{\qhphi}{}'_2)
      \langle \lambda, \Lambda \rangle
      \overline{\qhphi}{}'_3 \overline{\qhphi}_3 \qhphi_3 \\
    {}^{\eqref{eq:q-Hopf-def-4}, \eqref{eq:q-Hopf-def-7}}
    & = \qhmu(\overline{\qhphi}_{1} \qhphi_1) \qhmu(\qhbeta) \qhmu(\qhS(\overline{\qhphi}_2 \qhphi_{2}))
      \langle \lambda, \Lambda \rangle \overline{\qhphi}_3 \qhphi_3
    = \qhmu(\qhbeta) \langle \lambda, \Lambda \rangle. \qedhere
  \end{align*}
\end{proof}

We note that, for any algebra map $\gamma: H \to k$, both $\gamma(\qhalpha)$ and $\gamma(\qhbeta)$ are invertible elements of $k$. Indeed, by applying $\gamma$ to \eqref{eq:q-Hopf-def-6}, we obtain
\begin{equation*}
  \gamma(\qhalpha) \cdot \gamma(\qhbeta) \cdot \gamma(\qhphi_{1} \qhS(\qhphi_{2}) \qhphi_3) = 1.
\end{equation*}
Let $\lambda$ be a non-zero left cointegral on $H$. Then $H$ is a Frobenius algebra with Frobenius form $\lambda$. In other words, the map $H \to H^*$ given by $h \mapsto h \rightharpoonup \lambda$ ($h \in H$) is bijective. The inverse of this map is given explicitly as follows:

\begin{theorem}
  \label{thm:Frobenius-dual-basis}
  We fix a non-zero left cointegral $\lambda$ on $H$. Then the linear maps
  \begin{equation*}
    \Theta_{\qhL}, \Theta_{\qhR}: H \to H^*,
    \quad 
    \Theta_{\qhL}(h) = h \rightharpoonup \lambda,
    \quad
    \Theta_{\qhR}(h) = \lambda \leftharpoonup h
    \quad (h \in H)
  \end{equation*}
  are bijective. Let $\Lambda$ be a left integral in $H$ such that $\langle \lambda, \Lambda \rangle = \qhmu(\qhbeta)^{-1}$, and set
  \begin{equation}
    \label{eq:def-Lambda-tilde}
    \widetilde{\Lambda} = \qhqr_1 \Lambda_{(1)} \qhpr_1 \otimes \qhqr_2 \Lambda_{(2)} \qhpr_2.
  \end{equation}
  Then the inverses of $\Theta_{\qhL}$ and $\Theta_{\qhR}$ are given respectively by
  \begin{equation*}
    \Theta_{\qhL}^{-1}(\xi) = \qhS( \widetilde{\Lambda}_1 \leftharpoonup \qhmu) \, \langle \xi, \widetilde{\Lambda}_2 \rangle
    \quad \text{and} \quad
    \Theta_{\qhR}^{-1}(\xi) = \overline{\qhS}(\widetilde{\Lambda}_1) \, \langle \xi, \widetilde{\Lambda}_2 \rangle
    \quad (\xi \in H^*).
  \end{equation*}
\end{theorem}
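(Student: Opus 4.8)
The plan is to dispose of the bijectivity first, then pin down the inverses by verifying a single ``dual basis'' identity, and finally deduce the second formula from the first via the Nakayama automorphism. For the bijectivity of $\Theta_{\qhL}$ I appeal directly to the discussion preceding Theorem~\ref{thm:left-coint}: the bijectivity of the isomorphism $\Xi$ of~\eqref{eq:Xi} is exactly the statement that $h \mapsto h \rightharpoonup \lambda$ is bijective, which is the Frobenius property of $H$. For $\Theta_{\qhR}$ I observe that, for all $h, x \in H$,
\[
  \langle \Theta_{\qhR}(h), x \rangle = \langle \lambda, h x \rangle = \langle \lambda, x \, \nu(h) \rangle = \langle \Theta_{\qhL}(\nu(h)), x \rangle,
\]
where $\nu$ is the Nakayama automorphism from Theorem~\ref{thm:left-coint}; hence $\Theta_{\qhR} = \Theta_{\qhL} \circ \nu$, which is bijective because $\nu$ is an algebra automorphism.

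With the normalization $\langle \lambda, \Lambda \rangle = \qhmu(\qhbeta)^{-1}$ assumed in the statement, Lemma~\ref{lem:left-coint-characterization-3} gives the partition of unity $\langle \lambda, \widetilde{\Lambda}_2 \rangle \, \widetilde{\Lambda}_1 = 1$ (applicable since a left cointegral satisfies~\eqref{eq:left-cointegral-1}, hence~\eqref{eq:left-cointegral-2} by Lemma~\ref{lem:left-coint-characterization-2}). Since $\Theta_{\qhR}$ is already known to be bijective, to establish the formula for $\Theta_{\qhR}^{-1}$ it suffices to check that the candidate $\xi \mapsto \overline{\qhS}(\widetilde{\Lambda}_1) \, \langle \xi, \widetilde{\Lambda}_2 \rangle$ is a right inverse of $\Theta_{\qhR}$. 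Composing and pairing against $x \in H$, and using that the result must hold for all $\xi \in H^*$, this reduces to the single identity
\[
  \langle \lambda, \overline{\qhS}(\widetilde{\Lambda}_1) \, x \rangle \, \widetilde{\Lambda}_2 = x \qquad (x \in H).
\]

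This identity is the crux, and the step I expect to be the main obstacle. Expanding $\widetilde{\Lambda}_1 \otimes \widetilde{\Lambda}_2 = \qhqr_1 \Lambda_{(1)} \qhpr_1 \otimes \qhqr_2 \Lambda_{(2)} \qhpr_2$, it becomes a statement relating the cointegral $\lambda$ and the integral $\Lambda$. I would prove it using the characterizations of cointegrals from this subsection---principally relation~\eqref{eq:left-cointegral-2}---together with the fact that $\Lambda$ is a left integral satisfying $\Lambda g = \qhmu(g) \Lambda$ (Theorem~\ref{thm:left-coint}), which collapses the comultiplication of $\Lambda$, and with the partition of unity of the previous step as the final normalization. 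The genuine difficulty lies in the bookkeeping: one must carry the extra factor $x$ and the antipode $\overline{\qhS}$ through the reassociator $\qhphi$ and the elements $\qhpr, \qhqr, \qhpl, \qhql$, using the transport rules~\eqref{eq:q-Hopf-pR}--\eqref{eq:q-Hopf-qL} and the antipode axioms~\eqref{eq:q-Hopf-def-5} and~\eqref{eq:q-Hopf-def-6}, so that the whole expression telescopes down to $x$.

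Once the $\Theta_{\qhR}^{-1}$ formula is in hand, the formula for $\Theta_{\qhL}^{-1}$ follows formally. From $\Theta_{\qhR} = \Theta_{\qhL} \circ \nu$ I get $\Theta_{\qhL}^{-1} = \nu \circ \Theta_{\qhR}^{-1}$, so
\[
  \Theta_{\qhL}^{-1}(\xi) = \nu\big( \overline{\qhS}(\widetilde{\Lambda}_1) \big) \, \langle \xi, \widetilde{\Lambda}_2 \rangle.
\]
Using the explicit Nakayama formula $\nu(h) = \qhS(\qhS(h) \leftharpoonup \qhmu)$ from Theorem~\ref{thm:left-coint} and the relation $\qhS \, \overline{\qhS} = \id$, one computes $\nu(\overline{\qhS}(\widetilde{\Lambda}_1)) = \qhS(\qhS(\overline{\qhS}(\widetilde{\Lambda}_1)) \leftharpoonup \qhmu) = \qhS(\widetilde{\Lambda}_1 \leftharpoonup \qhmu)$, which is exactly the stated expression for $\Theta_{\qhL}^{-1}$. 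In this way the entire theorem rests on a single nontrivial computation, the dual basis identity above.
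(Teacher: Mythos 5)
Your skeleton matches the paper's: bijectivity from the Frobenius property, $\Theta_{\qhR} = \Theta_{\qhL} \circ \nu$ via the Nakayama automorphism, a direct verification of one inverse formula, and the deduction of $\Theta_{\qhL}^{-1} = \nu \circ \Theta_{\qhR}^{-1}$ with the computation $\nu(\overline{\qhS}(\widetilde{\Lambda}_1)) = \qhS(\widetilde{\Lambda}_1 \leftharpoonup \qhmu)$, which is exactly the paper's closing step. However, the proof is not complete: the identity you correctly isolate as the crux, $\langle \lambda, \overline{\qhS}(\widetilde{\Lambda}_1) x \rangle \, \widetilde{\Lambda}_2 = x$, is only announced, with a plan (``use \eqref{eq:left-cointegral-2}, collapse the comultiplication of $\Lambda$, do the bookkeeping'') rather than a proof. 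Since you yourself flag this as the main obstacle and the entire theorem rests on it, this is a genuine gap, not a routine omission.

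Moreover, your choice to verify the \emph{right}-inverse property makes the crux harder than it needs to be. The paper's device is the transport identity
\begin{equation*}
  h \widetilde{\Lambda}_1 \otimes \widetilde{\Lambda}_2
  = \widetilde{\Lambda}_1 \otimes \overline{\qhS}(h) \widetilde{\Lambda}_2
  \qquad (h \in H),
\end{equation*}
which follows in three lines from \eqref{eq:q-Hopf-qR} and $x \Lambda = \qhepsilon(x) \Lambda$. With it, the \emph{left}-inverse verification is immediate: $\overline{\Theta}\Theta_{\qhR}(h) = \overline{\qhS}(\widetilde{\Lambda}_1)\langle \lambda, h\widetilde{\Lambda}_2\rangle = \overline{\qhS}(\qhS(h)\widetilde{\Lambda}_1)\langle\lambda,\widetilde{\Lambda}_2\rangle = h\,\overline{\qhS}\bigl(\widetilde{\Lambda}_1\langle\lambda,\widetilde{\Lambda}_2\rangle\bigr) = h$, the last step being exactly the partition of unity \eqref{eq:left-cointegral-4}. (A left inverse of a bijection is the inverse, so this suffices.) If you instead push your right-inverse identity through the same transport identity, you arrive at needing $\langle\lambda,\overline{\qhS}(\widetilde{\Lambda}_1)\rangle\,\widetilde{\Lambda}_2 = 1$, which pairs $\lambda\circ\overline{\qhS}$ against the \emph{first} leg of $\widetilde{\Lambda}$ and is \emph{not} the normalization supplied by Lemma~\ref{lem:left-coint-characterization-3}; so the ``final normalization'' you invoke does not directly close your argument. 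I recommend you prove the transport identity and switch to the left-inverse check.
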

\begin{proof}
  Since $H$ is a Frobenius algebra with Frobenius form $\lambda$, the maps $\Theta_{\qhL}$ and $\Theta_{\qhR}$ are bijective. Let $\nu$ be the Nakayama automorphism of $H$ with respect to $\lambda$. The defining formula~\eqref{eq:def-Nakayama-auto} of $\nu$ implies the equation $\Theta_{\qhR} = \Theta_{\qhL} \circ \nu$. Hence,
  \begin{equation}
    \label{eq:Frobenius-dual-basis-proof-1}
    \Theta_{\qhL}^{-1} = \nu \circ \Theta_{\qhR}^{-1}.
  \end{equation}
  To prove the formula for $\Theta_{\qhR}^{-1}$, we note:
  \begin{equation}
    \label{eq:Lambda-tilde-H}
    h \widetilde{\Lambda}_1 \otimes \widetilde{\Lambda}_2
    = \widetilde{\Lambda}_1 \otimes  \overline{\qhS}(h) \widetilde{\Lambda}_2
    \quad (h \in H).
  \end{equation}
  Indeed, for all $h \in H$, we have
  \begin{align*}
    h \widetilde{\Lambda}_1 \otimes \widetilde{\Lambda}_2
    & = \qhano{h \qhqr_1} \Lambda_{(1)} \qhpr_1 \otimes \qhano{\qhqr_2} \Lambda_{(2)} \qhpr_2 \\
    {}^{\eqref{eq:q-Hopf-qR}}
    & = \qhqr_1 h_{(1,1)} \Lambda_{(1)} \qhpr_1 \otimes \overline{\qhS}(h_{(2)}) \qhqr_2 h_{(1,2)} \Lambda_{(2)} \qhpr_2 \\
    {}^{(x \Lambda = \qhepsilon(x) \Lambda)}
    & = \qhepsilon(h_{(1)}) \qhqr_1 \Lambda_{(1)} \qhpr_1 \otimes \overline{\qhS}(h_{(2)}) \qhqr_2 \Lambda_{(2)} \qhpr_2 
      = \widetilde{\Lambda}_1 \otimes  \overline{\qhS}(h) \widetilde{\Lambda}_2.
  \end{align*}
  Set $\overline{\Theta}(\xi) = \overline{\qhS}(\widetilde{\Lambda}_1) \langle \xi, \widetilde{\Lambda}_2 \rangle$. By Lemmas~\ref{lem:left-coint-characterization-2} and~\ref{lem:left-coint-characterization-3}, equation \eqref{eq:left-cointegral-4} holds. Hence,
  \begin{gather*}
    \overline{\Theta} \Theta_{\qhR}(h)
    = \overline{\qhS}(\widetilde{\Lambda}_1) \langle \lambda, h \widetilde{\Lambda}_2 \rangle
    = \overline{\qhS}(\qhS(h) \widetilde{\Lambda}_1) \langle \lambda, \widetilde{\Lambda}_2 \rangle \\
    = \overline{\qhS}(\widetilde{\Lambda}_1) h \langle \lambda, \widetilde{\Lambda}_2 \rangle
    \stackrel{\eqref{eq:left-cointegral-4}}{=} h \overline{\qhS}(1) = h.
  \end{gather*}
  This implies $\Theta_{\qhR}^{-1} = \overline{\Theta}$. The expression for $\Theta_{\qhL}^{-1}$ is obtained by \eqref{eq:Frobenius-dual-basis-proof-1} and the explicit description of the Nakayama automorphism $\nu$ given in Theorem~\ref{thm:left-coint}.
\end{proof}

Now we give the following characterizations of cointegrals:

\begin{theorem}
  \label{thm:left-coint-characterization}
  For $\lambda \in H^*$, the following are equivalent:
  \begin{enumerate}
  \item $\lambda$ is a left cointegral on $H$.
  \item For all $h \in H$, the following equation holds:
    \begin{equation}
      \tag{$=$ \eqref{eq:left-cointegral-1}}
      \qhVL_1 h_{(1)} \qhUL_1
      \, \langle \lambda, \qhVL_2 h_{(2)} \qhUL_2 \rangle
      = \qhmu(\overline{\qhphi}_{1})
      \langle \lambda, h \qhS(\overline{\qhphi}_2) \rangle
      \, \overline{\qhphi}_3.
    \end{equation}
  \item For all $h \in H$, the following equation holds:
    \begin{equation}
      \tag{$=$ \eqref{eq:left-cointegral-2}}
      \qhqr_1 h_{(1)} \qhpr_1 \langle \lambda, \qhqr_2 h_{(2)} \qhpr_2 \rangle
      = \qhmu(\overline{\qhphi}_{1}) \langle \lambda,
      \overline{\qhS}(\qhql_1) h \qhS(\overline{\qhphi}_2 \qhpl_1) \rangle
      \qhql_2 \overline{\qhphi}_3 \qhpl_2.
    \end{equation}
  \item For all $h \in H$, the following equation holds:
    \begin{equation}
      \label{eq:left-cointegral-3}
      \qhVL_1 h_{(1)} \qhWL_1
      \, \langle \lambda, \qhVL_2 h_{(2)} \qhWL_2 \rangle
      = \langle \lambda, h \rangle \, 1,
    \end{equation}
    where $\qhWL$ is the element of $H^{\otimes 2}$ defined by
    \begin{equation}
      \label{eq:q-Hopf-def-WL}
      \qhWL = \qhmu(\overline{\qhphi}_1) \overline{\qhf}_1 \qhS(\qhqr_2 \overline{\qhphi}_3)
      \otimes \overline{\qhf}_2 \qhS((\qhqr_1 \leftharpoonup \qhmu) \overline{\qhphi}_2).
    \end{equation}
  \item For all left integrals $\Lambda$ in $H$, the following equation holds:
    \begin{equation}
      \tag{$=$ \eqref{eq:left-cointegral-4}}
      \qhqr_1 \Lambda_{(1)} \qhpr_1 \langle \lambda, \qhqr_2 \Lambda_{(2)} \qhpr_2 \rangle
      = \qhmu(\qhbeta) \lambda(\Lambda) 1.
    \end{equation}
  \end{enumerate}
\end{theorem}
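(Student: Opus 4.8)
The plan is to prove the cyclic chain $(1)\Rightarrow(2)\Rightarrow(3)\Rightarrow(5)\Rightarrow(1)$, which absorbs most of the work already done in the preceding lemmas, and then to splice in condition $(4)$ through one reversible computation. The forward arc is essentially immediate. The implication $(1)\Rightarrow(2)$ is the special case $h'=1$ of Lemma~\ref{lem:left-coint-characterization-1}: then $h'_{(1)}\otimes h'_{(2)}=1\otimes 1$ and, using $\qhS(1)=1$, equation \eqref{eq:left-cointegral-0} becomes precisely \eqref{eq:left-cointegral-1}. The implications $(2)\Rightarrow(3)$ and $(3)\Rightarrow(5)$ are exactly Lemmas~\ref{lem:left-coint-characterization-2} and~\ref{lem:left-coint-characterization-3}, respectively.

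To close the cycle I would prove $(5)\Rightarrow(1)$ using the explicit inverse Frobenius pairing of Theorem~\ref{thm:Frobenius-dual-basis}. Fix a non-zero left cointegral $\lambda_0$, a left integral $\Lambda$ with $\langle\lambda_0,\Lambda\rangle=\qhmu(\qhbeta)^{-1}$, and the element $\widetilde{\Lambda}$ of \eqref{eq:def-Lambda-tilde}, so that $\Theta_{\qhR}^{-1}(\xi)=\overline{\qhS}(\widetilde{\Lambda}_1)\,\langle\xi,\widetilde{\Lambda}_2\rangle$. If $\lambda\in H^*$ satisfies \eqref{eq:left-cointegral-4} (in particular for this $\Lambda$), then applying the linear map $\overline{\qhS}$ to that identity and using $\overline{\qhS}(1)=1$ gives $\Theta_{\qhR}^{-1}(\lambda)=\overline{\qhS}(\qhqr_1\Lambda_{(1)}\qhpr_1\langle\lambda,\qhqr_2\Lambda_{(2)}\qhpr_2\rangle)=\qhmu(\qhbeta)\,\langle\lambda,\Lambda\rangle\,1\in k\cdot 1$. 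Applying the bijection $\Theta_{\qhR}$ then yields $\lambda=\qhmu(\qhbeta)\langle\lambda,\Lambda\rangle\,\Theta_{\qhR}(1)=\qhmu(\qhbeta)\langle\lambda,\Lambda\rangle\,\lambda_0$, since $\Theta_{\qhR}(1)=\lambda_0\leftharpoonup 1=\lambda_0$. Hence $\lambda\in k\lambda_0=\cointl$, so $\lambda$ is a left cointegral. This is the conceptual heart of the new content and is short; together with the forward arc it establishes the equivalence of $(1)$, $(2)$, $(3)$ and $(5)$.

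It then remains to place $(4)$ in this chain, and I would do so by showing that \eqref{eq:left-cointegral-1} and \eqref{eq:left-cointegral-3} are equivalent through a single reversible computation. The point is that the element $\qhWL$ of \eqref{eq:q-Hopf-def-WL} is engineered precisely to absorb the associator-correction $\qhmu(\overline{\qhphi}_1)\langle\lambda,h\qhS(\overline{\qhphi}_2)\rangle\,\overline{\qhphi}_3$ appearing on the right-hand side of \eqref{eq:left-cointegral-1}. Peeling the scalar $\qhmu(\overline{\qhphi}_1)$ and the $\qhS$-twisted $\qhphi$-factors off $\qhWL$ exposes a $\qhUL$-shaped core $\qhVL_1 h_{(1)}\qhUL_1\langle\lambda,\qhVL_2 h_{(2)}\qhUL_2\rangle$ inside the left-hand side of \eqref{eq:left-cointegral-3}; replacing this core by the right-hand side of \eqref{eq:left-cointegral-1} and reorganizing the factors $\qhf$, $\qhpr$, $\qhqr$, $\qhpl$, $\qhql$ and $\qhphi$ by the identities of Section~\ref{sec:quasi-hopf} (notably Lemma~\ref{lem:q-Hopf-pR-qL}) together with the Nakayama description $\nu(h)=\qhS(\qhS(h)\leftharpoonup\qhmu)$ of Theorem~\ref{thm:left-coint} collapses the expression to $\langle\lambda,h\rangle\,1$. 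Since each step is an equality that can be read backwards, this simultaneously yields $(2)\Rightarrow(4)$ and $(4)\Rightarrow(2)$; equivalently, \eqref{eq:left-cointegral-3} is the unwinding of the coinvariance condition $\lambda\in({}^{\vee}\! H)^{\coinv H}$ defining $(1)$.

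The main obstacle is this last $\qhWL$-bookkeeping. Unlike $\qhUL$, the element $\qhWL$ carries the modular function $\qhmu$ both as the scalar $\qhmu(\overline{\qhphi}_1)$ and inside the twisted factor $\qhS((\qhqr_1\leftharpoonup\qhmu)\overline{\qhphi}_2)$, and these $\qhmu$-insertions must be tracked through the antipode alongside the $\qhphi$-factors while keeping the $\qhf$, $\qhpr$, $\qhqr$ relations aligned. By contrast, the three preceding lemmas and the $(5)\Rightarrow(1)$ step are comparatively routine once Theorem~\ref{thm:Frobenius-dual-basis} is available.
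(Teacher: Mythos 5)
Your proposal is correct and follows the paper's proof essentially verbatim: the same cycle $(1)\Rightarrow(2)\Rightarrow(3)\Rightarrow(5)\Rightarrow(1)$, with $(1)\Rightarrow(2)$ obtained by specializing $h'=1$ in Lemma~\ref{lem:left-coint-characterization-1}, $(2)\Rightarrow(3)$ and $(3)\Rightarrow(5)$ being Lemmas~\ref{lem:left-coint-characterization-2} and~\ref{lem:left-coint-characterization-3}, $(5)\Rightarrow(1)$ via the Frobenius inverse of Theorem~\ref{thm:Frobenius-dual-basis}, and $(4)$ spliced in through a reversible exchange between $\qhWL$ and $\qhUL$. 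The only minor discrepancy is in your sketch of $(2)\Leftrightarrow(4)$: the paper's bookkeeping rests on the two identities expressing $\qhWL$ through $\qhUL$ and conversely, which are proved from \eqref{eq:q-Hopf-def-UL}, \eqref{eq:q-Hopf-f-1} and \eqref{eq:q-Hopf-qR-phi} rather than from Lemma~\ref{lem:q-Hopf-pR-qL} or the Nakayama automorphism, but this does not affect the soundness of your plan.
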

\begin{proof}
  We prove this theorem in the following way:
  \begin{center}
    (1) $\Rightarrow$ (2) $\Rightarrow$ (3) $\Rightarrow$ (5) $\Rightarrow$ (1),
    \quad (2) $\Leftrightarrow$ (4),
    % \quad (5) $\Leftrightarrow$ (6).
  \end{center}
  Lemmas~\ref{lem:left-coint-characterization-1}, \ref{lem:left-coint-characterization-2} and~\ref{lem:left-coint-characterization-3} prove (1) $\Rightarrow$ (2), (2) $\Rightarrow$ (3) and (3) $\Rightarrow$ (5), respectively. We prove (5) $\Rightarrow$ (1). Suppose that $\lambda \in H^*$ satisfies \eqref{eq:left-cointegral-4} for any left integral $\Lambda$ in $H$. We fix a non-zero cointegral $\lambda_0$ on $H$ and define $\Theta_{\qhR}: H \to H^*$ by $\Theta_{\qhR}(h) = \lambda_0 \leftharpoonup h$ for $h \in H$. Then, by Theorem \ref{thm:Frobenius-dual-basis},  $\Theta_{\qhR}^{-1}(\lambda)$ is a scalar multiple of the unit $1 \in H$. Thus we have $\lambda \in \Theta_{\qhR}(k 1) = k \lambda_0 = \cointl$, that is, $\lambda$ is a left cointegral on $H$.

  (2) $\Leftrightarrow$ (4). To prove this, we require the following relations:
  \begin{align}
    \label{eq:left-coint-proof-UL-WL-1}
    \qhWL & = \qhmu(\qhphi_1) \qhS(\qhphi_2)_{(1)} \qhUL_1 \qhphi_3
    \otimes \qhS(\qhphi_2)_{(2)} \qhUL_2, \\
    \label{eq:left-coint-proof-UL-WL-2}
    \qhUL & = \qhmu(\overline{\qhphi}_1) \qhS(\overline{\qhphi}_2)_{(1)} \qhWL_1 \overline{\qhphi}_3
    \otimes \qhS(\overline{\qhphi}_2)_{(2)} \qhWL_2.
  \end{align}
  The first equation is proved as follows:
  \begin{align*}
    & \qhmu(\qhphi_1) \, \qhS(\qhphi_2)_{(1)} \qhano{\qhUL_1} \qhphi_3
      \otimes \qhS(\qhphi_2)_{(2)} \qhano{\qhUL_2} \\
    {}^{\eqref{eq:q-Hopf-def-UL}}
    & = \qhmu(\qhphi_1) \, \qhano{\qhS(\qhphi_2)_{(1)} \overline{\qhf}_1} \qhS(\qhqr_2) \qhphi_3
      \otimes \qhano{\qhS(\qhphi_2)_{(2)} \overline{\qhf}_2} \qhS(\qhqr_1) \\
    {}^{\eqref{eq:q-Hopf-f-1}}
    & = \qhmu(\qhphi_1) \, \overline{\qhf}_1 \qhano{\qhS(\qhphi_{2(2)}) \qhS(\qhqr_2) \qhphi_3}
      \otimes \overline{\qhf}_2 \qhano{\qhS(\qhphi_{2(1)}) \qhS(\qhqr_1)} \\
    & = \qhmu(\qhano{\qhphi_1}) \, \overline{\qhf}_1
      \qhS(\qhano{\overline{\qhS}(\qhphi_3) \qhqr_2\qhphi_{2(2)}})
      \otimes \overline{\qhf}_2 \qhS(\qhano{\qhqr_1 \qhphi_{2(1)}}) \\
    {}^{\eqref{eq:q-Hopf-qR-phi}}
    & = \qhmu(\qhqr_{1(1)} \overline{\qhphi}{}_1) \, \overline{\qhf}_1 \qhS(\qhqr_2 \overline{\qhphi}{}_3)
      \otimes \overline{\qhf}_2 \qhS(\qhqr_{1(2)} \overline{\qhphi}{}_2)
      \stackrel{\eqref{eq:q-Hopf-def-WL}}{=} \qhWL.
  \end{align*}
  The second one easily follows from the first one. Now we suppose that (2) holds. Then we have
  \begin{align*}
    \langle \lambda, h \rangle 1
    & = \qhmu(\qhphi_1) \cdot \qhano{\qhmu(\overline{\qhphi}_1) \langle \lambda, h \qhS(\qhphi_2) \cdot
      \qhS(\overline{\qhphi}_2) \rangle \overline{\qhphi}_3} \cdot \qhphi_3 \\
    {}^{\eqref{eq:left-cointegral-1}}
    & = \qhmu(\qhphi_1) \qhVL_1 (h \qhS(\qhphi_2))_{(1)} \qhUL_1
      \, \langle \lambda, \qhVL_2 (h \qhS(\qhphi_2))_{(2)} \qhUL_2 \rangle \qhphi_3 \\
    & = \qhano{\qhmu(\qhphi_1)} \qhVL_1 h_{(1)} \qhano{\qhS(\qhphi_2)_{(1)} \qhUL_1 \qhphi_3}
      \, \langle \lambda, \qhVL_2 h_{(2)} \qhano{\qhS(\qhphi_2)_{(2)} \qhUL_2} \rangle \\
    {}^{\eqref{eq:left-coint-proof-UL-WL-1}}
    & = \qhVL_1 h_{(1)} \qhWL_1
      \, \langle \lambda, \qhVL_2 h_{(2)} \qhWL_2 \rangle
  \end{align*}
  for all $h \in H$. Thus (4) holds. If, conversely, (4) holds, then we prove that (2) holds as follows:
  \begin{align*}
    & \qhmu(\overline{\qhphi}_{1})
      \qhano{\langle \lambda, h \qhS(\overline{\qhphi}_2) \rangle}
      \, \overline{\qhphi}_3 \\
    {}^{\eqref{eq:left-cointegral-3}}
    & = \qhVL_1 (h \qhS(\overline{\phi}_{2}))_{(1)} \qhWL_1
      \, \langle \lambda, \qhVL_2 (h \qhS(\overline{\phi}_{2}))_{(2)} \qhWL_2 \rangle
      \, \overline{\qhphi}_3 \\
    {}^{\eqref{eq:left-coint-proof-UL-WL-2}}
    & = \qhVL_1 h_{(1)} \qhUL_1
      \, \langle \lambda, \qhVL_2 h_{(2)} \qhUL_2 \rangle.
      \qedhere
  \end{align*}
\end{proof}

We note that a right cointegral on $H$ is just a left cointegral on the quasi-Hopf algebra $H^{\cop}$. By rephrasing the above theorem for $H^{\cop}$ by using \eqref{eq:q-Hopf-def-cop}, \eqref{eq:q-Hopf-pq-cop}, \eqref{eq:q-Hopf-f-op-cop} and \eqref{eq:q-Hopf-UL-VR-cop}, we obtain the following theorem:

\begin{theorem}
  \label{thm:right-coint-characterization}
  For $\lambda \in H^*$, the following are equivalent:
  \begin{enumerate}
  \item $\lambda$ is a right cointegral on $H$.
  \item For all $h \in H$, the following equation holds:
    \begin{equation}
      \label{eq:right-cointegral-1}
      \langle \lambda, \qhVR_1 h_{(1)} \qhUR_1 \rangle
      \, \qhVR_2 h_{(2)} \qhUR_2
      = \qhphi_1
      \langle \lambda, h \overline{\qhS}(\qhphi_2) \rangle
      \, \qhmu(\qhphi_3)
    \end{equation}
  \item For all $h \in H$, the following equation holds:
    \begin{equation}
      \label{eq:right-cointegral-2}
      \langle \lambda, \qhql_1 h_{(1)} \qhpl_1 \rangle \, \qhql_2 h_{(2)} \qhpl_2
      = \qhqr_1 \qhphi_1 \qhpr_1
      \langle \lambda, \qhS(\qhqr_2) h \overline{\qhS}(\qhphi_2 \qhpr_2) \rangle
      \qhmu(\qhphi_{3})
    \end{equation}
  \item For all $h \in H$, the following equation holds:
    \begin{equation}
      \label{eq:right-cointegral-3}
      \langle \lambda, \qhVR_1 h_{(1)} \qhWR_1 \rangle
      \, \qhVR_2 h_{(2)} \qhWR_2
      = \langle \lambda, h \rangle \, 1,
    \end{equation}
    where $\qhWR$ is the element of $H^{\otimes 2}$ defined by
    \begin{equation}
      \label{eq:q-Hopf-def-W-L}
      \qhWR =  \qhmu(\qhphi_3) \overline{\qhS}((\qhmu \rightharpoonup \qhql_2) \qhphi_2 \overline{\qhf}_2)
      \otimes \overline{\qhS}(\qhql_1 \qhphi_1 \overline{\qhf}_1)
    \end{equation}
  \item For all left integrals $\Lambda$ in $H$, the following equation holds:
    \begin{equation}
      \label{eq:right-cointegral-4}
      \langle \lambda, \qhql_1 \Lambda_{(1)} \qhpl_1 \rangle
      \qhql_2 \Lambda_{(2)} \qhpl_2
      = \qhmu(\overline{\qhS}(\qhbeta)) \langle \lambda, \Lambda \rangle.
    \end{equation}
  \end{enumerate}
\end{theorem}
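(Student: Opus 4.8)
The plan is to deduce Theorem~\ref{thm:right-coint-characterization} from Theorem~\ref{thm:left-coint-characterization} by base change along the $\cop$-operation. As observed just before the statement, a right cointegral on $H$ is exactly a left cointegral on the quasi-Hopf algebra $K := H^{\cop}$, and the two modular functions agree, $\qhmu^{\cop} = \qhmu$, because the left $H$-action on $\cointr$ is the coinvariant action for $K$ and coincides with the map $\qhmu'$, which equals $\qhmu$ by the lemma asserting $\cointl \cong \cointr$. Hence it suffices to apply the chain of equivalences of Theorem~\ref{thm:left-coint-characterization} to $K$ and rewrite each of its five conditions in terms of the structure data of $H$. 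Condition~(1) for $K$ is condition~(1) of the target theorem by the very identification of cointegrals, so the real content of the proof is the dictionary that turns conditions~(2)--(5) for $K$ into \eqref{eq:right-cointegral-1}--\eqref{eq:right-cointegral-4}.

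First I would assemble the translation dictionary from \eqref{eq:q-Hopf-def-cop}, \eqref{eq:q-Hopf-pq-cop}, \eqref{eq:q-Hopf-f-op-cop} and \eqref{eq:q-Hopf-UL-VR-cop}. For $K$ one has $\Delta^{\cop}(h) = h_{(2)} \otimes h_{(1)}$, antipode $\overline{\qhS}$ (so its inverse antipode is $\qhS$), reassociator $\qhphi^{\cop} = \qhphi_{321}^{-1}$ and hence $\overline{\qhphi}^{\cop} = \qhphi_{321}$, together with $\qhalpha^{\cop} = \overline{\qhS}(\qhalpha)$ and $\qhbeta^{\cop} = \overline{\qhS}(\qhbeta)$. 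The elements $\qhpr, \qhqr, \qhpl, \qhql$ for $K$ are the flips $\qhpl_{21}, \qhql_{21}, \qhpr_{21}, \qhqr_{21}$ by \eqref{eq:q-Hopf-pq-cop}, the element $\qhf$ for $K$ is given by \eqref{eq:q-Hopf-f-op-cop}, and \eqref{eq:q-Hopf-UL-VR-cop} gives $\qhUL_{\cop} = \qhUR_{21}$.

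Next I would process the five conditions. Conditions~(3) and~(5) are immediate: substituting the flips of $\qhpr, \qhqr, \qhpl, \qhql$ and $\Delta^{\cop}$ into \eqref{eq:left-cointegral-2} and \eqref{eq:left-cointegral-4} for $K$, and using $\qhmu^{\cop} = \qhmu$, $\overline{\qhphi}^{\cop} = \qhphi_{321}$ and $\qhbeta^{\cop} = \overline{\qhS}(\qhbeta)$, reproduces \eqref{eq:right-cointegral-2} and \eqref{eq:right-cointegral-4} after merely reordering the scalar factor $\langle \lambda, - \rangle$. For condition~(2) I would compute the two auxiliary elements: $\qhUL_{\cop} = \qhUR_{21}$ is read off from \eqref{eq:q-Hopf-UL-VR-cop}, while expanding the definition \eqref{eq:q-Hopf-def-VL} of $\qhVL$ with the $K$-data and simplifying via $\qhS \, \overline{\qhS} = \id$ yields $\qhVL_{\cop} = \qhVR_{21}$; feeding these into \eqref{eq:left-cointegral-1} for $K$ then gives \eqref{eq:right-cointegral-1}.

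The main obstacle is condition~(4), which forces one to identify the $\cop$-version of the auxiliary element $\qhWL$ with $\qhWR$. Concretely I would substitute $\qhmu^{\cop} = \qhmu$, $\overline{\qhphi}^{\cop} = \qhphi_{321}$, the $K$-value of $\qhf$ from \eqref{eq:q-Hopf-f-op-cop}, and the flipped $\qhqr, \qhql$ into the defining formula \eqref{eq:q-Hopf-def-WL}, and verify that $\qhWL_{\cop} = \qhWR_{21}$, so that \eqref{eq:left-cointegral-3} for $K$ becomes \eqref{eq:right-cointegral-3}. The delicate points are that the weight $\qhqr_1 \leftharpoonup \qhmu$ in \eqref{eq:q-Hopf-def-WL}, being defined through $\Delta$, turns into a $\qhmu \rightharpoonup (-)$ action under $\Delta^{\cop}$ (matching the $\qhmu \rightharpoonup \qhql_2$ appearing in \eqref{eq:q-Hopf-def-W-L}), and that the inverse antipode $\overline{\qhS}$ of $H$ must be reintroduced in place of the antipode $\qhS^{\cop}$ of $K$; keeping the two tensor slots straight through the order reversal is the error-prone part. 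Once $\qhWL_{\cop} = \qhWR_{21}$ is established, the equivalence (2)$\Leftrightarrow$(4) for $K$ transports to the equivalence of \eqref{eq:right-cointegral-1} and \eqref{eq:right-cointegral-3}, and the whole chain of equivalences of Theorem~\ref{thm:left-coint-characterization} applied to $K$ yields exactly conditions~(1)--(5) of the present theorem, completing the proof.
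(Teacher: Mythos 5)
Your proposal is correct and is exactly the paper's own argument: the paper proves this theorem by observing that a right cointegral on $H$ is a left cointegral on $H^{\cop}$ and then rephrasing Theorem~\ref{thm:left-coint-characterization} for $H^{\cop}$ via \eqref{eq:q-Hopf-def-cop}, \eqref{eq:q-Hopf-pq-cop}, \eqref{eq:q-Hopf-f-op-cop} and \eqref{eq:q-Hopf-UL-VR-cop} --- the same dictionary you assemble. Your explicit verification that $\qhWL_{\cop} = \qhWR_{21}$ and $\qhVL_{\cop} = \qhVR_{21}$ just fills in details the paper leaves to the reader.
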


\section{Yetter-Drinfeld category}
\label{sec:yetter-drinfeld}

\subsection{Yetter-Drinfeld modules of the first kind}

Throughout this section, $H$ is a quasi-Hopf algebra with bijective antipode. As in the case of ordinary Hopf algebras, a Yetter-Drinfeld module over $H$ is defined to be a left $H$-module $V$ equipped with a linear map $\delta: V \to H \otimes V$, denoted by $\delta(v) = v_{\YDA{-1}} \otimes v_{\YDA{0}}$, such that the family
\begin{equation*}
  \{ V \otimes M \to M \otimes V, \ v \otimes m \mapsto v_{\YDA{-1}} m \otimes v_{\YDA{0}} \}_{M \in {}_H \Mod}
\end{equation*}
of maps makes $V$ into an object of the Drinfeld center of ${}_H \Mod$. An explicit definition in this spirit is found, {\it e.g.}, in Majid \cite{MR1631648} and Bulacu-Caenepeel-Panaite \cite{MR2106925,MR2194347}. On the other hand, Schauenburg \cite{MR1897403} introduced another kind of Yetter-Drinfeld module over $H$ from the viewpoint of the fundamental theorem for quasi-Hopf bimodules. Although these two kinds of Yetter-Drinfeld modules are equivalent notions, each of them has advantages and disadvantages. In this paper, we use both of them. We first recall the definition of the first kind:

\begin{definition}[\cite{MR1631648,MR2106925,MR2194347}]
  A {\em Yetter-Drinfeld module of the first kind} is a left $H$-module $V$ endowed with a linear map
  \begin{equation*}
    \YDAdelta_V: V \to H \otimes V,
    \quad \YDAdelta_V(v) = v_{\YDA{-1}} \otimes v_{\YDA{0}}
    \quad (v \in V)
  \end{equation*}
  such that the following equations hold for all $h \in H$ and $v \in V$.
  \begin{gather}
    \label{eq:YD-1-1}
    \begin{aligned}
      & \qhphi_1 v_{\YDA{-1}} \otimes (\qhphi_2 v_{\YDA{0}})_{\YDA{-1}} \qhphi_3 \otimes (\qhphi_2 v_{\YDA{0}})_{\YDA{0}}
      \\
      & \qquad \qquad = \qhphi'_1(\qhphi_1 v)_{\YDA{-1}(1)} \qhphi_2 \otimes \qhphi'_2 (\qhphi_1 v)_{\YDA{-1}(2)} \qhphi_3 \otimes \qhphi'_3 (\qhphi_1 v)_{\YDA{0}},
    \end{aligned} \\
    \label{eq:YD-1-2}
    \qhepsilon(v_{\YDA{-1}}) v_{\YDA{0}} = v, \\
    \label{eq:YD-1-3}
    h_{(1)} v_{\YDA{-1}} \otimes h_{(2)} v_{\YDA{0}}
    = (h_{(1)} v)_{\YDA{-1}} h_{(2)} \otimes (h_{(1)} v)_{\YDA{0}},
  \end{gather}
  where $\qhphi'$ is a copy of $\qhphi$. The map $\YDAdelta_V$ is called the {\em coaction of the first kind}. We denote by ${}^H_H \YD_1$ the category of Yetter-Drinfeld modules of the first kind and $k$-linear maps that preserves the action and the coaction of $H$.
\end{definition}

The trivial $H$-module $\unitobj = k$ is a Yetter-Drinfeld module of the first kind by the coaction determined by $\YDAdelta_{\unitobj}(1) = 1 \otimes 1$. If $V$ and $W$ are Yetter-Drinfeld modules of the first kind, then their tensor product $H$-module $V \otimes W$ is a Yetter-Drinfeld module of the first kind by the coaction given by
\begin{equation}
  \label{eq:YD-1-tensor}
  \begin{aligned}
  \YDAdelta_{V \otimes W}(v \otimes w)
  & = \qhphi_1 (\overline{\qhphi}_{1} \qhphi'_{1} v)_{\YDA{-1}} \overline{\qhphi}_{2} (\qhphi'_{2} w)_{\YDA{-1}} \qhphi'_{3} \\
  & \qquad \qquad \otimes \qhphi_{2}(\overline{\qhphi}_{1} \qhphi'_{1} v)_{\YDA{0}}
  \otimes \qhphi_{3} \overline{\qhphi}_{3} (\qhphi'_{2} w)_{\YDA{0}}
\end{aligned}
\end{equation}
for $v \in V$ and $w \in W$ (see \cite[Proposition 2.2]{MR1631648}). The category ${}^H_H \YD_1$ is a monoidal category with this tensor product.

Given $V \in {}^H_H \YD_1$ and $X \in {}_H \Mod$, we define $\YDsigma_{V,X}: V \otimes X \to X \otimes V$ by $\YDsigma_{V,X}(v \otimes x) = v_{\YDA{-1}} x \otimes v_{\YDA{0}}$ for $v \in V$ and $x \in X$. The family $\YDsigma_V = \{ \YDsigma_{V,X} \}_{X}$ is natural in $X \in {}_H \Mod$ and the pair $(V, \YDsigma_V)$ is in fact an object of the Drinfeld center $\mathcal{Z}({}_H \Mod)$ of ${}_H \Mod$. Moreover, the assignment $(V, \YDAdelta_V) \mapsto (V, \YDsigma_V)$ is an isomorphism of monoidal categories from ${}^H_H \YD_1$ to $\mathcal{Z}({}_H \Mod)$.

\subsection{Yetter-Drinfeld modules of the second kind}

The category ${}_H^{} \Mod_H^H$ is naturally a left ${}_H \Mod_H$-module category. Hence ${}_H \Mod$ is also a left ${}_H \Mod_H$-module category in such a way that the category equivalence~\eqref{eq:q-Hopf-bimod-equiv-1} is ${}_H \Mod_H$-equivariant. Schauenburg \cite{MR1897403} gave an explicit description of such an action of ${}_H \Mod_H$ on ${}_H \Mod$. Given $M \in {}_H \Mod_H$, we denote by ${}_{\ad}M$ the vector space $M$ equipped with the left $H$-module structure given by
\begin{equation*}
  h \triangleright m = h_{(1)} m \qhS(h_{(2)})
  \quad (h \in H, m \in M).
\end{equation*}
Then the action $\ogreaterthan: {}_H \Mod_H \times {}_H \Mod \to {}_H \Mod$ is given by
\begin{equation*}
  M \ogreaterthan V = {}_{\ad} (M \hatotimes V)
  \quad (M \in {}_H \Mod_H, V \in {}_H \Mod).
\end{equation*}
To describe the associator $\Omega_{M,N,V}: (M \hatotimes N) \ogreaterthan V \to M \ogreaterthan (N \ogreaterthan V)$ for the action of ${}_H \Mod_H$, we introduce the following element:
\begin{equation}
  \label{eq:q-Hopf-def-omega}
  \qhomega = (1 \otimes 1 \otimes 1 \otimes \overline{\qhf}_2 \otimes \overline{\qhf}_1)
  \cdot (\id \otimes \Delta \otimes \qhS \otimes \qhS)(\chi)
  \cdot (\qhphi \otimes 1 \otimes 1) \in H^{\otimes 5},
\end{equation}
where $\chi$ is the element of $H^{\otimes 4}$ given by
\begin{equation}
  \label{eq:q-Hopf-def-chi}
  \begin{aligned}
    \chi
    & = (\id \otimes \Delta \otimes \id)(\qhphi^{-1})
    \cdot (1 \otimes \qhphi^{-1}) \cdot (\id \otimes \id \otimes \Delta)(\qhphi) \\
    {}^{\eqref{eq:q-Hopf-def-3}}
    & = (\qhphi \otimes 1) \cdot (\Delta \otimes \id \otimes \id)(\qhphi^{-1}).
  \end{aligned}
\end{equation}
The associator $\Omega$ is then given by
\begin{equation}
  \label{eq:def-Omega}
  \Omega_{M,N,V}(m \otimes n \otimes v)
  = \qhomega_1 m \qhomega_5 \otimes \qhomega_2 n \qhomega_4 \otimes \qhomega_3 v
\end{equation}
for $m \in M$, $n \in N$ and $v \in V$.

\begin{definition}[Schauenburg \cite{MR1897403}]
  We define the category ${}^H_H \YD_2$ to be the category of left $H$-comodules in the left ${}_H \Mod_H$-module category ${}_H \Mod$ (see Definition \ref{def:modules-in-module-cat}). An object of ${}^H_H \YD_2$ is referred to as a {\em Yetter-Drinfeld module of the second kind}. 
  Stated differently, a Yetter-Drinfeld module of the second kind is a left $H$-module $V$ endowed with a linear map $\YDBdelta_V: V \to H \otimes V$, expressed as $v \mapsto v_{\YDB{-1}} \otimes v_{\YDB{0}}$, such that the equations
  \begin{gather}
    \label{eq:YD-2-1}
    v_{\YDB{-1}} \otimes v_{\YDB{0} \YDB{-1}} \otimes v_{\YDB{0} \YDB{0}}
    = \qhomega_1 (v_{\YDB{-1}})_{(1)} \qhomega_5 \otimes \qhomega_2 (v_{\YDB{-1}})_{(2)} \qhomega_4 \otimes \qhomega_3 v_{\YDB{0}}, \\
    \label{eq:YD-2-2}
    \qhepsilon(v_{\YDB{-1}}) v_{\YDB{0}} = v, \\
    \label{eq:YD-2-3}
    \YDBdelta_V(h v) = h_{(1, 1)} v_{\YDB{-1}} \qhS(h_{(2)}) \otimes h_{(1, 2)} v_{\YDB{0}}
  \end{gather}
  hold for all $h \in H$ and $v \in V$.
\end{definition}

Schauenburg \cite{MR1897403} showed that there is an isomorphism $\Psi: {}^H_H \YD_1 \to {}^H_H \YD_2$ of categories. The isomorphism $\Psi$ is the identity on morphisms and keeps the underlying $H$-module unchanged. Given $V \in {}^H_H \YD_1$, the isomorphism $\Psi$ replaces the coaction of $V$ of the first kind with
\begin{equation}
  \label{eq:YD1-to-YD2}
  \YDBdelta_V(v) = (\qhpr_1 v)_{\YDA{-1}} \qhpr_2 \otimes (\qhpr_1 v)_{\YDA{0}}
  \quad (v \in V).
\end{equation}
The inverse of the isomorphism $\Psi$ replaces the second kind coaction of $V \in {}^H_H \YD_2$ with
\begin{equation}
  \label{eq:YD2-to-YD1}
  \YDAdelta_V(v)
  = \qhqr_{1(1)} v_{\YDB{-1}} \qhS(\qhqr_2)
  \otimes \qhqr_{1(2)} v_{\YDB{0}}
  \quad (v \in V).
\end{equation}
From now on, we identify ${}^H_H \YD_1$ with ${}^H_H \YD_2$, and denote both of them by ${}^H_H \YD$. An object of ${}^H_H \YD$ is thought of as a left $H$-module $V$ equipped with two kinds of coaction $\YDAdelta_V$ and $\YDBdelta_V$ related to each other by equations~\eqref{eq:YD1-to-YD2} and~\eqref{eq:YD2-to-YD1}.

\subsection{Induction to the Yetter-Drinfeld category, I}

Given $V \in {}_H \Mod$, we define $R(V) \in {}_H \Mod$ by $R(V) = H \ogreaterthan V$. The left $H$-module $R(V)$ is a Yetter-Drinfeld module of the second kind by the free left $H$-coaction given by
\begin{equation*}
  R(V) = H \ogreaterthan V
  \xrightarrow{\ \Delta \ogreaterthan \id_V \ }
  (H \hatotimes H) \ogreaterthan V
  \xrightarrow{\ \Omega_{H,H,V} \ }
  H \ogreaterthan (H \ogreaterthan V)
  = H \ogreaterthan R(V).
\end{equation*}
Specifically, the action of $H$ and the second kind coaction of $H$ on $R(V)$ are given respectively by
\begin{gather}
  \label{eq:Rad-ind-2nd-1}
  h \triangleright (a \otimes v)
  = h_{(1, 1)} a \qhS(h_{(2)}) \otimes h_{(1, 2)} v, \\
  \label{eq:Rad-ind-2nd-2}
  \YDBdelta_{R(V)}(a \otimes v)
  = \qhomega_1 a_{(1)} \qhomega_5 \otimes \qhomega_2 a_{(2)} \qhomega_4 \otimes \qhomega_3 v
\end{gather}
for $a, h \in H$ and $v \in V$. By~\eqref{eq:YD2-to-YD1}, the first kind coaction is given by
\begin{equation}
  \label{eq:Rad-ind-1st-coact}
  \YDAdelta_{R(V)}(a \otimes v)
  = \qhqr_{1(1)} \qhomega_1 a_{(1)} \qhomega_5 \qhS(\qhqr_2)
  \otimes \qhqr_{1(2)} \triangleright (\qhomega_2 a_{(2)} \qhomega_4 \otimes \qhomega_3 v).
\end{equation}
Now let $F: {}^H_H \YD \to {}_H \Mod$ be the forgetful functor. We recall that ${}^H_H \YD = {}^H_H \YD_2$ is the category of left $H$-comodules in ${}_H \Mod$. Hence the free $H$-comodule functor is right adjoint to $F$. Namely, we have:
 
\begin{theorem}
  \label{thm:Rad-ind-2nd}
  The assignment $V \mapsto R(V)$ extends to a functor from ${}_H \Mod$ to ${}^H_H \YD$. This functor is right adjoint to $F$ with the unit $\eta$ and the counit $\varepsilon$ given by
  \begin{gather}
    \label{eq:Rad-ind-2nd-unit}
    \eta_M: M \to R F(M),
    \quad m \mapsto m_{\YDB{-1}} \otimes m_{\YDB{0}}
    \quad (m \in M \in {}^H_H \YD), \\
    \label{eq:Rad-ind-2nd-counit}
    \varepsilon_V: F R(V) \to V,
    \quad a \otimes v \mapsto \qhepsilon(a) v
    \quad (a \in H, v \in V \in {}_H \Mod).
  \end{gather}
\end{theorem}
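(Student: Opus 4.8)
The plan is to recognize Theorem~\ref{thm:Rad-ind-2nd} as a special case of the standard adjunction between a comonad and its Eilenberg--Moore category of comodules, and then to read off the explicit formulas for the unit and the counit. The genuinely formal nature of the statement means the proof should consist almost entirely of an appeal to a general categorical fact, followed by a short translation into the notation set up for $H$.

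First I would observe that, by Definition~\ref{def:modules-in-module-cat}, the category ${}^H_H \YD = {}^H_H \YD_2$ is \emph{by construction} the category of $\mathfrak{L}_H$-comodules, where $\mathfrak{L}_H \colon {}_H \Mod \to {}_H \Mod$, $\mathfrak{L}_H(V) = H \ogreaterthan V$, is the comonad on ${}_H \Mod$ induced by the coalgebra $H$ of ${}_H \Mod_H$. Under this identification $F$ is precisely the forgetful functor, and $R(V) = H \ogreaterthan V$, equipped with the free coaction described just before the theorem, is precisely the cofree comodule whose structure map is the comultiplication of the comonad $\mathfrak{L}_H$. In particular $V \mapsto R(V)$ automatically extends to a functor (the cofree functor), with $R(f) = \id_H \ogreaterthan f$ on morphisms.

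Next I would invoke the general fact, dual to the free/forgetful adjunction for a monad \cite{MR1712872}, that for any comonad $G$ the forgetful functor from $G$-comodules is left adjoint to the cofree functor $V \mapsto (GV, \delta_V)$; moreover the counit of this adjunction at $V$ is the comonad counit $GV \to V$, and the unit at a comodule $M$ is its structure coaction $M \to GM$. Applying this with $G = \mathfrak{L}_H$ shows at once that $R$ is right adjoint to $F$, and the two triangle identities reduce to the counit axiom for $\mathfrak{L}_H$-comodules and the counit axiom of the comonad $\mathfrak{L}_H$, so no computation is required.

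It then remains only to translate these abstract descriptions into the claimed formulas. The comonad counit is the composite of $\qhepsilon \ogreaterthan \id_V \colon H \ogreaterthan V \to \unitobj \ogreaterthan V$ with the left unit isomorphism $\unitobj \ogreaterthan V \to V$ of the module category, which sends $a \otimes v$ to $\qhepsilon(a) v$; this is exactly \eqref{eq:Rad-ind-2nd-counit}. The unit at $M \in {}^H_H \YD$ is the $\mathfrak{L}_H$-coaction of $M$, namely the second-kind coaction $\YDBdelta_M(m) = m_{\YDB{-1}} \otimes m_{\YDB{0}}$, which is \eqref{eq:Rad-ind-2nd-unit}. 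The only point demanding any care is purely bookkeeping: one must confirm that the comonad comultiplication on $R(V)$ agrees with the explicit free coaction \eqref{eq:Rad-ind-2nd-2}, which amounts to unwinding the definition of the associator $\Omega$ in \eqref{eq:def-Omega} together with the coalgebra structure $(\Delta, \qhepsilon)$ of $H$ in ${}_H \Mod_H$. These identifications are precisely the formulas \eqref{eq:Rad-ind-2nd-1}--\eqref{eq:Rad-ind-1st-coact} already recorded before the theorem, so I anticipate no substantive obstacle: the entire content lies in this normalization check.
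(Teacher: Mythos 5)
Your proposal is correct and is essentially the paper's own argument: the paper likewise observes that ${}^H_H\YD = {}^H_H\YD_2$ is by definition the category of left $H$-comodules in the ${}_H\Mod_H$-module category ${}_H\Mod$, so the free (cofree) comodule functor is right adjoint to the forgetful functor, with unit the structure coaction and counit induced by $\qhepsilon$. The normalization check you flag at the end is exactly what the formulas \eqref{eq:Rad-ind-2nd-1}--\eqref{eq:Rad-ind-1st-coact} preceding the theorem record, so nothing further is needed.
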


As we have recalled in Subsection~\ref{subsec:monoidal-cats}, the functor $R$ is a monoidal functor as a right adjoint of the strict monoidal functor $F$.
The structure morphisms
\begin{equation*}
  R^{(0)}: \unitobj \to R(\unitobj)
  \quad \text{and} \quad
  R^{(2)}_{X,Y}: R(X) \otimes R(Y) \to R(X \otimes Y)
  \quad (X, Y \in {}_H \Mod)
\end{equation*}
are given by $R^{(0)} = \eta_{\unitobj}$ and $R_{X,Y}^{(2)} = R(\varepsilon_{X} \otimes \varepsilon_{Y}) \circ \eta_{R(X) \otimes R(Y)}$, respectively, since $F$ is strict. By the explicit formulas for $\eta$ and $\varepsilon$ given by the above theorem, we now prove:

\begin{lemma}
  \label{lem:R-monoidal-struc}
  The morphism $R^{(0)}$ is determined by
  \begin{equation}
    \label{eq:induction-R-0}
    R^{(0)}(1) = \qhbeta \otimes 1.
  \end{equation}
  The natural transformation $R^{(2)}$ is given by
  \begin{equation}
    \label{eq:induction-R-2}
    \begin{aligned}
      & R^{(2)}_{X, Y}((a \otimes x) \otimes (b \otimes y)) \\
      & = \qhphi_1 \overline{\qhphi}_{1(1)} \qhqr_{1(1)} \overline{\qhphi}{}'_{1(1,1)} a \qhS(\qhqr_2 \overline{\qhphi}{}'_{1(2)})
      \overline{\qhphi}{}'_{2} \qhphi{}'_1 b
      \qhS(\overline{\qhphi}_{3} \overline{\qhphi}{}'_{3(2)} \qhphi{}'_3) \\
      & \qquad \qquad \otimes \qhphi_2 \overline{\qhphi}_{1(2)} \qhqr_{1(2)} \overline{\qhphi}{}'_{1(1,2)} x
      \otimes \qhphi_3 \overline{\qhphi}_2 \overline{\qhphi}{}'_{3(1)} \qhphi{}'_2 y
    \end{aligned}
  \end{equation}
  for $X, Y \in {}_H \Mod$, $a, b \in H$, $x \in X$ and $y \in Y$, where $\qhphi'$ is a copy of $\qhphi$.
\end{lemma}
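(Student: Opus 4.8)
The plan is to compute both structure morphisms directly from the general formula for the monoidal structure on a right adjoint of a strict monoidal functor recalled in Subsection~\ref{subsec:monoidal-cats}: since $F$ is strict, these reduce to $R^{(0)}=\eta_{\unitobj}$ and $R^{(2)}_{X,Y}=R(\varepsilon_X\otimes\varepsilon_Y)\circ\eta_{R(X)\otimes R(Y)}$, into which I substitute the explicit unit and counit from Theorem~\ref{thm:Rad-ind-2nd}.

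For $R^{(0)}$, note that $R^{(0)}(1)=\eta_{\unitobj}(1)=\YDBdelta_{\unitobj}(1)$ is the second-kind coaction on the unit object evaluated at $1$. The first-kind coaction on $\unitobj=k$ is $\YDAdelta_{\unitobj}(1)=1\otimes1$, and $k$ is an $H$-module through $\qhepsilon$, so the conversion formula~\eqref{eq:YD1-to-YD2} gives $\YDBdelta_{\unitobj}(1)=\qhepsilon(\qhpr_1)\,\qhpr_2\otimes1=\qhepsilon(\overline{\qhphi}_1)\,\overline{\qhphi}_2\qhbeta\qhS(\overline{\qhphi}_3)\otimes1$. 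Applying the identity $(\qhepsilon\otimes\id\otimes\id)(\qhphi^{-1})=1\otimes1$, which follows from~\eqref{eq:q-Hopf-phi-eps} because $\qhepsilon\otimes\id\otimes\id$ is an algebra map and $\qhphi\,\qhphi^{-1}=1$, collapses this to $\qhbeta\otimes1$, which is~\eqref{eq:induction-R-0}.

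For $R^{(2)}$, the core task is to evaluate the second-kind coaction $\eta_{R(X)\otimes R(Y)}=\YDBdelta_{R(X)\otimes R(Y)}$ on a typical element $(a\otimes x)\otimes(b\otimes y)$. I would assemble this in three steps: first record the first-kind coactions of the free objects $R(X)$ and $R(Y)$ from~\eqref{eq:Rad-ind-1st-coact} (whose $\triangleright$-action is~\eqref{eq:Rad-ind-2nd-1}); then combine them through the tensor-product coaction~\eqref{eq:YD-1-tensor}; and finally convert back to the second kind via~\eqref{eq:YD1-to-YD2}, where the tensor-product $H$-action on $R(X)\otimes R(Y)$ intervenes. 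This yields a large expression whose outer $H$-leg is the coaction leg and whose two inner $H$-legs belong to the $R(X)$- and $R(Y)$-components. Applying $R(\varepsilon_X\otimes\varepsilon_Y)$ preserves the outer leg and evaluates $\qhepsilon$ on the two inner legs, after which I would simplify using the quasi-Hopf identities of Section~\ref{sec:quasi-hopf}---chiefly the $\Delta$--$\qhphi$ compatibility~\eqref{eq:q-Hopf-def-1}, the pentagon~\eqref{eq:q-Hopf-def-3}, the relation~\eqref{eq:q-Hopf-qR} and related $\qhp$--$\qhq$ identities---to reach~\eqref{eq:induction-R-2}.

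The hard part will be controlling the size of the intermediate data, and in particular the element $\qhomega\in H^{\otimes5}$ of~\eqref{eq:q-Hopf-def-omega} that enters through~\eqref{eq:Rad-ind-1st-coact}: the target~\eqref{eq:induction-R-2} contains neither $\qhf$ nor $\qhomega$, only powers of $\qhphi^{\pm1}$ and a single $\qhqr$, so both occurrences of $\qhomega$ and all the $\qhf$-factors concealed in them must cancel. The decisive point is that $\varepsilon_X\otimes\varepsilon_Y$ counits exactly the $H$-legs fed by the inner tensorands $\qhomega_2(\cdots)\qhomega_4$ of~\eqref{eq:Rad-ind-1st-coact}; combined with the counit normalisations~\eqref{eq:q-Hopf-def-4} and~\eqref{eq:q-Hopf-phi-eps} of $\qhphi$ and the definition~\eqref{eq:q-Hopf-def-chi} of $\chi$, these legs collapse and remove the $\qhf$-contributions, while the $\qhqr$ from the $R(X)$-coaction is transported through~\eqref{eq:YD1-to-YD2} to become the single $\qhqr$ of~\eqref{eq:induction-R-2}. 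As an independent check, one may verify the formula via the adjunction: $R^{(2)}_{X,Y}$ is the unique ${}^H_H\YD$-morphism $\psi$ with $\varepsilon_{X\otimes Y}\circ\psi=\varepsilon_X\otimes\varepsilon_Y$, so it suffices to confirm that the right-hand side of~\eqref{eq:induction-R-2}, written as $c\otimes(x'\otimes y')$ with $c\in H$, $x'\in X$, $y'\in Y$, is $H$-linear and $H$-colinear and satisfies $\qhepsilon(c)\,(x'\otimes y')=\qhepsilon(a)\qhepsilon(b)\,(x\otimes y)$, the last equality following from repeated application of the counit properties of $\qhphi$ and $\qhqr$.
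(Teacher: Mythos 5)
Your proposal follows exactly the paper's route: $R^{(0)}=\eta_{\unitobj}$ evaluated via \eqref{eq:YD1-to-YD2}, and $R^{(2)}_{X,Y}=R(\varepsilon_X\otimes\varepsilon_Y)\circ\eta_{R(X)\otimes R(Y)}$ expanded through \eqref{eq:Rad-ind-1st-coact}, \eqref{eq:YD-1-tensor} and \eqref{eq:YD1-to-YD2}, and you correctly identify the decisive mechanism — the counits collapse the inner legs $\qhomega_2(\cdots)\qhomega_4$, eliminating $\qhomega$ and all $\qhf$-factors — which is precisely the content of the paper's auxiliary identity \eqref{eq:R-monoidal-struc-pf-1}. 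The only shortfall is that the $R^{(2)}$ computation is left as an outline; the paper executes it by also establishing two further identities, \eqref{eq:R-monoidal-struc-pf-2} and \eqref{eq:R-monoidal-struc-pf-3}, which let the $H$-actions and the $\qhpr$ coming from \eqref{eq:YD1-to-YD2} be absorbed step by step before the final applications of \eqref{eq:q-Hopf-pR-phi} and \eqref{eq:q-Hopf-def-3}.
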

\begin{proof}
  We note that the unit $\eta$ is given by the coaction of the second kind. Since the first kind coaction of the trivial Yetter-Drinfeld module $\unitobj = k$ is determined by $\YDAdelta_{\unitobj}(1) = 1 \otimes 1$, we have
  \begin{equation*}
    R^{(0)}(1) = \YDBdelta_{\unitobj}(1)
    \stackrel{\eqref{eq:YD1-to-YD2}}{=}
    \qhpr_2 \otimes \qhepsilon(\qhpr_1)
    \stackrel{\eqref{eq:q-Hopf-def-pR}}{=}
    \qhbeta \otimes 1.
  \end{equation*}
  Thus \eqref{eq:induction-R-0} is proved.

  We check that $R^{(2)}_{X,Y}$ is given by \eqref{eq:induction-R-2}. For simplicity, we write $m = m_H \otimes m_V \in H \otimes V$ for an element $m \in R(V)$. The following equations hold:
  \begin{gather}
    \label{eq:R-monoidal-struc-pf-1}
    m_{\YDA{-1}} \otimes \varepsilon_V(h \triangleright m_{\YDA{0}})
    = \qhqr_{1(1)} m_{H} \qhS(\qhqr_2) \otimes h \qhqr_{1(2)} m_{V}, \\
    \label{eq:R-monoidal-struc-pf-2}
    \begin{aligned}
      & \qhqr_{1(1)}(h_{(1)} \triangleright m)_{H} \qhS(\qhqr_2) h_{(2)}
      \otimes \qhqr_{1(2)} (h_{(1)} \triangleright m)_{V} \\
      & \qquad \qquad = h_{(1)} \qhqr_{1(1)} m_{H} \qhS(\qhqr_2) \otimes h_{(2)} \qhqr_{1(2)} m_{V},
    \end{aligned} \\
    \label{eq:R-monoidal-struc-pf-3}
    \qhqr_{1(1)}(\qhpr_{1} \triangleright m)_{H} \qhS(\qhqr_2) \qhpr_2
    \otimes \qhqr_{1(2)} (\qhpr_{1} \triangleright m)_{V} = m
  \end{gather}
  for $h \in H$ and $m \in R(V)$. Indeed, the first one is proved as follows:
  \begin{align*}
    & m_{\YDA{-1}} \otimes \varepsilon_V(h \triangleright m_{\YDA{0}}) \\
    \ {}^{\eqref{eq:Rad-ind-1st-coact}}
    & = \qhqr_{1(1)} \qhomega_1 m_{H(1)} \qhomega_5 \qhS(\qhqr_2)
      \otimes \qhano{\varepsilon_V(h \qhqr_{1(2)} \triangleright (\qhomega_2 m_{H(2)} \qhomega_4 \otimes \qhomega_3 m_{V}))} \\
    {}^{\eqref{eq:Rad-ind-2nd-counit}}
    & = \qhqr_{1(1)} \qhano{\qhomega_1 m_{H(1)} \qhomega_5} \qhS(\qhqr_2)
     \otimes h \qhqr_{1(2)} \cdot
      \qhano{\qhepsilon(\qhomega_2 m_{H(2)} \qhomega_4) \qhomega_3} m_{V} \\
    {}^{\eqref{eq:q-Hopf-def-omega}}
    & = \qhqr_{1(1)} m_{H} \qhS(\qhqr_2) \otimes h \qhqr_{1(2)} m_{V}.
  \end{align*}
  The second one is proved as follows:
  \begin{align*}
    & \qhano{\qhqr_{1(1)}(h_{(1)} \triangleright m)_{H} \qhS(\qhqr_2)} h_{(2)}
      \otimes \qhano{\qhqr_{1(2)} (h_{(1)} \triangleright m)_{V}} \\
    {}^{\eqref{eq:R-monoidal-struc-pf-1}}
    & = \qhano{(h_{(1)} \triangleright m)_{\YDA{-1}} h_{(2)}}
      \otimes \varepsilon_V(\qhano{(h_{(1)} \triangleright m)_{\YDA{0}}}) \\
    {}^{\eqref{eq:YD-1-3}}
    & = h_{(1)} \qhano{m_{\YDA{-1}}}
      \otimes \qhano{\varepsilon_V(h_{(2)} \triangleright m_{\YDA{0}})} \\
    {}^{\eqref{eq:R-monoidal-struc-pf-1}}
    & = h_{(1)} \qhqr_{1(1)} m_{H} \qhS(\qhqr_2) \otimes h_{(2)} \qhqr_{1(2)} m_{V}.
  \end{align*}
  The third one is proved as follows:
  \begin{align*}
    & \qhqr_{1(1)}\qhano{(\qhpr_{1} \triangleright m)_{H}} \qhS(\qhqr_2) \qhpr_2
    \otimes \qhqr_{1(2)} \qhano{(\qhpr_{1} \triangleright m)_{V}} \\
    {}^{\eqref{eq:Rad-ind-2nd-1}}
    & = \qhqr_{1(1)} \qhpr_{1(1,1)} m_{H} \qhS(\qhpr_{1(2)}) \qhS(\qhqr_2) \qhpr_2
    \otimes \qhqr_{1(2)} \qhpr_{1(1,2)} m_{V} \\
    & = (\qhano{\qhqr_{1} \qhpr_{1(1)}})_{(1)} m_{H}
      \qhS(\qhano{\overline{\qhS}(\qhpr_2) \qhqr_2 \qhpr_{1(2)}})
    \otimes (\qhano{\qhqr_{1} \qhpr_{1(1)}})_{(2)} m_{V} \\
    {}^{\eqref{eq:q-Hopf-pR-qR-2}}
    & = m_{H} \otimes m_{V} = m.
  \end{align*}
  For $v \in R(X)$ and $w \in R(Y)$, we have
  \begin{align*}
    & \YDBdelta_{R(X) \otimes R(Y)}(v \otimes w) \\
    {}^{\eqref{eq:YD1-to-YD2}}
    & = \qhano{(\qhpr_1 \cdot (v \otimes w))_{\YDA{-1}}} \qhpr_2
      \otimes \qhano{(\qhpr_1 \cdot (v \otimes w))_{\YDA{0}}} \\
    %%%%%%%%%%%%%%%%%%%%%%%%%%%%%%%%%%%%%%%%%%%%%%%%%%%%%%%%%%%%%%%%%%%%%%
    {}^{\eqref{eq:YD-1-tensor}}
    & = \qhphi_1 (\overline{\qhphi}_1 \qhano{\qhphi'_1 \qhpr_{1(1)}} \triangleright v)_{\YDA{-1}}
      \overline{\qhphi}_2 (\qhano{\qhphi'_2 \qhpr_{1(2)}} \triangleright w)_{\YDA{-1}}
      \qhphi'_3 \qhpr_2 \\
    & \qquad \otimes \qhphi_2 \triangleright
      (\overline{\qhphi}_1 \qhano{\qhphi'_1 \qhpr_{1(1)}} \triangleright v)_{\YDA{0}}
      \otimes \qhphi_3 \overline{\qhphi}_3
      \triangleright (\qhano{\qhphi'_2 \qhpr_{1(2)}} \triangleright w)_{\YDA{-1}} \\
      %%%%%%%%%%%%%%%%%%%%%%%%%%%%%%%%%%%%%%%%%%%%%%%%%%%%%%%%%%%%%%%%%%%%%% 
    & = \underbrace{
      \qhphi_1 (\overline{\qhphi}_1 \mathbbm{t}_1 \triangleright v)_{\YDA{-1}}
      \overline{\qhphi}_2 (\mathbbm{t}_2 \triangleright w)_{\YDA{-1}} \mathbbm{t}_3
      }_{\in H}
      \otimes \underbrace{
      \qhphi_2 \triangleright (\overline{\qhphi}_1 \mathbbm{t}_1 \triangleright v)_{\YDA{0}}
      }_{\in R(X)}
      \otimes \underbrace{
      \qhphi_3 \overline{\qhphi}_3 \triangleright (\mathbbm{t}_2 \triangleright w)_{\YDA{0}}
      }_{\in R(Y)},
  \end{align*}
  where $\mathbbm{t} \in H^{\otimes 3}$ is given by
  \begin{align}
    \label{eq:R-monoidal-struc-pf-4}
    \mathbbm{t}
    := \qhphi'_1 \qhpr_{1(1)} \otimes \qhphi'_2 \qhpr_{1(2)} \otimes \qhphi'_3 \qhpr_2
    \stackrel{\eqref{eq:q-Hopf-pR-phi}}{=}
    \overline{\qhphi}{}'_1 \otimes \overline{\qhphi}{}'_{2(1)} \qhpr_1
    \otimes \overline{\qhphi}{}'_{2(2)} \qhpr_2 \qhS(\overline{\qhphi}{}'_3).
  \end{align}
  Set $\qhq = \qhq' = \qhqr$. We verify~\eqref{eq:induction-R-2} as follows:
  \allowdisplaybreaks
  \begin{align*}
    R^{(2)}_{X,Y}(v \otimes w)
    & = R(\varepsilon_X \otimes \varepsilon_Y) \YDBdelta_{R(X) \otimes R(Y)}(v \otimes w) \\
    %%%%%%%%%%%%%%%%%%%%%%%%%%%%%%%%%%%%%%%% 
    & = \qhphi_1 \qhano{(\overline{\qhphi}_1 \mathbbm{t}_1 \triangleright v)_{\YDA{-1}}}
    \overline{\qhphi}_2 \qhano{(\mathbbm{t}_2 \triangleright w)_{\YDA{-1}}} \mathbbm{t}_3 \\
    & \qquad \qquad
      \otimes \qhano{\varepsilon_X(\qhphi_2 \triangleright (\overline{\qhphi}_1 \mathbbm{t}_1 \triangleright v)_{\YDA{0}})}
      \otimes \qhano{\varepsilon_Y(\qhphi_3 \overline{\qhphi}_3 \triangleright (\mathbbm{t}_2 \triangleright w)_{\YDA{0}})} \\
    %%%%%%%%%%%%%%%%%%%%%%%%%%%%%%%%%%%%%%%% 
    {}^{\eqref{eq:R-monoidal-struc-pf-1}}
    & = \qhphi_1 \qhq_{1(1)} (\overline{\qhphi}_1 \qhano{\mathbbm{t}_1} \triangleright v)_{H} \qhS(\qhq_2)
    \overline{\qhphi}_2 \qhq'_{1(1)} (\qhano{\mathbbm{t}_2} \triangleright w)_{H} \qhS(\qhq'_2) \qhano{\mathbbm{t}_3} \\
    & \qquad \qquad \otimes \qhphi_2 \qhq_{1(2)} (\overline{\qhphi}{}_1 \qhano{\mathbbm{t}_1} \triangleright v)_{X}
    \otimes \qhphi_3 \overline{\qhphi}_3 \qhq'_{1(2)} (\qhano{\mathbbm{t}_2} \triangleright w)_{Y} \\
    %%%%%%%%%%%%%%%%%%%%%%%%%%%%%%%%%%%%%%%% 
    {}^{\eqref{eq:R-monoidal-struc-pf-4}}
    & = \qhphi_1 \qhq_{1(1)} (\overline{\qhphi}_1 \overline{\qhphi}{}'_1 \triangleright v)_{H} \qhS(\qhq_2)
      \overline{\qhphi}_2
      \qhano{\qhq'_{1(1)} (\overline{\qhphi}{}'_{2(1)} \qhpr_1 \triangleright w)_{H} \qhS(\qhq'_2)
      \overline{\qhphi}{}'_{2(2)} \qhpr_2}
      \qhS(\overline{\qhphi}{}'_3) \\
    & \qquad \qquad \otimes \qhphi_2 \qhq_{1(2)} (\overline{\qhphi}{}_1 \overline{\qhphi}{}'_1 \triangleright v)_{X}
      \otimes \qhphi_3 \overline{\qhphi}_3
      \qhano{\qhq'_{1(2)} (\overline{\qhphi}{}'_{2(1)} \qhpr_1 \triangleright w)_{Y}} \\
    %%%%%%%%%%%%%%%%%%%%%%%%%%%%%%%%%%%%%%%% 
    {}^{\eqref{eq:R-monoidal-struc-pf-2}}
    & = \qhphi_1 \qhq_{1(1)} (\overline{\qhphi}_1 \overline{\qhphi}{}'_1 \triangleright v)_{H} \qhS(\qhq_2)
      \overline{\qhphi}_2 \overline{\qhphi}{}'_{2(1)}
      \qhano{\qhq'_{1(1)} (\qhpr_1 \triangleright w)_{H} \qhS(\qhq'_2) \qhpr_2}
      \qhS(\overline{\qhphi}{}'_3) \\
    & \qquad \qquad \otimes \qhphi_2 \qhq_{1(2)} (\overline{\qhphi}{}_1 \overline{\qhphi}{}'_1 \triangleright v)_{X}
    \otimes \qhphi_3 \overline{\qhphi}_3 \overline{\qhphi}{}'_{2(2)} \qhano{\qhq'_{1(2)} (\qhpr_1 \triangleright w)_{Y}} \\
    %%%%%%%%%%%%%%%%%%%%%%%%%%%%%%%%%%%%%%%% 
    {}^{\eqref{eq:R-monoidal-struc-pf-3}}
    & = \qhphi_1 \qhq_{1(1)}
      (\qhano{\overline{\qhphi}_1 \overline{\qhphi}{}'_1} \triangleright v)_{H} \qhS(\qhq_2)
      \qhano{\overline{\qhphi}_2 \overline{\qhphi}{}'_{2(1)}} w_{H}
    \qhS(\qhano{\overline{\qhphi}{}'_3}) \\
    & \qquad \qquad \otimes \qhphi_2 \qhq_{1(2)} (\qhano{\overline{\qhphi}{}_1 \overline{\qhphi}{}'_1} \triangleright v)_{X}
    \otimes \qhphi_3 \qhano{\overline{\qhphi}_3 \overline{\qhphi}{}'_{2(2)}} w_{Y} \\
    %%%%%%%%%%%%%%%%%%%%%%%%%%%%%%%%%%%%%%%%
    {}^{\eqref{eq:q-Hopf-def-3}}
    & = \qhphi_1 \qhano{\qhq_{1(1)} (\overline{\qhphi}_{1(1)} \overline{\qhphi}{}'_1
      \triangleright v)_{H} \qhS(\qhq_2) \overline{\qhphi}_{1(2)}}
      \overline{\qhphi}{}'_{2} \qhphi{}'_1 w_{H}
      \qhS(\overline{\qhphi}_{3} \overline{\qhphi}{}'_{3(2)} \qhphi{}'_3) \\
    & \qquad \qquad \otimes \qhphi_2
      \qhano{\qhq_{1(2)} (\overline{\qhphi}_{1(1)} \overline{\qhphi}{}'_1 \triangleright v)_{X}}
      \otimes \qhphi_3 \overline{\qhphi}_2 \overline{\qhphi}{}'_{3(1)} \qhphi{}'_2 w_{Y} \\
    %%%%%%%%%%%%%%%%%%%%%%%%%%%%%%%%%%%%%%%% 
    {}^{\eqref{eq:R-monoidal-struc-pf-2}}
    & = \qhphi_1 \overline{\qhphi}_{1(1)} \qhq_{1(1)}
      \qhano{(\overline{\qhphi}{}'_1 \triangleright v)_{H}} \qhS(\qhq_2)
      \overline{\qhphi}{}'_{2} \qhphi{}'_1 w_{H}
      \qhS(\overline{\qhphi}_{3} \overline{\qhphi}{}'_{3(2)} \qhphi{}'_3) \\
    & \qquad \qquad \otimes \qhphi_2 \overline{\qhphi}_{1(2)} \qhq_{1(2)}
      \qhano{(\overline{\qhphi}{}'_1 \triangleright v)_{X}}
      \otimes \qhphi_3 \overline{\qhphi}_2 \overline{\qhphi}{}'_{3(1)} \qhphi{}'_2 w_{Y} \\
    %%%%%%%%%%%%%%%%%%%%%%%%%%%%%%%%%%%%%%%% 
    {}^{\eqref{eq:Rad-ind-2nd-1}}
    & = \qhphi_1 \overline{\qhphi}_{1(1)} \qhq_{1(1)} \overline{\qhphi}{}'_{1(1,1)} v_{H} \qhS(\qhq_2 \overline{\qhphi}{}'_{1(2)})
    \overline{\qhphi}{}'_{2} \qhphi{}'_1 w_{H}
    \qhS(\overline{\qhphi}_{3} \overline{\qhphi}{}'_{3(2)} \qhphi{}'_3) \\
    & \qquad \qquad \otimes \qhphi_2 \overline{\qhphi}_{1(2)} \qhq_{1(2)} \overline{\qhphi}{}'_{1(1,2)} v_{X}
    \otimes \qhphi_3 \overline{\qhphi}_2 \overline{\qhphi}{}'_{3(1)} \qhphi{}'_2 w_{Y}.
    & \qedhere
  \end{align*}
\end{proof}

\subsection{The adjoint algebra}

Let $F \dashv R$ be the adjunction given in Theorem~\ref{thm:Rad-ind-2nd}. The Yetter-Drinfeld module $\mathbf{A} := R(\unitobj)$ is an algebra in ${}^H_H \YD$ as the image of the trivial algebra $\unitobj$ in ${}_H \Mod$ under a monoidal functor. We identify $\mathbf{A} = H$ as a vector space. Then the structure of $\mathbf{A}$ is given as follows:

\begin{theorem}
  \label{thm:BCP-alg}
  The action $\triangleright$, the coaction $\YDAdelta_{\mathbf{A}}$ of the first kind, the coaction $\YDBdelta_{\mathbf{A}}$ of the second kind, the multiplication $\star$, and the unit $1_{\mathbf{A}}$ of the algebra $\mathbf{A} \in {}^H_H \YD$ are given respectively by
  \begin{gather}
    \label{eq:BCP-alg-action}
    h \triangleright a = h_{(1)} a \qhS(h_{(2)}), \\
    \label{eq:BCP-alg-coaction-1}
    \YDAdelta_{\mathbf{A}}(a) = \qhphi_1 \qhphi'_{1(1)} a_{(1)} \overline{\qhf}_1 \qhS(\qhqr_2 \qhphi'_{2(2)}) \qhphi'_3 
    \otimes \qhphi_2 \qhphi'_{1(2)} a_{(2)} \overline{\qhf}_2 \qhS(\qhphi_3 \qhqr_1 \qhphi'_{2(1)}), \\
    \label{eq:BCP-alg-coaction-2}
    \YDBdelta_{\mathbf{A}}(a) = \qhphi_1 \overline{\qhphi}_{1(1)} a_{(1)} \overline{\qhf}_1 \qhS(\overline{\qhphi}_3)
    \otimes \qhphi_2 \overline{\qhphi}_{1(2)} a_{(2)} \overline{\qhf}_2 \qhS(\qhphi_3 \overline{\qhphi}_2), \\
    \label{eq:BCP-alg-mult}
    a \star b = \qhphi_1 a \qhS(\overline{\qhphi}_1 \qhphi_2) \qhalpha
    \overline{\qhphi}_2 \qhphi_{3(1)} b \qhS(\overline{\qhphi}_3 \qhphi_{3(2)}), \\
    \label{eq:BCP-alg-unit}
    1_{\mathbf{A}} = \qhbeta
  \end{gather}
  for $a, b \in \mathbf{A}$ and $h \in H$, where $\qhphi' = \qhphi$.
\end{theorem}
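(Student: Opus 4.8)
The plan is to specialise the general descriptions from Theorem~\ref{thm:Rad-ind-2nd} and Lemma~\ref{lem:R-monoidal-struc} to the case $V = \unitobj = k$, using the vector-space identification $\mathbf{A} = R(\unitobj) = H \ogreaterthan k = {}_{\ad}H$. Two of the five formulas are immediate. The action \eqref{eq:BCP-alg-action} follows by putting $v = 1$ in \eqref{eq:Rad-ind-2nd-1} and contracting the resulting $\qhepsilon(h_{(1,2)})\,h_{(1,1)} = h_{(1)}$ by the counit axiom \eqref{eq:q-Hopf-def-2}; the unit \eqref{eq:BCP-alg-unit} is exactly \eqref{eq:induction-R-0}.

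For the second-kind coaction I would set $v = 1$ in \eqref{eq:Rad-ind-2nd-2}, so that $\qhomega_3 v = \qhepsilon(\qhomega_3)\,1$ and hence $\YDBdelta_{\mathbf{A}}(a) = \qhepsilon(\qhomega_3)\,\qhomega_1 a_{(1)} \qhomega_5 \otimes \qhomega_2 a_{(2)} \qhomega_4$. Unpacking $\qhomega$ from \eqref{eq:q-Hopf-def-omega} and $\chi$ from \eqref{eq:q-Hopf-def-chi}, one computes $\chi_1 = \qhphi_1 \overline{\qhphi}_{1(1)}$, $\chi_2 = \qhphi_2 \overline{\qhphi}_{1(2)}$, $\chi_3 = \qhphi_3 \overline{\qhphi}_2$, $\chi_4 = \overline{\qhphi}_3$, and reads off $\qhomega_4 = \overline{\qhf}_2 \qhS(\chi_3)$, $\qhomega_5 = \overline{\qhf}_1 \qhS(\chi_4)$, while $\qhomega_1, \qhomega_2, \qhomega_3$ still carry the right factor $\qhphi \otimes 1 \otimes 1$. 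The crucial simplification is the contraction $\qhepsilon(\qhomega_3)\,\qhomega_1 \otimes \qhomega_2 = \chi_1 \otimes \chi_2$: the spare right associator is killed by \eqref{eq:q-Hopf-phi-eps} and the middle leg of $\chi$ is reassembled by the counit axiom. Substituting these values yields \eqref{eq:BCP-alg-coaction-2}.

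The first-kind coaction \eqref{eq:BCP-alg-coaction-1} I would then derive by feeding \eqref{eq:BCP-alg-coaction-2} into the conversion formula \eqref{eq:YD2-to-YD1}, keeping in mind that the left action on the trailing tensor factor is the adjoint action $\triangleright$ of \eqref{eq:BCP-alg-action}. This produces an expression in which $\qhqr$ is entangled with two copies of the associator, and the stated form is reached by applying \eqref{eq:q-Hopf-qR-phi} to migrate $\qhqr$ past $\overline{\qhphi}$ and then re-coassociating via the pentagon \eqref{eq:q-Hopf-def-3}. For the multiplication I would specialise \eqref{eq:induction-R-2} to $X = Y = k$ and $x = y = 1$, so that $\qhepsilon$ is applied to the second and third tensor legs; most associator factors collapse through \eqref{eq:q-Hopf-def-4} and \eqref{eq:q-Hopf-phi-eps}, and the element $\qhalpha$ in \eqref{eq:BCP-alg-mult} emerges from $\qhS(\qhqr_2) = \qhS(\qhphi_2)\qhalpha\qhphi_3$ after inserting the definition \eqref{eq:q-Hopf-def-qR} and using $\qhS\,\overline{\qhS} = \id$, whereupon \eqref{eq:q-Hopf-def-3} rearranges the surviving $\qhphi$'s into the stated shape.

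I expect the coaction computations, and in particular \eqref{eq:BCP-alg-coaction-1}, to be the main obstacle: they hinge on the delicate identity \eqref{eq:q-Hopf-qR-phi} combined with repeated re-coassociation, and the bookkeeping of several distinct copies of $\qhphi$ is where sign- and index-tracking errors are most likely to occur.
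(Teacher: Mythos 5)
Your proposal is correct and follows essentially the same route as the paper's proof: specialize Theorem~\ref{thm:Rad-ind-2nd} and Lemma~\ref{lem:R-monoidal-struc} to $V=\unitobj$, unpack $\qhomega$ and $\chi$ (with exactly the values $\chi_1=\qhphi_1\overline{\qhphi}_{1(1)}$, etc., that you state) to get the second-kind coaction, convert via \eqref{eq:YD2-to-YD1} together with \eqref{eq:q-Hopf-qR-phi} for the first-kind coaction, and collapse \eqref{eq:induction-R-2} at $X=Y=\unitobj$ for the multiplication. The only slip is a citation: the re-coassociation step in the first-kind coaction uses the quasi-coassociativity axiom \eqref{eq:q-Hopf-def-1} to move $\qhqr_{1(1)}\otimes\Delta(\qhqr_{1(2)})$ past the associator, not the pentagon \eqref{eq:q-Hopf-def-3}.
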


We call $\mathbf{A}$ the  {\em adjoint algebra}, since \eqref{eq:BCP-alg-action} is usually called the adjoint action of $H$ in the Hopf algebra theory. Theorem~\ref{thm:BCP-alg} shows that the adjoint algebra is identical to the algebra $H_0$ of \cite{MR2106925}. In other words, this theorem gives a category-theoretical origin of the algebra $H_0$ of \cite{MR2106925}. 

\begin{proof}[Proof of Theorem~\ref{thm:BCP-alg}]
  Equation~\eqref{eq:BCP-alg-action} is obvious from the definition of the functor $R$. It also follows from the definition of $R$ that the second kind coaction of $\mathbf{A}$ is given by
  \begin{align*}
    \YDBdelta_{\mathbf{A}}(a)
    & = \qhano{\qhomega_1} a_{(1)} \qhano{\qhomega_5}
      \otimes \qhano{\qhomega_2} a_{(2)} \qhano{\qhomega_4 \qhepsilon(\qhomega_3)} \\
    {}^{\eqref{eq:q-Hopf-def-omega}}
    & = \qhano{\chi_1} a_{(1)} \overline{\qhf}_1 \qhS(\qhano{\chi_4})
      \otimes \qhano{\chi_2} a_{(2)} \overline{\qhf}_2 \qhS(\qhano{\chi_3}) \\
    {}^{\eqref{eq:q-Hopf-def-chi}}
    & = \qhphi_1 \overline{\qhphi}_{1(1)} a_{(1)} \overline{\qhf}_1 \qhS(\overline{\qhphi}_3)
      \otimes \qhphi_2 \overline{\qhphi}_{1(2)} a_{(2)} \overline{\qhf}_2 \qhS(\qhphi_3 \overline{\qhphi}_2)
  \end{align*}
  for $a \in \mathbf{A}$. Thus~\eqref{eq:BCP-alg-coaction-2} is proved. We verify~\eqref{eq:BCP-alg-coaction-1} as follows: For $a \in \mathbf{A}$,
  \begin{align*}
    & \YDAdelta_{\mathbf{A}}(a) \\
    {}^{\eqref{eq:YD2-to-YD1},~\eqref{eq:BCP-alg-coaction-2}}
    & = \qhqr_{1(1)} \qhphi_1 \overline{\qhphi}_{1(1)}
      a_{(1)} \overline{\qhf}_1 \qhS(\overline{\qhphi}_3) \qhS(\qhqr_{2})
      \otimes \qhqr_{1(2)} \triangleright(\qhphi_2 \overline{\qhphi}_{1(2)}
      a_{(2)} \overline{\qhf}_2 \qhS(\qhphi_3 \overline{\qhphi}_2)) \\
    {}^{\eqref{eq:BCP-alg-action}}
    & = \qhano{\qhqr_{1(1)} \qhphi_1}
      \overline{\qhphi}_{1(1)} a_{(1)} \overline{\qhf}_1 \qhS(\qhqr_{2} \overline{\qhphi}_3)
      \otimes \qhano{\qhqr_{1(2,1)} \qhphi_2}
      \overline{\qhphi}_{1(2)} a_{(2)} \overline{\qhf}_2
      \qhS(\qhano{\qhqr_{1(2,2)} \qhphi_3} \overline{\qhphi}_2) \\
    {}^{\eqref{eq:q-Hopf-def-1}}
    & = \qhphi_1 \qhqr_{1(1,1)} \overline{\qhphi}_{1(1)} a_{(1)} \overline{\qhf}_1 \qhS(\qhqr_{2} \overline{\qhphi}_3)
      \otimes \qhphi_2 \qhqr_{1(1,2)} \overline{\qhphi}_{1(2)} a_{(2)} \overline{\qhf}_2 \qhS(\qhphi_3 \qhqr_{1(2)} \overline{\qhphi}_2) \\
    & = \qhphi_1 (\qhano{\qhqr_{1(1)} \overline{\qhphi}_{1}} a)_{(1)}
      \overline{\qhf}_1 \qhS(\qhano{\qhqr_{2} \overline{\qhphi}_3})
      \otimes \qhphi_2 (\qhano{\qhqr_{1(1)} \overline{\qhphi}_{1}} a)_{(2)}
      \overline{\qhf}_2 \qhS(\qhphi_3 \qhano{\qhqr_{1(2)} \overline{\qhphi}_2}) \\
      {}^{\eqref{eq:q-Hopf-qR-phi}}
    & = \qhphi_1 (\qhphi'_{1} a)_{(1)} \overline{\qhf}_1 \qhS(\overline{\qhS}(\qhphi'_3) \qhqr_{2} \qhphi'_{2(2)})
      \otimes \qhphi_2 (\qhphi'_{1} a)_{(2)} \overline{\qhf}_2 \qhS(\qhphi_3 \qhqr_{1} \qhphi'_{2(1)}) \\
    & = (\text{the right-hand side of~\eqref{eq:BCP-alg-coaction-1}}).
  \end{align*}
  Equation~\eqref{eq:BCP-alg-mult} is proved as follows: For $a, b \in \mathbf{A}$,
  \begin{align*}
    a \star b
    & = R^{(2)}_{\unitobj, \unitobj}(a \otimes b) \\
    {}^{\eqref{eq:induction-R-2}}
    & = \qhphi_1 \overline{\qhphi}_{1(1)} \qhqr_{1(1)} \overline{\qhphi}{}'_{1(1,1)} a \qhS(\qhqr_2 \overline{\qhphi}{}'_{1(2)})
      \overline{\qhphi}{}'_{2} \qhphi{}'_1 b
      \qhS(\overline{\qhphi}_{3} \overline{\qhphi}{}'_{3(2)} \qhphi{}'_3) \\
    & \qquad \qquad
      \qhepsilon(\qhphi_2 \overline{\qhphi}_{1(2)} \qhqr_{1(2)} \overline{\qhphi}{}'_{1(1,2)})
      \qhepsilon(\qhphi_3 \overline{\qhphi}_2 \overline{\qhphi}{}'_{3(1)} \qhphi{}'_2) \\
    & = \qhqr_{1} \overline{\qhphi}{}'_{1(1)} a \qhS(\qhqr_2 \overline{\qhphi}{}'_{1(2)})
      \overline{\qhphi}{}'_{2} b
      \qhS(\overline{\qhphi}{}'_{3}) \\
    {}^{\eqref{eq:q-Hopf-def-qR}}
    & = \qhano{\qhphi_{1} \overline{\qhphi}{}'_{1(1)}}
      a \qhS(\qhano{\qhphi_2 \overline{\qhphi}{}'_{1(2)}}) \qhalpha
      \qhano{\qhphi_3 \overline{\qhphi}{}'_{2}} b
      \qhS(\qhano{\overline{\qhphi}{}'_{3}}) \\
    {}^{\eqref{eq:q-Hopf-def-3}}
    & = \overline{\qhphi}{}'_1 \qhphi_1 a \qhS(\overline{\qhphi}{}'_{2(1)} \overline{\qhphi}_1 \qhphi_2) \qhalpha
      \overline{\qhphi}{}'_{2(2)} \overline{\qhphi}_2 \qhphi_{3(1)} b \qhS(\overline{\qhphi}{}'_3 \overline{\qhphi}_3 \qhphi_{3(2)}) \\
    & = \qhano{\overline{\qhphi}{}'_1} \qhphi_1 a \qhS(\overline{\qhphi}_1 \qhphi_2)
      \qhano{\qhS(\overline{\qhphi}{}'_{2(1)}) \qhalpha
      \overline{\qhphi}{}'_{2(2)}} \overline{\qhphi}_2 \qhphi_{3(1)}
      b \qhS(\qhano{\overline{\qhphi}{}'_3} \overline{\qhphi}_3 \qhphi_{3(2)}) \\
    {}^{\eqref{eq:q-Hopf-def-4}, \eqref{eq:q-Hopf-def-5}}
    & = (\text{the right-hand side of~\eqref{eq:BCP-alg-mult}}).
  \end{align*}
  The unit of $\mathbf{A}$ is $R^{(0)}: \unitobj \to R(\unitobj)$. Thus \eqref{eq:BCP-alg-unit} follows from \eqref{eq:induction-R-0}.
\end{proof}

\subsection{Remarks on the adjoint algebra}

Let $\mathcal{C}$ be a monoidal category such that the forgetful functor $U_{\mathcal{C}}: \mathcal{Z}(\mathcal{C}) \to \mathcal{C}$ admits a right adjoint $R_{\mathcal{C}}: \mathcal{C} \to \mathcal{Z}(\mathcal{C})$. The adjunction $U_{\mathcal{C}} \dashv R_{\mathcal{C}}$ and the algebra $\mathbf{A}_{\mathcal{C}} := R_{\mathcal{C}}(\unitobj)$ are important in the study of tensor categories and related areas \cite{MR3039775,MR3631720,MR3632104,MR3921367}.

If we identify ${}_H^H \YD$ with $\mathcal{Z}({}_H \Mod)$, then our $F$, $R$ and $\mathbf{A}$ are identified with $U_{\mathcal{C}}$, $R_{\mathcal{C}}$ and $\mathbf{A}_{\mathcal{C}}$ for $\mathcal{C} = {}_H \Mod$, respectively.
Thus it is worth to investigate $\mathbf{A}$ in detail and hence we give some remarks on $\mathbf{A}$ in this subsection.
The content of this subsection will not be mentioned in later sections, where we discuss categorical aspects of (co)integrals of quasi-Hopf algebras.

\subsubsection{Quantum commutativity of $\mathbf{A}$}

An algebra $A$ in a braided monoidal category $\mathcal{B}$ is said to be {\em quantum commutative}, or {\em commutative} for short, if the equation $m \circ \sigma_{A,A} = m$ holds, where $m$ is the multiplication of $A$ and $\sigma$ is the braiding of $\mathcal{B}$.

We have mentioned that $\mathbf{A}$ is identical to the algebra $H_0$ of \cite{MR2106925}. Thus, according to \cite[Proposition 4.2]{MR2106925}, the algebra $\mathbf{A}$ is commutative in the above sense. We now demonstrate that this fact follows from a general result in the theory of monoidal categories as follows:

\begin{theorem}
  \label{thm:BCP-alg-1}
  The algebra $\mathbf{A}$ is a commutative algebra in ${}^H_H \YD$.
\end{theorem}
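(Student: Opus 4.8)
The plan is to avoid the explicit multiplication~\eqref{eq:BCP-alg-mult} entirely and instead deduce commutativity from the universal property of $\mathbf{A} = R(\unitobj)$ encoded in the adjunction $F \dashv R$ of Theorem~\ref{thm:Rad-ind-2nd}. Throughout I identify ${}^H_H\YD$ with the Drinfeld center $\mathcal{Z}({}_H\Mod)$, so that the braiding $\sigma$ is given by the half-braidings and, for the forgetful functor $F$, one has $F(\sigma_{\mathbf{A},\mathbf{A}}) = \sigma_{\mathbf{A},F(\mathbf{A})}$, the half-braiding of $\mathbf{A}$ evaluated at its own underlying module. Since $F$ is strict monoidal, the adjunction supplies, for each $X \in {}^H_H\YD$, a bijection
\begin{equation*}
  \Phi_X\colon {}^H_H\YD(X,\mathbf{A}) \xrightarrow{\ \cong\ } \Hom_H(F(X),\unitobj),
  \qquad \Phi_X(f) = \varepsilon_{\unitobj}\circ F(f),
\end{equation*}
natural in $X$. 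Writing $m = R^{(2)}_{\unitobj,\unitobj}\colon \mathbf{A}\otimes\mathbf{A}\to\mathbf{A}$ for the multiplication, the desired identity $m\circ\sigma_{\mathbf{A},\mathbf{A}} = m$ will follow once I show $\Phi_{\mathbf{A}\otimes\mathbf{A}}(m) = \Phi_{\mathbf{A}\otimes\mathbf{A}}(m\circ\sigma_{\mathbf{A},\mathbf{A}})$, because $\Phi_{\mathbf{A}\otimes\mathbf{A}}$ is injective.

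First I would compute the transpose $\tau := \Phi_{\mathbf{A}\otimes\mathbf{A}}(m) = \varepsilon_{\unitobj}\circ F(R^{(2)}_{\unitobj,\unitobj})$. Substituting the description $R^{(2)}_{\unitobj,\unitobj} = R(\varepsilon_{\unitobj}\otimes\varepsilon_{\unitobj})\circ\eta_{\mathbf{A}\otimes\mathbf{A}}$ from Subsection~\ref{subsec:monoidal-cats} (valid because $F$ is strict), then applying naturality of the counit $\varepsilon$ to the morphism $\varepsilon_{\unitobj}\otimes\varepsilon_{\unitobj}\colon F(\mathbf{A})\otimes F(\mathbf{A})\to\unitobj$ together with the triangle identity $\varepsilon_{F(-)}\circ F(\eta) = \id$, the expression collapses to
\begin{equation*}
  \tau = \varepsilon_{\unitobj}\otimes\varepsilon_{\unitobj}\colon F(\mathbf{A})\otimes F(\mathbf{A})\to\unitobj .
\end{equation*}
Consequently $\Phi_{\mathbf{A}\otimes\mathbf{A}}(m\circ\sigma_{\mathbf{A},\mathbf{A}}) = \tau\circ F(\sigma_{\mathbf{A},\mathbf{A}}) = (\varepsilon_{\unitobj}\otimes\varepsilon_{\unitobj})\circ\sigma_{\mathbf{A},F(\mathbf{A})}$, and the whole theorem is reduced to the single identity
\begin{equation*}
  (\varepsilon_{\unitobj}\otimes\varepsilon_{\unitobj})\circ\sigma_{\mathbf{A},F(\mathbf{A})} = \varepsilon_{\unitobj}\otimes\varepsilon_{\unitobj}.
\end{equation*}

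This last identity is a formal consequence of the half-braiding axioms. Applying naturality of the half-braiding $\sigma_{\mathbf{A},-}$ to the morphism $\varepsilon_{\unitobj}\colon F(\mathbf{A})\to\unitobj$ gives $(\varepsilon_{\unitobj}\otimes\id)\circ\sigma_{\mathbf{A},F(\mathbf{A})} = \sigma_{\mathbf{A},\unitobj}\circ(\id\otimes\varepsilon_{\unitobj})$; since the unit isomorphisms of ${}_H\Mod$ are identities, $\sigma_{\mathbf{A},\unitobj} = \id$, and post-composing with $\varepsilon_{\unitobj}$ on the surviving tensor factor produces exactly the displayed identity. To stay self-contained I would re-derive the two transpose computations from the explicit unit $\eta$ and counit $\varepsilon$ of Theorem~\ref{thm:Rad-ind-2nd}, but no manipulation of $\qhphi$, $\qhalpha$ or $\qhbeta$ is required. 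The only genuine obstacle is bookkeeping: one must track the (trivial but ubiquitous) unit identifications $\unitobj\otimes Y = Y = Y\otimes\unitobj$ so that the triangle identity and the naturality squares are applied to morphisms with matching sources and targets. The conceptual content is the reduction through $\Phi$, which bypasses the associator $\qhphi$ altogether and explains, in categorical terms, why the complicated formula~\eqref{eq:BCP-alg-mult} defines a commutative multiplication.
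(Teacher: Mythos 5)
Your argument is correct and is essentially the paper's proof: the paper establishes commutativity by invoking the general categorical fact that $R_{\mathcal{C}}(\unitobj)$ is a commutative algebra in $\mathcal{Z}(\mathcal{C})$, citing \cite[Lemma 3.5]{MR3039775}, and the adjunction transpose $\Phi(m)=\varepsilon_{\unitobj}\otimes\varepsilon_{\unitobj}$ combined with naturality of the half-braiding that you carry out is precisely the proof of that cited lemma. Both routes deliberately bypass the explicit multiplication formula~\eqref{eq:BCP-alg-mult}.
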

\begin{proof}
  Let $\mathcal{C}$ be a monoidal category, and let $\mathbf{A}_{\mathcal{C}} \in \mathcal{Z}(\mathcal{C})$ be as in the beginning of this subsection. It is known that the algebra $\mathbf{A}_{\mathcal{C}}$ is commutative (a proof for fusion categories is found in \cite[Lemma 3.5]{MR3039775}, but the same proof can be applied for the general case). The proof is done by applying this result to the monoidal category $\mathcal{C} = {}_H \Mod$ and identifying $\mathcal{Z}(\mathcal{C})$ with ${}^H_H \YD$.
\end{proof}

\subsubsection{Class functions on $H$}

The vector space $\CF(H) := \Hom_H(\mathbf{A}, \unitobj)$ is called the {\em space of class functions} on $H$ as it coincides with the space of class functions in the usual sense when $H$ is a group algebra \cite{MR3631720}. By the argument of \cite{MR3631720}, the vector space $\CF(H)$ has a structure of an algebra that is isomorphic to the endomorphism algebra of $\mathbf{A} \in {}^H_H \YD$. By using our explicit description of the unit and the counit of the adjunction $F \dashv R$, we give the following expression of the algebra structure of $\CF(H)$:

\begin{theorem}
  \label{thm:BCP-alg-3}
  The vector space $\CF(H)$ is an algebra with respect to the multiplication $\star$ given by
  \begin{equation*}
    \langle \xi \star \zeta, a \rangle
    = \langle \xi, \qhphi_1 \overline{\qhphi}{}_{1(1)} a_{(1)} \overline{\qhf}{}_1 \qhS(\overline{\qhphi}{}_3) \rangle
    \langle \zeta, \qhphi_2 \overline{\qhphi}{}_{1(2)} a_{(2)} \overline{\qhf}{}_2 \qhS(\qhphi_2 \overline{\qhphi}{}_2) \rangle
  \end{equation*}
  for $\xi, \zeta \in \CF(H)$ and $a \in \mathbf{A}$. The algebra $\CF(H)$ is isomorphic to the endomorphism algebra of the Yetter-Drinfeld $H$-module $\mathbf{A}$.
\end{theorem}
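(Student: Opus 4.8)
The plan is to transport the algebra structure of the endomorphism ring of $\mathbf{A}$ across the adjunction $F \dashv R$ to the hom-space $\Hom_H(\mathbf{A}, \unitobj)$. The starting observation is that $\CF(H) = \Hom_H(\mathbf{A}, \unitobj) = \Hom_H(R(\unitobj), \unitobj)$, and by the adjunction $F \dashv R$ there is a natural isomorphism $\Hom_H(R(\unitobj), \unitobj) \cong {}^H_H \YD(R(\unitobj), R(\unitobj)) = \End_{{}^H_H \YD}(\mathbf{A})$. More precisely, since $\mathbf{A}$ is an algebra in ${}^H_H \YD$ arising as the image of the trivial algebra under the monoidal functor $R$, its endomorphism algebra carries the usual convolution-type product, and the claimed formula for $\star$ on $\CF(H)$ should be exactly the transport of this product under the above isomorphism.

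First I would make the identification $\CF(H) \cong \End_{{}^H_H \YD}(\mathbf{A})$ explicit. An endomorphism $g: \mathbf{A} \to \mathbf{A}$ in ${}^H_H \YD$ corresponds to a map $\unitobj \to \mathbf{A}$ in ${}_H \Mod$, and by freeness this is detected by postcomposing with the counit $\varepsilon$; concretely the correspondence sends $g$ to $\varepsilon_{\unitobj} \circ F(g) \circ \eta_{\unitobj}$-type data, and conversely a morphism $\xi: \mathbf{A} \to \unitobj$ in ${}_H \Mod$ is sent to $R(\xi) \circ \eta_{\mathbf{A}}$, an endomorphism of $\mathbf{A} = R(\unitobj)$. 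Here I would use the explicit formulas for $\eta$ and $\varepsilon$ from Theorem~\ref{thm:Rad-ind-2nd}, namely $\eta_M(m) = m_{\YDB{-1}} \otimes m_{\YDB{0}}$ and $\varepsilon_V(a \otimes v) = \qhepsilon(a) v$, so that the correspondence becomes fully computable.

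Next I would compute the product explicitly. Given $\xi, \zeta \in \CF(H)$ corresponding to endomorphisms $\widehat{\xi}, \widehat{\zeta}$, the product $\widehat{\xi} \circ \widehat{\zeta}$ (or the convolution coming from the algebra structure on $\mathbf{A}$) transports back to $\CF(H)$ to give $\xi \star \zeta$. The key ingredient is the second-kind coaction $\YDBdelta_{\mathbf{A}}$ computed in~\eqref{eq:BCP-alg-coaction-2}, since the unit $\eta_{\mathbf{A}}$ is precisely this coaction. Substituting the formula $\YDBdelta_{\mathbf{A}}(a) = \qhphi_1 \overline{\qhphi}_{1(1)} a_{(1)} \overline{\qhf}_1 \qhS(\overline{\qhphi}_3) \otimes \qhphi_2 \overline{\qhphi}_{1(2)} a_{(2)} \overline{\qhf}_2 \qhS(\qhphi_3 \overline{\qhphi}_2)$ and pairing the two tensor legs against $\xi$ and $\zeta$ respectively should yield exactly the displayed formula for $\langle \xi \star \zeta, a\rangle$.

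The main obstacle I expect is bookkeeping rather than conceptual: one must correctly match the convolution product on $\End_{{}^H_H \YD}(\mathbf{A})$ induced by the algebra structure of $\mathbf{A}$ with composition, and verify that the transport under the adjunction isomorphism produces the coaction-based formula without stray associator or $\qhf$ factors. In particular I would have to be careful that the isomorphism $\CF(H) \cong \End(\mathbf{A})$ is an \emph{algebra} isomorphism for the correct algebra structure on $\CF(H)$; this is where the general argument of \cite{MR3631720} is invoked, reducing the remaining work to verifying that the induced product coincides with the stated one, which is a direct substitution of~\eqref{eq:BCP-alg-coaction-2} into the pairing. The final clause, that $\CF(H) \cong \End_{{}^H_H \YD}(\mathbf{A})$ as algebras, then follows immediately from this construction.
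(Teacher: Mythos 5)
Your proposal is correct and follows essentially the same route as the paper: the adjunction isomorphism $\Psi(\xi) = R(\xi)\circ\eta_{\mathbf{A}}$, the observation that $\eta_{\mathbf{A}}$ is the second-kind coaction so that $\langle\xi\star\zeta,a\rangle = (\xi\otimes\zeta)\,\YDBdelta_{\mathbf{A}}(a)$ by substituting \eqref{eq:BCP-alg-coaction-2}, and naturality of $\eta$ to see that $\Psi$ intertwines $\star$ with composition. One small clarification: the product being transported is plain composition in $\End_{{}^H_H\YD}(\mathbf{A})$, not a convolution coming from the algebra structure of $\mathbf{A}$ — your parenthetical hedge there is unnecessary, and the rest of your argument already treats it correctly.
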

\begin{proof}
  We first establish an isomorphism ${}^H_H \YD(\mathbf{A}, \mathbf{A}) \cong \CF(H)$ of vector spaces. Let $\eta$ be the unit of the adjunction $F \dashv R$ given by Theorem \ref{thm:Rad-ind-2nd}. Then the adjunction isomorphism of $F \dashv R$ is given by
  \begin{equation*}
    \Hom_H(F(M), X) \cong {}^H_H \YD(M, R(X)),
    \quad \xi \mapsto R(\xi) \circ \eta_M
  \end{equation*}
for $X \in {}_H \Mod$ and $M \in {}^H_H \YD$. Letting $X = \unitobj$ and $M = \mathbf{A}$ and identifying $R(\unitobj)$ with $\mathbf{A}$, we obtain the following isomorphism of vector spaces:
\begin{equation}
  \label{eq:CF-End-A-iso}
  \Psi: \CF(H) \to {}^H_H \YD(\mathbf{A}, \mathbf{A}),
  \quad \xi \mapsto R(\xi) \circ \eta_{\mathbf{A}}.
\end{equation}
Next, we show the following equation:
\begin{equation}
  \label{eq:CF-End-A-iso-2}
  \xi \star \zeta = \xi \circ F R(\zeta) \circ F(\eta_{\mathbf{A}})
  \quad (\xi, \zeta \in \CF(H)).
\end{equation}
Indeed, for all $a \in \mathbf{A}$, we have
\begin{align*}
  & (\xi \circ F R(\zeta) \circ F(\eta_{\mathbf{A}}))(a) \\
  {}^{\eqref{eq:Rad-ind-2nd-unit}} & = (\xi \otimes \zeta) \YDBdelta_{\mathbf{A}}(a) \\
  {}^{\eqref{eq:BCP-alg-coaction-2}} & = 
  \Big \langle \xi, \qhphi_1 \overline{\qhphi}_{1(1)} a_{(1)} \overline{\qhf}_1 \qhS(\overline{\qhphi}_3)
  \langle \zeta, \qhphi_2 \overline{\qhphi}_{1(2)} a_{(2)} \overline{\qhf}_2 \qhS(\qhphi_3 \overline{\qhphi}_2)  \rangle \Big \rangle \\
  & = \langle \xi \star \zeta, a \rangle.
\end{align*}
Since the right-hand side of \eqref{eq:CF-End-A-iso-2} is a composition of $H$-linear maps, $\xi \star \zeta$ belongs to $\CF(H)$. Finally, we have
\begin{align*}
  \Psi(\xi) \circ \Psi(\zeta)
  & = R(\xi) \circ \eta_{\mathbf{A}} \circ \Psi(\zeta) \\
  & = R(\xi) \circ R F(\Psi(\zeta)) \circ\eta_{\mathbf{A}} \\
  & = R(\xi) \circ R F (R(\zeta) \circ \eta_{\mathbf{A}}) \circ \eta_{\mathbf{A}} \\
  & = R(\xi \circ F R(\zeta) \circ F(\eta_{\mathbf{A}})) \circ  \eta_{\mathbf{A}} 
  = \Psi(\xi \star \zeta)
\end{align*}
for $\xi, \zeta \in \CF(H)$, where the second equality follows from the naturality of $\eta$. Since ${}^H_H \YD(\mathbf{A}, \mathbf{A})$ is an associative unital algebra by the composition, and since the map $\Psi$ is an isomorphism of vector spaces, $\CF(H)$ is an algebra with respect to $\star$ and is isomorphic to ${}^H_H \YD(\mathbf{A}, \mathbf{A})$ as an algebra through the map $\Psi$.
\end{proof}

\subsubsection{The category of $\mathbf{A}$-modules}

The adjoint algebra $\mathbf{A}$ acts on an object of the form $R(V)$, $V \in {}_H \Mod$, from the right by $R^{(2)}_{V, \unitobj}: R(V) \otimes \mathbf{A} \to R(V)$. We denote by $R(V)_{\mathbf{A}}$ the right $\mathbf{A}$-module in ${}^H_H \YD$ obtained in this way. This construction gives rise to a functor
\begin{equation*}
  K: {}_H \Mod \to {}^H_H \YD_{\mathbf{A}},
  \quad V \mapsto R(V)_{\mathbf{A}}
\end{equation*}
from ${}_H \Mod$ to the category ${}^H_H \YD_{\mathbf{A}}$ of right $\mathbf{A}$-modules in ${}^{H}_H \YD$. Here we aim to prove the following theorem:

\begin{theorem}
  \label{thm:BCP-alg-2}
  The functor $K$ is a category equivalence.
\end{theorem}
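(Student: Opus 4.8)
The plan is to recognize Theorem~\ref{thm:BCP-alg-2} as an instance of the general reconstruction principle for co-Hopf monoidal adjunctions, thereby reducing it to two inputs: the co-Hopfness of the adjunction $F \dashv R$ (Theorem~\ref{thm:YD-co-Hopf-adj}) and the Hopf-(co)monadic machinery of Brugui\`eres--Lack--Virelizier \cite{MR2793022}. First I would record the formal setup. Write $\mathcal{C} = {}_H \Mod$ and $\mathcal{D} = {}^H_H \YD$. Since $F$ is strong monoidal, its right adjoint $R$ is lax monoidal, so $\mathbf{A} = R(\unitobj)$ is an algebra in $\mathcal{D}$ and each $R(V)$ carries the right $\mathbf{A}$-action $R^{(2)}_{V,\unitobj}$; hence $K$ is well defined and the forgetful functor $U_{\mathbf{A}}\colon {}^H_H \YD_{\mathbf{A}} \to \mathcal{D}$ satisfies $U_{\mathbf{A}} \circ K = R$. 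The free--forgetful adjunction $(-)\otimes \mathbf{A} \dashv U_{\mathbf{A}}$ is then the auxiliary tool against which $K$ will be compared.

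Next I would construct the candidate quasi-inverse. Because $F \dashv R$ is a monoidal adjunction, the counit $\varepsilon\colon FR \to \id_{\mathcal{C}}$ is a monoidal natural transformation, so $\varepsilon_{\unitobj}\colon F(\mathbf{A}) = FR(\unitobj) \to \unitobj$ is a morphism of algebras in $\mathcal{C}$ and makes $\unitobj$ a left $F(\mathbf{A})$-module. For a right $\mathbf{A}$-module $(M,\rho)$ I would set $L(M) = F(M) \otimes_{F(\mathbf{A})} \unitobj$, i.e. the coequalizer
\[
  F(M) \otimes F(\mathbf{A}) \rightrightarrows F(M) \longrightarrow L(M)
\]
of $F(\rho)$ (via $F^{(2)}\colon F(M) \otimes F(\mathbf{A}) \cong F(M \otimes \mathbf{A})$) and $\id_{F(M)} \otimes \varepsilon_{\unitobj}$. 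These coequalizers exist and are suitably preserved because $\mathcal{C}$ and $\mathcal{D}$ are cocomplete abelian categories and $F$ is exact. A routine computation with the adjunction $F \dashv R$ identifies $L$ as left adjoint to $K$, so it suffices to show that the unit $\id_{{}^H_H \YD_{\mathbf{A}}} \to KL$ and the counit $LK \to \id_{\mathcal{C}}$ are isomorphisms.

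Here co-Hopfness enters. The counit $LK(V) = FR(V) \otimes_{FR(\unitobj)} \unitobj \to V$, and likewise the unit, are exactly the comparison morphisms governed by the categorical ``fundamental theorem of Hopf modules'': they are invertible precisely when the canonical (fusion-type) morphisms $F(M \otimes R(X)) \to F(M) \otimes X$, assembled from $F^{(2)}$ and $\varepsilon$, are invertible for all $M \in \mathcal{D}$ and $X \in \mathcal{C}$ --- that is, when $F \dashv R$ is co-Hopf. This invertibility is the content of Theorem~\ref{thm:YD-co-Hopf-adj}, and the implication ``co-Hopf adjunction $\Rightarrow$ the comparison functor to modules over $R(\unitobj)$ is an equivalence'' is precisely the general result on Hopf monads in \cite{MR2793022}. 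Thus the proof reduces to citing Theorem~\ref{thm:YD-co-Hopf-adj} and invoking that general theorem, with the explicit adjunction data of Theorem~\ref{thm:Rad-ind-2nd} guaranteeing the structural hypotheses.

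The genuinely hard part is isolated entirely in co-Hopfness: establishing that the canonical morphisms above are invertible is where the quasi-Hopf input is spent, and it is most cleanly derived from the rigidity of ${}_H \Mod_{\fd}$ together with the formulas of Theorem~\ref{thm:Rad-ind-2nd}. Everything else --- well-definedness of $K$, the adjunction $L \dashv K$, and the behavior of the relative-tensor coequalizers --- is formal category theory requiring no further computation inside $H$. Once Theorem~\ref{thm:YD-co-Hopf-adj} is available, the equivalence of $K$ is pure Hopf-monad formalism.
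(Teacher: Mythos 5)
Your proposal is correct and follows essentially the same route as the paper: both reduce the statement to the co-Hopfness of $F \dashv R$ (Theorem~\ref{thm:YD-co-Hopf-adj}) together with the general Hopf-monad result of Brugui\`eres--Lack--Virelizier, the paper doing so by dualizing to the Hopf monad $T = F \circ R$ on $({}_H \Mod)^{\op,\rev}$ and citing \cite[Theorem 6.11]{MR2793022}, while you unpack the quasi-inverse explicitly as the relative tensor product $F(M) \otimes_{F(\mathbf{A})} \unitobj$ that this theorem constructs internally. Your remark that the coequalizers behave well because the categories are abelian and $F$ is exact matches the paper's verification of condition (ii) of that theorem.
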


We explain that this theorem is in fact a consequence of a general result in the theory of Hopf monads (see \cite{MR2793022}).
Let $\mathcal{C}$ and $\mathcal{D}$ be monoidal categories, and let $U: \mathcal{C} \to \mathcal{D}$ be a strong monoidal functor admitting a right adjoint $T: \mathcal{D} \to \mathcal{C}$. The left Hopf operator $\mathbb{H}^{(\ell)}$ and the right Hopf operator $\mathbb{H}^{(r)}$ for the monoidal adjunction $U \dashv T$ are the natural transformations defined by
\begin{equation*}
  \mathbb{H}^{(\ell)}_{X, M}
  = T^{(2)}_{X, U(M)} \circ (\id_{T(X)} \otimes i_M)
  \quad \text{and} \quad
  \mathbb{H}^{(r)}_{X, M}
  = T^{(2)}_{U(M), X} \circ (i_M \otimes \id_{T(X)}),
\end{equation*}
respectively, for $X \in \mathcal{D}$ and $M \in \mathcal{C}$, where $i: \id_{\mathcal{C}} \to T U$ is the unit of $U \dashv T$. We say that the monoidal adjunction $U \dashv T$ is {\em co-Hopf} if the Hopf operators $\mathbb{H}^{(\ell)}$ and $\mathbb{H}^{(r)}$ are invertible. We note that $U \dashv T$ is co-Hopf if and only if the comonoidal adjunction
$(T^{\op} : \mathcal{D}^{\op} \to \mathcal{C}^{\op}) \dashv (U^{\op}: \mathcal{C}^{\op} \to \mathcal{D}^{\op})$
is a Hopf adjunction in the sense of \cite{MR2793022}. Thus any result on Hopf adjunctions can be translated into a result on co-Hopf adjunctions.

We apply the fundamental theorem for Hopf modules over a Hopf monad \cite[Theorem 6.11]{MR2793022} to the Hopf monad arising from the monoidal adjunction $F \dashv R$ of Theorem \ref{thm:Rad-ind-2nd}. We first prove the following theorem, which is of independent interest:

\begin{theorem}
  \label{thm:YD-co-Hopf-adj}
  The monoidal adjunction $F \dashv R$ is co-Hopf.
\end{theorem}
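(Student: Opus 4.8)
The plan is to show that both Hopf operators are invertible, and to deduce the right one from the left one so that essentially all the work goes into $\mathbb{H}^{(\ell)}$. First I would record a clean description of $\mathbb{H}^{(\ell)}_{X,M}=R^{(2)}_{X,F(M)}\circ(\id_{R(X)}\otimes\eta_M)$. Since $F$ is \emph{strict} monoidal, I can compute the image of this morphism under the adjunction isomorphism $\Hom_{{}^H_H\YD}(R(X)\otimes M,\,R(X\otimes F(M)))\cong\Hom_{{}_H\Mod}(F(R(X)\otimes M),\,X\otimes F(M))$. Using the two triangle identities together with the formula $R^{(2)}_{A,B}=R(\varepsilon_A\otimes\varepsilon_B)\circ\eta_{R(A)\otimes R(B)}$, one obtains $\varepsilon_{X\otimes F(M)}\circ F(R^{(2)}_{X,F(M)})=\varepsilon_X\otimes\varepsilon_{F(M)}$ and $\varepsilon_{F(M)}\circ F(\eta_M)=\id_{F(M)}$, so the transpose of $\mathbb{H}^{(\ell)}_{X,M}$ is simply $\varepsilon_X\otimes\id_{F(M)}$; equivalently $\mathbb{H}^{(\ell)}_{X,M}=R(\varepsilon_X\otimes\id_{F(M)})\circ\eta_{R(X)\otimes M}$. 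This both pins the map down uniquely among ${}^H_H\YD$-morphisms and tells me precisely which linear isomorphism I must produce as its inverse.

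Next I would pass to explicit formulas. As vector spaces $R(X)\otimes M=H\otimes X\otimes M=R(X\otimes F(M))$, so $\mathbb{H}^{(\ell)}_{X,M}$ is a concrete linear endomorphism of $H\otimes X\otimes M$; substituting $\eta_M(m)=m_{\YDB{-1}}\otimes m_{\YDB{0}}$ (the second-kind coaction, by \eqref{eq:Rad-ind-2nd-unit}) into the expression for $R^{(2)}$ in Lemma~\ref{lem:R-monoidal-struc} puts it in closed form. I would then write a candidate inverse $J_{X,M}\colon H\otimes X\otimes M\to H\otimes X\otimes M$ built from the \emph{bijective} antipode $\overline{\qhS}$ together with the passage \eqref{eq:YD2-to-YD1} from the second-kind to the first-kind coaction of $M$, that is, from the inverse half-braiding of $M$; the shape of $J_{X,M}$ is dictated by the transpose computation above. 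Verifying $J_{X,M}\circ\mathbb{H}^{(\ell)}_{X,M}=\id$ and $\mathbb{H}^{(\ell)}_{X,M}\circ J_{X,M}=\id$ is then a manipulation using the $p$--$q$ relations \eqref{eq:q-Hopf-pR}--\eqref{eq:q-Hopf-pL-qL-2}, the antipode and twist identities \eqref{eq:q-Hopf-f-1}--\eqref{eq:q-Hopf-delta-alpha}, and the Yetter--Drinfeld axioms \eqref{eq:YD-1-1}--\eqref{eq:YD-1-3}.

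Finally, I would get the invertibility of $\mathbb{H}^{(r)}$ for free. The left and right Hopf operators are interchanged by reversing the tensor product, and $({}_H\Mod)^{\rev}\cong{}_{H^{\cop}}\Mod$ with $\mathcal{Z}\big(({}_H\Mod)^{\rev}\big)\simeq{}^{H^{\cop}}_{H^{\cop}}\YD$; since the argument above applies verbatim to every quasi-Hopf algebra with bijective antipode, in particular to $H^{\cop}$, its left Hopf operator — which is the right Hopf operator of $H$ — is invertible as well.

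I expect the main obstacle to be the identity $J_{X,M}\circ\mathbb{H}^{(\ell)}_{X,M}=\id=\mathbb{H}^{(\ell)}_{X,M}\circ J_{X,M}$: the two composites live in different bracketings of $H\otimes X\otimes M$, and the reassociation is governed by several copies of $\qhphi$, so the bookkeeping is delicate. Guessing the correct $J_{X,M}$ — in particular the precise insertion of $\overline{\qhS}$ and of the reconstructed first-kind coaction of $M$ — is the crux, and the transpose formula $\varepsilon_X\otimes\id_{F(M)}$ is the tool I would lean on to fix it, since it determines $\mathbb{H}^{(\ell)}_{X,M}$, and hence its inverse, uniquely.
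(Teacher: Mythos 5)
Your categorical setup is sound: the transpose of $\mathbb{H}^{(\ell)}_{X,M}$ under the adjunction is indeed $\varepsilon_X \otimes \id_{F(M)}$, and reducing $\mathbb{H}^{(r)}$ to the left Hopf operator of $H^{\cop}$ via $({}_H\Mod)^{\rev} \cong {}_{H^{\cop}}\Mod$ is legitimate (it is the same left--right switching device used throughout the paper). But the proof stops exactly where the mathematical content lies: you never produce the candidate inverse $J_{X,M}$, only assert that its ``shape is dictated by the transpose computation.'' That computation determines $\mathbb{H}^{(\ell)}_{X,M}$ itself, not its inverse; extracting a two-sided inverse from the closed form of $R^{(2)}$ in Lemma~\ref{lem:R-monoidal-struc} --- a formula involving several copies of $\qhphi$, the element $\qhqr$ and the adjoint action --- is precisely the hard step, and nothing in your outline guarantees that the naive guess ``$\overline{\qhS}$ applied to the first-kind coaction'' assembles correctly once all the associators are inserted. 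As written this is a plan with its self-identified crux unexecuted, not a proof.

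It is worth knowing that the paper deliberately avoids your route for exactly this reason: it remarks that the expression for $R^{(2)}$ is too complicated to work with directly, and instead transports the question through the monoidal equivalences ${}^H_H\YD \simeq {}^H_H\Mod^H_H$ and ${}_H\Mod \simeq {}_H^{}\Mod^H_H$ to the adjunction $U \dashv T$, $T(M) = H \hatotimes M$, on quasi-Hopf bimodule categories. There the Hopf operators take the transparent form $(h \hatotimes x)\otimes_H m \mapsto h m_{(-1)} \hatotimes (x \otimes_H m_{(0)})$, explicit inverses can be written down using $\qhpl$ and $\qhql$ and verified with \eqref{eq:q-Hopf-pL-qL-2} and its relatives, and the only extra care needed is well-definedness over $\otimes_H$. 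To salvage your direct approach you would have to carry out the full computation with Lemma~\ref{lem:R-monoidal-struc}; alternatively, observe (as the paper does) that co-Hopfness is preserved under the commuting square of monoidal equivalences and do the computation in the simpler model.
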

\begin{proof}
  \allowdisplaybreaks
  We recall that ${}_H \Mod_H^{H}$ is defined to be the category of right $H$-comodules in $({}_H \Mod_H, \hatotimes, \unitobj)$. Let ${}_H^{H} \Mod_H^H$ be the category of $H$-bicomodules in ${}_H \Mod_H$, and let $U: {}_H^{H} \Mod_H^H \to {}_H^{} \Mod_H^{H}$ be the forgetful functor. Since ${}^H_H \Mod_H^H$ is identified with the category of left $H$-comodules in the left ${}_H \Mod_H$-module category ${}_H^{} \Mod_H^{H}$, a right adjoint of $U$ is given by the free left $H$-comodule functor
  \begin{equation*}
    T: {}_H^{} \Mod_H^H \to {}_H^{H} \Mod_H^H,
    \quad M \mapsto H \hatotimes M.
  \end{equation*}
  We recall that ${}_H^H \YD = {}_H^H \YD_2$ is defined to be the category of $H$-comodules in the ${}_H \Mod_H$-module category ${}_H \Mod$. Since the functor \eqref{eq:q-Hopf-bimod-equiv-1} is an equivalence of ${}_H \Mod_H$-module categories, it induces an equivalence
  \begin{equation}
    \label{eq:q-Hopf-YD-HHMHH-equiv-1}
    {}_H^H \YD \to {}_H^H \Mod_H^H,
    \quad V \mapsto V \hatotimes H
  \end{equation}
  of categories \cite[Theorem 5.3]{MR1897403}. By the construction of this equivalence, it is trivial that the following diagram (of functors) is commutative:
  \begin{equation}
    \label{eq:q-Hopf-YD-HHMHH-equiv-2}
    \xymatrix@C=64pt{
      {}^H_H \YD \ar[r]_{\approx}^{\text{\eqref{eq:q-Hopf-YD-HHMHH-equiv-1}}} \ar[d]_{F}
      & {}^H_H \Mod^H_H \ar[d]^{U} \\
      {}_H \Mod \ar[r]_{\approx}^{\text{\eqref{eq:q-Hopf-bimod-equiv-1}}}
      & {}_H^{} \Mod^H_H \\
    }
  \end{equation}
  If $M, N \in {}_H \Mod_H^H$, then their tensor product $M \otimes_H N$ over $H$ is an object of ${}_H^{} \Mod^H_H$ by the usual $H$-bimodule structure and the coaction given by
  \begin{equation*}
    m \otimes_H n \mapsto (m_{(0)} \otimes_H n_{(0)}) \hatotimes m_{(1)} n_{(1)}
    \quad (m \in M, n \in N).
  \end{equation*}
  The category ${}_H^{} \Mod_H^H$ is a monoidal category with respect to this tensor product \cite{1999math4164H}. If, moreover, $M$ and $N$ belong to ${}_H^{H} \Mod_H^H$, then $M \otimes_H N \in {}_H \Mod_H^H$ is an object of ${}^H_H \Mod_H^H$ by the left coaction given by
  \begin{equation*}
     m \otimes_H n \mapsto m_{(-1)} n_{(-1)}  \hatotimes (m_{(0)} \otimes_H n_{(0)})
      \quad (m \in M, n \in N).
  \end{equation*}
  The category ${}_H^{H} \Mod_H^H$ is also a monoidal category with respect to this tensor product \cite{MR1897403}. The equivalence~\eqref{eq:q-Hopf-bimod-equiv-1} is in fact an equivalence of monoidal categories \cite{1999math4164H}. The equivalence \eqref{eq:q-Hopf-YD-HHMHH-equiv-1} is also a monoidal equivalence such that \eqref{eq:q-Hopf-YD-HHMHH-equiv-2} is in fact a commutative diagram of monoidal functors \cite[Section 9.3]{MR3929714}.

  By the above argument, the co-Hopfness of $F \dashv R$ is equivalent to that of $U \dashv T$. Since our expression of the monoidal structure of $R$ is quite complicated (see Lemma \ref{lem:R-monoidal-struc}), we shall show that $U \dashv T$ is co-Hopf. We first note that the unit $\nu$ and the counit $\varphi$ of $U \dashv T$ are given as follows:
  \begin{gather*}
    \nu_M: M \to T U(M),
    \quad m \mapsto m_{(-1)} \hatotimes m_{(0)}
    \quad (m \in M \in {}_H^H \Mod_H^H), \\
    \varphi_X: U T(X) \to X,
    \quad h \hatotimes x \mapsto \qhepsilon(h) x
    \quad (x \in X \in {}^{}_H \Mod_H^H, h \in H).
  \end{gather*}
  The monoidal structure $T^{(2)}$ and Hopf operators are given as follows:
  \begin{align*}
    T^{(2)}_{X, Y}: T(X) \otimes_H T(Y) & \to T(X \hatotimes Y), \\
    (h \otimes x) \otimes_H (h' \otimes y)
    & \mapsto h h' \otimes x \otimes y, \\
    \mathbb{H}_{X,M}^{(\ell)}: (H \hatotimes X) \otimes_H M & \to H \hatotimes (X \otimes_H U(M)), \\
    (h \hatotimes x) \otimes_H m & \mapsto h m_{(-1)} \hatotimes (x \otimes_H m_{(0)}), \\
    \mathbb{H}^{(r)}_{M,X}: M \otimes_H (H \hatotimes X) & \to H \hatotimes (U(M)  \otimes_H X), \\
    m \otimes_H (h \hatotimes x) & \mapsto m_{(-1)} h \hatotimes (m_{(0)} \otimes_H x) \phantom{,}
  \end{align*}
  for $h, h' \in H$, $X, Y \in {}_H^{} \Mod_{H}^{H}$, $M \in {}_{H}^{H} \Mod^{H}_{H}$, $x \in X$, $y \in Y$ and $m \in M$. To give the inverse of the left Hopf operator, we first define the linear map
  \begin{align*}
    \widetilde{\mathbb{G}}_{X,M}^{(\ell)}: H \otimes X \otimes M & \to (H \hatotimes X) \otimes_H M, \\
    h \otimes x \otimes m & \mapsto (h \qhS(m_{(-1)} \qhpl_1) \qhql_1 \hatotimes x \qhql_2) \otimes_H m_{(0)} \qhpl_2.
  \end{align*}
  For $h, h' \in H$, $x \in X$ and $m \in M$, we compute
  \begin{align*}
    \widetilde{\mathbb{G}}_{X,M}^{(\ell)} (h \otimes x \otimes h' m)
    & = (h \qhS(h'_{(1)} m_{(-1)} \qhpl_1) \qhql_1 \hatotimes x \qhql_2) \otimes_H h'_{(2)} m_{(0)} \qhpl_2 \\
    & = (h \qhS(m_{(-1)} \qhpl_1) \qhano{\qhS(h'_{(1)}) \qhql_1 h'_{(2,1)}}
      \hatotimes x \qhano{\qhql_2 h'_{(2,2)}}) \otimes_H m_{(0)} \qhpl_2 \\
    {}^{\eqref{eq:q-Hopf-qL}}
    & = (h \qhS(m_{(-1)} \qhpl_1) \qhql_1 \hatotimes x h' \qhql_2) \otimes_H m_{(0)} \qhpl_2 \\
    & = \widetilde{\mathbb{G}}_{X,M}^{(\ell)}(h \otimes x h' \otimes m).
  \end{align*}
  Hence the following linear map is well-defined:
  \begin{align*}
    \mathbb{G}^{(\ell)}_{X,M}: H \hatotimes (X \otimes_H M) & \to (H \hatotimes X) \otimes_H M, \\
    h \otimes (x \otimes_H m) & \mapsto \widetilde{\mathbb{G}}^{(\ell)}_{X,M}(h \otimes x \otimes m).
  \end{align*}
  The map $\mathbb{G}^{(\ell)} := \mathbb{G}_{X,M}^{(\ell)}$ is the inverse of $\mathbb{H}^{(\ell)} := \mathbb{H}_{X,M}^{(\ell)}$. To see this, we remark that the equation
  \begin{equation*}
    m_{(-1)} \otimes m_{(0,-1)} \otimes m_{(0,0)}
    = \qhphi_1 m_{(-1,1)} \overline{\qhphi}_1 \otimes \qhphi_2 m_{(-1,2)} \overline{\qhphi}_2 \otimes \qhphi_3 m_{(0)} \overline{\qhphi}_3
  \end{equation*}
  holds for all $m \in M$. By using this equation instead of \eqref{eq:q-Hopf-def-1}, one can verify that the equations
  \begin{gather}
    \label{eq:q-Hopf-bimod-pL}
    m_{(0,-1)} \qhpl_1 \overline{\qhS}(m_{(-1)}) \otimes m_{(0,0)} \qhpl_2
    = \qhpl_1 \otimes \qhpl_2 m, \\
    \label{eq:q-Hopf-bimod-qL}
    \qhS(m_{(-1)}) \qhql_1 m_{(0,-1)} \otimes m_{(0,0)} \qhql_{2(2)}
    = \qhql_1 \otimes m \qhql_2
  \end{gather}
  hold for all $m \in M$ in a similar way as \eqref{eq:q-Hopf-pL} and \eqref{eq:q-Hopf-qL}. Now we prove that $\mathbb{G}^{(\ell)}$ is the inverse of $\mathbb{H}^{(\ell)}$ as follows: For $h \in H$, $x \in X$ and $m \in M$,
  \begin{align*}
    & \mathbb{G}^{(\ell)} \mathbb{H}^{(\ell)} ((h \hatotimes x) \otimes_H m) \\
    & = (h m_{(-1)} \qhS(m_{(0,-1)} \qhpl_1) \qhql_1 \hatotimes x \qhql_2) \otimes_H m_{(0,0)} \qhpl_2 \\
    & = (h \qhS(\qhano{m_{(0,-1)} \qhpl_1 \overline{\qhS}(m_{(-1)})})
      \qhql_1 \hatotimes x \qhql_2) \otimes_H \qhano{m_{(0,0)} \qhpl_2} \\
    {}^{\eqref{eq:q-Hopf-bimod-pL}}
    & = (h \qhS(\qhpl_1) \qhql_1 \hatotimes x \qhql_2) \otimes_H \qhano{\qhpl_2} m \\
    & = (h \qhano{\qhS(\qhpl_1) \qhql_1 \qhpl_{2(1)}}
      \hatotimes x \qhano{\qhql_2 \qhpl_{2(2)}}) \otimes_H m
    \stackrel{\eqref{eq:q-Hopf-pL-qL-2}}{=}
    (h \hatotimes x) \otimes_H m, \\
    % % % % % % % % % % % %
    & \mathbb{H}^{(\ell)} \mathbb{G}^{(\ell)} (h \hatotimes (x \otimes_H m)) \\
    & = h \qhS(m_{(-1)} \qhpl_1) \qhql_1 m_{(0,-1)} \qhpl_{2(1)} \hatotimes (x \qhql_2 \otimes_H m_{(0,0)} \qhpl_{2(2)}) \\
    & = h \qhS(\qhpl_1) \qhano{\qhS(m_{(-1)}) \qhql_1 m_{(0,-1)}} \qhpl_{2(1)}
      \hatotimes (x \otimes_H \qhano{\qhql_2 m_{(0,0)}} \qhpl_{2(2)}) \\
    {}^{\eqref{eq:q-Hopf-bimod-qL}}
    & = h \qhano{\qhS(\qhpl_1) \qhql_1 \qhpl_{2(1)}}
      \hatotimes (x \otimes_H m \qhano{\qhql_2 \qhpl_{2(2)}})
      \stackrel{\eqref{eq:q-Hopf-pL-qL-2}}{=}
      h \hatotimes (x \otimes_H m).
  \end{align*}
  In a similar manner, one can verify that the linear map
  \begin{align*}
    \mathbb{G}^{(r)}_{M,X}: H \hatotimes (U(M)  \otimes_H X)
    & \to M \otimes_H (H \hatotimes X), \\
    h \hatotimes (m \otimes_H x) & \mapsto
    \qhql_2 m_{(0)} \otimes_H (\qhpl_1 \overline{\qhS}(\qhql_1 m_{(-1)}) h \hatotimes \qhpl_2 x)
  \end{align*}
  is well-defined and is the inverse of $\mathbb{H}^{(r)}_{M, X}$. The proof is done.
\end{proof}

\begin{proof}[Proof of Theorem~\ref{thm:BCP-alg-2}]
  We set $T = F \circ R$ and regard it as an endofunctor on $\mathcal{C} := ({}_H \Mod)^{\op,\rev}$. By Theorem \ref{thm:YD-co-Hopf-adj}, the comonoidal adjunction $R^{\op} \dashv F^{\op}$ is Hopf, and hence the functor $T$ has a natural structure of a Hopf monad on $\mathcal{C}$ \cite[Proposition 2.14]{MR2793022}. The category of $T$-modules and the category of left Hopf $T$-modules (see \cite[Subsection 6.5]{MR2793022} for the definition) are identified with $({}^H_H \YD)^{\op}$ and $\mathcal{H} := (({}^H_H \YD)_{\mathbf{A}})^{\op}$, respectively. The functor appearing (i) of \cite[Theorem 6.11]{MR2793022} agrees with the functor $K^{\op}: \mathcal{C} \to \mathcal{H}$. Since $\mathcal{C}$ is an abelian category, and since $T$ is exact and faithful, it is easy to check that the condition (ii) of \cite[Theorem 6.11]{MR2793022} holds. Thus, by \cite[Theorem 6.11]{MR2793022}, we see that $K^{\op}$ is a category equivalence. Hence so is $K$.
\end{proof}

\section{Categorical cointegrals}
\label{sec:categori-coint}

\subsection{Induction to the Yetter-Drinfeld category, II}

Let $H$ be a finite-\hspace{0pt}dimensional quasi-Hopf algebra. A {\em categorical cointegral} for a finite tensor category has been introduced in \cite{MR3632104,MR3921367}. The aim of this section is to investigate relations between cointegrals on $H$ and categorical cointegrals of the finite tensor category ${}_H \Mod_{\fd}$ of finite-dimensional left $H$-modules. Provided that $H$ is unimodular, the algebra $\mathbf{A} \in {}^H_H \YD$ of Theorem \ref{thm:BCP-alg}
 is known to be a Frobenius algebra in ${}^H_H \YD$ \cite{MR3632104}. We also describe the Frobenius structure of $\mathbf{A}$ in terms of integrals and cointegrals of $H$.

As in the previous section, we denote by $F: {}^H_H \YD \to {}_H \Mod$ the forgetful functor. The functor $F$ has a right adjoint $R$ given in Theorem \ref{thm:Rad-ind-2nd} (without the finiteness assumption on $H$). We now construct a left adjoint $L$ of $F$ as follows: Given $V \in {}_H \Mod$, we define $L(V) \in {}_H \Mod$ by $L(V) = H^{\vee} \ogreaterthan V$. We recall that $H^{\vee}$ is a left quasi-Hopf bimodule over $H$. Thus $L(V)$ is a Yetter-Drinfeld module over $H$ by the second kind coaction
\begin{align*}
\YDBdelta_{L(V)} := \Big(
  L(V) = H^{\vee} \ogreaterthan V
  & \xrightarrow{\makebox[6em]{$\scriptsize \delta \ogreaterthan \id_V$}}
  (H \hatotimes H^{\vee}) \ogreaterthan V \\
  & \xrightarrow{\makebox[6em]{$\Omega_{H,H^{\vee},V}$}}
  H \ogreaterthan (H^{\vee} \ogreaterthan V)
  = H \ogreaterthan L(V) \Big),
\end{align*}
where $\delta: H^{\vee} \to H \hatotimes H^{\vee}$ is the left coaction of $H$ given by Lemma~\ref{lem:q-Hopf-bimod-dual} (i) with $M = H$ and the natural isomorphism $\Omega$ is given by   \eqref{eq:def-Omega}. Although we will not need explicit formulas of the (co)action of $H$ on $L(V)$ in this paper, it could be worth to include them:

\begin{proposition}
We fix a basis $\{ h_i \}$ of $H$ and let $\{ h^i \}$ be the dual basis of $H^{\vee}$. The action and the second kind coaction of $H$ on $L(V)$ are given by
\begin{align*}
  h \triangleright (\xi \otimes v)
  & = (\overline{\qhS}(h_{(2)}) \rightharpoonup \xi \leftharpoonup \qhS(h_{(1,1)})) \otimes h_{(1,2)} v, \\
  \YDBdelta_{L(V)}(\xi \otimes v)
  & = \langle \xi, \qhVL_2 h_{i(2)} \qhUL_2 \rangle \, \qhomega_1 \qhVL_1 h_{i(1)} \qhUL_1 \qhomega_5 \\
  & \qquad \qquad \otimes (\overline{\qhS}(\qhomega_4) \rightharpoonup h^i \leftharpoonup \qhS(\qhomega_2)) \otimes \qhomega_3 v
\end{align*}
for $h \in H$, $\xi \in H^{\vee}$ and $v \in V$, where we have used the Einstein convention.
\end{proposition}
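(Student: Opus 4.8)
The plan is to prove both formulas by a direct unwinding of the definition of the module-category action $\ogreaterthan$, turning each assertion into a manipulation in $H$. As a vector space I identify $L(V) = H^{\vee} \ogreaterthan V = {}_{\ad}(H^{\vee} \hatotimes V)$ with $H^{\vee} \otimes V$, where $V$ carries the right $H$-action through $\qhepsilon$ and the $H$-bimodule structure of the $H^{\vee}$-factor is recorded by the convolution actions $\rightharpoonup,\leftharpoonup$ and the bijective antipode as in \eqref{eq:H-bimod-left-dual-actions}.

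For the action I would begin from the defining adjoint action $h \triangleright m = h_{(1)} m \qhS(h_{(2)})$ of ${}_{\ad}(-)$ and expand it with the $H$-bimodule structure of $H^{\vee} \hatotimes V$, so that with left multiplier $h_{(1)}$ and right multiplier $\qhS(h_{(2)})$ the two tensor factors separate. On the $V$-factor the right multiplier contributes only a scalar via $\qhepsilon$, and after splitting $\Delta(h_{(1)}) = h_{(1,1)} \otimes h_{(1,2)}$ by coassociativity one is left with the left action $h_{(1,2)}$ on $v$. On the $H^{\vee}$-factor, the left multiplier gives $\leftharpoonup \qhS(h_{(1,1)})$, while the right multiplier $\qhS(h_{(2)})$ together with the counit axiom \eqref{eq:q-Hopf-def-2} collapses its cofactor and produces a single left convolution action on $\xi$; this gives the first displayed formula.

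For the coaction I would read $\YDBdelta_{L(V)}$ as the composite $\Omega_{H,H^{\vee},V} \circ (\delta \ogreaterthan \id_V)$ and substitute the two ingredients. First $(\delta \ogreaterthan \id_V)(\xi \otimes v) = \delta(\xi) \otimes v$, where $\delta$ is the left $H$-coaction on $H^{\vee}$ from Lemma \ref{lem:q-Hopf-bimod-dual}(i) with $M = H$; this yields the structural two-tensors paired through $\langle \xi, -\rangle$ against a split $\Delta(h_i)$ of the running basis. Next the associator element $\qhomega \in H^{\otimes 5}$ of \eqref{eq:q-Hopf-def-omega} is inserted through \eqref{eq:def-Omega}: the outer legs $\qhomega_1,\qhomega_5$ multiply the $H$-slot, the legs $\qhomega_2,\qhomega_4$ act on the $H^{\vee}$-slot, and $\qhomega_3$ acts on $v$. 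The key observation is that the action $\qhomega_2 \cdot (-) \cdot \qhomega_4$ on the $H^{\vee}$-slot is, by \eqref{eq:H-bimod-left-dual-actions}, precisely $\overline{\qhS}(\qhomega_4) \rightharpoonup (-) \leftharpoonup \qhS(\qhomega_2)$, which applied to the dual basis vector $h^i$ produces the middle tensor factor of the claimed coaction; pulling the scalar $\langle \xi, -\rangle$ outside and rearranging then finishes the computation.

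The main obstacle is this coaction computation. One must interleave the dual-module coaction of Lemma \ref{lem:q-Hopf-bimod-dual}(i) -- which already carries the structural elements built from $\qhpr,\qhqr,\qhpl,\qhql,\qhf$ and the two antipodes -- with the fivefold element $\qhomega$, while converting the $H$-bimodule action on $H^{\vee}$ into $\rightharpoonup,\leftharpoonup$ and keeping the iterated coproducts of $h_i$ and of $\qhomega$ correctly aligned. Here the coassociativity constraint \eqref{eq:q-Hopf-def-1} together with the counit normalizations \eqref{eq:q-Hopf-def-4} and \eqref{eq:q-Hopf-phi-eps} are what bring the long expression into the stated closed form; by contrast the action formula is routine once the counit axiom \eqref{eq:q-Hopf-def-2} has been applied.
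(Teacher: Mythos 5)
Your plan is correct and coincides with the paper's own proof: the action formula is read off directly from the definition of $\ogreaterthan$ as ${}_{\ad}(H^{\vee} \hatotimes V)$ together with the counit collapse on the $V$-factor, and the coaction is obtained exactly as you describe, by feeding the dual coaction of Lemma~\ref{lem:q-Hopf-bimod-dual}~(i) into $\Omega_{H,H^{\vee},V} \circ (\delta \ogreaterthan \id_V)$ via \eqref{eq:def-Omega} and then rewriting $\qhomega_2 \, h^i \, \qhomega_4$ as $\overline{\qhS}(\qhomega_4) \rightharpoonup h^i \leftharpoonup \qhS(\qhomega_2)$ using \eqref{eq:H-bimod-left-dual-actions}. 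The only (harmless) discrepancy is in your closing paragraph: the paper's computation terminates immediately after these three substitutions, so the further alignment via \eqref{eq:q-Hopf-def-1}, \eqref{eq:q-Hopf-def-4} and \eqref{eq:q-Hopf-phi-eps} that you anticipate as the ``main obstacle'' is not actually needed.
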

\begin{proof}
  The formula for the action follows from the definition of $\ogreaterthan$. We compute the second kind coaction as follows:
  \begin{align*}
    & \YDBdelta_{L(V)}(\xi \otimes v)
    = \Omega_{H,H^{\vee},V}(\delta \ogreaterthan \id_V)(\xi \otimes v) \\
    {}^{\eqref{eq:H-bimod-left-dual-coactions}}
    & = \Omega_{H,H^{\vee},V}(\langle \xi, \qhVR_1 h_{i(1)} \qhUR_1 \rangle \, \qhVR_2 h_{i(2)} \qhUR_2 \otimes h^i \otimes v) \\
    {}^{\eqref{eq:def-Omega}}
    & = \langle \xi, \qhVR_1 h_{i(1)} \qhUR_1 \rangle
    \, \qhomega_1 \qhVR_2 h_{i(2)} \qhUR_2 \qhomega_5 \otimes \qhomega_2 h^i \qhomega_4 \otimes \qhomega_3 v \\
    {}^{\eqref{eq:H-bimod-left-dual-actions}}
    & = \langle \xi, \qhVR_1 h_{i(1)} \qhUR_1 \rangle
    \, \qhomega_1 \qhVR_2 h_{i(2)} \qhUR_2 \qhomega_5
    \otimes (\overline{\qhS}(\qhomega_4) \rightharpoonup h^i \leftharpoonup \qhS(\qhomega_2))
    \otimes \qhomega_3 v. \qedhere
  \end{align*}
\end{proof}

Now we prove:

\begin{theorem}
  \label{thm:left-adj}
  \textup{(i)} The assignment $V \mapsto L(V)$ extends to a functor from ${}_H \Mod$ to ${}^H_H \YD$. This functor is left adjoint to the forgetful functor $F$ with the unit $\eta'$ and the counit $\varepsilon'$ given by
  \begin{gather}
    \label{eq:left-adj-unit}
    \eta'_V: V \to F L(V),
    \quad v \mapsto \qhepsilon \otimes v
    \quad (v \in V \in {}_H \Mod), \\
    \label{eq:left-adj-counit}
    \varepsilon'_M: L F(M) \to M,
    \quad \xi \otimes m \mapsto \langle \xi, \mathbbm{t}_1 m_{\YDB{-1}} \mathbbm{t}_3 \rangle \mathbbm{t}_2 m_{\YDB{0}}
    \quad (m \in M \in {}^{H}_H \YD),
  \end{gather}
  where
  \begin{equation*}
    \mathbbm{t} = \qhS(\overline{\qhphi}_1) \qhql_1 \overline{\qhphi}_{2(1)}
    \otimes \qhqr_1 \qhql_2 \overline{\qhphi}_{2(2)}
    \otimes \qhS(\qhqr_2 \overline{\qhphi}_3).
  \end{equation*}
  \textup{(ii)} For $V \in {}_H \Mod$, there is a natural isomorphism
  \begin{equation}
    \label{eq:left-adj-iso}
    R(\cointr \otimes V) \to L(V),
    \quad a \otimes (\lambda \otimes v)
    \mapsto (\overline{\qhS}(\overline{\qhphi}_1 a) \qhmu(\overline{\qhphi}_2) \rightharpoonup \lambda)
    \otimes \overline{\qhphi}_3 v
  \end{equation}
  of Yetter-Drinfeld $H$-modules.
\end{theorem}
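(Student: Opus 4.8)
The plan is to prove both parts by categorical means, identifying the explicit formulas only at the end. The organizing principle for part (i) is the commutative diagram of the Introduction: under the Schauenburg and Hausser--Nill equivalences, the forgetful functor $F$ is identified with the functor forgetting the left $H$-comodule structure of an $H$-bicomodule, and $R=\mathfrak{L}_H$ becomes the cofree left-comodule functor. First I would record, via Lemma~\ref{lem:q-Hopf-bimod-dual}, that $H^{\vee}$ is the left dual of the coalgebra $H$ in $({}_H\Mod_H,\hatotimes)$. By the module-category version of the reciprocity isomorphisms of Subsection~\ref{subsec:duality} (with $\ogreaterthan$ and $\Omega$ in place of $\otimes$ and $\Phi$), this gives $\mathfrak{L}_{H^{\vee}}=H^{\vee}\ogreaterthan(-)\dashv\mathfrak{L}_H$ on ${}_H\Mod$. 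Dualizing the coalgebra structure of $H$ along this duality equips $H^{\vee}$ with an algebra structure whose mate is precisely the comonad structure of $\mathfrak{L}_H$; hence $\mathfrak{L}_{H^{\vee}}$ is a monad and $\mathfrak{L}_H$ a comonad, and the two structures are mates. Consequently a $\mathfrak{L}_H$-coaction $M\to\mathfrak{L}_H(M)$ is the same datum as a $\mathfrak{L}_{H^{\vee}}$-action $\mathfrak{L}_{H^{\vee}}(M)\to M$ (pass the map across the adjunction), so the category of $\mathfrak{L}_{H^{\vee}}$-modules is isomorphic to ${}^H_H\YD$ compatibly with the forgetful functors. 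Therefore $F$ is, up to this isomorphism, the forgetful functor from $\mathfrak{L}_{H^{\vee}}$-modules, whose left adjoint is the free-module functor $V\mapsto\mathfrak{L}_{H^{\vee}}(V)=L(V)$; this establishes $L\dashv F$.

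It then remains to identify the unit and counit. The unit $\eta'$ is the monad unit of $\mathfrak{L}_{H^{\vee}}$, induced by the algebra unit $\unitobj\to H^{\vee}$ dual to $\qhepsilon\colon H\to\unitobj$, i.e. the element $\qhepsilon\in H^{*}=H^{\vee}$; unwinding gives $\eta'_V(v)=\qhepsilon\otimes v$, which is \eqref{eq:left-adj-unit}. The counit $\varepsilon'_M$ is the $\mathfrak{L}_{H^{\vee}}$-action on $M$ determined, via the mate correspondence, by the coaction $\YDBdelta_M$; transporting this action through the reciprocity formulas \eqref{eq:q-Hopf-reciprocity-Q1}--\eqref{eq:q-Hopf-reciprocity-P2} and the coaction of $H^{\vee}$ from Lemma~\ref{lem:q-Hopf-bimod-dual}(i) produces the element $\mathbbm{t}$ of the statement. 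The substantial part of (i) is the verification that this map is $H$-linear and colinear and satisfies the two triangle identities; I expect this to be the main computational obstacle, reducing after repeated use of \eqref{eq:q-Hopf-pR}--\eqref{eq:q-Hopf-pL-qL-2} and Lemma~\ref{lem:q-Hopf-pR-qL} to the defining axioms of a quasi-Hopf algebra.

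For part (ii), the key input is the fundamental theorem for quasi-Hopf bimodules, which yields an isomorphism $H^{\vee}\cong H\hatotimes\cointr$ in ${}_H^H\Mod_H^{}$, explicitly $h\otimes\lambda\mapsto\overline{\qhS}(h)\rightharpoonup\lambda$ (the counit of the adjoint equivalence \eqref{eq:q-Hopf-bimod-equiv-3}--\eqref{eq:q-Hopf-bimod-equiv-4}, using the right action \eqref{eq:H-bimod-left-dual-actions}). Since this isomorphism respects the left $H$-coaction used to define $\YDBdelta_{L(V)}$, applying $(-)\ogreaterthan V$ gives an isomorphism $L(V)=H^{\vee}\ogreaterthan V\cong(H\hatotimes\cointr)\ogreaterthan V$ in ${}^H_H\YD$. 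Composing with the associator $\Omega_{H,\cointr,V}$ of \eqref{eq:def-Omega} identifies the target with $H\ogreaterthan(\cointr\ogreaterthan V)$, and a direct check shows that for the one-dimensional module $\cointr$, regarded as an $H$-bimodule via $\qhepsilon$, the adjoint action collapses so that $\cointr\ogreaterthan V=\cointr\otimes V$ as left $H$-modules; thus $H\ogreaterthan(\cointr\ogreaterthan V)=R(\cointr\otimes V)$. Reading the composite backwards produces the asserted map $R(\cointr\otimes V)\to L(V)$, whose naturality in $V$ is inherited from that of $\Omega$ and of the fundamental-theorem isomorphism.

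Finally, the explicit formula \eqref{eq:left-adj-iso} is obtained by evaluating this composite on $a\otimes(\lambda\otimes v)$: the associator contributes the factors $\overline{\qhphi}_1,\overline{\qhphi}_2,\overline{\qhphi}_3$; moving $a$ past the $\cointr$-slot forces the modular function to appear, since $\cointr$ carries the action $h\triangleright\lambda=\qhmu(h)\lambda$; and the bimodule isomorphism contributes the factor $\overline{\qhS}(-)\rightharpoonup\lambda$. The obstacle in (ii) is precisely this bookkeeping, namely simplifying the resulting expression down to $(\overline{\qhS}(\overline{\qhphi}_1 a)\,\qhmu(\overline{\qhphi}_2)\rightharpoonup\lambda)\otimes\overline{\qhphi}_3 v$; I expect it to require the counit identities \eqref{eq:q-Hopf-def-4} and \eqref{eq:q-Hopf-phi-eps} together with the coinvariance of $\lambda$ and the characterizations of cointegrals from Section~\ref{sec:integrals}.
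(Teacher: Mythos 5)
Your proposal is correct and follows essentially the same route as the paper: for part (i) you identify ${}^H_H\YD$ with the category of left $H^{\vee}$-modules in the ${}_H\Mod_H$-module category ${}_H\Mod$ via the duality $H^{\vee}\dashv H$, so that $L$ is the free $H^{\vee}$-module functor with unit given by $\qhepsilon$ and counit given by the induced action map, and for part (ii) you take the composite of $\Omega^{-1}$ with the fundamental-theorem isomorphism $H\hatotimes\cointr\cong H^{\vee}$ applied under $(-)\ogreaterthan V$. The paper does exactly this, reducing the identification of $\mathbbm{t}$ and of the formula \eqref{eq:left-adj-iso} to direct computations with the inverse associator $\tilde{\qhomega}$.
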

\begin{proof}
  (i) The $H$-bimodule $H^{\vee}$ is an algebra in the monoidal category $({}_H \Mod_H, \hatotimes, \unitobj)$ as the left dual object of $H$. A Yetter-Drinfeld module $M$ is naturally a left $H^{\vee}$-module in ${}_H \Mod$ by the action
  \begin{align*}
    a_M := \Big(
    H^{\vee} \ogreaterthan M
    & \xrightarrow{\makebox[6em]{\scriptsize $\id_{H^{\vee}} \ogreaterthan \YDBdelta_{M}$}}
      H^{\vee} \ogreaterthan (H \ogreaterthan M) \\
    & \xrightarrow{\makebox[6em]{\scriptsize $(\Omega_{H^{\vee}, H, M})^{-1}$}}
      (H^{\vee} \hatotimes H) \ogreaterthan M
      \xrightarrow{\makebox[6em]{\scriptsize $\eval_H \ogreaterthan \id_M$}}
    M \Big),
  \end{align*}
  and the category ${}^H_H \YD$ is identified with the category ${}_{H^{\vee}}({}_H \Mod)$ of left $H^{\vee}$-modules in ${}_H \Mod$. Under this identification, a left adjoint of the forgetful functor $F$ is given by the free $H^{\vee}$-module functor, that is, the functor $L$. The unit of $L \dashv F$ is given by the unit of $H^{\vee}$, that is, the counit of $H$. Thus $\eta'$ is given as stated. The counit of $L \dashv F$ is given by $\varepsilon'_M = a_M$ for $M \in {}^H_H \YD$, where $a_M$ is given in the above. To verify the expression for $a_M$, we introduce the element $\tilde{\qhomega} \in H^{\otimes 5}$ defined by
  \begin{equation}
    \label{eq:ome-til}
    \tilde{\qhomega} = 
    \overline{\qhphi}_1 \overline{\chi}_1 \otimes \overline{\qhphi}_2 \overline{\chi}_{2(1)}
    \otimes \overline{\qhphi}_3 \overline{\chi}_{2(2)}
    \otimes \qhS(\overline{\chi}_3) \qhf_2 \otimes \qhS(\overline{\chi}_4) \qhf_1,
  \end{equation}
  where $\chi \in H^{\otimes 4}$ is given by~\eqref{eq:q-Hopf-def-chi}. The element $\tilde{\qhomega}$ is actually the inverse of the element $\qhomega \in H \otimes H \otimes H \otimes H^{\op} \otimes H^{\op}$. Hence the inverse of the natural isomorphism $\Omega$ is given by
  \begin{align*}
    \Omega_{M,N,V}^{-1}:
    M \ogreaterthan (N \ogreaterthan V)
    & \to (M \hatotimes N) \ogreaterthan V, \\
    m \otimes (n \otimes v)
      & \mapsto (\tilde{\qhomega}_1 m \tilde{\qhomega}_5 \otimes \tilde{\qhomega}_2 n \tilde{\qhomega}_4) \otimes \tilde{\qhomega}_3 v
  \end{align*}
  for $M, N \in {}_H \Mod_H$, $V \in {}_H \Mod$, $m \in M$, $n \in N$ and $v \in V$. Now let $M$ be a Yetter-Drinfeld $H$-module. For $\xi \in H^{\vee}$ and $m \in M$, we have:
  \begin{align*}
    a_M(\xi \otimes m)
    & = \eval_H(\tilde{\qhomega}_1 \xi \tilde{\qhomega}_5 \otimes \tilde{\qhomega}_2 m_{\YDB{-1}} \tilde{\qhomega}_4) \tilde{\qhomega}_3 m_{\YDB{0}} \\
    {}^{\eqref{eq:H-bimod-left-dual-actions}, \eqref{eq:H-bimod-left-dual-eval}}
    & = \langle \overline{\qhS}(\tilde{\qhomega}_5) \rightharpoonup \xi \leftharpoonup \qhS(\tilde{\qhomega}_1),
      \qhalpha \tilde{\qhomega}_2 m_{\YDB{-1}} \tilde{\qhomega}_4 \overline{\qhS}(\qhbeta) \rangle \tilde{\qhomega}_3 m_{\YDB{0}} \\
    & = \langle \xi, \mathbbm{t}'_1 m_{\YDB{-1}} \mathbbm{t}'_3 \rangle \mathbbm{t}'_2 m_{\YDB{0}},
  \end{align*}
  where $\mathbbm{t}' = \qhS(\tilde{\qhomega}_1) \qhalpha \tilde{\qhomega}_2 \otimes \tilde{\qhomega}_3 \otimes \tilde{\qhomega}_4 \overline{\qhS}(\qhbeta) \overline{\qhS}(\tilde{\qhomega}_5)$. We prove \eqref{eq:left-adj-counit} by showing $\mathbbm{t}' = \mathbbm{t}$ as follows:
  \begin{align*}
    \mathbbm{t}'
    \ {}^{\eqref{eq:ome-til}}
    & = \qhS(\overline{\qhphi}_1 \overline{\chi}_1) \qhalpha \overline{\qhphi}_2 \overline{\chi}_{2(1)}
      \otimes \overline{\qhphi}_3 \overline{\chi}_{2(2)}
      \otimes \qhS(\overline{\chi}_3) \qhano{\qhf_2 \overline{\qhS}(\qhf_1 \qhbeta)}
      \overline{\chi}_4 \\
    {}^{\eqref{eq:q-Hopf-f-4}}
    & = \qhS(\overline{\qhphi}_1 \overline{\chi}_1) \qhalpha \overline{\qhphi}_2 \overline{\chi}_{2(1)}
      \otimes \overline{\qhphi}_3 \overline{\chi}_{2(2)}
      \otimes \qhS(\overline{\chi}_3) \qhalpha \overline{\chi}_4 \\
    {}^{\eqref{eq:q-Hopf-def-chi}}
    & = \qhS(\qhano{\overline{\qhphi}_1 \qhphi_{1(1)}} \overline{\qhphi}{}'_{1})
      \qhalpha \qhano{\overline{\qhphi}_2 \qhphi_{1(2,1)}} \overline{\qhphi}{}'_{2(1)}
      \otimes \qhano{\overline{\qhphi}_3 \qhphi_{1(2,2)}} \overline{\qhphi}{}'_{2(2)}
      \otimes \qhS(\qhphi_2 \overline{\qhphi}{}'_{3}) \qhalpha \qhphi_3 \\
    {}^{\eqref{eq:q-Hopf-def-1}}
    & = \qhano{\qhS(\qhphi_{1(1,1)}} \overline{\qhphi}_1 \overline{\qhphi}{}'_{1})
      \qhano{\qhalpha \qhphi_{1(1,2)}} \overline{\qhphi}_2 \overline{\qhphi}{}'_{2(1)}
      \otimes \qhphi_{1(2)} \overline{\qhphi}_3 \overline{\qhphi}{}'_{2(2)}
      \otimes \qhS(\qhphi_2 \overline{\qhphi}{}'_{3}) \qhalpha \qhphi_3 \\
    {}^{\eqref{eq:q-Hopf-def-5}}
    & = \qhano{\qhS(\overline{\qhphi}_1} \overline{\qhphi}{}'_{1})
      \qhano{\qhalpha \overline{\qhphi}_2} \overline{\qhphi}{}'_{2(1)}
      \otimes \qhano{\qhphi_{1} \overline{\qhphi}_3} \overline{\qhphi}{}'_{2(2)}
      \otimes \qhS(\qhano{\qhphi_2} \overline{\qhphi}{}'_{3}) \qhano{\qhalpha \qhphi_3} \\
    {}^{\eqref{eq:q-Hopf-def-qR}, \eqref{eq:q-Hopf-def-qL}}
    & = \qhS(\overline{\qhphi}_1) \qhql_1 \overline{\qhphi}_{2(1)}
    \otimes \qhqr_1 \qhql_2 \overline{\qhphi}_{2(2)}
    \otimes \qhS(\qhqr_2 \overline{\qhphi}_3)
    = \mathbbm{t}.
  \end{align*}
  (ii) By the fundamental theorem for quasi-Hopf bimodules, the map
  \begin{equation*}
    \Xi_{\qhR}: H \hatotimes \cointr \to H^{\vee},
    \quad h \otimes \lambda \mapsto \overline{\qhS}(h) \rightharpoonup \lambda
  \end{equation*}
  is an isomorphism of left quasi-Hopf bimodules. Thus the map
  \begin{align*}
    \xi_V :=
    \Big( R(\cointr \otimes V)
    = H \ogreaterthan (\cointr \ogreaterthan V)
    & \xrightarrow{\makebox[6em]{\scriptsize $\Omega^{-1}_{H, \cointr, V}$}}
    (H \hatotimes \cointr) \ogreaterthan V \\
    & \xrightarrow{\makebox[6em]{\scriptsize $\Xi_{\qhR} \ogreaterthan \id_{V}$}}
    H^{\vee} \ogreaterthan V = L(V) \Big)
  \end{align*}
  is an isomorphism of Yetter-Drinfeld $H$-modules that is natural in the variable $V \in {}_H \Mod$. The map $\xi_V$ actually coincides with the map \eqref{eq:left-adj-iso}. Indeed, for $a \in H$, $\lambda \in \cointr$ and $v \in V$, we have
  \begin{align*}
    \xi_V(a \otimes (\lambda \otimes v))
    & = \Xi_{\qhR}(\tilde{\qhomega}_1 a \tilde{\qhomega}_5 \otimes \tilde{\qhomega}_{2} \lambda \tilde{\qhomega}_4) \otimes \tilde{\qhomega}_3 v \\
    & = \qhmu(\tilde{\qhomega}_{2} ) \qhepsilon(\tilde{\qhomega}_{4})
      (\overline{\qhS}(\tilde{\qhomega}_1 a \tilde{\qhomega}_5) \rightharpoonup \lambda) \otimes \tilde{\qhomega}_3 v \\
    & = (\overline{\qhS}(\overline{\qhphi}_1 a) \qhmu(\overline{\qhphi}_2) \rightharpoonup \lambda)
    \otimes \overline{\qhphi}_3 v.
  \end{align*}
  Here, the last equality follows from the following computation:
  \begin{align*}
    & (\id_H \otimes \id_H \otimes \id_H \otimes \qhepsilon \otimes \id_H)(\tilde{\qhomega}) \\
    & = \overline{\qhphi}_1 \overline{\chi}_1 \otimes \overline{\qhphi}_2 \overline{\chi}_{2(1)}
      \otimes \overline{\qhphi}_3 \overline{\chi}_{2(2)}
      \otimes \qhepsilon(\qhS(\overline{\chi}_3) \qhf_2) \otimes \qhS(\overline{\chi}_4) \qhf_1 \\
    & = \overline{\qhphi}_1 \otimes \overline{\qhphi}_2
      \otimes \overline{\qhphi}_3 \otimes 1_k \otimes 1_H. \qedhere
  \end{align*}
\end{proof}

\if 0
\begin{corollary}
  \label{cor:left-adj-iso}
  There is a natural isomorphism
  \begin{equation*}
    R(\qhmu \otimes V) \cong L(V)
    \quad (V \in {}_H \Mod)
  \end{equation*}
  of Yetter-Drinfeld $H$-modules, where the modular function $\qhmu$ is regarded as a one-dimensional left $H$-module.
\end{corollary}
\fi

\subsection{Categorical cointegrals of ${}_H \Mod$}

Let $H$ be a finite-dimensional quasi-Hopf algebra. Given an algebra map $\gamma: H \to k$, we define the Yetter-Drinfeld $H$-module $\mathbf{A}_{\gamma}$ as follows: As a vector space $\mathbf{A}_{\gamma} = H$. The action and the coaction (of the second kind) of $H$ are given by the following formulas:
\begin{equation*}
  h \triangleright a
  = h_{(1, 1)} a \qhS(h_{(2)}) \gamma(h_{(1, 2)}), \quad
  \YDBdelta_{\mathbf{A}_{\gamma}} (a)
  = \qhomega_1 a_{(1)} \qhomega_5 \otimes \qhomega_2 a_{(2)} \qhomega_4 \gamma(\qhomega_3)
\end{equation*}
for $a \in \mathbf{A}_{\gamma}$ and $h \in H$. We note that $\mathbf{A}_{\gamma}$ is isomorphic to $R(\gamma)$ as a Yetter-Drinfeld $H$-module, where $\gamma$ is regarded as a one-dimensional left $H$-module. Now we introduce the following definition:

\begin{definition}
  \label{def:categori-coint}
  We call $\catcoint := {}^H_{H} \YD(\mathbf{A}_{\qhmu}, \unitobj)$ the space of {\em categorical cointegrals}, where $\qhmu$ is the modular function on $H$.
\end{definition}

We first explain why this definition agrees with the original definition of categorical cointegrals introduced in \cite{MR3921367}. Let $\mathcal{C}$ be a finite tensor category, and let $\mathcal{Z}(\mathcal{C})$ be the Drinfeld center of $\mathcal{C}$. The forgetful functor $U_{\mathcal{C}}: \mathcal{Z}(\mathcal{C}) \to \mathcal{C}$ has a right adjoint, say $R_{\mathcal{C}}$. According to \cite[Lemma 4.1]{MR3921367}, there is a unique (up to isomorphisms) simple object $\mu_{\mathcal{C}} \in \mathcal{C}$ such that
\begin{equation*}
  \mathbf{I}_{\mathcal{C}} := \Hom_{\mathcal{Z}(\mathcal{C})}(R_{\mathcal{C}}(\mu_{\mathcal{C}}), \unitobj) \ne 0.
\end{equation*}
We call $\mathbf{I}_{\mathcal{C}}$ the space of {\em categorical cointegrals} \cite[Definition 4.3]{MR3921367} (see also the first equation in the proof of \cite[Theorem 4.8]{MR3921367}). Now we consider the case where $\mathcal{C} = {}_H \Mod_{\fd}$. Then $\mathcal{Z}(\mathcal{C})$ is identified with the category ${}^H_H \YD_{\fd}$ of finite-dimensional Yetter-Drinfeld $H$-modules and, under this identification, the restriction of the functor $R: {}_H \Mod \to {}^H_H \YD$ is right adjoint to $U_{\mathcal{C}}$. Moreover, since $\cointr \cong \qhmu$ as left $H$-modules, we have
\begin{equation}
  \label{eq:cat-coint-one-dim}
  \catcoint
  \cong {}^H_H \YD(R(\qhmu), \unitobj)
  \cong {}^H_H \YD(L(\unitobj), \unitobj)
  \cong \Hom_H(\unitobj, \unitobj) \cong k \ne 0
\end{equation}
by Theorem~\ref{thm:left-adj}. Thus $\mu_{\mathcal{C}} = \qhmu$ and categorical cointegrals of Definition~\ref{def:categori-coint} can be identified with categorical cointegrals of \cite{MR3921367}.

If $H$ is an ordinary Hopf algebra, then $\catcoint$ is identical to the space of {left} cointegrals on $H$ \cite[Example 4.4]{MR3921367} and hence it is spanned by $\lambda \circ \overline{\qhS}$, where $\lambda$ is a non-zero {right} cointegral on $H$. We now establish an analogous result for the quasi-Hopf algebra $H$ as follows:

\begin{theorem}
  \label{thm:q-Hopf-cat-coint}
  Let $\lambda$ be a non-zero right cointegral on $H$, and define
  \begin{equation}
    \label{eq:lambda-cat}
    \catlambda: \mathbf{A}_{\qhmu} \to \unitobj,
    \quad \catlambda(a) = \langle \lambda, \qhalpha \overline{\qhS}(a) \rangle
    \quad (a \in \mathbf{A}_{\qhmu}).
  \end{equation}
  Then $\catcoint$ is the one-dimensional vector space spanned by $\catlambda$.
\end{theorem}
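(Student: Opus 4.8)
Since we already know from \eqref{eq:cat-coint-one-dim} that $\catcoint$ is one-dimensional, the whole content of the theorem is that $\catlambda$ is a \emph{non-zero} element of $\catcoint = {}^H_H\YD(\mathbf{A}_{\qhmu}, \unitobj)$; once that is established, $\catcoint = k\catlambda$ follows immediately. The plan is to realize $\catlambda$, up to a non-zero scalar, as the image of $\id_{\unitobj}$ under the chain of isomorphisms \eqref{eq:cat-coint-one-dim}. This has the advantage that the resulting functional is automatically a Yetter--Drinfeld morphism (being a composite of such), so the only thing to compute is its value on $a \in \mathbf{A}_{\qhmu}$.

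Concretely, I would identify $\mathbf{A}_{\qhmu} = R(\qhmu)$ and transport it to $R(\cointr)$ via the module isomorphism $\qhmu \cong \cointr$ sending $1$ to the fixed non-zero right cointegral $\lambda$; explicitly $a \mapsto a \otimes \lambda$. Next I apply the isomorphism $R(\cointr \otimes \unitobj) \to L(\unitobj)$ of Theorem~\ref{thm:left-adj}(ii), i.e.\ \eqref{eq:left-adj-iso} with $V = \unitobj$. Using the counit normalizations \eqref{eq:q-Hopf-phi-eps} on the third leg of $\qhphi^{-1}$, this map collapses to $a \otimes \lambda \mapsto \overline{\qhS}(a) \rightharpoonup \lambda$. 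Finally I compose with the counit $\varepsilon'_{\unitobj}\colon L(\unitobj) \to \unitobj$ of $L \dashv F$, which generates ${}^H_H\YD(L(\unitobj), \unitobj)$ under the adjunction. Writing $\varepsilon'_{\unitobj}$ out from \eqref{eq:left-adj-counit} and using $\YDBdelta_{\unitobj}(1) = \qhbeta \otimes 1$ (this is \eqref{eq:induction-R-0}, since $R^{(0)} = \eta_{\unitobj} = \YDBdelta_{\unitobj}$), the whole composite becomes $a \mapsto \langle \lambda,\, \qhepsilon(\mathbbm{t}_2)\,\mathbbm{t}_1 \qhbeta \mathbbm{t}_3\, \overline{\qhS}(a) \rangle$, where $\mathbbm{t} = \qhS(\overline{\qhphi}_1)\qhql_1\overline{\qhphi}_{2(1)} \otimes \qhqr_1\qhql_2\overline{\qhphi}_{2(2)} \otimes \qhS(\qhqr_2\overline{\qhphi}_3)$ is the element of Theorem~\ref{thm:left-adj}(i).

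The main step is then the purely algebraic identity $\qhepsilon(\mathbbm{t}_2)\,\mathbbm{t}_1\qhbeta\mathbbm{t}_3 = \qhalpha$, which I would prove by expanding $\mathbbm{t}$ and applying repeatedly the $\qhpr$, $\qhqr$, $\qhpl$, $\qhql$ relations collected in Section~\ref{sec:quasi-hopf}, the antipode identities \eqref{eq:q-Hopf-f-4}, and the counit normalizations \eqref{eq:q-Hopf-phi-eps}, \eqref{eq:q-Hopf-def-4}; the degenerate Hopf case (where every structure element reduces to $1$ and $\mathbbm{t} = 1 \otimes 1 \otimes 1$) already gives $\qhalpha$, which is reassuring. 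Granting this identity, the composite equals $\catlambda$, so $\catlambda \in \catcoint$. I expect the bookkeeping in this identity to be the main obstacle: the legs of $\qhqr$ and of $\qhql$ are entangled with one another, so the counit on the middle leg cannot be applied factor-by-factor, and one must reassemble the pieces carefully (much as in the verification $\mathbbm{t}' = \mathbbm{t}$ inside the proof of Theorem~\ref{thm:left-adj}(i)).

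It remains to check $\catlambda \neq 0$, which I would argue independently of the above identification. Since $\lambda$ is a non-zero right cointegral on $H$, it is a non-zero left cointegral on $H^{\cop}$, so by Theorem~\ref{thm:left-coint} the algebra $H$ is Frobenius with Frobenius form $\lambda$; in particular the map $h \mapsto \lambda \leftharpoonup h$ is a bijection $H \to H^{*}$. If $\catlambda$ vanished, then $\langle \lambda, \qhalpha \overline{\qhS}(a) \rangle = 0$ for all $a \in H$, and since $\overline{\qhS}$ is bijective this would force $\lambda \leftharpoonup \qhalpha = 0$, hence $\qhalpha = 0$, contradicting $\qhepsilon(\qhalpha) = 1$ from \eqref{eq:q-Hopf-def-7}. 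Therefore $\catlambda$ is a non-zero element of the one-dimensional space $\catcoint$, and $\catcoint = k\catlambda$. As an alternative to the composite computation, one could instead verify directly that $\catlambda$ preserves the $H$-action and the $H$-coaction, rewriting the $\overline{\qhS}$-twisted expressions via the characterizations \eqref{eq:right-cointegral-1} and \eqref{eq:right-cointegral-3} of Theorem~\ref{thm:right-coint-characterization}; this is comparably computational and relies on the same family of identities.
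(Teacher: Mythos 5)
Your proposal is correct and follows essentially the same route as the paper: transport $\mathbf{A}_{\qhmu}\cong R(\cointr)$ via $a\mapsto a\otimes\lambda$, apply the isomorphism \eqref{eq:left-adj-iso} with $V=\unitobj$ (which collapses to $a\otimes\lambda\mapsto\overline{\qhS}(a)\rightharpoonup\lambda$), and compose with $\varepsilon'_{\unitobj}$, reducing everything to the identity $\qhepsilon(\mathbbm{t}_2)\,\mathbbm{t}_1\qhbeta\mathbbm{t}_3=\qhalpha$. That identity is less delicate than you fear — since $\qhepsilon$ is an algebra map one gets $\qhepsilon(\qhql_2)\qhql_1=\qhalpha$ and $\qhepsilon(\qhqr_1)\qhS(\qhqr_2)=\qhalpha$ directly from \eqref{eq:q-Hopf-def-qR}, \eqref{eq:q-Hopf-def-qL} and \eqref{eq:q-Hopf-phi-eps}, and then \eqref{eq:q-Hopf-def-6} finishes it — and your separate non-vanishing check is redundant (the composite is the image of $1\in k$ under isomorphisms) but harmless.
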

\begin{proof}
  We have used an isomorphism $\mathbf{A}_{\qhmu} \cong R(\cointr)$ to establish \eqref{eq:cat-coint-one-dim}. We recall that $\cointr$ is the one-dimensional left $H$-module spanned by $\lambda$ and is isomorphic to $\qhmu$ as a left $H$-module. Thus we have an isomorphism
  \begin{equation}
    \label{eq:iso-A-mu-R-cointr}
    \mathbf{A}_{\qhmu} \to R(\cointr), \quad a \mapsto a \otimes \lambda
  \end{equation}
  of Yetter-Drinfeld $H$-modules. The element of $\catcoint$ corresponding to $1 \in k$ through the isomorphism \eqref{eq:cat-coint-one-dim} is given by
  \begin{equation*}
    l := \Big( \mathbf{A}_{\qhmu} \xrightarrow{\  \eqref{eq:iso-A-mu-R-cointr} \ }
    R(\cointr)
    \xrightarrow{\ \cong \ }
    R(\cointr \otimes \unitobj)
    \xrightarrow{\  \eqref{eq:left-adj-iso} \ }
    L(\unitobj)
    \xrightarrow{\  \varepsilon'_{\unitobj} \ } \unitobj \Big),
  \end{equation*}
  where $\varepsilon'$ is the counit of $L \dashv F$ given by~\eqref{eq:left-adj-counit}. To prove this theorem, it suffices to show that $l$ coincides with the map $\catlambda$ given by \eqref{eq:lambda-cat}. We first give a formula for $\varepsilon_{\unitobj}'$. Since $\YDBdelta_{\unitobj}(1) = \qhbeta \otimes 1$, we have
  \begin{gather*}
      \varepsilon_{\unitobj}'(\xi \otimes 1)
      \mathop{=}^{\eqref{eq:left-adj-counit}}
      \langle \xi,
        \qhS(\overline{\qhphi}_1) \qhano{\qhql_1} \overline{\qhphi}_{2(1)}
        \qhbeta \qhS(\qhano{\qhqr_2} \overline{\qhphi}_3) \rangle
        \qhepsilon(\qhano{\qhqr_1 \qhql_2} \overline{\qhphi}_{2(2)}) \\
      \mathop{=}^{\eqref{eq:q-Hopf-def-qL}, \eqref{eq:q-Hopf-def-qR}}
      \langle \xi,
      \qhano{\qhS(\overline{\qhphi}_1) \qhalpha \overline{\qhphi}_{2}
      \qhbeta \qhS(\overline{\qhphi}_3)} \qhalpha \rangle
      \mathop{=}^{\eqref{eq:q-Hopf-def-6}}
      \xi(\qhalpha)
    \end{gather*}
    for $\xi \in H^{\vee}$. Thus, for $a \in \mathbf{A}_{\qhmu}$, we have
  \begin{align*}
    l(a)
    & = \varepsilon_{\unitobj}'((\overline{\qhS}(\overline{\qhphi}_1 a) \qhmu(\overline{\qhphi}_2) \rightharpoonup \lambda) \otimes \qhepsilon(\overline{\qhphi}_3)) \\
    {}^{\eqref{eq:q-Hopf-phi-eps}}
    & = \varepsilon_{\unitobj}'((\overline{\qhS}(a) \rightharpoonup \lambda) \otimes 1)
    = \langle \overline{\qhS}(a) \rightharpoonup \lambda, \qhalpha \rangle = \catlambda(a). \qedhere
  \end{align*}
\end{proof}

\begin{remark}
\label{rem:left-right-coint}
  (a) Unlike the case of Hopf algebras, the map $\catlambda$ in the above theorem is not a left cointegral on $H$ in general. However, one can give a basis of $\catcoint$ by a non-zero left cointegral on $H$ as follows:
  By \cite[Proposition 4.3]{MR2862216}, the element $u := \qhmu(\qhVL_1) \qhS^2(\qhVL_2)$ is invertible and the map
  \begin{equation*}
    \mathbf{s}: \cointl \to \cointr, \quad \lambda^{\mathsf{L}} \mapsto (\lambda^{\mathsf{L}} \circ \overline{\qhS}) \leftharpoonup u^{-1}
    \quad (\lambda^{\mathsf{L}} \in \cointl)
  \end{equation*}
  is bijective. We fix a non-zero left cointegral $\lambda^{\mathsf{L}}$ on $H$ and set $\lambda' = \mathbf{s}(\lambda^{\mathsf{L}})$. Then $\lambda'$ is a non-zero right cointegral on $H$ and hence, by the above theorem,  $\catcoint$ is spanned by $\lambda'' := (\lambda' \leftharpoonup \qhalpha) \circ \overline{\qhS}$. Explicitly, this linear form is given by
  \begin{equation*}
    \langle \lambda'', a \rangle
    = \langle \lambda^{\mathsf{L}} \overline{\qhS}, u^{-1} \qhalpha \overline{\qhS}(a) \rangle
    = \langle \lambda^{\mathsf{L}}, \overline{\qhS}{}^2(a) \overline{\qhS}(u^{-1} \qhalpha) \rangle
    \quad (a \in \mathbf{A}_{\qhmu}).
  \end{equation*}
  (b) Suppose that $H$ is unimodular, that is, $\qhmu = \qhepsilon$. Let $\lambda$ be a non-zero right cointegral on $H$. By the unimodularity and Theorem~\ref{thm:left-coint}, we have
  \begin{equation}
    \label{eq:Nakayama-auto-unimo-case}
    \lambda(h h') = \lambda(h' \qhS^2(h))
  \end{equation}
  for all $h, h' \in H$. In particular, we have $\lambda \circ \qhS^2 = \lambda$. By the unimodularity, we also have $u = 1$ and therefore $\lambda \circ \overline{\qhS}$ ($= \lambda \circ \qhS$) is in fact a left cointegral on $H$ ({\it cf}. \cite[Corollary 4.4]{MR2862216}). Hence we obtain the following conclusion: Let $\lambda^{\mathsf{L}}$ be a non-zero left cointegral on $H$. Then $\catcoint$ is spanned by the map
  \begin{equation*}
    \mathbf{A}_{\qhmu} = \mathbf{A} \to \unitobj, \quad a \mapsto \langle \lambda^{\mathsf{L}}, a \qhS(\qhalpha) \rangle
    \quad (a \in \mathbf{A}_{\qhmu}).
  \end{equation*}
\end{remark}

\subsection{Frobenius structure of the adjoint algebra}

Let $H$ be a finite-dimensional unimodular Hopf algebra. Ishii and Masuoka observed that the adjoint algebra $\mathbf{A}$ of Theorem~\ref{thm:BCP-alg} is in fact a Frobenius algebra in ${}^H_H \YD$ and use this fact to construct an invariant of handlebody-knots \cite{MR3265394}. By the result of \cite{MR3632104}, the algebra $\mathbf{A}$ is still Frobenius even in the case where $H$ is a finite-dimensional unimodular quasi-Hopf algebra. However, an explicit Frobenius structure is not yet known. We answer this problem as follows:

\begin{theorem}
  \label{thm:Ishii-Masuoka-q-Hopf}
  Suppose that $H$ is a finite-dimensional unimodular quasi-Hopf algebra. We fix a non-zero right cointegral $\lambda$ on $H$, and let $\Lambda$ be the left integral in $H$ such that $\langle \lambda, \Lambda \rangle = 1$. Then the algebra $\mathbf{A} \in {}^H_H \YD$ of Theorem~\ref{thm:BCP-alg} is a self-dual object in ${}^H_H \YD$ with the evaluation morphism $e: \mathbf{A} \otimes \mathbf{A} \to \unitobj$ and the coevaluation morphism $d: \unitobj \to \mathbf{A} \otimes \mathbf{A}$ given by
  \begin{equation*}
    e(a \otimes b) = \langle \catlambda, a \star b \rangle
    \quad \text{and} \quad
    d(1) = \upbeta \triangleright \qhS(\overline{\qhf}_2 \qhql_1 \Lambda_{(1)} \qhpl_1) \otimes
     \qhql_2 \Lambda_{(2)} \qhpl_2 \overline{\qhf}_1,
  \end{equation*}
  respectively, for $a, b \in \mathbf{A}$, where $\catlambda = (\lambda \leftharpoonup \qhalpha) \circ \overline{\qhS}$. In particular, the algebra $\mathbf{A}$ is a commutative Frobenius algebra in ${}^H_H \YD$ with Frobenius form $\catlambda: \mathbf{A} \to \unitobj$.
\end{theorem}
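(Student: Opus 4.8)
The plan is to verify that $(\mathbf{A}, e, d)$ is a left dual object of $\mathbf{A}$ in the sense of Subsection~\ref{subsec:duality}, from which the final assertion (a commutative Frobenius algebra with Frobenius form $\catlambda$) is the standard reformulation. First I would dispose of the easy ingredients. Since $H$ is unimodular we have $\qhmu = \qhepsilon$ and $\mathbf{A}_{\qhmu} = \mathbf{A}$; by Theorem~\ref{thm:q-Hopf-cat-coint} and Remark~\ref{rem:left-right-coint}(b) the space $\catcoint = {}^H_H \YD(\mathbf{A}, \unitobj)$ is one-dimensional and spanned by $\catlambda$. In particular $\catlambda$ is a morphism in ${}^H_H \YD$, so $e = \catlambda \circ \star$ is a morphism in ${}^H_H \YD$ as a composite of the multiplication $\star$ with $\catlambda$. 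Moreover $\mathbf{A}$ is commutative (Theorem~\ref{thm:BCP-alg-1}), whence $e(a \otimes b) = \langle \catlambda, a \star b\rangle = \langle \catlambda, b \star a\rangle = e(b \otimes a)$; this symmetry will let me deduce one snake identity from the other.

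Next I would explain why $d$ is a morphism in ${}^H_H \YD$ without a direct computation. Because $H$ is unimodular, $\cointr \cong \unitobj$ as left $H$-modules, so Theorem~\ref{thm:left-adj}(ii) yields an isomorphism $L \cong R$; thus $F$ admits $R$ as both a left and a right adjoint, and $\mathbf{A} = R(\unitobj)$ is a Frobenius algebra in ${}^H_H \YD$ by \cite{MR3632104}, the Frobenius form being forced to be a scalar multiple of $\catlambda$ by one-dimensionality of $\catcoint$. The coevaluation then arises canonically from the counit $\varepsilon'$ of the adjunction $L \dashv F$ transported through the isomorphism \eqref{eq:left-adj-iso}; being a composite of morphisms in ${}^H_H \YD$, the resulting map $\unitobj \to \mathbf{A} \otimes \mathbf{A}$ is automatically a morphism there. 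I would then unwind this composite explicitly, using \eqref{eq:left-adj-counit}, the formula \eqref{eq:left-adj-iso}, the coaction \eqref{eq:BCP-alg-coaction-2}, and the normalization $\langle \lambda, \Lambda\rangle = 1$, to identify it with the stated formula for $d(1)$.

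It then remains to check the snake identities with these explicit $e$ and $d$. Writing $d(1) = d_1 \otimes d_2$, the first identity $(\id_{\mathbf{A}} \otimes e)\,\Phi_{\mathbf{A},\mathbf{A},\mathbf{A}}\,(d \otimes \id_{\mathbf{A}}) = \id_{\mathbf{A}}$ amounts to the equation $\langle \catlambda, (\qhphi_2 d_2) \star (\qhphi_3 a)\rangle\,\qhphi_1 d_1 = a$ for all $a \in \mathbf{A}$. I would verify this by substituting the multiplication \eqref{eq:BCP-alg-mult}, the definition of $\catlambda$, and the explicit $d$, and then collapsing the resulting expression by means of the integral--cointegral relation \eqref{eq:right-cointegral-4} specialized to $\qhmu = \qhepsilon$ (so that $\langle \lambda, \qhql_1 \Lambda_{(1)} \qhpl_1\rangle\,\qhql_2 \Lambda_{(2)} \qhpl_2 = 1$), together with the unimodular symmetry $\lambda \circ \qhS^2 = \lambda$ of \eqref{eq:Nakayama-auto-unimo-case} and the structural identities of Section~\ref{sec:quasi-hopf} (Lemma~\ref{lem:q-Hopf-pR-qL}, equations \eqref{eq:q-Hopf-pL}, \eqref{eq:q-Hopf-qL}, and \eqref{eq:HN-Eq-7.2,7.3}). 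Because $e$ is symmetric, the second snake identity is the mirror image of the first and is verified by the same computation, which completes the proof that $\mathbf{A}$ is self-dual; the commutative Frobenius structure with form $\catlambda$ is then immediate.

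Finally, the main obstacle is the snake-identity computation of the previous paragraph: it is a lengthy manipulation inside $H$ in which the associator $\qhphi$, the Drinfeld twist $\qhf$, and the elements $\qhpl, \qhql, \qhpr, \qhqr$ must be carried along simultaneously and steered toward the single scalar identity \eqref{eq:right-cointegral-4}. I would organize it as a quasi-Hopf refinement of the Hopf-algebra argument of Ishii--Masuoka \cite{MR3265394}, inserting the associativity corrections step by step via \eqref{eq:q-Hopf-pL}--\eqref{eq:q-Hopf-qL} and the coinvariance relations \eqref{eq:HN-Eq-7.2,7.3}, which is where essentially all the work resides.
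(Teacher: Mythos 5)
Your strategy is genuinely different from the paper's, and it contains one concrete error plus a large amount of deferred computation. The paper never verifies the snake identities directly. Instead it takes the known left dual $(\mathbf{A}^{\vee}, \eval_{\mathbf{A}}, \coev_{\mathbf{A}})$ in ${}^H_H\YD_{\fd}$, forms the canonical morphism $\widetilde{\Theta} = (e \otimes \id_{\mathbf{A}^{\vee}})\circ\Phi^{-1}\circ(\id_{\mathbf{A}}\otimes\coev_{\mathbf{A}}): \mathbf{A}\to\mathbf{A}^{\vee}$, computes it explicitly to be $h\mapsto \Theta_{\qhL}\bigl(\overline{\qhS}(\overline{\qhS}(\qhf_2)h\qhf_1)\bigr)\circ\overline{\qhS}$ using \eqref{eq:Nakayama-auto-unimo-case} and \eqref{eq:q-Hopf-def-f}, and invokes Theorem~\ref{thm:Frobenius-dual-basis} applied to $H^{\cop}$ to see that $\Theta_{\qhL}$ --- hence $\widetilde{\Theta}$ --- is invertible, with an explicit inverse built from $\widetilde{\Lambda} = \qhql_2\Lambda_{(2)}\qhpl_2\otimes\qhql_1\Lambda_{(1)}\qhpl_1$. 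The coevaluation is then \emph{defined} as $d = (\id_{\mathbf{A}}\otimes\widetilde{\Theta}^{-1})\circ\coev_{\mathbf{A}}$, so both duality axioms are inherited from those of $\mathbf{A}^{\vee}$ by transport along the isomorphism $\widetilde{\Theta}$; no snake computation and no appeal to the counit of $L\dashv F$ is needed. What this buys is that the only hard computation is the evaluation of $\widetilde{\Theta}$, and the cointegral input enters through Theorem~\ref{thm:Frobenius-dual-basis} rather than through \eqref{eq:right-cointegral-4}.

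The concrete error in your plan is the claim that commutativity of $\mathbf{A}$ gives $e(a\otimes b) = e(b\otimes a)$ and hence that the second snake identity is the mirror image of the first. Commutativity in ${}^H_H\YD$ means $\star\circ\sigma_{\mathbf{A},\mathbf{A}} = \star$ for the Yetter--Drinfeld braiding $\sigma_{\mathbf{A},\mathbf{A}}(a\otimes b) = a_{\YDA{-1}}b\otimes a_{\YDA{0}}$, which is not the flip of vector spaces; so $e$ is invariant under $\sigma$, not under the transposition, and the two snake identities do not reduce to one another in the way you assert. You would either have to carry out the second snake computation separately (or justify it by a dimension count in this $k$-linear finite-dimensional setting), or, more cleanly, adopt the isomorphism argument above, which makes both identities automatic. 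Beyond that, the remaining steps of your outline (deriving the formula for $d(1)$ from the adjunction data, and the first snake identity via \eqref{eq:right-cointegral-4}) are plausible in principle but are exactly where all the work lies, and they are not carried out in the proposal.
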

\begin{proof}
  The monoidal category ${}^H_H \YD_{\fd}$ is rigid as it is isomorphic to the Drinfeld center of ${}_H \Mod_{\fd}$. The left dual object of $M \in {}^H_H \YD_{\fd}$ is, as a left $H$-module, identical to $M^{\vee}$. By Theorem~\ref{thm:q-Hopf-cat-coint}, the map $\catlambda$ belongs to ${}^H_H \YD(\mathbf{A}, \unitobj)$. Thus the map $e$ is a morphism in ${}^H_H \YD$. Since $\mathbf{A}$ is finite-dimensional, it has a left dual object in ${}^H_H \YD$. We now consider the morphism
  \begin{equation*}
    \widetilde{\Theta} := \Big( \mathbf{A} \xrightarrow{\  \id_{\mathbf{A}} \otimes \coev_{\mathbf{A}} \ } \mathbf{A} \otimes (\mathbf{A} \otimes \mathbf{A}\!^{\vee})
    \xrightarrow{\  \Phi^{-1}_{\mathbf{A},\mathbf{A},\mathbf{A}^{\vee}} \ } (\mathbf{A} \otimes \mathbf{A}) \otimes \mathbf{A}\!^{\vee}
    \xrightarrow{\  e \otimes \id_{\mathbf{A}^{\vee}} \ } \mathbf{A}\!^{\vee} \Big)
  \end{equation*}
  of Yetter-Drinfeld modules. We choose a basis $\{ a_i \}$ of $\mathbf{A}$ and let $\{ a^i \}$ be the dual basis. Using the Einstein convention, we compute
  \begin{align*}
    \langle \widetilde{\Theta}(h), h' \rangle
    & = \langle \catlambda, (\overline{\qhphi}_1 \triangleright h) \star (\overline{\qhphi}_2 \qhbeta \triangleright a_i) \rangle \langle \overline{\qhphi}_3 \triangleright a^i, h' \rangle \\
    & = \langle \catlambda, (\overline{\qhphi}_1 \triangleright h) \star (\overline{\qhphi}_2 \qhbeta \triangleright a_i) \rangle \langle a^i, \qhS(\overline{\qhphi}_3) \triangleright h' \rangle \\
    & = \langle \catlambda, (\overline{\qhphi}_1 \triangleright h) \star (\overline{\qhphi}_2 \qhbeta \qhS(\overline{\qhphi}_3) \triangleright h') \rangle \\
    {}^{\eqref{eq:q-Hopf-def-qR}, \eqref{eq:lambda-cat}}
    & = \langle \lambda \overline{\qhS}, ((\qhpr_1 \triangleright h) \star (\qhpr_2 \triangleright h')) \qhS(\qhalpha) \rangle \\
    {}^{\eqref{eq:BCP-alg-mult}}
    & = \langle \lambda \overline{\qhS}, \qhphi_1 (\qhano{\qhpr_1 \triangleright h})
      \qhS(\overline{\qhphi}_1 \qhphi_2) \qhalpha \overline{\qhphi}_2 \qhphi_{3(1)}
      (\qhano{\qhpr_2 \triangleright h'}) \qhS(\overline{\qhphi}_3 \qhphi_{3(2)}) \qhS(\qhalpha) \rangle \\
    {}^{\eqref{eq:BCP-alg-action}}
    & = \langle \lambda \overline{\qhS}, \qhphi_1 \qhpr_{1(1)} h \qhS(\qhpr_{1(2)}) \qhS(\overline{\qhphi}_1 \qhphi_2) \qhalpha \overline{\qhphi}_2 \qhphi_{3(1)} \qhpr_{2(1)} h'
      \qhano{\qhS(\qhpr_{2(2)}) \qhS(\overline{\qhphi}_3 \qhphi_{3(2)}) \qhS(\qhalpha)} \rangle \\
    {}^{\eqref{eq:Nakayama-auto-unimo-case}}
    & = \langle \lambda \overline{\qhS}, \overline{\qhS}(\qhalpha \overline{\qhphi}_3 \qhphi_{3(2)} \qhpr_{2(2)}) \qhphi_1 \qhpr_{1(1)} h \qhS(\qhpr_{1(2)}) \qhS(\overline{\qhphi}_1 \qhphi_2) \qhalpha \overline{\qhphi}_2 \qhphi_{3(1)} \qhpr_{2(1)} h' \rangle \\
    & = \langle \lambda \overline{\qhS}, \overline{\qhS}(\qhano{\qhS(\qhphi_1 \qhpr_{1(1)}) \qhalpha \overline{\qhphi}_3 \qhphi_{3(2)} \qhpr_{2(2)}}) h \qhano{\qhS(\overline{\qhphi}_1 \qhphi_2 \qhpr_{1(2)}) \qhalpha \overline{\qhphi}_2 \qhphi_{3(1)} \qhpr_{2(1)}} h' \rangle \\
    {}^{\eqref{eq:q-Hopf-def-f}}
    & = \langle \lambda \overline{\qhS}, \overline{\qhS}(\qhf_2) h \qhf_1 h' \rangle
    = \langle \lambda, \overline{\qhS}(h') \overline{\qhS}(\overline{\qhS}(\qhf_2) h \qhf_1) \rangle
  \end{align*}
  for $h, h' \in H$. Hence, we finally obtain
  \begin{equation*}
    \widetilde{\Theta}(h) = (\overline{\qhS}(\overline{\qhS}(\qhf_2) h \qhf_1)
    \rightharpoonup \lambda) \circ \overline{\qhS}
    = \Theta_{\qhL}(\overline{\qhS}(\overline{\qhS}(\qhf_2) h \qhf_1)) \circ \overline{\qhS}
  \end{equation*}
  for $h \in H$, where $\Theta_{\qhL}(x) = x \rightharpoonup \lambda$ ($x \in H$). We note that $\lambda$ is a {\em left} cointegral on $H^{\cop}$. Thus, by applying Theorem~\ref{thm:Frobenius-dual-basis} to $H^{\cop}$, we see that the inverse of $\Theta_{\qhL}$ is given by
  \begin{equation*}
    \Theta_{\qhL}^{-1}(\xi) = \overline{\qhS}(\widetilde{\Lambda}_1) \langle \xi, \widetilde{\Lambda}_2 \rangle
  \end{equation*}
  for $\xi \in H^{*}$, where $\widetilde{\Lambda} = \qhql_2 \Lambda_{(2)} \qhpl_2 \otimes \qhql_1 \Lambda_{(1)} \qhpl_1$. By \eqref{eq:Lambda-tilde-H} applied to $H^{\cop}$, we have
  \begin{equation}
    \label{eq:Lambda-tilde-H-cop}
    h \widetilde{\Lambda}_1 \otimes \widetilde{\Lambda}_2
    = \widetilde{\Lambda}_1 \otimes \qhS(h) \widetilde{\Lambda}_2
  \end{equation}
  for all $h \in H$. Since $\Theta_{\qhL}$, $\qhS$ and $\qhf$ are invertible, $\widetilde{\Theta}$ is invertible. Explicitly, the inverse of $\widetilde{\Theta}$ is given by
  \begin{align*}
    \widetilde{\Theta}{}^{-1}(\xi)
    & = \overline{\qhS}(\overline{\qhf}_2)
      \cdot \qhS(\Theta_{\qhL}^{-1}(\xi \circ \qhS)) \cdot \overline{\qhf}_1 \\
    & = \overline{\qhS}(\overline{\qhf}_2)
      \cdot \qhS(\overline{\qhS}(\widetilde{\Lambda}_1)
      \langle \xi \circ \qhS, \widetilde{\Lambda}_2 \rangle) \cdot \overline{\qhf}_1 \\
    & = \overline{\qhS}(\overline{\qhf}_2) \widetilde{\Lambda}_1 \overline{\qhf}_1
      \langle \xi, \qhS(\widetilde{\Lambda}_2) \rangle \\
      {}^{\eqref{eq:Lambda-tilde-H-cop}}
    & = \widetilde{\Lambda}_1 \overline{\qhf}_1
      \langle \xi, \qhS(\overline{\qhf}_2 \widetilde{\Lambda}_2) \rangle
  \end{align*}
  for $\xi \in \mathbf{A}\!^{\vee}$. Thus $\mathbf{A}$ is a self-dual object with evaluation $e$ and the coevaluation
  \begin{align*}
    (\id_{\mathbf{A}} \otimes \widetilde{\Theta}^{-1}) \coev_{\mathbf{A}}(1)
    & = \upbeta \triangleright a_i \otimes \widetilde{\Theta}^{-1}(a^i) \\
    & = \upbeta \triangleright a_i \otimes
      \widetilde{\Lambda}_1 \overline{\qhf}_1
      \langle a^i, \qhS(\overline{\qhf}_2 \widetilde{\Lambda}_2) \rangle \\
    & = \upbeta \triangleright \qhS(\overline{\qhf}_2 \widetilde{\Lambda}_2) \otimes
      \widetilde{\Lambda}_1 \overline{\qhf}_1.
      \qedhere
  \end{align*}
\end{proof}

\section{Twisted module trace}
\label{sec:modified-trace}

\subsection{Twisted module trace on tensor ideals}
\label{subsec:modified-trace}

Let $\mathcal{C}$ be a rigid monoidal category, and let $X^{\vee}$ denote the left dual object of $X \in \mathcal{C}$. Then the assignment $X \mapsto X^{\vee \vee} := (X^{\vee})^{\vee}$ canonically extends to a monoidal autoequivalence on $\mathcal{C}$. A {\em pivotal structure} of $\mathcal{C}$ is an isomorphism of monoidal functors from $\id_{\mathcal{C}}$ to $(-)^{\vee\vee}$. A {\em pivotal monoidal category} is a rigid monoidal category equipped with a pivotal structure. Now let $\mathcal{C}$ be a pivotal monoidal category with pivotal structure $\mathfrak{j}: \id_{\mathcal{C}} \to (-)^{\vee\vee}$. For objects $V, W, X \in \mathcal{C}$, the (right) {\em partial pivotal trace} over $X$ is the map
\begin{gather*}
  \ptrace^{\mathcal{C}}_{V, W | X}: \Hom_{\mathcal{C}}(V \otimes X, W \otimes X) \to \Hom_{\mathcal{C}}(V, W), \\
  f \mapsto (\id_W \otimes \eval_{X^{\vee}}(\mathfrak{j}_{X} \otimes \id_{X^{\vee}})) \Phi_{W,X,X^{\vee}} (f \otimes \id_{X^{\vee}}) \Phi_{V,X,X^{\vee}}^{-1} (\id_V \otimes \coev_X).
\end{gather*}
Graphically, the partial pivotal trace is expressed as follows:
\begin{equation*}
  \ptrace^{\mathcal{C}}_{V,W|X}
  \left( \xy /r1pc/:
    (0,2)="T1", (0,-2)="T0",
    "T1"; "T0" **\dir{} ?(.5) *+!{\makebox[3em]{$f$}} *\frm{-}="BX1",
    "BX1"!U!L(.6); \makecoord{p}{"T1"} **\dir{-} ?>="L1",
    "BX1"!U!R(.6); \makecoord{p}{"T1"} **\dir{-} ?>="L2",
    "BX1"!D!L(.6); \makecoord{p}{"T0"} **\dir{-} ?>="L3",
    "BX1"!D!R(.6); \makecoord{p}{"T0"} **\dir{-} ?>="L4",
    %%%%% 
    "L1" *+!D{V}, "L2" *+!D{X},
    "L3" *+!U{W}, "L4" *+!U{X},
    \endxy
  \right)
  = \xy /r1pc/:
  (0,2)="T1", (0,-2)="T0",
  "T1"; "T0" **\dir{} ?(.4) *+!{\makebox[3em]{$f$}} *\frm{-}="BX1",
  "BX1"!U!L(.6); \makecoord{p}{"T1"} **\dir{-} ?>="L1",
  "BX1"!D!L(.6); \makecoord{p}{"T0"} **\dir{-} ?>="L3",
  "BX1"!U!R(.6); p+(2,0) \mycap{.75} ?>="P1",
  "BX1"!D!R(.6); p-(0,.5) **\dir{-}
  ?> *+!U{\mathfrak{j}_X} *\frm{-}="BX2",
  "BX2"!D; p+(2,0) \mycap{-.75}
  ?>; "P1" **\dir{-} ?(.5)="L5",
  %%%%% 
  "L1" *+!D{V}, "L3" *+!U{W}, "L5" *+!L{X^{\vee}}
  \endxy
\end{equation*}

The partial pivotal trace is widely used to construct invariants of knots and closed 3-manifolds so-called quantum invariants \cite{MR1292673}. Although such a construction of quantum invariants works in a quite general setting, most of interesting quantum invariants originate from semisimple $k$-linear pivotal monoidal categories. In fact, the partial pivotal trace often vanishes in the non-semisimple case. To construct a meaningful invariant from a non-semisimple category, some authors considered a modification of the partial pivotal trace \cite{MR2998839,2018arXiv180100321B,2018arXiv180907991C,2018arXiv180900499G,2018arXiv180901122F}.

We suppose that $\mathcal{C}$ is a $k$-linear pivotal monoidal category with finite-\hspace{0pt}dimensional Hom-spaces. In this paper, we mention a recent result of Fontalvo Orozco and Gainutdinov \cite{2018arXiv180901122F}. They introduced the notion of a {\em module trace} over a right $\mathcal{C}$-module category $\mathcal{M}$ equipped with a $\mathcal{C}$-module endofunctor $\Sigma$. For simplicity, we consider the case where $\mathcal{M}$ is a {\em tensor ideal} of $\mathcal{C}$ in the sense of \cite[Section 2]{MR2998839}. We moreover restrict ourselves to the case where $\Sigma = D \otimes (-)$ for some object $D \in \mathcal{C}$. Then the notion of a module trace \cite{2018arXiv180901122F}, which we call a $D$-twisted module trace, is defined as follows:

\begin{definition}
  \label{def:D-tw-trace}
  {\rm (a)} Let $\mathcal{M}$ be a $k$-linear category, and let $\Sigma: \mathcal{M} \to \mathcal{M}$ be a $k$-linear endofunctor on $\mathcal{M}$. A {\em $\Sigma$-twisted trace} \cite[Definition 2.1]{2016arXiv160503523B} on $\mathcal{M}$ is a family $\mathsf{t}_{\bullet} = \{ \mathsf{t}_{P}: \Hom_{\mathcal{C}}(P, \Sigma(P)) \to k \}_{P \in \mathcal{M}}$ of $k$-linear maps such that the equation
  \begin{equation}
    \label{eq:tw-trace-cyclicity}
    \mathsf{t}_{P} \Big( P \xrightarrow{\ g \ } Q \xrightarrow{\ f \ } \Sigma(P) \Big)
    = \mathsf{t}_{Q} \Big( Q \xrightarrow{\ f \ } \Sigma(P) \xrightarrow{\ \Sigma(g) \ } \Sigma(Q) \Big)
  \end{equation}
  holds for all morphisms $g: P \to Q$ and $f: Q \to \Sigma(P)$ in $\mathcal{M}$. We denote by $\HH_{0}(\mathcal{M}, \Sigma)$ the class of $\Sigma$-twisted traces on $\mathcal{M}$.

  {\rm (b)} Let $\mathcal{I}$ be a tensor ideal of $\mathcal{C}$, and let $D$ be an object of $\mathcal{C}$. By the definition of tensor ideals, we can define the functor $\Sigma: \mathcal{I} \to \mathcal{I}$ by $\Sigma(P) = D \otimes P$ for $P \in \mathcal{I}$. A {\em $D$-twisted module trace on $\mathcal{I}$} is a $\Sigma$-twisted trace $\mathsf{t}_{\bullet}$ on $\mathcal{I}$ with this $\Sigma$ such that the {\it module trace condition}
  \begin{equation}
    \label{eq:module-trace-property}
    \mathsf{t}_{P \otimes V}(f)
    = \mathsf{t}_{P}\Big( \ptrace^{\mathcal{C}}_{P, \Sigma(P)|V}(\Phi_{D, P, V}^{-1} \circ f) \Big)
  \end{equation}
  holds for all objects $P \in \mathcal{I}$, $V \in \mathcal{C}$ and morphisms $f: P \otimes V \to \Sigma(P \otimes V)$.
\end{definition}

One of the main results of \cite{2018arXiv180901122F} classifies $D$-twisted module traces on $\mathcal{I}$ in the case where $\mathcal{C} = {}_H \Mod_{\fd}$ for some finite-dimensional pivotal Hopf algebra $H$, $\mathcal{I}$ is the class of projective objects of $\mathcal{C}$, and $D$ is the one-dimensional left $H$-module associated to the modular function $\qhmu$ on $H$. According to \cite[Corollary 6.1]{2018arXiv180901122F}, the space of such a trace is identified with the space of ``$\qhmu$-symmetrized'' cointegrals on $H$. The aim of this section is to give the same description of such a trace in the case where $H$ is a finite-dimensional quasi-Hopf algebra.

\subsection{Pivotal quasi-Hopf algebras}

We recall that a pivotal Hopf algebra is a Hopf algebra $H$ equipped with a grouplike element $\qhg \in H$ such that $\qhg h \qhg^{-1} = \qhS^2(h)$ for all $h \in H$. If $H$ is a pivotal Hopf algebra, then ${}_H \Mod_{\fd}$ is a pivotal monoidal category by the pivotal structure given by $\qhg$. The definition of a pivotal quasi-Hopf algebra is a little more complicated than the ordinary case because of the fact that the canonical isomorphism $(V \otimes W)^{\vee\vee} \cong V^{\vee\vee} \otimes W^{\vee\vee}$ is non-trivial in the quasi-Hopf case.

\begin{definition}[{{\it cf}. \cite{MR2501184,MR4009567}}]
  Let $H$ be a quasi-Hopf algebra. A {\em pivotal element} of $H$ is an invertible element $\qhg \in H$ such that the equations
  \begin{equation}
    \label{eq:q-Hopf-pivot}
    \qhg h \qhg^{-1} = \qhS^2(h)
    \quad \text{and} \quad
    \Delta(\qhg)
    = \overline{\qhf}_1 \qhS(\qhf_2) \qhg
    \otimes \overline{\qhf}_2 \qhS(\qhf_1) \qhg
  \end{equation}
  hold for all element $h \in H$. A {\em pivotal quasi-Hopf algebra} is a quasi-Hopf algebra equipped with a pivotal element.
\end{definition}

The category $\Vect := {}_{k} \Mod_{\fd}$ of finite-dimensional vector spaces over $k$ is a pivotal monoidal category with the canonical pivotal structure $\mathfrak{j}_V: V \to V^{**}$ given by $\langle \mathfrak{j}_{V}(v), \xi \rangle = \langle \xi, v \rangle$ for $v \in V$ and $\xi \in V^*$. Let $f: V \otimes X \to W \otimes X$ be a morphism in $\Vect$. We write $f(m) = f(m)_W \otimes f(m)_X$ for $m \in V \otimes X$. The partial pivotal trace over $X$ (with respect to the canonical pivotal structure $\mathfrak{j}$) is given by
\begin{equation}
  \label{eq:partial-tr-Vec}
  \ptrace^{\Vect}_{V,W|X}(f)(v) = f(v \otimes x_i)_W \, \langle x^i, f(v \otimes x_i)_X \rangle
\end{equation}
for $v \in V$, where $\{ x_i \}$ is a basis of $X$, $\{ x^i \}$ is the dual basis of $\{ x_i \}$ and the Einstein summation convention is used to suppress the sum.

Let $H$ be a quasi-Hopf algebra. Given a pivotal element $\qhg$ of $H$, we define the natural isomorphism $\mathfrak{g}_V: V \to V^{\vee\vee}$ ($V \in {}_H \Mod_{\fd}$) by $\mathfrak{g}_V(v) = \mathfrak{j}_V(\qhg v)$ for $v \in V$. Then $\mathfrak{g} = \{ \mathfrak{g}_V \}$ is a pivotal structure on ${}_H \Mod_{\fd}$. Moreover, if $H$ is finite-dimensional, then every pivotal structure of ${}_H \Mod_{\fd}$ is obtained in this way from a pivotal element of $H$ \cite[Proposition 4.2]{MR2501184}.

Now we fix a pivotal element $\qhg$ of $H$ and compute the partial pivotal trace in ${}_H \mathcal{M}_{\fd}$ with respect to the pivotal structure associated to $\qhg$. In the following lemma, given left $H$-modules $X
$ and $Y$, we regard $X \otimes Y$ as a left $H \otimes H$-module by $(h \otimes h') (x \otimes y) = h x \otimes h' y$ for $h, h' \in H$, $x \in X$ and $y \in Y$.

\begin{lemma}
  \label{lem:partial-tr-H-mod}
  The partial pivotal trace of $f: V \otimes X \to W \otimes X$ over $X$ in $\mathcal{C} := {}_H \Mod_{\fd}$ is given by
  \begin{equation}
    \label{eq:partial-tr-H-mod-1}
    \ptrace^{\mathcal{C}}_{V,W|X}(f) = \ptrace^{\Vect}_{V,W|X}(\widetilde{f}),
  \end{equation}
  where $\widetilde{f}: V \otimes X \to W \otimes X$ is the linear map defined by
  \begin{equation}
    \label{eq:partial-tr-H-mod-2}
    \widetilde{f}(m)
    = (1 \otimes \qhg)\qhqr f(\qhpr m)
    \quad (m \in V \otimes X).
  \end{equation}
\end{lemma}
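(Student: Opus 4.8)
The plan is to unwind the definition of the partial pivotal trace into an explicit contraction of underlying vector spaces and then recognise it as $\ptrace^{\Vect}_{V,W|X}$ applied to a modified linear map. First I would compute the pivotal evaluation $\eval_{X^{\vee}} \circ (\mathfrak{g}_X \otimes \id_{X^{\vee}}) : X \otimes X^{\vee} \to \unitobj$. Using $\mathfrak{g}_X(v) = \mathfrak{j}_X(\qhg v)$ and the formula $\eval_Y(\eta \otimes y) = \langle \eta, \qhalpha y\rangle$ applied to $Y = X^{\vee}$, together with the action $h \cdot \xi = \xi \leftharpoonup \qhS(h)$ on $X^{\vee}$, one finds that this map sends $v \otimes \xi$ to $\langle \xi, \qhS(\qhalpha)\qhg v\rangle$. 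Substituting this, the associator $\Phi_{\bullet,\bullet,\bullet}$ (multiplication by $\qhphi^{\pm 1}$), and $\coev_X(1) = \qhbeta x_i \otimes x^i$ into the definition of $\ptrace^{\mathcal{C}}_{V,W|X}(f)$ and using $\qhS(\overline{\qhphi}_3)\qhS(\qhphi_3)=\qhS(\qhphi_3\overline{\qhphi}_3)$, I obtain, for $v \in V$,
\[
\ptrace^{\mathcal{C}}_{V,W|X}(f)(v) = \qhphi_1\, f(\overline{\qhphi}_1 v \otimes \overline{\qhphi}_2 \qhbeta x_i)_W \, \big\langle x^i,\ \qhS(\qhphi_3 \overline{\qhphi}_3)\qhS(\qhalpha)\qhg\,\qhphi_2\, f(\overline{\qhphi}_1 v \otimes \overline{\qhphi}_2 \qhbeta x_i)_X\big\rangle,
\]
which, by \eqref{eq:partial-tr-Vec}, is exactly $\ptrace^{\Vect}_{V,W|X}(F)(v)$ for the evident linear map $F$. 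Thus \eqref{eq:partial-tr-H-mod-1} reduces to the equality $\ptrace^{\Vect}_{V,W|X}(F) = \ptrace^{\Vect}_{V,W|X}(\widetilde f)$ of two $\Vect$-partial traces.

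Next I would exploit the cyclicity of the $\Vect$-partial trace. Using the dual-basis identity $x_i \otimes (x^i \circ T) = T(x_i) \otimes x^i$ (in particular $x_i \otimes h x^i = \qhS(h) x_i \otimes x^i$) recalled in the proof of Lemma~\ref{lem:q-Hopf-reciprocity-Q}, one checks that an operator acting on the traced factor $X$ may be transported from the output of the argument to its input, i.e. $\ptrace^{\Vect}_{V,W|X}\big((\id_W \otimes a)\circ g\big) = \ptrace^{\Vect}_{V,W|X}\big(g \circ (\id_V \otimes a)\big)$ for $a \in \End_k(X)$. Applying this to $F$ and to $\widetilde f(m) = (1 \otimes \qhg)\qhqr\, f(\qhpr m)$ brings both traces to a common shape $\ptrace^{\Vect}_{V,W|X}$ of a map $b\, f(\overline{\qhphi}_1 v \otimes c\, x_i)_W \otimes f(\overline{\qhphi}_1 v \otimes c\, x_i)_X$; note that the $V$-leg is $\overline{\qhphi}_1=\qhpr_1$ on both sides, so the statement reduces to matching the decorating elements $b \otimes c$ produced from the two sides (here $b$ is the residual operator on $W$ and $c$ the combined operator on the input $X$).

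The decisive step is then the purely algebraic verification of that identity. Here the coevaluation prefactor $\qhbeta$ together with the associator legs reassembles into $\qhpr$ via \eqref{eq:q-Hopf-def-pR}, and the pivotal evaluation prefactor $\qhS(\qhalpha)$ together with the associator legs reassembles into $\qhqr$ via \eqref{eq:q-Hopf-def-qR}; the nested legs coming from the two copies of $\qhphi$ (one from $\Phi_{W,X,X^{\vee}}$, one from $\Phi^{-1}_{V,X,X^{\vee}}$) are reorganised by the cocycle condition \eqref{eq:q-Hopf-def-3} and the axiom \eqref{eq:q-Hopf-def-1}, while the antipode relations \eqref{eq:q-Hopf-def-5}--\eqref{eq:q-Hopf-def-6} absorb the stray $\qhalpha,\qhbeta$. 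The pivotal element is matched by means of $\qhg h \qhg^{-1} = \qhS^2(h)$ and $\Delta(\qhg) = \overline{\qhf}_1 \qhS(\qhf_2)\qhg \otimes \overline{\qhf}_2 \qhS(\qhf_1)\qhg$ from \eqref{eq:q-Hopf-pivot}, together with \eqref{eq:q-Hopf-f-1}; as a sanity check, in the Hopf case $\qhphi=1^{\otimes 3}$ the identity collapses to $\qhbeta\qhS(\qhalpha)\qhg = \qhbeta\qhg\,\overline{\qhS}(\qhalpha)$, which is exactly $\qhg^{-1}\qhS(\qhalpha)\qhg = \overline{\qhS}(\qhalpha)$. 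Throughout, the $H$-linearity of $f$ for the diagonal action is used to pass operators through $f$ whenever two legs combine into a comultiplication.

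I expect the main obstacle to be precisely this associator bookkeeping. Unlike the Hopf-algebra case treated in \cite{2018arXiv180901122F}, the forgetful functor ${}_H \Mod_{\fd} \to \Vect$ is not monoidal, so no general compatibility of partial traces is available and the legs of $\qhphi$ genuinely spread across the $W$-slot, the $X$-input and the $X$-output; reassembling them requires careful repeated use of \eqref{eq:q-Hopf-def-1} and \eqref{eq:q-Hopf-def-3}. A secondary subtlety is reconciling the $\qhS$ produced by the sliding identity and the $\qhS^2$ produced by conjugation with $\qhg$ against the $\overline{\qhS}$ appearing in $\qhqr$ and the $\qhg$ in $(1\otimes\qhg)$. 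The alternative of routing the computation through the explicit reciprocity isomorphisms $\mathbbm{Q}^{\pm 1}$ and $\mathbbm{P}^{\pm 1}$ of Lemmas~\ref{lem:q-Hopf-reciprocity-Q} and~\ref{lem:q-Hopf-reciprocity-P}, whose formulas already package the associator into $\qhpl,\qhql,\qhpr,\qhqr$, would organise this bookkeeping more transparently and is the route I would ultimately prefer.
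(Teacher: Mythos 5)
Your proposal is correct, and it is in substance the paper's own argument: the paper simply front-loads your ``preferred alternative'' by writing $\ptrace^{\mathcal{C}}_{V,W|X}(f)$ in terms of $f^{\sharp}=\mathbbm{P}(f)$, so that formula \eqref{eq:q-Hopf-reciprocity-P1} packages $\coev_X$ and $\Phi^{-1}_{V,X,X^{\vee}}$ into $\qhpr$ in one stroke, leaving only the evaluation/pivot part to be identified with $(1\otimes\qhg)\qhqr$. The one substantive comment is that you considerably overestimate the ``decisive step''. Starting from your (correct) displayed formula, a single application of the sliding identity $x_i\otimes h x^i=\qhS(h)x_i\otimes x^i$ (applied term by term in the legs of $\qhphi^{-1}$, with $h=\overline{\qhphi}_3$) moves $\qhS(\overline{\qhphi}_3)$ onto the $X$-input of $f$, which makes the input decoration literally $\qhpr=\overline{\qhphi}_1\otimes\overline{\qhphi}_2\qhbeta\qhS(\overline{\qhphi}_3)$ of \eqref{eq:q-Hopf-def-pR}; the residual output decoration $\qhphi_1\otimes\qhS(\qhalpha\qhphi_3)\qhg\qhphi_2$ equals $(1\otimes\qhg)\qhqr$ by \eqref{eq:q-Hopf-def-qR} together with the single relation $\qhS(a)\qhg=\qhg\,\overline{\qhS}(a)$ coming from \eqref{eq:q-Hopf-pivot}. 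No use of the cocycle condition \eqref{eq:q-Hopf-def-3}, of \eqref{eq:q-Hopf-def-1}, of $\Delta(\qhg)$, or of the $H$-linearity of $f$ is needed: only one copy each of $\qhphi$ and $\qhphi^{-1}$ occurs, their legs are already positioned as the legs of $\qhqr$ and $\qhpr$ respectively, and $\qhg$ appears exactly once, so its comultiplication never enters.
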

\begin{proof}
  We recall that there is a canonical isomorphism
  \begin{equation*}
    \Hom_{H}(V \otimes X, W \otimes X) \cong \Hom_{H}(V, (W \otimes X) \otimes X^{\vee}).
  \end{equation*}
  Let $f^{\sharp}: V \to (W \otimes X) \otimes X^{\vee}$ be the morphism in ${}_H \Mod_{\fd}$ corresponding to $f: V \otimes X \to W \otimes X$ via this isomorphism. Although an explicit formula of $f^{\sharp}$ has been given in Lemma~\ref{lem:q-Hopf-reciprocity-P}, we also express $f^{\sharp}$ by $f^{\sharp}(v) = (f^{\sharp}(v)_W \otimes f^{\sharp}(v)_{X}) \otimes f^{\sharp}(v)_{X^{\vee}}$ for $v \in V$.   We fix a basis $\{ x_i \}$ of $X$ and let $\{ x^i \}$ be the dual basis to $\{ x_i \}$. By the definition of the partial pivotal trace, we compute as follows: For $v \in V$,
  \begin{align*}
    \ptrace^{\mathcal{C}}_{V,W|X}(f)(v)
    & = (\id_W \otimes \eval_{X^{\vee}}(\mathfrak{g}_X \otimes X^{\vee})) \Phi_{W,X,X^{\vee}}^{-1} f^{\sharp}(v) \\
    & = \qhphi_1 f^{\sharp}(v)_{W}
      \langle \mathfrak{g}_X(\qhphi_2 f^{\sharp}(v)_X),
      \qhalpha \qhphi_3 f^{\sharp}(v)_{X^{\vee}} \rangle \\
    & = \qhphi_1 f^{\sharp}(v)_{W}
      \langle f^{\sharp}(v)_{X^{\vee}},
      \qhano{\qhS(\qhalpha \qhphi_3) \qhg} \qhphi_2 f^{\sharp}(v)_X \rangle \\
    {}^{\eqref{eq:q-Hopf-pivot}}
    & = \qhano{\qhphi_1} f^{\sharp}(v)_{W}
      \langle f^{\sharp}(v)_{X^{\vee}},
      \qhg \qhano{\overline{\qhS}(\qhalpha \qhphi_3) \qhphi_2} f^{\sharp}(v)_X \rangle \\
    {}^{\eqref{eq:q-Hopf-def-pR}}
    & = \qhqr_1 f^{\sharp}(v)_{W}
      \langle f^{\sharp}(v)_{X^{\vee}},
      \qhg \qhqr_2 f^{\sharp}(v)_X \rangle \\
    {}^{\eqref{eq:q-Hopf-reciprocity-P1}}
    & = \qhqr_1 f(\qhpr_1 v \otimes \qhpr_2 x_i)_{W}
      \langle x^i, \qhg \qhqr_2 f(\qhpr_1 v \otimes \qhpr_2 x_i)_{X} \rangle \\
    {}^{\eqref{eq:partial-tr-H-mod-2}}
    & = \widetilde{f}(v \otimes x_i)_{W}
      \langle x^i, \widetilde{f}(v \otimes x_i)_{X} \rangle \\
    {}^{\eqref{eq:partial-tr-Vec}}
    & = \ptrace^{\Vect}_{V,W|X}(\widetilde{f})(v). \qedhere
  \end{align*}
\end{proof}

\subsection{Twisted trace on the full subcategory of projective objects}
\label{subsec:tw-mod-tr}

Let $A$ be a finite-dimensional algebra. We denote by ${}_A \Proj_{\fd}$ the full subcategory of projective objects of ${}_A \Mod_{\fd}$. Given an algebra automorphism $\chi: A \to A$ and a left $A$-module $V$, we define $\chi^{*}(V)$ to be the vector space $V$ equipped with the left action $\cdot_{\chi}$ of $A$ given by $a \cdot_{\chi} v = \chi(a) v$ for $a \in H$ and $v \in V$. It is easy to see that the assignment $V \mapsto \chi^{*} (V)$ extends to a $k$-linear autoequivalence on ${}_A \Mod_{\fd}$ preserving the full subcategory ${}_A \Proj_{\fd}$. We say that a linear form $t: A \to k$ is {\em $\chi$-symmetric} if $t(a b) = t(\chi(b) a)$ for all $a, b \in A$. According to \cite{2018arXiv180901122F}, we denote by $\HH_0(A, \chi)$ the space of $\chi$-symmetric linear forms on $A$. Given a $\chi$-symmetric linear form $t$ on $A$ and $P \in {}_A \Proj_{\fd}$, we define the linear map $\mathsf{t}_P: \Hom_{A}(P, \chi^{*}(P)) \to k$ as follows: By the dual basis lemma, there are a natural number $n$ and $A$-linear maps $a_i: A \to P$ and $b_i: P \to A$ ($i = 1, \dotsc, n$) such that $\sum_{i = 1}^n a_i b_i = \id_P$. Choose such a system $\{ a_i, b_i \}_{i = 1}^n$ and define
\begin{equation}
  \label{eq:def-tP-f}
  \mathsf{t}_P(f) = \sum_{i = 1}^n t \Big( \chi^{*}(b_i) f a_i(1) \Big)
\end{equation}
for $f \in \Hom_{A}(P, \chi^{*}(P))$. Then the map $\mathsf{t}_P$ does not depend on the choice of the system $\{ a_i, b_i \}_{i = 1}^n$. The family $\mathsf{t}_{\bullet} = \{ \mathsf{t}_P \}$ of $k$-linear maps is a $\chi^{*}$-twisted trace on ${}_A \Proj_{\fd}$ and this construction gives an isomorphism
\begin{equation*}
  \HH_0(A, \chi) \to \HH_0({}_A \Proj_{\fd}, \chi^{*}), \quad t \mapsto \mathsf{t}_{\bullet}
\end{equation*}
of vector spaces; see \cite{2018arXiv180901122F}.

\subsection{Twisted module trace for quasi-Hopf algebras}

Let $H$ be a finite-dimensional pivotal quasi-Hopf algebra with pivotal element $\qhg$, and let $\chi$ be the algebra automorphism of $H$ given by $\chi(h) = h \leftharpoonup \qhmu$ for $h \in H$, where $\qhmu$ is the modular function on $H$. By abuse of notation, we write
\begin{equation*}
  \HH_0(H, \qhmu)
  := \HH_0(H, \chi)
  \quad \text{and} \quad
  \HH_0({}_H \Proj_{\fd}, \qhmu)
  := \HH_0({}_H \Proj_{\fd}, \chi^{*}).
\end{equation*}
We note that $\mathcal{P} := {}_H \Proj_{\fd}$ is a tensor ideal of ${}_H \Mod_{\fd}$. Again by abuse of notation, we denote by $\qhmu$ the one-dimensional left $H$-module associated to $\qhmu$. Let $\HHm_0(\mathcal{P}, \qhmu)$ be the class of $\qhmu$-twisted module traces on $\mathcal{P}$ in the sense of Definition \ref{def:D-tw-trace} (b). Since the autoequivalence $\chi^{*}$ on $\mathcal{P}$ is identified with $\qhmu \otimes (-)$, the class $\HHm_0(\mathcal{P}, \qhmu)$ is the subspace of $\HH_0(\mathcal{P}, \qhmu)$ consisting of elements satisfying the module trace condition \eqref{eq:module-trace-property}. By the reduction lemma \cite[Lemma 2.9]{2018arXiv180901122F}, to check \eqref{eq:module-trace-property}, it is enough to verify that the equation
\begin{equation}
  \label{eq:tw-trace-progen}
  \mathsf{t}_{H \otimes H}(f)
  = \mathsf{t}_{H}\Big( \ptrace^{\mathcal{C}}_{H, \qhmu \otimes H|H}(\Phi_{\qhmu, H, H}^{-1} \circ f) \Big)
\end{equation}
holds for all $f \in \Hom_{H}(H \otimes H, \qhmu \otimes (H \otimes H))$. Thus we have
\begin{equation*}
  \HHm_0(\mathcal{P}, \qhmu)
  = \{ \mathsf{t}_{\bullet} \in \HH_{0}(\mathcal{P}, \qhmu)
  \mid \text{$\mathsf{t}_{\bullet}$ satisfies~\eqref{eq:tw-trace-progen}} \}.
\end{equation*}

\begin{definition}
  We define
  \begin{equation*}
    \musymint = \{ (\lambda \circ \overline{\qhS}) \leftharpoonup \qhg
    \mid \text{$\lambda$ is a left cointegral on $H$} \}
  \end{equation*}
  and call $\musymint$ the space of {\em $\qhmu$-symmetrized cointegrals on $H$}. 
\end{definition}

Let $\lambda$ be a left cointegral on $H$, and set $t = (\lambda \circ \overline{\qhS}) \leftharpoonup \qhg$. Then we have
\begin{align*}
  \langle t, h h' \rangle
  & = \langle \lambda, \overline{\qhS}(\qhg h h') \rangle
    = \langle \lambda, \qhano{\overline{\qhS}(h')} \overline{\qhS}(\qhg h) \rangle \\
  {}^{\text{(Theorem~\ref{thm:left-coint})}}
  & = \langle \lambda, \overline{\qhS}(\qhg h) \qhS(h' \leftharpoonup \qhmu) \rangle \\
  & = \langle \lambda, \overline{\qhS}(\qhano{\qhS^2(h' \leftharpoonup \qhmu)} \qhg h) \rangle \\
  {}^{\eqref{eq:q-Hopf-pivot}}
  & = \langle \lambda, \overline{\qhS}(\qhg (h' \leftharpoonup \qhmu) h) \rangle
    = \langle t, (h' \leftharpoonup \qhmu) h \rangle
\end{align*}
for all $h, h' \in H$. Thus $\musymint$ is a subspace of $\HH_0(H, \qhmu)$. As we have recalled from \cite{2018arXiv180901122F} in the previous subsection, there is an isomorphism between $\HH_0(H, \qhmu)$ and $\HH_0({}_H \Proj_{\fd}, \qhmu)$. Now the main result of this section is stated as follows:

\begin{theorem}
  \label{thm:q-Hopf-modified-trace}
  The isomorphism $\HH_0(H, \qhmu) \cong \HH_0({}_H \Proj_{\fd}, \qhmu)$ restricts to
  \begin{equation*}
    \musymint \cong \HHm_0({}_H \Proj_{\fd}, \qhmu).
  \end{equation*}
\end{theorem}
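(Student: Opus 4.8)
The plan is to prove the sharper statement that, for an arbitrary $t \in \HH_0(H, \qhmu)$, the $\qhmu$-twisted trace $\mathsf{t}_{\bullet}$ associated to $t$ via \eqref{eq:def-tP-f} satisfies the module trace condition if and only if $t$ lies in $\musymint$. Granting this equivalence, the isomorphism $\HH_0(H, \qhmu) \cong \HH_0({}_H \Proj_{\fd}, \qhmu)$ identifies the preimage of $\HHm_0({}_H \Proj_{\fd}, \qhmu)$ with $\musymint$, and the asserted restriction follows immediately. We have already verified that $\musymint \subseteq \HH_0(H, \qhmu)$, and $\musymint$ is one-dimensional because $\cointl$ is one-dimensional and the map $\lambda \mapsto (\lambda \circ \overline{\qhS}) \leftharpoonup \qhg$ is injective (both $\overline{\qhS}$ and the right action of the invertible element $\qhg$ being bijective). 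By the reduction lemma \cite[Lemma 2.9]{2018arXiv180901122F}, the module trace condition is equivalent to the single equation \eqref{eq:tw-trace-progen}, so it suffices to analyse that equation.

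Next I would compute both sides of \eqref{eq:tw-trace-progen} explicitly. Since $H$ is free of rank one over itself, the dual system $\{ \id_H, \id_H \}$ gives $\mathsf{t}_H(\psi) = t(\psi(1))$ for a morphism $\psi \colon H \to \qhmu \otimes H$, after the identification $\qhmu \otimes H \cong \chi^{*}(H)$. For the right-hand side I would rewrite the partial pivotal trace $\ptrace^{\mathcal{C}}_{H, \qhmu \otimes H | H}$ by means of Lemma \ref{lem:partial-tr-H-mod}, converting it into the vector-space partial trace $\ptrace^{\Vect}$ with the insertion of $(1 \otimes \qhg)\qhqr(-)\qhpr$; evaluating $\mathsf{t}_H$ then produces an expression in which $t$ is paired with a term carrying $\qhg$, $\qhqr$, $\qhpr$ and the associator coming from $\Phi_{\qhmu, H, H}^{-1}$. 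For the left-hand side I would fix the standard free basis of the diagonal module $H \otimes H$ (via the $H$-linear isomorphism $h \otimes v \mapsto h_{(1)} \otimes h_{(2)} v$) together with its dual system, and expand $\mathsf{t}_{H \otimes H}(f) = \sum_i t(\chi^{*}(b_i)\, f\, a_i(1))$. As both sides are linear in $f$, it is enough to verify the resulting identity on a spanning family of $\Hom_H(H \otimes H, \qhmu \otimes (H \otimes H))$, which by freeness is parametrised by linear data on $H$.

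Comparing the two expansions, I expect \eqref{eq:tw-trace-progen} to collapse to an equation in $H^{*}$ asserting that $\lambda := (t \leftharpoonup \qhg^{-1}) \circ \qhS$ obeys one of the equivalent characterisations of a left cointegral established in Section \ref{sec:integrals}. Concretely, the coherence elements $\qhp$, $\qhq$, $\qhf$, $\qhg$, $\qhbeta$ and the associator $\qhphi$ produced by unwinding the partial pivotal trace, together with the unit and counit of the adjunction, should reorganise exactly into the element $\qhWL$ of \eqref{eq:q-Hopf-def-WL} (or $\qhWR$ of \eqref{eq:q-Hopf-def-W-L}), so that \eqref{eq:tw-trace-progen} becomes precisely the characterisation \eqref{eq:left-cointegral-3} of Theorem \ref{thm:left-coint-characterization} (respectively \eqref{eq:right-cointegral-3} of Theorem \ref{thm:right-coint-characterization}). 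The $\qhmu$-symmetry of $t$ and the Nakayama description of Theorem \ref{thm:left-coint} are what allow one to commute the twist past the antipode during this reduction. This establishes the desired equivalence, and with it the theorem.

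I expect the main obstacle to be exactly this final matching step: tracking the associator $\qhphi$ alongside the structural elements $\qhpr$, $\qhqr$, $\qhf$ and the pivotal element $\qhg$ through Lemma \ref{lem:partial-tr-H-mod}, the defining formula \eqref{eq:def-tP-f}, and the free-basis computation, and then recognising the outcome as one of the cointegral characterisations of Section \ref{sec:integrals}. This is precisely the point where the technical results \eqref{eq:left-cointegral-3} and \eqref{eq:q-Hopf-def-WL} (and their right-handed analogues) become indispensable, whereas in the Hopf-algebra setting of \cite{2018arXiv180901122F} the plain cyclicity identity already suffices.
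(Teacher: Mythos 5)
Your plan follows the paper's proof essentially step for step: reduce via the lemma of Fontalvo Orozco--Gainutdinov to the single equation \eqref{eq:tw-trace-progen}, parametrise $\Hom_H(H\otimes H,\qhmu\otimes(H\otimes H))$ through the freeness isomorphism $h\otimes v\mapsto h_{(1)}\otimes h_{(2)}v$ (the paper's $\theta_H$), evaluate both sides using Lemma~\ref{lem:partial-tr-H-mod} and formula \eqref{eq:def-tP-f}, and recognise the resulting identity as the characterisation \eqref{eq:left-cointegral-3} of left cointegrals via $\qhWL$ applied to $\lambda=(t\leftharpoonup\qhg^{-1})\circ\qhS$ --- which is exactly how the paper's Claims \ref{claim:trace-proof-1} and \ref{claim:trace-proof-2} and the final computation proceed. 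The computational core you defer (tracking $\qhphi$, $\qhpr$, $\qhqr$, $\qhf$, $\qhg$ through to the element $\qhWL$) is precisely what the paper carries out, and it does close up as you anticipate.
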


We fix an element $t \in \HH_0(H, \qhmu)$ and then define $\mathsf{t}_{\bullet} \in \HH_0({}_H \Proj_{\fd}, \qhmu)$ from $t$ as in the previous subsection. To prove the above theorem, we first derive a necessary and sufficient condition for $\mathsf{t}_{\bullet}$ satisfying \eqref{eq:tw-trace-progen} in terms of the linear form $t$. The following lemma is useful:

\begin{lemma}
  For a left $H$-module $X$, we define the linear map $\theta_X$ by
  \begin{equation}
    \label{eq:q-Hopf-theta}
    \theta_X: H \otimes X_0 \to H \otimes X,
    \quad h \otimes x \mapsto h_{(1)} \qhpr_1 \otimes h_{(2)} \qhpr_2 x
    \quad (h \in H, x \in X),
  \end{equation}
  where $X_0$ is the vector space $X$ regarded as a left $H$-module by $\qhepsilon$. Then the family $\theta = \{ \theta_X \}$ is a natural isomorphism. The inverse of $\theta_X$ is given by
  \begin{equation}
    \label{eq:q-Hopf-theta-inv}
    \theta_X^{-1}(h \otimes x) = \qhqr_1 h_{(1)} \otimes \qhS(\qhqr_2 h_{(2)}) x
    \quad (h \in H, x \in X).
  \end{equation}
\end{lemma}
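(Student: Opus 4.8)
The plan is to read $\theta_X$ as the quasi-Hopf analogue of the standard isomorphism identifying the free module $H \otimes X_0$ with the tensor-product module $H \otimes X$, and to check that the displayed formula is a genuine two-sided inverse. I would first dispose of the structural claims, which are purely formal. Since $X_0$ carries the $\qhepsilon$-action, the module $H \otimes X_0$ is the free module with $h\cdot(a\otimes x)=ha\otimes x$ (using \eqref{eq:q-Hopf-def-2}), whereas $H\otimes X$ carries the usual action $h\cdot(a\otimes x)=h_{(1)}a\otimes h_{(2)}x$. The $H$-linearity of $\theta_X$ is then immediate from $\Delta$ being an algebra map: evaluating $\theta_X$ on $ha\otimes x$ gives $(ha)_{(1)}\qhpr_1\otimes(ha)_{(2)}\qhpr_2 x=h_{(1)}a_{(1)}\qhpr_1\otimes h_{(2)}a_{(2)}\qhpr_2 x$, which is exactly $h\cdot\theta_X(a\otimes x)$. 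The same observation makes $\theta_X^{-1}$ an $H$-module map, and naturality of $\theta=\{\theta_X\}$ follows because any $H$-linear $f$ commutes with the action occurring in the second tensor factor of the defining formula.

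The next step is the clean composite $\theta_X\circ\theta_X^{-1}=\id_{H\otimes X}$. Starting from $\theta_X^{-1}(h\otimes x)=\qhqr_1 h_{(1)}\otimes\qhS(\qhqr_2 h_{(2)})x$ and applying $\theta_X$, comultiplicativity of $\Delta$ gives $\qhqr_{1(1)}h_{(1,1)}\qhpr_1\otimes\qhqr_{1(2)}h_{(1,2)}\qhpr_2\qhS(h_{(2)})\qhS(\qhqr_2)x$, where I have used that $\qhS$ is anti-multiplicative. Grouping the three occurrences of $h$ and invoking \eqref{eq:q-Hopf-pR} in the form $h_{(1,1)}\qhpr_1\otimes h_{(1,2)}\qhpr_2\qhS(h_{(2)})=\qhpr_1 h\otimes\qhpr_2$ collapses this to $\qhqr_{1(1)}\qhpr_1 h\otimes\qhqr_{1(2)}\qhpr_2\qhS(\qhqr_2)x$, and then the orthogonality relation \eqref{eq:q-Hopf-pR-qR-1} reduces the $\qhpr,\qhqr$ factors to $1\otimes 1$, leaving $h\otimes x$. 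This direction is genuinely short and uses only \eqref{eq:q-Hopf-pR} and \eqref{eq:q-Hopf-pR-qR-1}.

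The hard part will be the opposite composite $\theta_X^{-1}\circ\theta_X=\id_{H\otimes X_0}$. A parallel direct computation produces $\qhqr_1 h_{(1,1)}\qhpr_{1(1)}\otimes\qhS(\qhqr_2 h_{(1,2)}\qhpr_{1(2)})\,h_{(2)}\qhpr_2 x$, and here the antipode lands on the ``wrong side'': one cannot apply \eqref{eq:q-Hopf-qR} directly, and a faithful treatment seems to require first rearranging the triple comultiplication of $h$ via the quasi-coassociativity \eqref{eq:q-Hopf-def-1}, then using \eqref{eq:q-Hopf-qR} and the second orthogonality relation \eqref{eq:q-Hopf-pR-qR-2}, with careful antipode bookkeeping. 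I expect this to be the main obstacle. To sidestep it cleanly — and because the lemma is asserted for an arbitrary, possibly infinite-dimensional $X$ — I would instead deduce two-sided invertibility by a reduction to the finite-dimensional case. The identity $\theta_X\theta_X^{-1}=\id$ already shows $\theta_X$ is surjective; writing $X$ as the filtered union of its finitely generated (hence finite-dimensional, since $H$ is finite-dimensional) submodules $X_i$, naturality shows that $\theta_X$ restricts to $\theta_{X_i}\colon H\otimes(X_i)_0\to H\otimes X_i$ (note $\qhS(\qhqr_2 h_{(2)})x\in Hx\subseteq X_i$, so the inverse formula also respects $X_i$). Each $\theta_{X_i}$ is a surjection between spaces of equal finite dimension $\dim_k H\cdot\dim_k X_i$, hence bijective, and $\theta_{X_i}^{-1}$ is its genuine inverse; passing to the union over $i$ identifies the displayed $\theta_X^{-1}$ as the two-sided inverse of $\theta_X$, completing the proof.
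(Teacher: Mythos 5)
Your proof is correct, but it takes a different route from the paper, whose ``proof'' consists of a citation to Schauenburg together with the remark that a direct verification is possible using \eqref{eq:q-Hopf-pR}, \eqref{eq:q-Hopf-qR}, \eqref{eq:q-Hopf-pR-qR-1} and \eqref{eq:q-Hopf-pR-qR-2}. Your computation of $\theta_X^{}\theta_X^{-1} = \id$ is exactly the intended use of the first and third of these. The composite you declare hard and then avoid is in fact no harder: applying $\qhS$ to the second tensor leg of \eqref{eq:q-Hopf-qR} and of \eqref{eq:q-Hopf-pR-qR-2} yields
\begin{equation*}
  \qhqr_1 h_{(1,1)} \otimes \qhS(h_{(1,2)}) \qhS(\qhqr_2) h_{(2)} = h \qhqr_1 \otimes \qhS(\qhqr_2),
  \qquad
  \qhqr_1 \qhpr_{1(1)} \otimes \qhS(\qhpr_{1(2)}) \qhS(\qhqr_2) \qhpr_2 = 1 \otimes 1,
\end{equation*}
and substituting these in turn into your expression $\qhqr_1 h_{(1,1)}\qhpr_{1(1)}\otimes\qhS(\qhqr_2 h_{(1,2)}\qhpr_{1(2)})\,h_{(2)}\qhpr_2 x$ (first the $h$-identity, then the orthogonality relation) collapses it to $h \otimes x$ in two steps, exactly mirroring your first computation; no rearrangement via \eqref{eq:q-Hopf-def-1} is needed. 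Your substitute argument --- surjectivity from the verified composite plus a dimension count over the filtered union of finitely generated submodules --- is nonetheless valid, because $H$ is finite-dimensional throughout this section; it buys you one fewer quasi-Hopf computation at the price of tying the lemma to that finiteness hypothesis, which the direct verification does not need. One incidental slip: the $H$-linearity of the formula \eqref{eq:q-Hopf-theta-inv} is \emph{not} ``the same observation'' as that for $\theta_X$ (it genuinely requires \eqref{eq:q-Hopf-qR}, since the diagonal action on the source has to be pulled through $\qhqr$), but this is harmless because you never use it, and the inverse of an $H$-linear bijection is automatically $H$-linear.
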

\begin{proof}
  A left-right switched version is found at \cite[p.3356]{MR1897403}. One can also verify this lemma directly by using equations~\eqref{eq:q-Hopf-pR}, \eqref{eq:q-Hopf-qR}, \eqref{eq:q-Hopf-pR-qR-1} and \eqref{eq:q-Hopf-pR-qR-2}.
\end{proof}

Set $M = \qhmu \otimes (H \otimes H_0)$ for simplicity. We may identify $M$ with the vector space $H \otimes H$ equipped with the left $H$-action given by $h \cdot (x \otimes y) = \langle \qhmu, h_{(1)} \rangle h_{(2)} x \otimes y$ ($h, x, y \in H$). There are isomorphisms
\begin{equation*}
  \Hom_H(H \otimes H_0, M)
  \cong \Hom_k(H, M)
  \cong H^* \otimes M
  \cong H^* \otimes H \otimes H
\end{equation*}
of vector spaces. Let $\Gamma_{\xi,x,y} \in \Hom_H(H \otimes H_0, M)$ be the element corresponding to $\xi \otimes x \otimes y \in H^* \otimes H \otimes H$ via the above isomorphism. Explicitly, it is given by
\begin{equation}
  \label{eq:Gamma-xi-x-y}
  \Gamma_{\xi, x, y}(h \otimes h')
  = \langle \xi, h' \rangle \langle \qhmu, h_{(1)} \rangle h_{(2)} x \otimes y
  \quad (h, h' \in H).
\end{equation}
By the above lemma, we have
\begin{align*}
  & \Hom_H(H \otimes H, \qhmu \otimes (H \otimes H)) \\
  & = \{ (\id_{\qhmu} \otimes \theta_H) \circ f \circ \theta_H^{-1}
    \mid f \in \Hom_H(H \otimes H_0, M) \} \\
  & = \mathrm{span}_k \{ \Gamma_{\xi, x, y}' \mid \xi \in H^*, x, y \in H \},
\end{align*}
where $\Gamma_{\xi,x,y}' := (\id_{\qhmu} \otimes \theta_H) \circ \Gamma_{\xi, x, y} \circ \theta_H^{-1}$. Thus $\mathsf{t}_{\bullet}$ satisfies~\eqref{eq:tw-trace-progen} if and only if the equation
\begin{equation}
  \label{eq:tw-trace-progen-2}
  \mathsf{t}_{H \otimes H}(\Gamma_{\xi,x,y}')
  = \mathsf{t}_{H}\Big( \ptrace^{\mathcal{C}}_{H, \qhmu \otimes H|H}(\Phi_{\qhmu, H, H}^{-1} \circ \Gamma_{\xi,x,y}') \Big)
\end{equation}
holds for all $\xi \in H^*$ and $x, y \in H$. We now compute the left and the right hand sides of~\eqref{eq:tw-trace-progen-2}.

\begin{claim}
  \label{claim:trace-proof-1}
  The left-hand side of \eqref{eq:tw-trace-progen-2} is equal to $t(x) \xi(y)$.
\end{claim}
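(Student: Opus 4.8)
The plan is to reduce the computation to the free module $H \otimes H_0$ and then to evaluate $\mathsf{t}$ on an explicit dual basis. First I would use that the constructed family $\mathsf{t}_{\bullet}$ is a $\Sigma$-twisted trace (with $\Sigma = \qhmu \otimes (-)$) together with the natural isomorphism $\theta$ of the preceding lemma. Writing $\Gamma_{\xi,x,y}' = \Sigma(\theta_H) \circ \Gamma_{\xi,x,y} \circ \theta_H^{-1}$ and applying the twisted cyclicity \eqref{eq:tw-trace-cyclicity} with $g = \theta_H^{-1} \colon H \otimes H \to H \otimes H_0$ and $f = \Sigma(\theta_H) \circ \Gamma_{\xi,x,y}$, the two copies of $\theta_H$ cancel and one obtains
\begin{equation*}
  \mathsf{t}_{H \otimes H}(\Gamma_{\xi,x,y}') = \mathsf{t}_{H \otimes H_0}(\Gamma_{\xi,x,y}).
\end{equation*}
Since $H \otimes H_0$ is a free (hence projective) left $H$-module, the right-hand side is now a tractable quantity.

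Next I would compute $\mathsf{t}_{H \otimes H_0}(\Gamma_{\xi,x,y})$ directly from the definition \eqref{eq:def-tP-f} using an explicit dual basis. Fixing a basis $\{ e_j \}$ of $H$ with dual basis $\{ e^j \}$, the maps $a_j \colon H \to H \otimes H_0$, $a_j(h) = h \otimes e_j$, and $b_j \colon H \otimes H_0 \to H$, $b_j(h \otimes h') = \langle e^j, h' \rangle \, h$, are $H$-linear (because $H_0$ carries the $\qhepsilon$-action, so $H$ acts only on the first tensorand) and satisfy $\sum_j a_j b_j = \id$. Evaluating, $a_j(1) = 1 \otimes e_j$, and by \eqref{eq:Gamma-xi-x-y} together with $\Delta(1) = 1 \otimes 1$ and $\qhmu(1) = 1$ one finds $\Gamma_{\xi,x,y}(1 \otimes e_j) = \langle \xi, e_j \rangle \, x \otimes y$. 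Applying $b_j$ and then $t$, the sum collapses: $\sum_j \langle \xi, e_j \rangle \langle e^j, y \rangle \, t(x) = \langle \xi, y \rangle \, t(x)$, which is the desired value $t(x)\,\xi(y)$.

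The computation is essentially routine, and the points requiring care are bookkeeping rather than genuine difficulty. I must make sure the twist $\Sigma = \qhmu \otimes (-)$ and the identification of $M$ with $\chi^{*}(H \otimes H_0)$ are tracked correctly in the cyclicity step, and that the maps $a_j, b_j$ really are $H$-linear. Both issues are mild: the factors coming from the comultiplication and from $\qhmu$ enter only through the value of $\Gamma_{\xi,x,y}$ at $h = 1$, where $\Delta(1) = 1 \otimes 1$ and $\qhmu(1) = 1$ make them disappear, so no associator or structure constants of $H$ survive. Notably this claim concerns only the left-hand side of \eqref{eq:tw-trace-progen-2} and therefore avoids the pivotal element, the partial pivotal trace, and the associator, all of which will enter when computing the right-hand side.
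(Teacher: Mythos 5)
Your proof is correct and follows essentially the same route as the paper: reduce to $\mathsf{t}_{H \otimes H_0}(\Gamma_{\xi,x,y})$ via the twisted cyclicity applied to $\theta_H$, then evaluate with the dual-basis system $a_j(h) = h \otimes e_j$, $b_j(h \otimes h') = \langle e^j, h'\rangle h$. Your explicit identification of $g$ and $f$ in the cyclicity step is slightly more detailed than the paper's one-line application, but the argument is the same.
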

\begin{proof}
  Let $\{ h_i \}_{i = 1}^n$ be a basis of $H$, and let $\{ h^i \}$ be the dual basis of $\{ h_i \}$. For each $i$, we define linear maps $a_i$ and $b_i$ by
\begin{align*}
  a_i & : H \to H \otimes H_0,
  \quad a_i(h) = h \otimes h_i, \\
  b_i & : H \otimes H_0 \to H,
  \quad b_i(h \otimes h') = \langle h^i, h' \rangle h
\end{align*}
for $h, h' \in H$, respectively. Then $a_i$ and $b_i$ are $H$-linear maps and the equation $\sum_{i = 1}^n a_i b_i = \id_{H\otimes H_0}$ holds. Since $\mathsf{t}_{\bullet}$ satisfies \eqref{eq:tw-trace-cyclicity} with $\Sigma = \qhmu \otimes (-)$, we have
  \begin{gather*}
    \mathsf{t}_{H \otimes H}(\Gamma'_{\xi,x,y})
    = \mathsf{t}_{H \otimes H}(\Sigma(\theta_H) \circ \Gamma_{\xi,x,y} \circ \theta_H^{-1})
    \stackrel{\eqref{eq:tw-trace-cyclicity}}{=} \mathsf{t}_{H \otimes H_0}(\Gamma_{\xi,x,y}) \\
    \stackrel{\eqref{eq:def-tP-f}}{=} \sum_{i = 1}^n t(b_i \Gamma_{\xi,x,y} a_i(1))
    \stackrel{\eqref{eq:Gamma-xi-x-y}}{=} \sum_{i = 1}^n t(x \langle h^i, y \rangle \langle \xi, h_i \rangle)
    = t(x) \xi(y). \qedhere
  \end{gather*}
\end{proof}

\begin{claim}
  \label{claim:trace-proof-2}
  The right-hand side of \eqref{eq:tw-trace-progen-2} is equal to
  \begin{equation}
    \label{eq:tw-trace-progen-2-RHS}
    \qhmu(\overline{\qhphi}_1) \langle \xi, \qhg \qhqr_2 \overline{\qhphi}_3 x_{(2)} \qhpr_2 y \rangle
    \langle t, (\qhqr_1 \leftharpoonup \qhmu) \overline{\qhphi}_2 x_{(1)} \qhpr_1 \rangle.
  \end{equation}
\end{claim}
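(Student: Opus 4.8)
The plan is to reduce the categorical partial pivotal trace to the elementary vector-space trace and then feed the result into the free-module description of $\mathsf{t}_H$. Write $f = \Phi_{\qhmu,H,H}^{-1} \circ \Gamma'_{\xi,x,y}$. By Lemma~\ref{lem:partial-tr-H-mod}, the right-hand side of \eqref{eq:tw-trace-progen-2} equals $\mathsf{t}_H\big(\ptrace^{\Vect}_{H,\qhmu\otimes H|H}(\widetilde f)\big)$, where $\widetilde f(m) = (1\otimes\qhg)\qhqr\, f(\qhpr m)$ as in \eqref{eq:partial-tr-H-mod-2}. First I would unfold $f$ explicitly: using $\Gamma'_{\xi,x,y} = (\id_{\qhmu}\otimes\theta_H)\circ\Gamma_{\xi,x,y}\circ\theta_H^{-1}$ together with \eqref{eq:q-Hopf-theta}, \eqref{eq:q-Hopf-theta-inv} and \eqref{eq:Gamma-xi-x-y}, and then applying the module associator $\Phi_{\qhmu,H,H}^{-1}$, which inserts a factor $\overline{\qhphi}$ and produces $\qhmu(\overline{\qhphi}_1)$ on the one-dimensional tensorand. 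The output of $f$ on an input $\qhpr_1 \otimes \qhpr_2 h_i$ carries the basis element $h_i$ \emph{only} inside the pairing $\langle\xi, \qhS(\qhqr_2\qhpr_{1(2)})\qhpr_2 h_i\rangle$ coming from $\Gamma_{\xi,x,y}$, since the second tensor slot is consumed by $\xi$.

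This observation is what makes the trace collapse. Evaluating \eqref{eq:partial-tr-Vec} at $v = 1$, the $X$-component of $\widetilde f(1\otimes h_i)$ is independent of $i$, so writing $\widetilde f(1\otimes h_i) = \langle\xi, Q h_i\rangle\,\Xi$ with $Q = \qhS(\qhqr_2\qhpr_{1(2)})\qhpr_2$ and $\Xi \in (\qhmu\otimes H)\otimes H$ fixed, the completeness relation $\sum_i h_i\langle h^i, Z\rangle = Z$ gives
\begin{equation*}
  \ptrace^{\Vect}_{H,\qhmu\otimes H|H}(\widetilde f)(1)
  = (\id_{\qhmu\otimes H}\otimes \langle\xi, Q(-)\rangle)(\Xi),
\end{equation*}
an element of $\qhmu\otimes H$. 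The map $\mathsf{t}_H$ is then computed from \eqref{eq:def-tP-f} using that $H$ is free of rank one: taking $a_1 = b_1 = \id_H$ yields $\mathsf{t}_H(g) = t(g(1))$, so the whole right-hand side becomes a product of a scalar $\langle\xi, \dots\rangle$ (from the $X$-contraction) with $t(\dots)$ (applied to the $H$-part of the surviving $\qhmu\otimes H$ factor).

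The remaining, and genuinely laborious, step is to simplify the resulting expression — which at this point involves three nested copies of $\qhpr$ and $\qhqr$ (from $\theta_H^{-1}$, from $\widetilde f$, and from $\theta_H$ inside $\Gamma'$), several associator legs $\qhphi,\overline{\qhphi}$, the element $\qhf$, and the pivotal element $\qhg$ — into the target form $\qhmu(\overline{\qhphi}_1)\langle\xi, \qhg\qhqr_2\overline{\qhphi}_3 x_{(2)}\qhpr_2 y\rangle\langle t, (\qhqr_1\leftharpoonup\qhmu)\overline{\qhphi}_2 x_{(1)}\qhpr_1\rangle$. Here I would use the reconstitution identity \eqref{eq:q-Hopf-pR-qR-2} (and \eqref{eq:q-Hopf-pR}, \eqref{eq:q-Hopf-qR}) to cancel the auxiliary $\qhpr$–$\qhqr$ pairs introduced by the two applications of $\theta_H$, the coassociativity relations \eqref{eq:q-Hopf-def-1} and \eqref{eq:q-Hopf-def-3} to rebracket the iterated comultiplications into $x_{(1)}\otimes x_{(2)}$, and the pivotal relation $\qhg h\qhg^{-1} = \qhS^2(h)$ from \eqref{eq:q-Hopf-pivot} to move $\qhg$ to the front of the $\xi$-pairing past the antipode factors $\qhS(\qhqr_2\qhpr_{1(2)})$. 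I expect this bookkeeping to be the main obstacle: the correct order of applying these identities is delicate, and keeping track of which Sweedler legs of $\qhqr_1$ carry the modular function $\qhmu$ (producing the $\qhqr_1\leftharpoonup\qhmu$ in the statement) is where errors are most likely to creep in.
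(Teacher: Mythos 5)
Your overall strategy coincides with the paper's: reduce $\mathsf{t}_H$ to $t(r(1))$ using freeness of $H$ over itself, apply Lemma~\ref{lem:partial-tr-H-mod} to replace the categorical partial trace by $\ptrace^{\Vect}$ of $\widetilde f$, and collapse the dual-basis sum by observing that $h_i$ enters only through the pairing with $\xi$. All of that is sound. The gap is that the decisive computation is only promised, not performed, and your roadmap for it points at the wrong difficulties. The one observation that makes everything collapse is to evaluate $f$ on $\qhpr_1\otimes\qhpr_2 h$ \emph{before} expanding anything: by \eqref{eq:q-Hopf-theta-inv} and \eqref{eq:q-Hopf-pR-qR-2},
\begin{equation*}
  \theta_H^{-1}(\qhpr_1\otimes\qhpr_2 h)
  = \qhqr_1\qhpr_{1(1)}\otimes\qhS(\qhqr_2\qhpr_{1(2)})\qhpr_2 h
  = 1\otimes h,
\end{equation*}
whence $f(\qhpr_1\otimes\qhpr_2 h)=\qhmu(\overline{\qhphi}_1)\langle\xi,h\rangle\,\bigl((1\otimes\overline{\qhphi}_2 x_{(1)}\qhpr_1)\otimes\overline{\qhphi}_3 x_{(2)}\qhpr_2 y\bigr)$ in two lines. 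Note that only this one $\qhp$--$\qhq$ pair cancels; the $\qhpr$ coming from $\theta_H$ and the $\qhqr$ coming from $\widetilde f$ both survive into \eqref{eq:tw-trace-progen-2-RHS}, so your plan to ``cancel the auxiliary pairs introduced by the two applications of $\theta_H$'' is not quite what happens. Your intermediate claim that $\widetilde f(1\otimes h_i)=\langle\xi,Qh_i\rangle\,\Xi$ with $\Xi$ fixed is also not literally correct as stated, since $Q$ and $\Xi$ share the Sweedler legs of $\qhqr_1\qhpr_{1(1)}$; it is precisely the displayed identity that legitimizes treating the $X$-output as independent of $i$.

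After this, computing $\widetilde f(1\otimes h)=(1\otimes\qhg)\qhqr\, f(\qhpr_1\otimes\qhpr_2 h)$ needs only the diagonal action of $\qhqr_1$ on $\qhmu\otimes H$, which produces $\qhmu(\qhqr_{1(1)})\qhqr_{1(2)}=\qhqr_1\leftharpoonup\qhmu$ directly, while $\qhg\qhqr_2$ multiplies the last tensor factor from the left and is already in the form required by \eqref{eq:tw-trace-progen-2-RHS}. None of $\qhf$, $\Delta(\qhg)$, \eqref{eq:q-Hopf-def-1}, \eqref{eq:q-Hopf-def-3}, or the relation $\qhg h\qhg^{-1}=\qhS^2(h)$ enters the proof of this claim; those are needed only later, in the proof of Theorem~\ref{thm:q-Hopf-modified-trace} itself. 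So the ``genuinely laborious'' step you defer is in fact a short computation once the cancellation is applied at the input of $f$ rather than at the end, and your proposal as written does not yet establish the claim.
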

\begin{proof}
  For simplicity, we set $f := \Phi_{\qhmu, H, H}^{-1} \circ \Gamma'_{\xi,x,y}$ and $r := \ptrace^{\mathcal{C}}_{H, \qhmu \otimes H|H}(f)$. By the defining formula~\eqref{eq:def-tP-f} of $\mathsf{t}_{\bullet}$, the right-hand side of \eqref{eq:tw-trace-progen-2} is equal to $t(r(1))$. To compute $r(1)$, we note:
  \begin{equation*}
    \theta_H^{-1}(\qhpr_1 \otimes \qhpr_2 h)
    \stackrel{\eqref{eq:q-Hopf-theta-inv}}{=}
    \qhqr_1 \qhpr_{1(1)} \otimes \qhS(\qhqr_2 \qhpr_{1(2)}) \qhpr_2 h
    \stackrel{\eqref{eq:q-Hopf-pR-qR-2}}{=} 1 \otimes h
    \quad (h \in H).
  \end{equation*}
  Hence, for all $h \in H$, we have
  \begin{align*}
    f(\qhpr_1 \otimes \qhpr_2 h)
    & = (\Phi_{\qhmu,H,H}^{-1} (\id_{\qhmu} \otimes \theta_H) \Gamma_{\xi, x, y})
      (1 \otimes h) \\
    & = \langle \xi, h \rangle
      \, \Phi_{\qhmu,H,H}^{-1} (\id_{\qhmu} \otimes \theta_H)
      (\underbrace{1 \otimes (x \otimes y)}_{\mathclap{\in \qhmu \otimes (H \otimes H_0)}}) \\
    & = \langle \xi, h \rangle
      \, \Phi_{\qhmu,H,H}^{-1}
      (1 \otimes (x_{(1)} \qhpr_1 \otimes x_{(2)} \qhpr_2 y)) \\
    & = \langle \qhmu, \overline{\qhphi}_1 \rangle \langle \xi, h \rangle
      \, ((1 \otimes \overline{\qhphi}_2 x_{(1)} \qhpr_1)
      \otimes \overline{\qhphi}_3 x_{(2)} \qhpr_2 y).
  \end{align*}
  Now we define $\widetilde{f}$ by \eqref{eq:partial-tr-H-mod-2} with $V = H$ and $W = \qhmu \otimes H$. Then,
  \begin{align*}
    \widetilde{f}(1 \otimes h)
    & = \langle \qhmu, \overline{\qhphi}_1 \rangle \langle \xi, h \rangle
      (1 \otimes \qhg) \qhqr \cdot ((1 \otimes \overline{\qhphi}_2 x_{(1)} \qhpr_1)
      \otimes \overline{\qhphi}_3 x_{(2)} \qhpr_2 y) \\
    & = \langle \qhmu, \qhqr_{1(1)} \overline{\qhphi}_1 \rangle \langle \xi, h \rangle
      \, ((\underbrace{1 \otimes \qhqr_{1(2)} \overline{\qhphi}_2 x_{(1)} \qhpr_1}_{\in \qhmu \otimes H})
      \otimes \underbrace{\qhg \qhqr_2 \overline{\qhphi}_3 x_{(2)} \qhpr_2 y}_{\in H}).
  \end{align*}
  We fix a basis $\{ h_i \}$ of $H$ and let $\{ h^i \}$ be the dual basis of $\{ h_i \}$. We identify $\qhmu \otimes H$ with $H$ as a vector space. By Lemma~\ref{lem:partial-tr-H-mod}, we compute $r(1)$ as follows:
  \begin{align*}
    r(1) & = \ptrace^{\Vect}_{H,H|H}(\widetilde{f})(1)
      \quad \text{(by identifying $\qhmu \otimes H$ with $H$ as a vector space)} \\
    {}^{\eqref{eq:partial-tr-Vec}}
    & = \langle \qhmu, \qhqr_{1(1)} \overline{\qhphi}_1 \rangle \langle \xi, h_i \rangle
      \qhqr_{1(2)} \overline{\qhphi}_2 x_{(1)} \qhpr_1
      \langle h^i, \qhg \qhqr_2 \overline{\qhphi}_3 x_{(2)} \qhpr_2 y \rangle \\
    & = \langle \qhmu, \qhqr_{1(1)} \overline{\qhphi}_1 \rangle \langle \xi, \qhg \qhqr_2 \overline{\qhphi}_3 x_{(2)} \qhpr_2 y \rangle
      \qhqr_{1(2)} \overline{\qhphi}_2 x_{(1)} \qhpr_1.
  \end{align*}
  Hence the right-hand side of \eqref{eq:tw-trace-progen-2} coincides with~\eqref{eq:tw-trace-progen-2-RHS}
\end{proof}

Claims \ref{claim:trace-proof-1} and \ref{claim:trace-proof-2} show that $\mathsf{t}_{\bullet}$ satisfies~\eqref{eq:tw-trace-progen-2} if and only if the equation
\begin{equation}
  \label{eq:tw-trace-progen-3}
  \langle t, h \rangle 1
  = \qhmu(\overline{\qhphi}_1)
  \langle t, (\qhqr_1 \leftharpoonup \qhmu) \overline{\qhphi}_2 h_{(1)} \qhpr_1 \rangle
  \, \qhg \qhqr_2 \overline{\qhphi}_3 h_{(2)} \qhpr_2
\end{equation}
holds for all $h \in H$. Now we prove Theorem~\ref{thm:q-Hopf-modified-trace}.

\begin{proof}[Proof of Theorem~\ref{thm:q-Hopf-modified-trace}]
  By the above argument, it is sufficient to show
  \begin{equation}
    \label{eq:q-Hopf-modified-trace-pf-1}
    \musymint = \{ t \in \HH_0(H, \qhmu) \mid \text{$t$ satisfies~\eqref{eq:tw-trace-progen-3}} \}
  \end{equation}
  to prove this theorem. Suppose that $t$ belongs to $\musymint$. Then, by definition, $t = (\lambda \circ \overline{\qhS}) \leftharpoonup \qhg$ for some left cointegral $\lambda$ on $H$. Let $h$ be an arbitrary element of $H$, and set $h' = \overline{\qhS}(\qhg h)$. Then we have
  \begin{align*}
    \overline{\qhS}(\qhf_1) h'_{(2)} \overline{\qhf}_2 \otimes
    \overline{\qhS}(\qhf_2) h'_{(1)} \overline{\qhf}_1
    & = \qhano{\overline{\qhS}(\qhf_1) \overline{\qhS}(\qhg h)_{(2)}} \overline{\qhf}_2 \otimes
      \qhano{\overline{\qhS}(\qhf_2) \overline{\qhS}(\qhg h)_{(1)}} \overline{\qhf}_1 \\
    {}^{\eqref{eq:q-Hopf-f-1}}
    & = \overline{\qhS}(\qhg_{(1)} h_{(1)}) \overline{\qhS}(\qhf_1) \overline{\qhf}_2 \otimes
      \overline{\qhS}(\qhg_{(2)} h_{(2)}) \overline{\qhS}(\qhf_2) \overline{\qhf}_1 \\
    & = \overline{\qhS}(\qhano{\qhS(\overline{\qhf}_2) \qhf_1 \qhg_{(1)}} h_{(1)}) \otimes
      \overline{\qhS}(\qhano{\qhS(\overline{\qhf}_1) \qhf_2 \qhg_{(2)}} h_{(2)}) \\
    {}^{\eqref{eq:q-Hopf-pivot}}
    & = \overline{\qhS}(\qhg h_{(1)}) \otimes \overline{\qhS}(\qhg h_{(2)}).
  \end{align*}
  We now show that $t$ satisfies~\eqref{eq:tw-trace-progen-3} as follows:
  \begin{align*}
    & \qhmu(\overline{\qhphi}_1)
      \langle t, (\qhqr_1 \leftharpoonup \qhmu) \overline{\qhphi}_2 h_{(1)} \qhpr_1 \rangle
      \, \qhg \qhqr_2 \overline{\qhphi}_3 h_{(2)} \qhpr_2 \\
    & = \qhmu(\overline{\qhphi}_1)
      \langle \lambda, \overline{\qhS}(\qhano{\qhg (\qhqr_1 \leftharpoonup \qhmu) \overline{\qhphi}_2} h_{(1)} \qhpr_1) \rangle
      \, \qhano{\qhg \qhqr_2 \overline{\qhphi}_3} h_{(2)} \qhpr_2 \\
    {}^{\eqref{eq:q-Hopf-pivot}}
    & = \qhmu(\overline{\qhphi}_1)
      \langle \lambda, \overline{\qhS}(\qhS^2((\qhqr_1 \leftharpoonup \qhmu) \overline{\qhphi}_2)
      \qhg h_{(1)} \qhpr_1) \rangle
      \, \qhS^2(\qhqr_2 \overline{\qhphi}_3) \qhg h_{(2)} \qhpr_2 \\
    & = \qhmu(\overline{\qhphi}_1)
      \langle \lambda, \overline{\qhS}(\qhpr_1) \overline{\qhS}(\qhg h_{(1)})
      \qhS((\qhqr_1 \leftharpoonup \qhmu) \overline{\qhphi}_2) \rangle
      \, \qhS^2(\qhqr_2 \overline{\qhphi}_3) \qhS \overline{\qhS}(\qhg h_{(2)}) \qhpr_2 \\
    {}^{(h' := \overline{\qhS}(\qhg h))}
    & = \qhmu(\overline{\qhphi}_1)
      \langle \lambda, \overline{\qhS}(\qhpr_1) \overline{\qhS}(\qhf_1) h'_{(2)} \overline{\qhf}_2
      \qhS((\qhqr_1 \leftharpoonup \qhmu) \overline{\qhphi}_2) \rangle
      \, \qhS^2(\qhqr_2 \overline{\qhphi}_3) \qhS(\overline{\qhS}(\qhf_2) h'_{(1)} \overline{\qhf}_1) \qhpr_2 \\
    & = \qhano{\qhmu(\overline{\qhphi}_1)}
      \qhS(\qhano{\overline{\qhS}(\qhf_2 \qhpr_2)} h'_{(1)}
      \qhano{\overline{\qhf}_1 \qhS(\qhqr_2 \overline{\qhphi}_3)}) \,
      \langle \lambda, \qhano{\overline{\qhS}(\qhf_1 \qhpr_1)} h'_{(2)}
      \qhano{\overline{\qhf}_2 \qhS((\qhqr_1 \leftharpoonup \qhmu) \overline{\qhphi}_2)} \rangle \\
    {}^{\eqref{eq:q-Hopf-def-VL}, \eqref{eq:q-Hopf-def-WL}}
    & = \qhS(\qhVL_1 h'_{(1)} \qhWL_1)
      \langle \lambda, \qhVL_2 h'_{(2)} \qhWL_2 \rangle \\
    {}^{\eqref{eq:left-cointegral-3}}
    & = \qhS(\langle \lambda, h' \rangle 1) = t(h)1.
  \end{align*}
  Hence ``$\subset$'' of \eqref{eq:q-Hopf-modified-trace-pf-1} is proved. We shall prove the converse inclusion. Let $t \in \HH_0(H, \qhmu)$ be an element satisfying~\eqref{eq:tw-trace-progen-3}, and set $\lambda = (t \leftharpoonup \qhg^{-1}) \circ \qhS$. Then, for all $h \in H$, we have
  \begin{align*}
    \langle \lambda, h \rangle 1
    & = \qhano{\langle t, \qhg^{-1} \qhS(h) \rangle} \overline{\qhS}(\qhano{1}) \\
    {}^{\eqref{eq:tw-trace-progen-3}}
    & = \qhmu(\overline{\qhphi}_1) \langle t, (\qhqr_1 \leftharpoonup \qhmu)
      \overline{\qhphi}_2 (\qhano{\qhg^{-1}} \qhS(h))_{(1)} \qhpr_1 \rangle
      \, \overline{\qhS}(\qhg \qhqr_2 \overline{\qhphi}_3 (\qhano{\qhg^{-1}} \qhS(h))_{(2)} \qhpr_2) \\
    {}^{\eqref{eq:q-Hopf-pivot}}
    & = \qhmu(\overline{\qhphi}_1) \langle t,
      (\qhqr_1 \leftharpoonup \qhmu) \overline{\qhphi}_2 \qhg^{-1}
      \qhS(\overline{\qhf}{}_2) \qhano{\qhf_1 \qhS(h)_{(1)}}
      \qhpr_1 \rangle
      \, \overline{\qhS}(\qhg \qhqr_2 \overline{\qhphi}_3 \qhg^{-1}
      \qhS(\overline{\qhf}{}_1) \qhano{\qhf_2 \qhS(h)_{(2)}} \qhpr_2) \\
    {}^{\eqref{eq:q-Hopf-f-1}}
    & = \qhmu(\overline{\qhphi}_1) \langle t,
      \qhano{(\qhqr_1 \leftharpoonup \qhmu) \overline{\qhphi}_2 \qhg^{-1}}
      \qhS(\overline{\qhf}{}_2) \qhS(h_{(2)}) \qhf_1
      \qhpr_1 \rangle
      \, \overline{\qhS}(\qhano{\qhg \qhqr_2 \overline{\qhphi}_3 \qhg^{-1}}
      \qhS(\overline{\qhf}{}_1) \qhS(h_{(1)}) \qhf_2 \qhpr_2) \\
    {}^{\eqref{eq:q-Hopf-pivot}}
    & = \qhmu(\overline{\qhphi}_1) \langle t, \qhg^{-1} \qhS^2(\qhqr_1 \leftharpoonup \qhmu)
      \qhS^2(\overline{\qhphi}_2) \qhS(\overline{\qhf}{}_2) \qhS(h_{(2)}) \qhf_1 \qhpr_1 \rangle
      \, \overline{\qhS}(\qhS^2(\qhqr_2 \overline{\qhphi}_3)
      \qhS(\overline{\qhf}{}_2) \qhS(h_{(1)}) \qhf_2 \qhpr_2) \\
    & = \qhano{\qhmu(\overline{\qhphi}_1)} \langle \lambda,
      \qhano{\overline{\qhS}(\qhf_1 \qhpr_1)} h_{(2)}
      \qhano{\overline{\qhf}{}_2 \qhS(\overline{\qhphi}_2) \qhS(\qhqr_1 \leftharpoonup \qhmu)}
      \rangle \, \qhano{\overline{\qhS}(\qhf_2 \qhpr_2)} h_{(1)}
      \qhano{\qhS(\qhqr_2 \overline{\qhphi}_3)} \\
    {}^{\eqref{eq:q-Hopf-def-VL}, \eqref{eq:q-Hopf-def-WL}}
    & = \qhVL_1 h_{(1)} \qhWL_1
      \langle \lambda, \qhVL_2 h_{(2)} \qhWL_2 \rangle.
  \end{align*}
  Thus, by Theorem~\ref{thm:left-coint-characterization}, the linear form $\lambda$ is a left cointegral on $H$. Since $t = (\lambda \circ \overline{\qhS}) \leftharpoonup \qhg$, the linear form $t$ belongs to $\musymint$. The proof is done.
\end{proof}

\subsection{Remarks}

\subsubsection{Twisted left module traces for quasi-Hopf algebras}

It would be worth to include a left-right switched version of Theorem~\ref{thm:q-Hopf-modified-trace}. Let $\mathcal{C}$ be a $k$-linear pivotal monoidal category with pivotal structure $\mathfrak{j}$.
We note that a left dual object of $X \in \mathcal{C}^{\rev}$ is just a right dual object of $X$ in $\mathcal{C}$ and $\mathcal{C}^{\rev}$ is a pivotal monoidal category by
\begin{equation*}
  \mathfrak{j}_X^{\rev}: X \xrightarrow{\quad \cong \quad} {}^{\vee\vee} \! X^{\vee\vee} \xrightarrow{\quad {}^{\vee\vee}(\mathfrak{j}_{X}^{-1}) \quad} {}^{\vee\vee} \! X
  \quad (X \in \mathcal{C}).
\end{equation*}
In this section, we only considered the {\em right} partial pivotal trace. The {\em left} partial pivotal trace is defined by
\begin{gather*}
  \widetilde{\ptrace}{}^{\mathcal{C}}_{X|V,W}
  := \left(
    \Hom_{\mathcal{C}}(X \otimes V, X \otimes W)
    \xrightarrow{\quad \displaystyle \ptrace^{\mathcal{C}^{\rev}}_{V,W|X} \quad}
    \Hom_{\mathcal{C}}(V, W)
  \right)
  % f \mapsto \widetilde{\eval}_X \circ \Phi_{{}^{\vee}X,X,W}^{-1} \circ (\id_{{}^{\vee}X} \otimes f) \circ \Phi_{{}^{\vee}X,X,V} \circ (\coev_X' \otimes \id_V)
\end{gather*}
for $V, W, X \in \mathcal{C}$. Let $\mathcal{I}$ be a tensor ideal of $\mathcal{C}$, and let $D$ be an object of $\mathcal{C}$. A $D$-twisted module trace on $\mathcal{I}$ of Definition \ref{def:D-tw-trace} should be called a $D$-twisted {\em right} module trace on $\mathcal{I}$. We define a {\em $D$-twisted left module trace on $\mathcal{I}$} to be a $\Sigma$-twisted trace $\mathsf{t}_{\bullet}$ on $\mathcal{I}$, where $\Sigma = (-) \otimes D$, such that the equation
\begin{equation*}
  \mathsf{t}_{X \otimes P}(f) = \mathsf{t}_{P}\Big(
  \widetilde{\ptrace}{}_{X | P, P \otimes D}^{\mathcal{C}}(\Phi_{X, P, D} \circ f) \Big)
\end{equation*}
holds for all objects $P \in \mathcal{I}$, $X \in \mathcal{C}$ and morphisms $f: X \otimes P \to (X \otimes P) \otimes D$ in $\mathcal{I}$.

Now let $H$ be a finite-dimensional pivotal quasi-Hopf algebra with pivotal element $\qhg$. We define the algebra automorphism $\widetilde{\chi}: H \to H$ by $\widetilde{\chi}(h) = \qhmu \rightharpoonup h$ for $h \in H$ and set
\begin{equation*}
\widetilde{\HH}_0(H, \qhmu) = \HH_0(H, \widetilde{\chi})
\quad \text{and} \quad
\widetilde{\HH}_0({}_H\Proj_{\fd}, \qhmu) = \HH_0({}_H \Proj_{\fd}, \widetilde{\chi}{}^*)
\end{equation*}
with notation of Subsection \ref{subsec:tw-mod-tr}. We denote by $\widetilde{\HH}{}^{\mathsf{mod}}_0({}_H\Proj_{\fd}, \qhmu)$ the class of $\qhmu$-twisted left module traces on the tensor ideal ${}_H\Proj_{\fd}$.

\begin{theorem}
  \label{thm:q-Hopf-modified-trace-left}
  The isomorphism $\widetilde{\HH}_0({}_H\Proj_{\fd}, \qhmu) \cong \widetilde{\HH}_0(H, \qhmu)$ given in Subsection \ref{subsec:tw-mod-tr} restricts to the isomorphism
  \begin{equation*}
    \widetilde{\HH}{}^{\mathsf{mod}}_0({}_H\Proj_{\fd}, \qhmu)
     \cong \{ (\lambda \circ \qhS) \leftharpoonup \qhg^{-1} \mid \text{$\lambda$ is a right cointegral on $H$} \}.
  \end{equation*}
\end{theorem}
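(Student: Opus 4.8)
The plan is to deduce Theorem~\ref{thm:q-Hopf-modified-trace-left} from Theorem~\ref{thm:q-Hopf-modified-trace} by applying the latter to the quasi-Hopf algebra $K := H^{\cop}$. The starting point is the identification of $\mathcal{C}^{\rev} = ({}_H \Mod_{\fd})^{\rev}$ with ${}_{K} \Mod_{\fd}$ as monoidal categories, recalled in Subsection~\ref{subsec:q-Hopf-and-monoidal}. Under this identification the left partial pivotal trace $\widetilde{\ptrace}{}^{\mathcal{C}}$ is, by its very definition, the right partial pivotal trace $\ptrace^{\mathcal{C}^{\rev}}$ computed in ${}_K \Mod_{\fd}$, and the twist $\Sigma = (-) \otimes \qhmu$ governing a left module trace becomes $\Sigma = \qhmu \otimes_{\rev} (-)$, which is exactly the form of the twist governing a right module trace. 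Matching the module trace conditions term by term, a $\qhmu$-twisted left module trace on ${}_H \Proj_{\fd}$ is therefore literally the same datum as a $\qhmu$-twisted right module trace on ${}_K \Proj_{\fd}$, so that $\widetilde{\HH}{}^{\mathsf{mod}}_0({}_H \Proj_{\fd}, \qhmu) = \HHm_0({}_K \Proj_{\fd}, \qhmu_K)$ once the data of $K$ are correctly identified.

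Next I would assemble the dictionary between $H$ and $K = H^{\cop}$, using \eqref{eq:q-Hopf-def-cop} and Lemma~\ref{lem:q-Hopf-f-op-cop}. Since $K$ and $H$ share the same underlying algebra, the category ${}_K \Proj_{\fd}$ coincides with ${}_H \Proj_{\fd}$ as a $k$-linear category, and the purely algebra-theoretic construction of Subsection~\ref{subsec:tw-mod-tr} shows that the isomorphism $\HH_0(K, \qhmu_K) \cong \HH_0({}_K \Proj_{\fd}, \qhmu_K)$ is exactly the isomorphism $\widetilde{\HH}_0(H, \qhmu) \cong \widetilde{\HH}_0({}_H \Proj_{\fd}, \qhmu)$; here one uses that the algebra automorphism $\chi_K$ of $K$ appearing in Theorem~\ref{thm:q-Hopf-modified-trace} is $\chi_K(h) = \langle \qhmu, h_{(2)} \rangle h_{(1)} = \widetilde{\chi}(h)$, because $\Delta_K = \Delta^{\cop}$. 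I would also record that $\overline{\qhS}_K = \qhS$, that the modular function on $K$ is again $\qhmu$ (as in the discussion following Definition~\ref{def:cointegrals}), and that left cointegrals on $K$ are precisely right cointegrals on $H$. Since the right action $\leftharpoonup$ of $K$ on $K^{*}$ agrees with that of $H$ (the two multiplications coincide), the space $\musymint$ computed for the pivotal quasi-Hopf algebra $(K, \qhg_K)$ with $\qhg_K = \qhg^{-1}$ unwinds as
\begin{align*}
  \musymint(K)
  & = \{ (\lambda \circ \overline{\qhS}_K) \leftharpoonup \qhg_K \mid \text{$\lambda$ is a left cointegral on $K$} \} \\
  & = \{ (\lambda \circ \qhS) \leftharpoonup \qhg^{-1} \mid \text{$\lambda$ is a right cointegral on $H$} \},
\end{align*}
which is exactly the right-hand side of Theorem~\ref{thm:q-Hopf-modified-trace-left}. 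Applying Theorem~\ref{thm:q-Hopf-modified-trace} to $K$ with pivotal element $\qhg_K$ then gives the asserted restriction of the isomorphism.

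The main obstacle will be the pivotal bookkeeping, not any new trace computation. I must verify that $\qhg^{-1}$ is a pivotal element of $K = H^{\cop}$: the relation $\qhg^{-1} h \qhg = \overline{\qhS}{}^2(h) = \qhS_K^2(h)$ follows by applying $\qhS^{-2}$ to the first equation of \eqref{eq:q-Hopf-pivot}, but the comultiplication axiom for $\qhg_K$ requires a direct computation from the second equation of \eqref{eq:q-Hopf-pivot} together with $\qhf_{\cop} = \overline{\qhS}(\qhf_1) \otimes \overline{\qhS}(\qhf_2)$ of Lemma~\ref{lem:q-Hopf-f-op-cop}. I must likewise check that the pivotal structure $\mathfrak{j}^{\rev}$ placed abstractly on $\mathcal{C}^{\rev}$ agrees, under $\mathcal{C}^{\rev} \cong {}_K \Mod_{\fd}$, with the pivotal structure on ${}_K \Mod_{\fd}$ induced by $\qhg^{-1}$; this compatibility is what guarantees that $\widetilde{\ptrace}{}^{\mathcal{C}}$ is genuinely the right partial pivotal trace on ${}_K \Mod_{\fd}$ for that pivotal element, so that Theorem~\ref{thm:q-Hopf-modified-trace} applies verbatim. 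Once these two compatibilities are confirmed, the entire analytic content is supplied by Theorem~\ref{thm:q-Hopf-modified-trace}, and no further calculation is needed.
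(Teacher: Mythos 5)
Your proposal is correct and follows essentially the same route as the paper: identify $({}_H\Mod_{\fd})^{\rev}$ with ${}_{H^{\cop}}\Mod_{\fd}$, verify that $\qhg^{-1}$ is a pivotal element of $H^{\cop}$ (using \eqref{eq:q-Hopf-pivot} and Lemma~\ref{lem:q-Hopf-f-op-cop}), note that left cointegrals on $H^{\cop}$ are right cointegrals on $H$, and apply Theorem~\ref{thm:q-Hopf-modified-trace} to $H^{\cop}$. The extra bookkeeping you flag (the automorphism $\chi_{H^{\cop}} = \widetilde{\chi}$, the modular function of $H^{\cop}$ being $\qhmu$, and the agreement of $\leftharpoonup$ for $H$ and $H^{\cop}$) is all correct and is left implicit in the paper's shorter proof.
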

\begin{proof}
  $\widetilde{\HH}{}^{\mathsf{mod}}_0({}_H\Proj_{\fd}, \qhmu)$ can be identified with the set of $\qhmu$-twisted {\em right} module traces on the tensor ideal of projective objects of the $k$-linear pivotal monoidal category $({}_H \Mod_{\fd})^{\rev}$. The quasi-Hopf algebra $H^{\cop}$ is also pivotal with pivotal element $\qhg^{-1}$. Indeed, we have  $\qhg^{-1} h \qhg = \overline{\qhS}{}^2(h)$ for all $h \in H$ and
  \begin{align*}
    \Delta^{\cop}(\qhg^{-1})
    \ {}^{\eqref{eq:q-Hopf-pivot}}
    & = \qhg^{-1} \qhS(\overline{\qhf}_1) \qhf_2 \otimes \qhg^{-1} \qhS(\overline{\qhf}_2) \qhf_1 \\
    {}^{\eqref{eq:q-Hopf-pivot}}
    & = \overline{\qhS}(\overline{\qhf}_1) \overline{\qhS}{}^2(\qhf_2) \qhg^{-1}
    \otimes \overline{\qhS}(\overline{\qhf}_2) \overline{\qhS}{}^2(\qhf_1) \qhg^{-1} \\
    {}^{\eqref{eq:q-Hopf-f-op-cop}}
    & = \overline{\mathfrak{f}}_1 \overline{\qhS}(\mathfrak{f}_2) \qhg^{-1}
    \otimes \overline{\mathfrak{f}}_2 \overline{\qhS}(\mathfrak{f}_1) \qhg^{-1},
  \end{align*}
  where $\mathfrak{f} = \qhf_{\cop}$. The $k$-linear pivotal monoidal category $({}_H \Mod_{\fd})^{\rev}$ can be identified with ${}_{H^{\cop}}\Mod_{\fd}$. A left cointegral on $H^{\cop}$ is nothing but a right cointegral on $H$. The proof is done by applying Theorem \ref{thm:q-Hopf-modified-trace} to $H^{\cop}$.
\end{proof}

\subsubsection{Unimodular case}

Let $H$ be a finite-dimensional pivotal quasi-Hopf algebra.
In \cite{2018arXiv181210445B}, modified traces on ${}_H \Proj_{\fd}$ are studied in the case where $H$ is unimodular. A modified trace on ${}_H \Proj_{\fd}$ is, in our terminology, a $\Sigma$-twisted module trace on ${}_H \Proj_{\fd}$ with $\Sigma$ the identity functor.

We suppose that $H$ is unimodular. Then, by \cite[Corollary 4.4]{MR2862216}, a linear map $\lambda: H \to k$ is a left cointegral on $H$ if and only if $\lambda \circ \overline{\qhS}$ is a right cointegral on $H$. Thus we have
\begin{equation*}
\musymint = \{ \lambda \leftharpoonup \qhg
\mid \text{$\lambda$ is a right cointegral on $H$} \},
\end{equation*}
which coincides with the space of {\em symmetrized right cointegrals} on $H$ in the sense of \cite{2018arXiv181210445B}. Hence Theorem \ref{thm:q-Hopf-modified-trace} reduces to the description of the space of modified traces given in \cite[Theorem 1.1]{2018arXiv181210445B}.

\subsubsection{Two-sided module traces}

Let $H$ be a finite-dimensional unimodular pivotal quasi-Hopf algebra with pivotal element $\qhg$. Since both $\qhmu \otimes (-)$ and $(-) \otimes \qhmu$ are the identity functors on ${}_H \Mod_{\fd}$ in this case, we may ask whether a left/right $\qhmu$-twisted module trace on ${}_H \Proj_{\fd}$ is a right/left $\qhmu$-twisted module trace on ${}_H \Proj_{\fd}$. By a {\em two-sided module trace} on ${}_H \Proj_{\fd}$, we mean a right $\qhmu$-twisted module trace on ${}_H \Proj_{\fd}$ as well as a left $\qhmu$-twisted module trace on ${}_H \Proj_{\fd}$.

We fix a non-zero right cointegral $\lambda$ on $H$. According to \cite{2018arXiv180100321B}, we say that $H$ is {\em unibalanced} if $\lambda \leftharpoonup \qhg = \lambda^{\qhL} \leftharpoonup \qhg^{-1}$ for some left cointegral $\lambda^{\qhL}$ on $H$. As a generalization of a result of \cite{2018arXiv180100321B} for finite-dimensional unimodular pivotal Hopf algebras, we prove:

\begin{theorem}[{\it cf.} {\cite[Theorem 1]{2018arXiv180100321B}}]
  A non-zero two-sided module trace on ${}_H \Proj_{\fd}$ exists if and only if $H$ is unibalanced.
\end{theorem}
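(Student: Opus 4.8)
The plan is to reduce the statement to a comparison of two one-dimensional subspaces of $H^*$, using Theorems~\ref{thm:q-Hopf-modified-trace} and~\ref{thm:q-Hopf-modified-trace-left} together with the unimodularity hypothesis. First I would observe that, since $H$ is unimodular, the modular function equals the counit, so $\qhmu = \qhepsilon$ and the algebra automorphisms $\chi(h) = h \leftharpoonup \qhmu$ and $\widetilde{\chi}(h) = \qhmu \rightharpoonup h$ both reduce to $\id_H$. Consequently the two functors $\qhmu \otimes (-)$ and $(-) \otimes \qhmu$ are the identity functor on ${}_H \Mod_{\fd}$, and the construction of Subsection~\ref{subsec:tw-mod-tr} produces a \emph{single} isomorphism $\HH_0(H, \id) \to \HH_0({}_H \Proj_{\fd}, \id)$, $t \mapsto \mathsf{t}_{\bullet}$, governing both the right and the left theory (formula~\eqref{eq:def-tP-f} is literally the same map once $\chi = \widetilde{\chi} = \id$). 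Thus a two-sided module trace $\mathsf{t}_{\bullet}$ corresponds, under this one isomorphism, to a symmetric linear form $t$ on $H$ that simultaneously satisfies the right and the left module trace conditions, and $\mathsf{t}_{\bullet} \ne 0$ if and only if $t \ne 0$.

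Next I would identify the two conditions explicitly. By Theorem~\ref{thm:q-Hopf-modified-trace} together with the unimodular-case reformulation, the right module trace condition on $t$ is equivalent to $t \in \musymint = \{ \lambda \leftharpoonup \qhg \mid \lambda \text{ a right cointegral on } H \}$, a one-dimensional subspace of $H^*$ spanned by $\lambda \leftharpoonup \qhg$ for a fixed non-zero right cointegral $\lambda$ (one-dimensionality of $\cointr$ and invertibility of $\qhg$ give this). By Theorem~\ref{thm:q-Hopf-modified-trace-left}, the left module trace condition is equivalent to $t$ lying in $\{ (\lambda \circ \qhS) \leftharpoonup \qhg^{-1} \mid \lambda \text{ a right cointegral on } H \}$. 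Using \cite[Corollary 4.4]{MR2862216}, which in the unimodular case asserts that $\lambda \circ \qhS$ runs through all left cointegrals as $\lambda$ runs through right cointegrals, this subspace equals $\{ \lambda^{\qhL} \leftharpoonup \qhg^{-1} \mid \lambda^{\qhL} \text{ a left cointegral on } H \}$, again one-dimensional since $\cointl$ is one-dimensional and $\qhS$, $\qhg$ are invertible.

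Finally I would conclude by pure linear algebra of one-dimensional spaces. A non-zero $t$ satisfying both conditions exists precisely when the two one-dimensional subspaces above meet non-trivially, and two one-dimensional subspaces of a vector space have non-zero intersection if and only if they coincide. Hence a non-zero two-sided module trace exists if and only if $\lambda \leftharpoonup \qhg$ lies in $\{ \lambda^{\qhL} \leftharpoonup \qhg^{-1} \mid \lambda^{\qhL} \text{ a left cointegral}\}$, i.e.\ if and only if there is a left cointegral $\lambda^{\qhL}$ with $\lambda \leftharpoonup \qhg = \lambda^{\qhL} \leftharpoonup \qhg^{-1}$ (the scalar is absorbed into $\lambda^{\qhL}$, as scalar multiples of cointegrals are cointegrals). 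This is exactly the definition of $H$ being unibalanced, as recalled before the statement. The one genuinely delicate point to nail down is the first paragraph's claim that in the unimodular case the right and left module-trace constructions are governed by the very same isomorphism $t \mapsto \mathsf{t}_{\bullet}$, so that ``two-sided'' really does translate into ``simultaneously in both one-dimensional subspaces''; once this identification is made explicit, the remainder is the elementary observation on one-dimensional intersections plus the cited cointegral symmetry.
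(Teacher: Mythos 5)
Your proposal is correct and follows essentially the same route as the paper: both reduce the statement, via Theorems~\ref{thm:q-Hopf-modified-trace} and~\ref{thm:q-Hopf-modified-trace-left} and the one-dimensionality of $\cointl$ and $\cointr$, to the coincidence of two one-dimensional subspaces of $H^*$, and then identify that coincidence with the unibalanced condition using \cite[Corollary 4.4]{MR2862216} (equivalently Remark~\ref{rem:left-right-coint}~(b)). The only difference is cosmetic bookkeeping — you rewrite both subspaces in their unimodular normal forms $\{\lambda \leftharpoonup \qhg\}$ and $\{\lambda^{\qhL} \leftharpoonup \qhg^{-1}\}$ before comparing, whereas the paper compares them in the raw forms $\{\lambda^{\qhL}\overline{\qhS} \leftharpoonup \qhg\}$ and $\{\lambda\qhS \leftharpoonup \qhg^{-1}\}$ — and your explicit remark that the right and left correspondences $t \mapsto \mathsf{t}_{\bullet}$ coincide when $\chi = \widetilde{\chi} = \id$ is a point the paper leaves implicit.
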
   
\begin{proof}
  Let $\lambda$ be a non-zero right cointegral on $H$.
  By Theorems~\ref{thm:q-Hopf-modified-trace} and \ref{thm:q-Hopf-modified-trace-left} and the fact that the spaces $\cointl$ and $\cointr$ of cointegrals are one-dimensional, a non-zero two-sided module trace on ${}_H \Proj_{\fd}$ exists if and only if
  \begin{equation}
    \label{eq:q-Hopf-two-sided-mod-tr}
    \lambda \qhS \leftharpoonup \qhg^{-1} \in \{ \lambda^{\qhL} \overline{\qhS} \leftharpoonup \qhg \mid \lambda^{\mathsf{L}} \in \cointl \}.
  \end{equation}
  Let $\lambda^{\qhL}$ and $\lambda^{\qhR}$ be a left and a right cointegral on $H$, respectively.
  By \cite[Corollary 4.4]{MR2862216} (see also Remark~\ref{rem:left-right-coint} (b)), $\lambda^{\qhL} \overline{\qhS}$ and $\lambda^{\qhR} \qhS$ are a right and a left cointegral on $H$, respectively.
  By this observation, it is easy to check that \eqref{eq:q-Hopf-two-sided-mod-tr} is equivalent to that $H$ is unibalanced. The proof is done.
\end{proof}

Suppose that $H$ is a finite-dimensional unimodular pivotal Hopf algebra with pivotal element $\qhg$. There is a unique grouplike element $g_H \in H$ such that the equation $h_{(1)} \langle \lambda, h_{(2)} \rangle = \lambda(h) g_H$ holds for all $h \in H$ and all right cointegrals $\lambda$ on $H$. It has been known that $H$ is unibalanced if and only if $g_H = \qhg^2$ \cite[Lemma 4.8]{2018arXiv180100321B}. We expect that a similar result holds in the general case where $H$ is a unimodular pivotal quasi-Hopf algebra.

%%% References
% \bibliographystyle{alpha}
% \bibliography{../common}
\def\cprime{$'$}

\end{document}